\colorlet{inlinkcolor}{purple}
\colorlet{exlinkcolor}{purple}
\colorlet{citecolor}{teal!75!blue}
\definecolor{codeblue}{rgb}{0.1,0.1,0.7}
\definecolor{codegreen}{rgb}{0,0.6,0}
\definecolor{codegray}{rgb}{0.5,0.5,0.5}
\definecolor{codepurple}{rgb}{0.58,0,0.82}
\definecolor{backcolour}{rgb}{0.95,0.95,0.92}
\lstdefinestyle{mypython}{
    backgroundcolor=\color{backcolour},
    commentstyle=\color{codegray}\itshape,
    keywordstyle=\color{codeblue}\bfseries,
    numberstyle=\tiny\color{codegray},
    stringstyle=\color{codepurple},
    basicstyle=\ttfamily\footnotesize,
    breakatwhitespace=false,
    breaklines=true,
    captionpos=b,
    keepspaces=true,
    showspaces=false,
    showstringspaces=false,
    showtabs=true,
    tabsize=2,
    language=Python
}
\newtheorem{theorem}{Theorem}[section]
\newtheorem{proposition}[theorem]{Proposition}
\newtheorem{lemma}[theorem]{Lemma}
\newtheorem{corollary}[theorem]{Corollary}
\theoremstyle{definition}
\newtheorem{definition}[theorem]{Definition}
\newtheorem{example}[theorem]{Example}
\newtheorem{assumption}{Assumption}
\theoremstyle{remark}
\newtheorem{remark}[theorem]{Remark}
\numberwithin{equation}{section}
\def\what{\widehat}
\let\originalparagraph\paragraph
\renewcommand{\paragraph}[1]{\vspace*{5pt}\originalparagraph{\bfseries #1}}
\def\vxi{{\bm{\xi}}}
\def\vepsilon{{\bm{\varepsilon}}}
\def\vb{\mathbf{b}}
\def\ve{\mathbf{e}}
\def\vs{\mathbf{s}}
\def\vu{\mathbf{u}}
\def\vx{\mathbf{x}}
\def\vy{\mathbf{y}}
\def\vz{\mathbf{z}}
\def\mB{\mathbf{B}}
\def\mF{\mathbf{F}}
\def\mG{\mathbf{G}}
\def\mI{\mathbf{I}}
\def\mU{\mathbf{U}}
\def\mV{\mathbf{V}}
\def\mW{\mathbf{W}}
\def\mX{\mathbf{X}}
\def\mY{\mathbf{Y}}
\def\mZ{\mathbf{Z}}
\def\calF{{\mathcal{F}}}
\def\calG{{\mathcal{G}}}
\def\calS{{\mathcal{S}}}
\def\bbE{{\mathbb{E}}}
\def\bbN{{\mathbb{N}}}
\def\bbR{{\mathbb{R}}}
\DeclareMathOperator*{\argmin}{arg\,min}
\DeclareMathOperator{\iid}{i.i.d.}
\DeclareMathOperator{\kl}{KL}
\DeclareMathOperator{\tv}{TV}
\DeclareMathOperator{\ent}{Ent}
\DeclareMathOperator{\PI}{PI}
\DeclareMathOperator{\LSI}{LSI}
\DeclareMathOperator{\var}{Var}
\DeclareMathOperator{\obs}{obs}
\def\d{\,\mathrm{d}}
\def\dt{\,\mathrm{d}t}
\def\ds{\,\mathrm{d}s}
\def\dz{\,\mathrm{d}z}
\def\almc{\mathtt{ALMC}}
\begin{document}

\title[Nonlinear Assimilation via Score-based Sequential Langevin Sampling]{Nonlinear Assimilation via Score-based Sequential Langevin Sampling}

\author[Z. Ding]{Zhao Ding$^{1}$}
\email{zd1998@whu.edu.cn}
\author[C. Duan]{Chenguang Duan$^{1}$}
\email{cgduan.math@whu.edu.cn}
\author[Y. Jiao]{Yuling Jiao$^{1}$}
\email{yulingjiaomath@whu.edu.cn}
\author[J.Z. Yang]{Jerry Zhijian Yang$^{1}$}
\email{zjyang.math@whu.edu.cn}
\author[C. Yuan]{Cheng Yuan$^{1,2}$}
\email{yuancheng@ccnu.edu.cn}
\author[P. Zhang]{Pingwen Zhang$^{1,3}$}
\email{pzhang@pku.edu.cn}
\address{1. Wuhan University, 2. Central China Normal University, 3. Peking University.}


\date{\today}


\begin{abstract}
This paper introduces score-based sequential Langevin sampling (SSLS), a novel approach to nonlinear data assimilation within a recursive Bayesian filtering framework. The proposed method decomposes the assimilation process into alternating prediction and update steps, using dynamic models for state prediction and incorporating observational data via score-based Langevin Monte Carlo during the updates. To overcome inherent challenges in highly non-log-concave posterior sampling, we integrate an annealing strategy into the update mechanism. Theoretically, we establish convergence guarantees for SSLS in total variation (TV) distance, yielding concrete insights into the algorithm's error behavior with respect to key hyperparameters. Crucially, our derived error bounds demonstrate the asymptotic stability of SSLS, guaranteeing that local posterior sampling errors do not accumulate indefinitely over time. Extensive numerical experiments across challenging scenarios, including high-dimensional systems, strong nonlinearity, and sparse observations, highlight the robust performance of the proposed method. Furthermore, SSLS effectively quantifies the uncertainty associated with state estimates, rendering it particularly valuable for reliable error calibration.
\end{abstract}
\keywords{Data assimilation, Langevin Monte Carlo, Bayesian inverse problems, convergence analysis.}

\maketitle


\section{Introduction}

\par Data assimilation aims to estimate the time-varying latent states given noisy observation data and the state transition dynamics~\cite{Law2015Data,Reich2015Probabilistic,Reich2019Data}. This task is essential in various application scenarios such as weather forecasting~\cite{Katsafados2020Numerical,Evensen2022Data}, digital twin technology~\cite{Thelen2022comprehensive1,Thelen2022comprehensive2}, and mathematical finance~\cite{Bhar2010Stochastic,Rudiger2012Pricing,Elliott2013option}. Despite its importance and widespread application, data assimilation remains a challenging task. The major difficulties in data assimilation lie in the nonlinear nature of both the state transition dynamics and the measurement model, as well as the high dimensionality of the state. Moreover, in practical scenarios, only noisy and sparse observation data are available, introducing further difficulties to the data assimilation. Apart from estimating the latent states, researchers also need to quantify the uncertainties of the estimated states, which is crucial for assessing and calibrating the estimation error~\cite{Sullivan2015Introduction}. These constraints and requirements pose significant challenges for data assimilation.

\par Although various widely-used methods have been proposed for data assimilation, none fully addresses the aforementioned challenges. These approaches generally fall into two categories: variational methods~\cite{Evensen2022Data} and Bayesian filtering~\cite{Sarkka20232023Bayesian}. Variational methods, such as 3D-Var and 4D-Var~\cite{LeDimet1986Variational}, estimate latent states through maximum-a-posteriori inference. In contrast, Bayesian filtering approaches, including the ensemble Kalman filter (EnKF)~\cite{houtekamer1998data} and particle filter (PF)~\cite{Gordon1993Novel,Kitagawa1996Monte,Doucet2001Sequential}, aim to sample from the posterior distribution. Despite their widespread adoption, both categories encounter significant limitations in complex assimilation scenarios. The fundamental rationale behind variational methods and EnKF relies on Gaussian approximations of the prior and likelihood~\cite{Sarkka20232023Bayesian}. Specifically, these methods assume both Gaussian prior and measurement noise, while linearizing the dynamics and measurement models. However, in highly nonlinear assimilation scenarios with non-Gaussian prior and likelihood, the true posterior may deviate substantially from a Gaussian distribution~\cite{Mandel2012convergence}, severely compromising the effectiveness of Gaussian approximations. The PF, while free from linear and Gaussian assumptions, encounters particle degeneracy or impoverishment in high-dimensional settings~\cite{Snyder2008Obstacles,Bengtsson2008Curse,Bickel2008Sharp}. This phenomenon occurs when the number of ensemble particles is limited: with high probability, multiple particles in the ensemble converge to identical values~\cite{Snyder2008Obstacles}. 

\par Recently, score-based generative models~\cite{Ho2020Denoising,Song2019Generative,song2021score} have emerged as a promising approach in data assimilation~\cite{Rozet2023score,li2024state,Bao2024score,si2024latent}, driven by their exceptional ability to learn and sample from complex distributions. While these approaches demonstrate encouraging empirical performance in certain nonlinear and high-dimensional problems, they face two key limitations: they either depend on restrictive assumptions about Gaussian priors and likelihoods~\cite{li2024state}, or they lack rigorous theoretical foundations~\cite{Rozet2023score,Bao2024score,si2024latent}. A detailed discussion of these limitations is provided in Section~\ref{section:related}.

\par In this work, we introduce a provable method for nonlinear and high-dimensional data assimilation that is both empirically validated and theoretically rigorous. Our main contributions are summarized as follows:
\begin{enumerate}[(i)]
\item We present a novel method for nonlinear assimilation, named \underline{\textbf{s}}core-based \underline{\textbf{s}}equential \underline{\textbf{L}}angevin \underline{\textbf{s}}ampling (SSLS), within a recursive Bayesian filtering framework. SSLS decomposes the assimilation process into a sequence of iterations invoking prediction and update steps. During the prediction step, we utilize the dynamics model to predict states, from which the score of the prior distribution can be learned. Subsequently, in the update step, we incorporate the observation data as the likelihood and sample from the posterior distribution using the score-based Langevin Monte Carlo. To improve convergence and facilitate multi-modal sampling, an annealing strategy is integrated into the Langevin algorithm.
\item We analyze the convergence of SSLS in total variation distance under mild conditions. 
Our theoretical results precisely characterize how the assimilation error depends on key hyperparameters, including the step size, the number of Langevin iterations, and 
the score matching tolerance, and provide explicit guidance for their selection (Theorem~\ref{theorem:section:convergence:assimilation}). A notable consequence is the long-horizon stability of SSLS: although the sequential assimilation error may grow over time, it remains uniformly bounded over all assimilation steps. We also establish a convergence guarantee for score-based Langevin sampling applied to posterior estimation (Theorem~\ref{theorem:section:convergence}), a result that is of independent interest beyond the data assimilation setting.
\item We utilize SSLS in various numerical examples to assess its performance and compare it with baseline approaches from multiple perspectives. According to our experimental results, SSLS yields significant advantages in high-dimensional and nonlinear data assimilation, even with only sparse observations. Furthermore, the standard deviation of SSLS accurately indicates estimation errors, highlighting the proficiency of our method in quantifying uncertainty.
\end{enumerate}

\subsection{Notations}\label{section:notations}
We now introduce some basic notations. The set of positive integers is denoted by $\bbN=\{1,2,\ldots\}$. Denote $\bbN_{0}=\{0\}\cup\bbN$ for convenience. For a positive integer $k\in\bbN$, let $[k]$ denote the set $\{1,\ldots,k\}$. We employ the notations $A\lesssim B$ and $B\gtrsim A$ to signify that there exists an absolute constant $c>0$ such that $A\leq cB$. In addition, $A\asymp B$ means both $A\lesssim B$ and $A\gtrsim B$. Denote by $\gamma_{d}(\cdot;\bm{\mu},\bm{\Sigma})$ the density of a $d$-dimensional Gaussian distribution $\mathcal{N}(\bm{\mu},\bm{\Sigma})$. Appendix~\ref{appendix:notations} summarizes the notations used in Sections~\ref{section:method} and~\ref{section:convergence} for easy reference and cross-checking.

\subsection{Organization}
The rest of this paper is organized as follows. Section~\ref{section:method} presents the score-based sequential Langevin sampling for data assimilation, while Section~\ref{section:convergence} provides a thorough theoretical guarantee. The efficiency of our methods is demonstrated through a series of numerical experiments in Section~\ref{section:experiments}. The related works are reviewed in Section~\ref{section:related}. Finally, Section~\ref{section:conclusion} summarizes the conclusions and outlines future work. The supplementary material provides a review of existing approaches, a notation summary, complete theoretical proofs, additional numerical experiments, and detailed experimental settings.


\section{Score-based Sequential Langevin Sampling}
\label{section:method}

\par This section begins with an introduction to data assimilation in Section~\ref{section:method:setup}, followed by a presentation of the recursive Bayesian filtering framework in Section~\ref{section:method:Bayes}. Then Sections~\ref{section:method:predict} and~\ref{section:method:update} propose the prediction and update procedures, respectively. The complete assimilation algorithm is summarized in Section~\ref{section:method:procedure}.

\subsection{Problem formulation}
\label{section:method:setup}

\par The data assimilation refers to a class of problems that aim to estimate the state of a time-varying system that is indirectly
observed through noisy measurements. Let $(\mX_{k})_{k\in\bbN}$ be a sequence of unobservable latent states taking values in $\bbR^{d}$, which satisfies the dynamics model
\begin{equation}\label{eq:dynamic}
\mX_{k+1}=\calF_{k}(\mX_{k},\mV_{k}).
\end{equation}
Here $k\in\bbN$ is the time index, $\calF_{k}$ is a time-dependent forward propagation operator, and $(\mV_{k})_{k\in\bbN}$ is a sequence of independent random variables with known distribution. The dynamics model~\eqref{eq:dynamic} implies that $(\mX_{k})_{k\in\bbN}$ is a non-homogeneous Markov chain, defined in terms of the transition probability density $\rho_{k}(\vx|\vx_{k}):=p_{\mX_{k+1}|\mX_{k}}(\vx|\vx_{k})$. The stochastic process $(\mY_{k})_{k\in\bbN}$ represents the indirect and noisy observations, linked with the latent states $(\mX_{k})_{k\in\bbN}$ by the measurement model 
\begin{equation}\label{eq:measurement}
\mY_{k}=\calG_{k}(\mX_{k},\mW_{k}),
\end{equation}
where $\calG_{k}$ is a time-dependent measurement operator, and $(\mW_{k})_{k\in\bbN}$ is a sequence of independent noise with known distribution. Denote the conditional density associated with the measurement model~\eqref{eq:measurement} by $g_{k}(\vy|\vx):=p_{\mY_{k}|\mX_{k}}(\vy|\vx)$, which is known as the measurement likelihood. The dynamics model~\eqref{eq:dynamic} together with the measurement model~\eqref{eq:measurement} characterize a state-space model illustrated in Figure~\ref{fig:model:assimilation}.

\begin{figure}[htbp]
\vspace*{10pt}
\centering 
\begin{tikzpicture}[
NodeState/.style={circle, draw=teal!50, fill=teal!5, thick, minimum size=13mm},
NodeObservation/.style={circle, draw=purple!50, fill=purple!5, thick, minimum size=13mm},
NodeBlank/.style={circle, draw=white, fill=white, thick, minimum size=13mm},
]
\node[NodeState](upper1){$\mX_{1}$};
\node[NodeBlank](upper2)[right=40pt of upper1]{$\cdots$};
\node[NodeState](upper3)[right=40pt of upper2]{$\mX_{k}$};
\node[NodeState](upper4)[right=40pt of upper3]{$\mX_{k+1}$};
\node[NodeBlank](upper5)[right=40pt of upper4]{$\cdots$};
\node[NodeObservation](lower1)[below=20pt of upper1]{$\mY_{1}$};
\node[NodeBlank](lower2)[below=20pt of upper2]{$\cdots$};
\node[NodeObservation](lower3)[below=20pt of upper3]{$\mY_{k}$};
\node[NodeObservation](lower4)[below=20pt of upper4]{$\mY_{k+1}$};
\node[NodeBlank](lower5)[below=20pt of upper5]{$\cdots$};
\node(state)[left=5pt of upper1]{Latent States};
\node(measurement)[left=5pt of lower1]{Observations};
\draw [->,line width=1pt](upper1)--node[above]{$\rho_{1}$}(upper2);
\draw [->,line width=1pt](upper2)--node[above]{$\rho_{k-1}$}(upper3);
\draw [->,line width=1pt](upper3)--node[above]{\tikzmarknode{transition}{$\rho_{k}$}}(upper4);
\draw [->,line width=1pt](upper4)--node[above]{$\rho_{k+1}$}(upper5);
\draw [->,line width=1pt](upper1)--node[left]{$g_{1}$}(lower1);
\draw [->,line width=1pt](upper3)--node[left]{\tikzmarknode{measurement}{$g_{k}$}}(lower3);
\draw [->,line width=1pt](upper4)--node[left]{$g_{k+1}$}(lower4);
\end{tikzpicture}
\begin{tikzpicture}[overlay,remember picture,nodes={align=left,inner ysep=1pt},<-]
\path ([yshift=5mm]transition.north)node[anchor=south east,color=black!75](commenttrans){\itshape state transition density};
\draw [<-,line width=0.5pt,black!75](transition.north)|-([color=black!75]commenttrans.south west);
\path ([xshift=-5mm]measurement.west)node[anchor=south east,color=black!75](commentmea){\itshape measurement likelihood};
\draw [<-,line width=0.5pt,black!75](measurement.west)--([color=black!75]commentmea.south west);
\end{tikzpicture}
\caption{An illustrative schematic of the state-space model. The latent states $(\mX_{k})_{k\in\bbN}$ are unobservable and evolves according to known transition densities $(\rho_{k})_{k\in\bbN}$, which are specified by a dynamics model~\eqref{eq:dynamic}. The observations $(\mY_{k})_{k\in\bbN}$ are linked with states by a known likelihood $g_{k}$ characterized by the measurement model~\eqref{eq:measurement}.}
\label{fig:model:assimilation}
\end{figure}

\par  The goal of the data assimilation is to estimate the posterior distribution of the latent state $\mX_{k+1}$ conditioned on all available observations $\mY_{[k+1]}$, that is, 
\begin{equation}\label{eq:assimilation}
\pi_{k+1}(\vx|\vy_{[k+1]}):=p_{\mX_{k+1}|\mY_{[k+1]}}(\vx|\vy_{[k+1]}), \quad k\in\bbN, \quad \vx\in\bbR^{d}.
\end{equation}
In practical scenarios, researchers predominantly focus on posterior sampling rather than posterior density estimation. This preference arises because posterior sampling provides direct access to statistical inference through the computation of essential measures such as means, standard deviations, and confidence intervals, which are crucial for decision-making and uncertainty quantification. Consequently, data assimilation can be reformulated as a sequence of posterior sampling problems.

\subsection{Recursive Bayesian filtering framework}
\label{section:method:Bayes}

\par In this subsection, we present the recursive Bayesian filtering framework~\cite{Sarkka20232023Bayesian} for data assimilation. Given the previous posterior distribution $\pi_{k}(\cdot|\vy_{[k]})$, the current state $\mX_{k+1}$ can be predicted using the dynamics model~\eqref{eq:dynamic}. The distribution of the predicted state given all historical measurements is given by
\begin{equation}\label{eq:prediction:distribution}
q_{k+1}(\vx|\vy_{[k]}):=p_{\mX_{k+1}|\mY_{[k]}}(\vx|\vy_{[k]})=\int\rho_{k}(\vx|\vx_{k})\pi_{k}(\vx_{k}|\vy_{[k]})\d\vx_{k}, \quad \vx\in\bbR^{d},
\end{equation}
where the Chapman-Kolmogorov identity is applied. The posterior distribution in~\eqref{eq:assimilation} can be expressed as the product of the measurement likelihood $g_{k+1}(\vy_{k+1}|\cdot)$~\eqref{eq:measurement} and the prediction distribution $q_{k+1}(\cdot|\vy_{[k]})$ via the Bayes' rule:
\begin{equation}\label{eq:bayes}
\pi_{k+1}(\vx|\vy_{[k+1]})\propto g_{k+1}(\vy_{k+1}|\vx)q_{k+1}(\vx|\vy_{[k]}), \quad \vx\in\bbR^{d},
\end{equation}
where we omit a constant independent of $\vx$. The prediction~\eqref{eq:prediction:distribution} and update~\eqref{eq:bayes} stages can be combined to characterize a recursion from the previous posterior $\pi_{k}(\cdot|\vy_{[k]})$ to the current posterior $\pi_{k+1}(\cdot|\vy_{[k+1]})$ as
\begin{equation}\label{eq:prediction:recursion}
\pi_{k+1}(\vx|\vy_{[k+1]})\propto g_{k+1}(\vy_{k+1}|\vx)\int\rho_{k}(\vx|\vx_{k})\pi_{k}(\vx_{k}|\vy_{[k]})\d\vx_{k}, \quad \vx\in\bbR^{d}.
\end{equation}
This recursion serves as the central object throughout our method, which enables us to decompose the data assimilation into a sequence of posterior sampling problems. Each of these sub-problems can be solved by alternating between prediction~\eqref{eq:prediction:distribution} and update~\eqref{eq:bayes} steps. The complete procedure of the prediction-update recursion is illustrated in Figure~\ref{fig:assimilation:ALMC}. We will present these two steps in detail as Sections~\ref{section:method:predict} and~\ref{section:method:update}, respectively. 

\begin{figure}[htbp]
\centering 
\begin{tikzpicture}[
NodePurple/.style={rectangle, draw=purple!50, fill=purple!5, thick, minimum width=65mm, minimum height=5mm},
NodeBlack/.style={rectangle, draw=black!50, fill=black!5, thick, minimum width=65mm, minimum height=5mm},
NodeTeal/.style={rectangle, draw=teal!50, fill=teal!5, thick, minimum width=65mm, minimum height=5mm},
NodeComment/.style={minimum size=8mm}]
\node[NodePurple](prediction1){$\what{\mX}_{k}^{1},\ldots,\what{\mX}_{k}^{n}\sim\what{\pi}_{k}(\cdot|\vy_{[k]})$};
\node[NodePurple](prediction2)[below=30pt of prediction1]{$\underline{\mX}_{k+1}^{1},\ldots,\underline{\mX}_{k+1}^{n}\sim\what{q}_{k+1}(\cdot|\vy_{[k]})$};
\node[NodePurple](prediction3)[below=30pt of prediction2]{\tikzmarknode{prior}{$\what{\vs}_{k+1}(\cdot,\vy_{[k]})\approx\nabla_{\vx}\log\what{q}_{k+1}(\cdot|\vy_{[k]})$}};
\node[NodeBlack](likelihood)[right=30pt of prediction3]{\tikzmarknode{loglikelihood}{$\nabla_{\vx}\log g_{k+1}(\vy_{k+1}|\cdot)$}};
\node[NodeTeal](update1)[right=30pt of prediction2]{\tikzmarknode{posterior}{$\what{\vb}_{k+1}(\cdot,\vy_{[k+1]})\approx\nabla_{\vx}\log\what{\pi}_{k+1}(\cdot|\vy_{[k+1]})$}};
\node[NodeTeal](update2)[right=30pt of prediction1]{$\what{\mX}_{k+1}^{1},\ldots,\what{\mX}_{k+1}^{n}\sim\what{\pi}_{k+1}(\cdot|\vy_{[k+1]})$};
\node[NodeComment](prediction)[above=0pt of prediction1]{\textcolor{purple!80}{Prediction}};
\node[NodeComment](update)[above=0pt of update2]{\textcolor{teal!80}{Update}};
\node[draw,dashed,draw=purple!80, thick,fit={(prediction1) (prediction2) (prediction3) (prediction)},minimum width=70mm](){};
\node[draw,dashed,teal!80, thick,fit={(update1) (update2) (update)},minimum width=70mm](){};
\draw [->,black,line width=1.5pt,align=right](prediction1.south)--node[left]{dynamics \\ model}(prediction2.north);
\draw [->,black,line width=1.5pt,align=right](prediction2.south)--node[left]{score \\ matching}(prediction3.north);
\draw [->,black,line width=1.5pt,align=left](update1.north)--node[right]{Langevin \\ sampling}(update2.south);
\draw [->,black,line width=1.5pt,align=left](likelihood.north)--node[right]{Bayes' rule}(update1.south);
\draw [-,black,line width=1.5pt,align=left]([xshift=15mm]prediction3.north)|-($(likelihood.north)!0.5!(update1.south)$);
\draw [->,black,dashed,line width=1.5pt](update2.west)--(prediction1.east);
\end{tikzpicture}
\begin{tikzpicture}[overlay,remember picture,nodes={align=left,inner ysep=1pt},<-]
\path ([yshift=-5mm,xshift=-18mm]prior.south)node[anchor=north west,color=black!75](commentprior){\itshape prediction score};
\draw [<-,line width=0.5pt,black!75]([xshift=-18mm,yshift=-1mm]prior.south)|-(commentprior.south east);
\path ([yshift=-5mm]loglikelihood.south)node[anchor=north east,color=black!75](commentloglikelihood){\itshape gradient of the log-likelihood};
\draw [<-,line width=0.5pt,black!75]([yshift=-1mm]loglikelihood.south)|-(commentloglikelihood.south west);
\path ([yshift=5mm,xshift=-20mm]posterior.north)node[anchor=south east,color=black!75](commentposterior){\itshape posterior score};
\draw [<-,line width=0.5pt,black!75]([xshift=-20mm,yshift=1mm]posterior.north)|-(commentposterior.south west);
\end{tikzpicture}
\vspace*{15pt}
\caption{Schematic representation of score-based sequential Langevin sampling. (Left) The prediction step involves sampling from the approximated prediction distribution and estimating the prediction score. (Right) The posterior score is then obtained by combining the prediction score with the gradient of the log-likelihood. The update step samples from the posterior distribution using ALMC. Combining these two phases characterizes a recursion from the previous posterior  to the current posterior.}
\label{fig:assimilation:ALMC}
\end{figure}

\subsection{Prediction and score matching}
\label{section:method:predict}

\par This section focuses on estimating the score, i.e., the gradient of log-density, of the prediction distribution $q_{k+1}(\cdot|\vy_{[k]})$~\eqref{eq:prediction:distribution}. The prediction score estimator will be utilized in the update step for sampling through Langevin-type algorithms, as demonstrated in the subsequent subsection.

\par Given that the exact previous posterior distribution $\pi_{k}(\cdot|\vy_{[k]})$ in~\eqref{eq:prediction:distribution} is intractable, and only an estimator $\what{\pi}_{k}(\cdot|\vy_{[k]})$ is available within the recursive Bayesian filtering framework, we substitute the exact posterior distribution in~\eqref{eq:prediction:distribution} with its estimator to derive the approximated prediction distribution:
\begin{equation}\label{eq:prediction:distribution:hat}
\what{q}_{k+1}(\vx|\vy_{[k]}):=\int\rho_{k}(\vx|\vx_{k})\what{\pi}_{k}(\vx_{k}|\vy_{[k]})\d\vx_{k}\approx q_{k+1}(\vx|\vy_{[k]}), \quad \vx\in\bbR^{d}.
\end{equation}
This approximation closely resembles the prediction distribution~\eqref{eq:prediction:distribution} when $\what{\pi}_{k}(\cdot|\vy_{[k]})$ provides an accurate approximation of the previous posterior distribution $\pi_{k}(\cdot|\vy_{[k]})$. The error of this approximation is analyzed in Theorem~\ref{theorem:section:convergence}. Our task thus becomes estimating the score for the approximated prediction distribution~\eqref{eq:prediction:distribution:hat}.

\par According to the dynamics model~\eqref{eq:dynamic}, a particle approximation to the approximated prediction distribution~\eqref{eq:prediction:distribution:hat} can be constructed as
\begin{equation}\label{eq:predict}
\underline{\mX}_{k+1}^{i}=\calF_{k}(\what{\mX}_{k}^{i},\mV_{k}^{i}), \quad 1\leq i\leq n,
\end{equation}
where $\what{\mX}_{k}^{1},\ldots,\what{\mX}_{k}^{n}$ are independent random variables drawn from the previous estimated posterior $\what{\pi}_{k}(\cdot|\vy_{[k]})$, and $\mV_{k}^{1},\ldots,\mV_{k}^{n}$ are independent random copies of $\mV_{k}$. However, in regions where the approximated prediction density is low, score matching using the particles~\eqref{eq:predict} fails to accurately estimate the score due to insufficient prediction samples~\cite{Song2019Generative}.

\par\textbf{Gaussian smoothing.} 
To address this limitation, we incorporate Gaussian smoothing into the score matching procedure, building upon the approach developed by~\cite{Song2019Generative}. For a fixed smoothing level $\sigma>0$, define the Gaussian smoothed counterpart of~\eqref{eq:prediction:distribution:hat} as 
\begin{equation}\label{eq:smooth:density}
q_{k+1}^{\sigma}(\vx|\vy_{[k]})=\int\gamma_{d}(\vx;\vx_{0},\sigma^{2}\mI_{d})\what{q}_{k+1}(\vx_{0}|\vy_{[k]})\d\vx_{0}\approx\what{q}_{k+1}(\vx|\vy_{[k]}), \quad \vx\in\bbR^{d},
\end{equation}
where $\gamma_{d}(\cdot;\vx_{0},\sigma^{2}\mI_{d})$ represents the density of a $d$-dimensional Gaussian distribution with mean $\vx_{0}$ and covariance matrix $\sigma^{2}\mI_{d}$. The Gaussian smoothing serves two important purposes. First, it fills in low density regions in the original approximated prediction distribution~\eqref{eq:prediction:distribution:hat}, making the estimation of the smoothed density~\eqref{eq:smooth:density} more tractable than estimating the original distribution~\cite{Song2019Generative}. Second, for sufficiently small $\sigma>0$, the score function of the smoothed distribution approximates that of the original prediction density $\what{q}_{k+1}(\cdot|\vy_{[k]})$~\cite{Tang2024Adaptivity}. Consequently, the score function of~\eqref{eq:smooth:density} serves as an effective surrogate that closely approximates the original score while being easier to estimate. The error of Gaussian smoothing has been investigated by~\cite[Theorem 1]{Tang2024Adaptivity}. We show the empirical effectiveness of the Gaussian smoothing in Appendix~\ref{appendix:experiment:sensitivity}.

\begin{remark}[Inflation]
The Gaussian smoothing~\eqref{eq:smooth:density} is commonly known as inflation in the field of data assimilation, and has demonstrated empirical success in practical applications~\cite{Anderson1999Monte,Anderson2009Spatially,Sacher2008Sampling,Evensen2022Data}. Inflation serves a main purpose to mitigate the excessive reduction of variance resulting from spurious correlations in the update step. 
\end{remark}

\par\textbf{Denoising score matching.}
Three mainstream approaches exist for score matching: implicit score matching~\cite{hyvarinen2005Estimation}, sliced score matching~\cite{song2020Sliced}, and denoising score matching~\cite{Vincent2011Connection}. Among these, we adopt denoising score matching because it eliminates the need to compute the gradient of the score network, unlike the other two methods which require this computation.

\par Following denoising score matching~\cite{Vincent2011Connection}, the score function of the smoothed density~\eqref{eq:smooth:density} minimizes the objective functional:
\begin{equation*}
L_{k+1}(\vs)=\bbE_{\underline{\mX}_{k+1}\sim\what{q}_{k+1}(\cdot|\vy_{[k]})}\bbE_{\vepsilon\sim\mathcal{N}(\bm{0},\mI_{d})}\big[\|\sigma\vs(\underline{\mX}_{k+1}+\sigma\vepsilon,\vy_{[k]})+\vepsilon\|_{2}^{2}\big].
\end{equation*}
Since this population risk is analytically intractable in practical applications, we estimate the score function through empirical risk minimization:
\begin{equation}\label{eq:dsm}
\what{\vs}_{k+1}(\cdot,\vy_{[k]})\in\argmin_{\vs\in\calS}\what{L}_{k+1}(\vs)=\frac{1}{n}\sum_{i=1}^{n}\|\sigma\vs(\underline{\mX}_{k+1}^{i}+\sigma\vepsilon_{i},\vy_{[k]})+\vepsilon_{i}\|_{2}^{2},
\end{equation}
where $\calS$ is a deep neural network class, $\{\underline{\mX}_{k+1}^{i}\}_{i=1}^{n}$ is a set of independent predicted states defined as~\eqref{eq:predict}, and $\{\vepsilon_{i}\}_{i=1}^{n}$ is a set of independent standard Gaussian variables. 

\subsection{Update via Langevin Sampling}
\label{section:method:update}

\par This subsection introduces a Langevin algorithm to sample from the posterior distribution $\pi_{k+1}(\cdot|\vy_{[k+1]})$ in~\eqref{eq:bayes}. We begin by introducing the Langevin diffusion associated with the target posterior distribution, defined as the solution to the following stochastic differential equation:
\begin{equation}\label{eq:method:update:LD}
\d\mZ_{t}=\nabla_{\vx}\log\pi_{k+1}(\mZ_{t}|\vy_{[k+1]})\dt+\sqrt{2}\d\mB_{t},  \quad \mZ_{0}\sim q_{k+1}(\cdot|\vy_{[k]}),
\end{equation}
where $(\mB_{t})_{t\geq 0}$ is a Brownian motion. Classical theory establishes that when $\pi_{k+1}(\cdot|\vy_{[k+1]})$ satisfies a functional inequality such as the log-Sobolev inequality, the law of Langevin diffusion \eqref{eq:method:update:LD} converges exponentially fast to the target distribution $\pi_{k+1}(\cdot|\vy_{[k+1]})$~\cite{Bakr2014Analysis}. By applying the Bayes' rule~\eqref{eq:bayes}, we can approximate the drift term of the Langevin diffusion~\eqref{eq:method:update:LD} as:
\begin{equation*}
\nabla_{\vx}\log\pi_{k+1}(\vx|\vy_{[k+1]})\approx\nabla_{\vx}\log g_{k+1}(\vy_{k+1}|\vx)+\what{\vs}_{k+1}(\vx,\vy_{[k]}), \quad \vx\in\bbR^{d},
\end{equation*}
where the second term represents the score estimated in the prediction step \eqref{eq:dsm}. Sampling from the posterior distribution requires simulating the Langevin diffusion with this estimated score. However, in most cases, the Langevin diffusion cannot be simulated analytically. We employ the Euler-Maruyama discretization to approximate the Langevin diffusion~\eqref{eq:method:update:LD} with the estimated score, leading to the Langevin Monte Carlo (LMC).

\par\textbf{Limitations of vanilla Langevin Monte Carlo.}
Notice that the initial distribution of Langevin diffusion \eqref{eq:method:update:LD} is chosen as the prediction distribution $q_{k+1}(\cdot|\vy_{[k]})$, which can be practically implemented using the approximated prediction distribution~\eqref{eq:prediction:distribution:hat}. This choice is necessitated by the fact that the prediction distribution represents our only available knowledge about the state variables $\mX_{k}$. However, this initialization strategy may become inefficient when there is a substantial discrepancy between the target posterior distribution $\pi_{k+1}(\cdot|\vy_{[k+1]})$ and the prediction distribution $q_{k+1}(\cdot|\vy_{[k]})$. 

Specifically, regions of high prediction density may not coincide with regions of high posterior density, particularly when the likelihood is highly informative or concentrated in regions where the prediction density is low. This misalignment creates two significant limitations for the vanilla LMC: it wastes computational resources exploring regions with high prediction density but low likelihood, and it may fail to locate important regions of the posterior distribution where the prediction density is low but the likelihood is high.

\par\textbf{Annealing strategy.} 
To overcome these limitations, we incorporate an annealing strategy into the Langevin algorithm. The rationale behind annealing involves gradually transitioning from the prediction distribution to the target posterior distribution~\cite{DelMoral2006Sequential,Kantas2014Sequential,Beskos2015Sequential,Brosse2018Normalizing,Song2019Generative,Ge2020Estimating,Jalal2021Robust,wu2024annealing}. Specifically, we construct a sequence of interpolations between these two distributions
\begin{equation}\label{eq:interpolation}
\pi_{k+1}^{m}(\vx|\vy_{[k+1]})\propto\pi_{k+1}(\vx|\vy_{[k+1]})^{\beta_{m}}q_{k+1}(\vx|\vy_{[k]})^{1-\beta_{m}}, \quad 0\leq m\leq M, \quad \vx\in\bbR^{d},
\end{equation}
where $0\equiv\beta_{0}<\beta_{1}<\cdots<\beta_{M}\equiv 1$ represents a sequence of inverse temperatures. 
Here $\beta_{0}=0$ corresponds to the prediction distribution, while $\beta_{M}=1$ corresponds to the target posterior. When $\beta_{m}$ is small, the intermediate distribution $\pi_{k+1}^{m}(\cdot|\vy_{[k+1]})$ is predominantly influenced by the prediction distribution, enabling efficient sampling via LMC initialized from the prediction distribution. As $\beta_{m}$ approaches 1, the intermediate distribution $\pi_{k+1}^{m}(\cdot|\vy_{[k+1]})$ converges to the target posterior distribution. Through this gradual increase in inverse temperatures from $\beta_{0}=0$ to $\beta_{M}=1$, the easily sampleable prediction distribution $q_{k+1}(\cdot|\vy_{[k]})$ smoothly transitions toward the target posterior distribution $\pi_{k+1}(\cdot|\vy_{[k+1]})$. 

For implementation, we introduce a general annealing scheme: \( \beta_m = (m/M)^\rho\) for $0 \leq m \leq M$, where the hyper-parameter $\rho$ controls the temperature distribution.
\begin{itemize}
  \item When $\rho=0$, $\beta_m \equiv 1$, this corresponds to the no annealing version.
  \item When $\rho=1$, this corresponds to the uniform annealing (linearly increase from 0 to 1).
  \item When $\rho>1$, inverse temperatures concentrate near 0.
  \item When $\rho \in (0, 1)$, inverse temperatures concentrate near 1.
\end{itemize}
Appendix \ref{appendix:experiment:sensitivity} presents the empirical influence of different annealing schedule $\rho$.  

\par At each inverse temperature $\beta_{m}$, we sample from the intermediate distribution $\pi_{k+1}^{m}(\cdot|\vy_{[k+1]})$ using the Langevin diffusion 
\begin{equation}\label{eq:method:update:LD:anneal}
\d\mZ_{t}^{m}=\nabla_{\vx}\log\pi_{k+1}^{m}(\mZ_{t}^{m}|\vy_{[k+1]})\dt+\sqrt{2}\d\mB_{t},  \quad \mZ_{0}^{m}\sim\pi_{k+1}^{m-1}(\cdot|\vy_{[k+1]}),
\end{equation}
where the score of the intermediate distribution is given by:
\begin{align*}
\nabla_{\vx}\log\pi_{k+1}^{m}(\vx|\vy_{[k+1]})
&=\beta_{m}\nabla_{\vx}\log\pi_{k+1}(\vx|\vy_{[k+1]})+(1-\beta_{m})\nabla_{\vx}\log q_{k+1}(\vx|\vy_{[k]}) \\
&=\beta_{m}\nabla_{\vx}\log g_{k+1}(\vy_{k+1}|\vx)+\nabla_{\vx}\log q_{k+1}(\vx|\vy_{[k]}), \quad \vx\in\bbR^{d}.
\end{align*}
Based on the construction of the intermediate distributions~\eqref{eq:interpolation}, when consecutive temperatures are sufficiently close, the target distribution $\pi_{k+1}^{m}(\cdot|\vy_{[k+1]})$ and the initial distribution $\pi_{k+1}^{m-1}(\cdot|\vy_{[k+1]})$ exhibit minimal discrepancy, facilitating rapid convergence of the Langevin diffusion~\eqref{eq:method:update:LD:anneal}. Through this annealing strategy, we effectively decompose the challenging posterior sampling problem~\eqref{eq:method:update:LD} into a sequence of more tractable posterior sampling steps~\eqref{eq:method:update:LD:anneal}. Figure~\ref{fig:almc} presents a comparison between the original Langevin algorithm and its annealed variant. Besides, this annealing procedure has been shown to enable effective sampling from multi-modal distributions~\cite{wu2024annealing}.

\begin{figure}[htbp]
\vspace*{15pt}
\centering 
\begin{tikzpicture}[
NodeState/.style={rectangle, thick, minimum width=8mm, minimum height=5mm},
NodeStateBlank/.style={rectangle, draw=white, fill=white, thick, minimum width=5mm, minimum height=5mm},
NodeBlank/.style={rectangle, draw=white, fill=white, thick, minimum width=5mm, minimum height=6mm},
]
\node[NodeState,draw=darkgray!90,fill=lightgray!90](upper1){$q_{k+1}(\cdot|\vy_{[k]})$};
\node[NodeState,draw=darkgray!60,fill=lightgray!60](upper2)[right=27pt of upper1]{$\pi_{k+1}^{1}(\cdot|\vy_{[k+1]})$};
\node[NodeStateBlank,draw=white,fill=white](upper3)[right=10pt of upper2]{$\cdots$};
\node[NodeState,draw=darkgray!35,fill=lightgray!35](upper4)[right=10pt of upper3]{$\pi_{k+1}^{M-1}(\cdot|\vy_{[k+1]})$};
\node[NodeState,draw=darkgray!25,fill=lightgray!25](upper5)[right=27pt of upper4]{$\pi_{k+1}(\cdot|\vy_{[k+1]})$};
\node[NodeBlank](lower2)[below=3pt of upper2]{$\beta_{1}$};
\node[NodeBlank](lower3)[below=3pt of upper3]{$\cdots$};
\node[NodeBlank](lower4)[below=3pt of upper4]{$\beta_{M-1}$};
\node[NodeBlank](lower11)[below=15pt of upper1]{inverse temperatures};
\node[NodeBlank](lower55)[below=15pt of upper5]{};
\draw [->,line width=1.0pt](upper1)--node[above]{LMC}(upper2);
\draw [->,line width=1.0pt](upper2)--(upper3);
\draw [->,line width=1.0pt](upper3)--(upper4);
\draw [->,line width=1.0pt](upper4)--node[above]{LMC}(upper5);
\draw [->,line width=1pt,color=darkgray](lower11)--(lower55);
\node[NodeState,draw=darkgray!90,fill=lightgray!90](langevin1)[above=20pt of upper1]{\tikzmarknode{prior}{$q_{k+1}(\cdot|\vy_{[k]})$}};
\node[NodeState,draw=darkgray!25,fill=lightgray!25](langevin2)[above=20pt of upper5]{\tikzmarknode{posterior}{$\pi_{k+1}(\cdot|\vy_{[k+1]})$}};
\draw [->,line width=1.0pt](langevin1)--node[above]{LMC}(langevin2);
\node[NodeBlank](lmc)[left=3pt of langevin1]{\textbf{LMC}};
\node[NodeBlank](lmc)[left=3pt of upper1]{\textbf{ALMC}};
\end{tikzpicture}
\begin{tikzpicture}[overlay,remember picture,nodes={align=left,inner ysep=1pt},<-]
\path ([yshift=5mm,xshift=-2mm]prior.north)node[anchor=south west,color=black!75](commentprior){\itshape prediction distribution};
\draw [<-,line width=0.5pt,black!75]([xshift=-5mm,yshift=1mm]prior.north)|-(commentprior.south east);
\path ([yshift=5mm,xshift=-10mm]posterior.north)node[anchor=south east,color=black!75](commentposterior){\itshape target posterior};
\draw [<-,line width=0.5pt,black!75]([xshift=-6mm,yshift=1mm]posterior.north)|-(commentposterior.south west);
\end{tikzpicture}
\vspace*{-5pt}
\caption{Schematic comparison of vanilla and annealed Langevin algorithms. (Top) The vanilla Langevin algorithm samples from the target posterior distribution, using the prediction distribution as initialization. (Bottom) The annealed Langevin algorithm employs a sequence of interpolations that smoothly transition from the prediction distribution to the target posterior distribution.}
\label{fig:almc}
\end{figure}

\par Since the Langevin diffusion~\eqref{eq:method:update:LD:anneal} cannot be simulated analytically, we employ the Euler-Maruyama discretization to approximate it, yielding the following sampling scheme:
\begin{equation}\label{eq:method:update:ALMC}
\begin{aligned}
\what{\mZ}_{(\ell+1)h}^{m}&=\what{\mZ}_{\ell h}^{m}+h\what{\vb}_{k+1}^{m}(\what{\mZ}_{\ell h},\vy_{[k+1]})+\sqrt{2h}\vxi_{\ell}^{m}, \quad 0\leq\ell\leq K-1, \\
\what{\mZ}_{0}^{1}&\sim \what{q}_{k+1}(\cdot|\vy_{[k]}), \quad \what{\mZ}_{0}^{m}=\what{\mZ}_{Kh}^{m-1}, \quad 2\leq m\leq M, 
\end{aligned}
\end{equation}
where $h>0$ is the step size, $(\vxi_{\ell}^{m})_{m,\ell}$ is a sequence of independent standard Gaussian variables, and the drift term is a weighted sum of the gradient of log-likelihood and the estimated prediction score function~\eqref{eq:dsm}
\begin{equation}\label{eq:method:update:ALMC:score}
\what{\vb}_{k+1}^{m}(\vx,\vy_{[k+1]})=\beta_{m}\nabla_{\vx}\log g_{k+1}(\vy_{k+1}|\vx)+\what{\vs}_{k+1}(\vx,\vy_{[k]})\approx\nabla_{\vx}\log\pi_{k+1}^{m}(\vx|\vy_{[k+1]}), \quad \vx\in\bbR^{d}.
\end{equation}
The complete procedure for the $(k+1)$-th update step is presented in Algorithm~\ref{alg:ALMC}.

\begin{algorithm}[htbp]
\caption{Update by Annealed Langevin Monte Carlo (ALMC).}
\label{alg:ALMC}
\begin{algorithmic}[1]
\REQUIRE{Predicted samples $\underline{\mX}_{k+1}^{1},\ldots,\underline{\mX}_{k+1}^{n}$, a prediction score estimator $\what{\vs}_{k+1}(\cdot,\vy_{[k]})$, the measurement likelihood $g_{k+1}(\vy_{k+1}|\cdot)$.} \\
\ENSURE{A particle approximation $\what{\mX}_{k+1}^{1},\ldots,\what{\mX}_{k+1}^{n}$ to the posterior $\pi_{k+1}(\cdot|\vy_{[k+1]})$.} \\
\STATE{Set inverse temperatures $0\equiv\beta_{0}<\beta_{1}<\cdots<\beta_{M}\equiv 1$, and a step size $h>0$.}
\STATE{Initialize the particles $\what{\mZ}_{0}^{1,i}\leftarrow\underline{\mX}_{k+1}^{i}$ for each $1\leq i\leq n$.}
\FOR {$m=1,\ldots,M$}
\FOR {$\ell=0,\ldots,K-1$}
\STATE{Sample independent Gaussian noises $\vxi_{\ell}^{m,1},\ldots,\vxi_{\ell}^{m,n}\sim^{\iid}\mathcal{N}(\bm{0},\mI_{d})$.}
\STATE{Compute the estimated posterior score \\ $\what{\vb}_{k+1}^{m}(\cdot,\vy_{[k+1]})\leftarrow\beta_{m}\nabla_{\vx}\log g_{k+1}(\vy_{k+1}|\cdot)+\what{\vs}_{k+1}(\cdot,\vy_{[k]})$.}
\STATE{Update by the LMC, for $1\leq i\leq n$, \\
$\what{\mZ}_{(\ell+1)h}^{m,i}\leftarrow\what{\mZ}_{\ell h}^{m,i}+h\what{\vb}_{k+1}^{m}(\what{\mZ}_{\ell h}^{m,i},\vy_{[k+1]})+\sqrt{2h}\vxi_{\ell}^{m,i}$.}
\ENDFOR
\STATE{Initialize the particles for the next temperature $\what{\mZ}_{0}^{m+1,i}\leftarrow\what{\mZ}_{Kh}^{m,i}$ for $1\leq i\leq n$.}
\ENDFOR
\RETURN{$\what{\mX}_{k+1}^{i}\leftarrow\what{\mZ}_{Kh}^{M,i}$ for $1\leq i\leq n$.}
\end{algorithmic}
\end{algorithm}

\begin{remark}[An alternative annealing strategy]The strategy defined in \eqref{eq:interpolation} can be interpreted as annealing with respect to the likelihood. Specifically, it gradually incorporates observational information to transition from the initial prior density to the posterior distribution. In practice, annealing can also be performed on the posterior as a whole:$$  \pi^m_{k+1}(\mathbf{x}|\mathbf{y}_{[k+1]}) \propto \pi_{k+1}(\mathbf{x}|\mathbf{y}_{[k+1]})^{\beta_m} q(\mathbf{x})^{1-\beta_m},$$where $q(\mathbf{x})$ is a reference distribution, such as the standard Gaussian. Empirical evidence suggests that the choice of strategy depends on the fidelity of the measurement model. If the likelihood provides more accurate information than the prior, annealing on the posterior may be preferable. This approach typically yields faster convergence toward the target density, whereas annealing on the likelihood might inadvertently weaken the guidance provided by accurate observations. Conversely, if the prior is more reliable, annealing on the likelihood remains an ideal option. We present a detailed empirical comparison of these two strategies in Section \ref{section:experiment:doublewell:multi-modal}.\end{remark}

\subsection{Summary of the procedure}
\label{section:method:procedure}

\par Building upon the methods described in Sections~\ref{section:method:predict} and~\ref{section:method:update}, we can sample from the current posterior distribution $\pi_{k+1}(\cdot|\vy_{[k+1]})$ given a particle approximation to the previous posterior distribution $\pi_{k}(\cdot|\vy_{[k]})$.

\par To obtain a particle approximation to the initial posterior distribution $\pi_{1}(\cdot|\vy_{1})$, we apply Bayes' rule, yielding:
\begin{equation}\label{eq:bayes:score:initial}
\nabla_{\vx}\log\pi_{1}(\vx|\vy_{1})=\nabla_{\vx}\log g_{1}(\vy_{1}|\vx)+\nabla_{\vx}\log p_{\mX_{1}}(\vx), \quad \vx\in\bbR^{d},
\end{equation}
where $q_{1}:=p_{\mX_{1}}$ denotes the initial prior distribution. Thus, sampling from the initial posterior distribution $\pi_{1}(\cdot|\vy_{1})$ requires only an estimate of the score of the initial prior distribution $\nabla_{\vx}\log q_{1}$. In practice, one typically has access to a set of samples drawn independently from $q_{1}$. Using these samples, we can estimate the initial prior score $\nabla_{\vx}\log q_{1}$ through Gaussian smoothing and denoising score matching as shown in~\eqref{eq:dsm}, which we denote as $\what{\vs}_{1}$. The complete procedure for score-based sequential Langevin sampling is presented in Algorithm~\ref{alg:recursive:DA:ALMC}.

\begin{algorithm}[htbp]
\caption{Score-based sequential Langevin sampling for data assimilation.}
\label{alg:recursive:DA:ALMC}
\begin{algorithmic}[1]
\REQUIRE{The observations $(\vy_{k})_{k\in\bbN}$, the dynamics model $(\calF_{k})_{k\in\bbN}$, the measurement likelihood $\{g_{k}(\vy_{k}|\cdot)\}_{k\in\bbN}$.} 
\ENSURE{A particle approximation $\what{\mX}_{k+1}^{1},\ldots,\what{\mX}_{k+1}^{n}$ to the distribution $\pi_{k+1}(\cdot|\vy_{[k+1]})$.}
\STATE{\texttt{\# Initial posterior sampling.}}
\STATE{Draw i.i.d. samples from the initial prior distribution: $\underline{\mX}_{1}^{1},\ldots,\underline{\mX}_{1}^{n}\sim^{\iid}q_{1}$.}
\STATE{Estimate the score from $\{\underline{\mX}_{1}^{i}\}_{i=1}^{n}$ by score matching $\what{\vs}_{1}$.}
\STATE{Sample from the posterior distribution $\what{\pi}_{1}(\cdot|\vy_{1})$ by the ALMC (Algorithm~\ref{alg:ALMC}): \\
$\what{\mX}_{1}^{1},\ldots,\what{\mX}_{1}^{n}\leftarrow\almc(\underline{\mX}_{1}^{1},\ldots,\underline{\mX}_{1}^{n},\what{\vs}_{1},g_{1}(\vy_{1}|\cdot))$.}
\STATE{\texttt{\# Recursive posterior sampling.}}
\FOR{$k=1,2,\ldots$}
\STATE{\texttt{\# Prediction step.}}
\STATE{Run the dynamics model: $\underline{\mX}_{k+1}^{i}\leftarrow\calF_{k}(\what{\mX}_{k}^{i},\mV_{k}^{i})$ with $\mV_{k}^{i}\sim p_{\mV}$ for $1\leq i\leq n$.}
\STATE{Estimate the prediction score from $\{\underline{\mX}_{k+1}^{i}\}_{i=1}^{n}$ by score matching $\what{\vs}_{k+1}(\cdot,\vy_{[k]})$.}
\STATE{\texttt{\# Update step.}}
\STATE{Sample from the posterior distribution $\pi_{k+1}(\cdot|\vy_{[k+1]})$ by the ALMC (Algorithm~\ref{alg:ALMC}): \\
$\what{\mX}_{k+1}^{1},\ldots,\what{\mX}_{k+1}^{n}\leftarrow\almc(\underline{\mX}_{k+1}^{1},\ldots,\underline{\mX}_{k+1}^{n},\what{\vs}_{k+1}(\cdot,\vy_{[k]}),g_{k+1}(\vy_{k+1}|\cdot))$.}
\ENDFOR
\end{algorithmic}
\end{algorithm}

\begin{remark}[Computational cost reduction]
The implementation of Algorithm~\ref{alg:recursive:DA:ALMC} requires learning a score network from predicted states at each time step, which introduces substantial computational overhead. However, we demonstrate that in practical applications, one can effectively fine-tune the score network using the current predicted states while initializing it with parameters obtained from the previous time step. This approach eliminates the need for complete network retraining with random initialization, thereby achieving significant computational efficiency.

As an alternative method, one can train a single score network with a shared time parameter to handle new observation trajectories. For example, using massive historical datasets (such as reanalysis data), one could pre-train a global "climate" model. This model would serve as a time- and season-aware prior, ready to be integrated with newly incoming observations. Following this line of thought, recent work \cite{yang2025generative} has successfully explored training a background score network on extensive historical data. In future work, we plan to adopt this shared-parameter paradigm. By simply fine-tuning a globally pre-trained network on real-time measurements, we expect to drastically reduce the training time required for new trajectories.
\end{remark}

\section{Non-asymptotic Convergence Guarantees}
\label{section:convergence}

\par In this section, we present a convergence analysis for the score-based sequential Langevin sampling (SSLS). Our theoretical analysis focuses on the core algorithm of the SSLS:
\begin{equation}\label{eq:section:convergence:SLMC}
\begin{aligned}
\what{\mZ}_{(\ell+1)h}&=\what{\mZ}_{\ell h}+h\what{\vb}_{k+1}(\what{\mZ}_{\ell h},\vy_{[k+1]})+\sqrt{2h}\vxi_{\ell}, \quad 1\leq\ell\leq K-1, \\
\what{\mZ}_{0}&\sim\pi_{k+1}^{0}(\cdot|\vy_{[k+1]}),
\end{aligned}
\end{equation}
where $h>0$ represents the time step, $(\vxi_{\ell})_{\ell}$ is a sequence of independent standard Gaussian variables, and $\pi_{k+1}^{0}(\cdot|\vy_{[k+1]})$ denotes the initial distribution at the $(k+1)$-th update step. While this initial distribution is typically selected as $\what{q}_{k+1}(\cdot|\vy_{[k]})$, our analysis can generalize to any choice of initial distribution. The estimated posterior score takes the form:
\begin{equation}\label{eq:method:update:posterior:score}
\what{\vb}_{k+1}(\vx,\vy_{[k+1]})=\nabla_{\vx}\log g_{k+1}(\vy_{k+1}|\vx)+\what{\vs}_{k+1}(\vx,\vy_{[k]}), \quad \vx\in\bbR^{d},
\end{equation}
where the prediction score $\what{\vs}_{k+1}$ is estimated using score matching~\eqref{eq:dsm}. 

\par Through this analysis, we establish rigorous theoretical guarantees for data assimilation using SSLS and provide theoretical understandings for the benefits of the annealing strategy employed in Section~\ref{section:method:update}.

\subsection{Notations and assumptions}
\label{section:convergence:assumptions}

\par Before proceeding with our analysis, we introduce some notations and assumptions. 

\begin{definition}[Total variation distance]
The total variation (TV) distance between two distributions $\mu$ and $\pi$ is defined as
\begin{equation*}
\|\mu-\pi\|_{\tv}=\frac{1}{2}\int|\mu(\vx)-\pi(\vx)|\d\vx.
\end{equation*}
\end{definition}

\begin{definition}[Chi-squared divergence]
The $\chi^{2}$-divergence between two distributions $\mu$ and $\pi$ is defined as
\begin{equation*}
\chi^{2}(\mu\|\pi)=\int\Big(\frac{\mu(\vx)}{\pi(\vx)}\Big)^{2}\pi(\vx)\d\vx-1=\int\Big(\frac{\mu(\vx)}{\pi(\vx)}-1\Big)^{2}\pi(\vx)\d\vx.
\end{equation*}
\end{definition}

\par Let $\what{\pi}_{k+1}$ denote the law of $\what{\mZ}_{T}$ with $T=Kh$, representing the SSLS estimate of the target posterior distribution. We denote $\varepsilon_{\tv}^{k}$ as the total variation distance between the target posterior distribution and its SSLS estimate:
\begin{equation}\label{eq:section:convergence:TV}
\varepsilon_{\tv}^{k}:=\|\pi_{k}(\cdot|\vy_{[k]})-\what{\pi}_{k}(\cdot|\vy_{[k]})\|_{\tv}, \quad k\in\bbN.
\end{equation}
For a comprehensive list of notations used throughout this section, we refer readers to Appendix~\ref{appendix:notations}.

\par Our analysis relies on the following assumptions on the posterior distributions.

\begin{assumption}[Lipschitz score]
\label{assumption:posterior:smooth}
For each $k\in\bbN$, the posterior score is $\lambda$-Lipschitz on $\bbR^{d}$, that is, for each $\vx_{1},\vx_{2}\in\bbR^{d}$,
\begin{align*}
\|\nabla_{\vx}\log\pi_{k+1}(\vx_{1}|\vy_{[k+1]})-\nabla_{\vx}\log\pi_{k+1}(\vx_{2}|\vy_{[k+1]})\|_{2} &\leq\lambda\|\vx_{1}-\vx_{2}\|_{2}, \\
\|\nabla_{\vx}\log\pi_{1}(\vx_{1})-\nabla_{\vx}\log\pi_{1}(\vx_{2})\|_{2} &\leq\lambda\|\vx_{1}-\vx_{2}\|_{2}
\end{align*}
\end{assumption}

\begin{assumption}[Log-Sobolev inequality]
\label{assumption:LSI:posterior}
For all $k\in\bbN$, the posterior distribution $\pi_{k+1}(\cdot|\vy_{[k+1]})$ satisfies a log-Sobolev inequality with constant $C_{\LSI}\geq 1$, i.e., for each function $f\in C_{0}^{\infty}(\bbR^{d})$,
\begin{equation*}
\ent(f^{2})\leq 2C_{\LSI}\bbE\big[\|\nabla f\|_{2}^{2}\big],
\end{equation*}
where the entropy is defined as $\ent(g)\coloneqq\bbE[g\log g]-\bbE[g]\log\bbE[g]$, and the expectation is taken with respect to the posterior distribution $\pi_{k+1}(\cdot|\vy_{[k+1]})$ and $\pi_{1}$. Further, assume the posterior distributions are centered.
\end{assumption}

\par Assumption~\ref{assumption:posterior:smooth} guarantees the existence and uniqueness of a strong solution to the Langevin diffusion in~\eqref{eq:method:update:LD}. Together, Assumptions~\ref{assumption:posterior:smooth} and~\ref{assumption:LSI:posterior} form the fundamental requirements for the convergence analysis of Langevin-type sampling methods~\cite{Chewi2024Analysis,Chewi2024log,Lee2022Convergence,Tang2024Adaptivity}. LSI is known to hold for a broad and practically relevant class of distributions. Under the Bakry-{\'E}mery theorem~\cite{Bakry1985Diffusions}, any $\beta$-strongly log-concave density satisfies the log-Sobolev inequality with constant $C_{\LSI}=\beta^{-1}$. Thus, Assumption~\ref{assumption:LSI:posterior} is sufficiently broad to accommodate Gaussian distributions, general log-concave distributions, and even certain multi-modal distributions~\cite{Chen2011Dimension}, such as Gaussian mixtures. Consequently, this framework naturally encompasses the setting of Kalman filtering, which operates under the Gaussian assumption. The LSI constant $C_{\LSI}$ captures the intrinsic geometric difficulty of sampling from the target posterior. The LSI constant of a multi-modal distribution can be large, and verifying LSI for a given distribution is itself a non-trivial task.

\begin{assumption}[Boundedness and regularity]\label{assumption:bounded}
There exist a universal constant $B\geq 1$ such that for all time steps $k \in \mathbb{N}$:
\begin{enumerate}[(i)]
\item The transition density and its gradient are uniformly bounded, i.e., for any $\vx\in\mathbb{R}^d$,
\begin{equation*}
\rho_{k}(\vx|\vx_{k}) \leq B \quad \text{and} \quad \|\nabla_{\vx}\rho_{k}(\vx|\vx_{k})\|_{\infty} \leq B.
\end{equation*}
\item The score function of the prediction distribution exhibits at most linear growth, i.e., for any $\vx\in\mathbb{R}^d$,
\begin{equation*}
\|\nabla_{\vx}\log q_{1}(\vx)\|_{2},\|\nabla_{\vx}\log q_{k+1}(\vx|\vy_{[k]})\|_{2} \leq B(1+\|\vx\|_{2}).
\end{equation*}
Further, this linear growth condition applies analogously to $\nabla_{\vx}\log\widehat{q}_{1}(\vx)$, $\widehat{\vs}_{1}(\vx)$, $\nabla_{\vx}\log\widehat{q}_{k+1}(\vx|\vy_{[k]})$, and $\widehat{\vs}_{k+1}(\vx,\vy_{[k]})$.
\end{enumerate}
\end{assumption}

\par Assumption~\ref{assumption:bounded} establishes essential regularity conditions for the state transition, prediction density, and measurement likelihood. Assumption~\ref{assumption:bounded} (i) ensures the transition kernel is non-singular, effectively excluding purely deterministic transitions. Assumption~\ref{assumption:bounded} (ii) constrains the tail behavior of the prediction distribution, ensuring the log-density does not decay faster than a quadratic. This condition is satisfied by a wide class of distributions:

\begin{example}[Gaussian distribution]\label{example:gaussian}
If the prediction density is a Gaussian distribution, i.e., $q_{k+1}(\vx|\vy_{[k]})\coloneqq\gamma_{d}(\vx;\bm{\mu},\bm{\Sigma})$ for some $\bm{\mu}\in\mathbb{R}^{d}$ and $\bm{\Sigma}\succ\bm{0}$, then the score is $\nabla_{\vx}\log q_{k+1}(\vx|\vy_{[k]})=-\bm{\Sigma}^{-1}(\vx - \bm{\mu})$. The linear growth condition holds directly.
\end{example}

\begin{example}[Gaussian mixture]\label{example:gaussian:mixture}
If the prediction density is a Gaussian mixture, i.e., 
\begin{equation*}
q_{k+1}(\vx|\vy_{[k]})\coloneqq\sum_{i=1}^{m}w_{i}\gamma_{d}(\vx;\bm{\mu}_{i},\bm{\Sigma}_{i}),
\end{equation*}
the score function reads 
\begin{equation*}
\nabla\log q_{k+1}(\vx|\vy_{[k]}) = \sum_{i=1}^{m}\frac{w_{i}\gamma_{d}(\vx;\bm{\mu}_{i},\bm{\Sigma}_{i})}{q_{k+1}(\vx|\vy_{[k]})}\nabla\log\gamma_{d}(\vx;\bm{\mu}_{i},\bm{\Sigma}_{i}) \eqqcolon \sum_{i=1}^{m}w_{i}^{\prime}(\vx)\nabla\log\gamma_{d}(\vx;\bm{\mu}_{i},\bm{\Sigma}_{i}),
\end{equation*}
which is a weighted combination of component scores with $w_{i}^{\prime}(\vx)\in(0,1)$. Since each component score is linear, the mixture score retains linear growth, as the tail behavior is dominated by the component with the largest variance in a given direction. See~\cite[Appendix C.1]{chang2025provable} for detailed derivations.
\end{example}

\begin{example}[Gaussian convolution]\label{example:gaussian:convolution}
Let $\nu$ be a probability density with a compact support. If the prediction density is a Gaussian convolution of $\nu$, i.e., there exists a constant $\sigma>0$ such that 
\begin{equation*}
q_{k+1}(\vx|\vy_{[k]})\coloneqq \int\gamma_{d}(\vx;\vx^{\prime},\sigma^{2}\mI_{d})\nu(\vx^{\prime})\d\vx.
\end{equation*}
As demonstrated in~\cite[Proposition 3.2]{ding2024characteristic}, the score of such a density exhibits at most linear growth, regardless of the complexity or singularity of the underlying measure $\nu$. 
\end{example}

\par In essence, Assumption~\ref{assumption:bounded} (ii) constrains the relative decay of the prediction density $q_{k+1}(\vx|\vy_{[k]})$, ensuring it does not vanish arbitrarily quickly as $\|\vx\|_{2}\to\infty$. This condition implies that the predictive density is bounded below by a Gaussian-like tail. This property is formalised in the following proposition.

\begin{proposition}\label{proposition:lower:bound}
Suppose Assumption~\ref{assumption:bounded} (ii) holds. Let $\vx_{*}\in\mathbb{R}^{d}$ be the reference point satisfying $q_{k+1}(\vx_{*}|\vy_{[k]})>0$. Then for any $\vx\in\mathbb{R}^{d}$,
\begin{equation*}
q_{k+1}(\vx|\vy_{[k]})\geq H\exp\Big(-\frac{\|\vx\|_{2}^{2}}{V^{2}}\Big),
\end{equation*}
where 
\begin{equation*}
H \coloneqq \frac{q_{k+1}(\vx_{*}|\vy_{[k]})}{\exp(B(1+3\|\vx_{*}\|_{2}^{2}))}, \quad\text{and}\quad V^{2} \coloneqq \frac{1}{2B}.
\end{equation*}
\end{proposition}

\par The proof of Proposition~\ref{proposition:lower:bound} is provided in Appendix~\ref{appendix:convergence}.

\par Without loss of generality, we assume the approximate prediction density $\widehat{q}_{k+1}(\vx|\vy_{[k]})$ also satisfies this lower bound. In numerical implementations, this can be ensured through techniques such as Gaussian perturbation.

\begin{remark}[Condition number]\label{remark:cn}
As noted by \cite{purohit2024posterior}, the quantity 
\begin{equation}\label{eq:condition:number}
\frac{\sup_{\vx}g_{k+1}(\vy_{k+1}|\vx)}{\int_{\mathbb{R}^d} g_{k+1}(\vy_{k+1}|\vx)q_{k+1}(\vx|\vy_{[k]})\mathrm{d}\vx} \leq \kappa
\end{equation}
quantifies the inherent difficulty of posterior sampling. To illustrate this concept at the $(k+1)$-th step: when the likelihood function $g_{k+1}(\vy_{k+1}|\cdot)$ concentrates on the high-probability support of the predictive distribution $q_{k+1}(\cdot|\vy_{[k]})$, the denominator in \eqref{eq:condition:number} remains bounded away from zero, resulting in a moderate condition number $\kappa$. Conversely, if the likelihood concentrates in a region where the predictive probability is negligible, the denominator approaches zero, leading to a large condition number and indicating numerical ill-posedness. As demonstrated in Theorem~\ref{theorem:section:convergence}, the error of the posterior sampling grows with this condition number. Proposition~\ref{proposition:lower:bound} establishes that the linear growth condition in Assumption~\ref{assumption:bounded} (ii) ensures the density retains sufficient mass across the state space to prevent the denominator from vanishing, thereby providing a safeguard for the condition number.
\end{remark}

\par Finally, to provide some intuations, we provide some concrete examples that simultaneously satisfy Assumptions~\ref{assumption:posterior:smooth},~\ref{assumption:LSI:posterior} and~\ref{assumption:bounded}. All these examples consider linear Gaussian measurement model. Verifying the log-Sobolev inequality for the posterior distribution with a nonlinear Gaussian measurement model remains open.

\begin{example}\label{example:gaussian:posterior}
If the prediction density is a Gaussian distribution as Example~\ref{example:gaussian}, and the measurement likelihood function $g_{k+1}(\vy_{k+1}|\vx_{k+1})$ is Gaussian, then the posterior density $\pi_{k+1}(\vx_{k+1}|\vy_{[k+1]})$ is Gaussian, thus satisfying Assumptions~\ref{assumption:posterior:smooth} and~\ref{assumption:LSI:posterior}.
\end{example}

\begin{example}\label{example:gaussian:mixture:posterior}
If the prediction density is a Gaussian mixture as Example~\ref{example:gaussian:mixture}, and the measurement likelihood function $g_{k+1}(\vy_{k+1}|\vx_{k+1})$ is Gaussian, then the posterior density $\pi_{k+1}(\vx_{k+1}|\vy_{[k+1]})$ is also a Gaussian mixture, thus satisfying Assumptions~\ref{assumption:LSI:posterior} directly. According to~\cite[Appendix C1]{chang2025provable}, the score of the Gaussian mixture exhibits a uniform Lipschitz constant, thus Assumption~\ref{assumption:posterior:smooth} holds.
\end{example}

\begin{example}\label{example:gaussian:convolution:posterior}
\par Consider the case where the predictive density is a Gaussian convolution as in Example~\ref{example:gaussian:convolution}, and the measurement likelihood $g_{k+1}(\vy_{k+1}|\vx_{k+1})$ is Gaussian. As demonstrated in \cite[Proposition 3.5]{ding2024characteristic}, the score of the predictive density admits a uniform Lipschitz constant. Furthermore, since $\nabla\log g_{k+1}(\vy_{k+1}|\cdot)$ is also uniformly Lipschitz, the resulting posterior satisfies the regularity requirements of Assumption~\ref{assumption:posterior:smooth}. To establish the log-Sobolev inequality, we examine the curvature of the posterior potential. Let the latent measure $\nu$ be compactly supported such that $\mathrm{supp}(\nu) \subseteq \{\vx \in \mathbb{R}^d : \|\vx\|_2 \leq R\}$. Utilizing a variant of Tweedie's formula \cite{Efron2011Tweedie}, it has been shown in \cite{Grenioux2024Stochastic} that the Hessian of the log-predictive density satisfies:
\begin{equation}\label{eq:tweedie:lower:bound}
-\nabla^2 \log q_{k+1}(\vx|\vy_{[k]}) \succeq \frac{\sigma^2 - dR^2}{\sigma^4} \mathbf{I}_d.
\end{equation}
Note that while the right-hand side of \eqref{eq:tweedie:lower:bound} may be negative, the predictive density is at most $1/\sigma^2$-semi-concave. Now, consider a Gaussian likelihood $g_{k+1}(\vy_{k+1}|\vx_{k+1}) \coloneqq \gamma_{d}(\vy_{k+1}; \vx_{k+1}, V^2 \mathbf{I}_d)$. Then
\begin{align*}
-\nabla^2 \log \pi_{k+1}(\vx_{k+1}|\vy_{[k+1]}) &= -\nabla^2 \log q_{k+1}(\vx_{k+1}|\vy_{[k]}) - \nabla^2 \log g_{k+1}(\vy_{k+1}|\vx_{k+1}) \\
&\succeq \Big( \frac{\sigma^2 - dR^2}{\sigma^4} + \frac{1}{V^2} \Big) \mathbf{I}_d.
\end{align*}
Consequently, if the measurement noise $V^2$ is small enough, the posterior becomes strongly log-concave. By the Bakry-{\'E}mery theorem~\cite{Bakry1985Diffusions}, such a posterior satisfies the log-Sobolev inequality.
\end{example}

\par We next introduce a ``black-box'' assumption on score matching~\eqref{eq:dsm}. 
\begin{assumption}[Error of score matching]
\label{assumption:sm}
There exists a score matching tolerance $\Delta\in(0,1)$ such that
\begin{align*}
\bbE_{\underline{\mX}_{1}}\big[\|\nabla_{\vx}\log\what{q}_{1}(\underline{\mX}_{1})-\what{\vs}_{1}(\underline{\mX}_{1})\|_{2}^{2}\big]&\leq\Delta^{2}, \\
\bbE_{\underline{\mX}_{k+1}}\big[\|\nabla_{\vx}\log\what{q}_{k+1}(\underline{\mX}_{k+1}|\vy_{[k]})-\what{\vs}_{k+1}(\underline{\mX}_{k+1},\vy_{[k]})\|_{2}^{2}\big]&\leq\Delta^{2}, 
\end{align*}
for each $k\in\bbN$. Here the expectation $\bbE_{\underline{\mX}_{1}}[\cdot]$ is taken with respect to $\underline{\mX}_{1}\sim\what{q}_{1}$, and the expectation $\bbE_{\underline{\mX}_{k+1}}[\cdot]$ is taken with respect to $\underline{\mX}_{k+1}\sim\what{q}_{k+1}(\cdot|\vy_{[k]})$.
\end{assumption}

\par Assumption~\ref{assumption:sm} requires the $L^{2}$-error of prediction score estimator $\what{\vs}_{k+1}$~\eqref{eq:dsm} to be sufficiently small, where the error is measured with respect to the approximated prediction distribution $\what{q}_{k+1}(\cdot|\vy_{[k]})$. While this assumption could be substituted with explicit score matching bounds, we maintain this formulation for clarity of presentation. Specifically, some standard techniques of non-parametric regression using deep neural networks~\cite{schmidt2020nonparametric,kohler2021rate,Jiao2023deep} demonstrates that $\bbE_{\underline{\mX}_{k+1}}\big[\|\nabla_{\vx}\log\what{q}_{k+1}(\underline{\mX}_{k+1}|\vy_{[k]})-\what{\vs}_{k+1}(\underline{\mX}_{k+1},\vy_{[k]})\|_{2}^{2}\big]$ can be sufficiently small with high probability, as the number of samples $n$ approaches infinity and the smoothing level $\sigma>0$ in~\eqref{eq:smooth:density} converges to zero. A complete proof of this convergence result can be found in~\cite[Theorem 1]{Tang2024Adaptivity}.

\par The rest of this section is organized as Figure~\ref{figure:structure:error}.

\begin{figure}[htbp]
\centering

\definecolor{myteal}{RGB}{32, 133, 114}
\definecolor{teallight}{RGB}{230, 245, 240}

\definecolor{mypurple}{RGB}{112, 89, 171}
\definecolor{purplelight}{RGB}{238, 235, 248}

\definecolor{myamber}{RGB}{204, 122, 0}
\definecolor{amberlight}{RGB}{253, 243, 230}

\definecolor{mycoral}{RGB}{214, 86, 61}
\definecolor{corallight}{RGB}{253, 234, 229}

\definecolor{myblue}{RGB}{41, 105, 176}
\definecolor{bluelight}{RGB}{235, 243, 250}

\definecolor{mygray}{RGB}{100, 100, 100}
\definecolor{graylight}{RGB}{245, 245, 245}

\tikzset{
basebox/.style={
rounded corners=6pt, 
align=center, 
inner sep=8pt, 
thick,
font=\sffamily\footnotesize},
termbox/.style={
basebox,
minimum width=3.2cm,
text width=2.8cm,
minimum height=1.1cm},
largebox/.style={
basebox,
minimum width=15.0cm,
text width=13.4cm},
myarrow/.style={
-{Stealth[length=6pt, width=5pt]}, 
thick, 
mygray},
arrowlabel/.style={
right, 
font=\sffamily\footnotesize\itshape, 
text=mygray}
}

\begin{tikzpicture}[node distance=0.6cm and 0.4cm]

\node[termbox, draw=myteal, fill=teallight!10, text=myteal] (lan) {Convergence of Langevin dynamics};
\node[termbox, draw=mypurple, fill=purplelight!10, text=mypurple, right=of lan] (disc) {Discretization error};
\node[termbox, draw=myamber, fill=amberlight!10, text=myamber, right=of disc] (prior) {Prior error};
\node[termbox, draw=mycoral, fill=corallight!10, text=mycoral, right=of prior] (score) {Score estimation error};

\node[largebox, draw=mygray, fill=graylight!10, text=black, below=0.7cm of $(lan.south)!0.5!(score.south)$] (total) {%
\textbf{Error of Posterior Sampling} {(Theorem~\ref{theorem:section:convergence} and Corollary~\ref{corollary:section:convergence})}
\begin{equation*}
(\varepsilon_{\tv}^{k+1})^{2}\lesssim C(C_{\LSI}\eta_{\chi}+T)C_{\LSI}^{\frac{1}{4}}(\varepsilon_{\tv}^{k})^{2\gamma}, \quad \gamma\in(0,1).
\end{equation*}
\vspace{-10pt}
};

\draw[myarrow] (lan.south)   -- (lan.south   |- total.north);
\draw[myarrow] (disc.south)  -- (disc.south  |- total.north);
\draw[myarrow] (prior.south) -- (prior.south |- total.north);
\draw[myarrow] (score.south) -- (score.south |- total.north);

\node[largebox, draw=myblue, fill=bluelight!10, text=black, below=0.6cm of total] (assim) {%
\textbf{Error of Assimilation} \upshape{(Theorem~\ref{theorem:section:convergence:assimilation})}
\begin{equation*}
\varepsilon_{\tv}^{k+1} \leq \big\{C(C_{\LSI}\eta_{\chi}+T)C_{\LSI}^{\frac{1}{4}}\big\}^{\frac{1-\gamma^{k}}{2(1-\gamma)}} \varepsilon^{\gamma^{k}}, \quad \gamma\in(0,1).
\end{equation*}
\vspace{-10pt}
};

\draw[myarrow] (total.south) -- node[arrowlabel]{unroll $k$ steps} (assim.north);

\node[largebox, draw=myteal, fill=teallight!10, text=black, below=0.6cm of assim] (floor) {%
\textbf{Long-Time Behavior and Asymptotic Stability}\\
\vspace{-0.2cm}
\begin{equation*}
\limsup_{k\to\infty}\varepsilon_{\tv}^{k+1} \leq \big\{C(C_{\LSI}\eta_{\chi}+T)C_{\LSI}^{\frac{1}{4}}\big\}^{\frac{1}{2(1-\gamma)}}, \quad \gamma\in(0,1).
\end{equation*}
\vspace{-10pt}
};

\draw[myarrow] (assim.south) -- node[arrowlabel]{$k\to\infty$} (floor.north);

\end{tikzpicture}
\caption{The organization of theoretical results. The definition of constants are given in Theorem~\ref{theorem:section:convergence}.}
\label{figure:structure:error}
\end{figure}

\subsection{Convergence analysis for posterior sampling}
\label{section:convergence:posterior}

\par Our main theoretical result for posterior sampling is stated as the following theorem.

\begin{theorem}[Error of posterior sampling]\label{theorem:section:convergence}
Suppose Assumptions~\ref{assumption:posterior:smooth},~\ref{assumption:LSI:posterior},~\ref{assumption:bounded}, and~\ref{assumption:sm} hold. Then for each $k\in\bbN$ and each terminal time $T=Kh$, 
\begin{align*}
(\varepsilon_{\tv}^{k+1})^{2}
&\lesssim\underbrace{\exp\Big(-\frac{T}{5C_{\LSI}}\Big)\eta_{\chi}^{2}}_{\text{convergence of Langevin diffusion}}+\underbrace{dC_{\LSI}\lambda^{2}h}_{\text{discretization error}}+\underbrace{C(C_{\LSI}\eta_{\chi}+T)C_{\LSI}^{\frac{1}{4}}(\varepsilon_{\tv}^{k})^{2\gamma}}_{\text{prior error}} \\
&\quad +\underbrace{C^{\prime}(C_{\LSI}\eta_{\chi}+T)C_{\LSI}(\kappa\Delta)^{\alpha}\log^{d+2}\Big(\frac{C_{\LSI}}{\kappa\Delta}\Big)}_{\text{score estimation error}},
\end{align*}
where $C$ and $C^{\prime}$ are constants only depending on $d$ and $B$, and
\begin{equation*}
\alpha \coloneqq \frac{1}{2+16BC_{\LSI}}, \quad \gamma \coloneqq \frac{1+96BC_{\LSI}}{1+128BC_{\LSI}}.
\end{equation*}
Here the step size $h$ and the initial discrepancy $\eta_{\chi}$ satisfies
\begin{equation*}
h\lesssim\frac{1}{dC_{\LSI}\lambda^{2}}, \quad 
\chi^{2}\big(\pi_{k+1}^{0}(\cdot|\vy_{[k+1]})\|\pi_{k+1}(\cdot|\vy_{[k+1]})\big)\leq\eta_{\chi}^{2}.
\end{equation*}
\end{theorem}

\par The proof of Theorem~\ref{theorem:section:convergence} is provided in Appendix~\ref{appendix:convergence}.

\par\textbf{Error decomposition.} 
Theorem~\ref{theorem:section:convergence} decomposes the total variation error into four fundamental components: the convergence of Langevin diffusion, the discretization error, the prior error, and the score estimation error.
\begin{enumerate}[(i)]
\item The error of the Langevin diffusion~\eqref{eq:method:update:LD} exhibits exponential convergence to zero as the terminal time $T$ increases~\cite{Vempala2019Rapid,Chewi2024Analysis}.
\item The discretization error, arising from the Euler-Maruyama approximation~\eqref{eq:section:convergence:SLMC}, converges linearly with respect to the step size $h$.
\item The prior error stems from the prior distribution approximation in~\eqref{eq:prediction:distribution:hat} and is governed by the error $\varepsilon_{\tv}^{k}$ of the previous posterior distribution estimation.
\item The score estimation error decreases as the score matching tolerance $\Delta$ in Assumption~\ref{assumption:sm} tends to zero.
\end{enumerate}

To complement the theoretical analysis above, we conduct a sensitivity study in Appendix~\ref{appendix:experiment:sensitivity} examining the effect of key
hyperparameters on posterior accuracy. The results demonstrate that increasing
the number of Langevin steps $K$ and the ensemble size, while reducing the step size $h$, consistently leads to smaller posterior errors. These empirical findings align with the theoretical bounds in Theorem~\ref{theorem:section:convergence}: a larger number of Langevin steps
corresponds to a longer terminal time $T$ and thus a smaller Langevin
diffusion error; a finer step size $h$ reduces the discretization error; and
a larger ensemble size yields a more accurate score approximation, reducing
the score estimation error.

\par\textbf{Early-stopping.}
The first term in Theorem~\ref{theorem:section:convergence} diminishes with increasing terminal time $T$. However, both the score estimation error and the prior error grow with $T$, establishing a fundamental trade-off in the sampling error decomposition. This trade-off necessitates early-stopping in score-based Langevin sampling, as noted by~\cite{Lee2022Convergence}. Corollary~\ref{corollary:section:convergence} derives the optimal number of iterations based on this trade-off.

\par The following corollary provides a theoretical guidance for selecting hyper-parameters in the score-based Langevin sampling, as well as outlines its computational complexity.

\begin{corollary}\label{corollary:section:convergence}
Suppose Assumptions~\ref{assumption:posterior:smooth},~\ref{assumption:LSI:posterior},~\ref{assumption:bounded}, and~\ref{assumption:sm} hold. Then for all $k\in\bbN$,
\begin{equation*}
(\varepsilon_{\tv}^{k+1})^{2}\lesssim C(C_{\LSI}\eta_{\chi}+T)C_{\LSI}^{\frac{1}{4}}(\varepsilon_{\tv}^{k})^{2\gamma},
\end{equation*}
where the step size $h$, the number of the Langevin iterations $K$, and the score matching error $\Delta$ satisfy
\begin{align*}
&h \asymp \frac{C}{d\lambda^{2}}(C_{\LSI}\eta_{\chi}+T)C_{\LSI}^{-\frac{3}{4}}(\varepsilon_{\tv}^{k})^{2\gamma}, \quad
K \asymp \frac{d\lambda^{2}}{C(C_{\LSI}\eta_{\chi}+T)}C_{\LSI}^{\frac{7}{4}}(\varepsilon_{\tv}^{k})^{-2\gamma}, \\
&\Delta \lesssim \Big(\frac{C}{C^{\prime}}\Big)^{\frac{1}{\alpha}}C_{\LSI}^{-\frac{3}{4\alpha}}\kappa^{-1}(\varepsilon_{\tv}^{k})^{\frac{2\gamma}{\alpha}}.
\end{align*}
Here, logarithmic factors are omitted, and the constants $\alpha \in (0,1)$ and $\gamma \in (0,1)$ are defined in Theorem~\ref{theorem:section:convergence}, and $C$ and $C^{\prime}$ are constants only depending on $d$ and $B$.
\end{corollary}

\par The proof of Corollary~\ref{corollary:section:convergence} is provided in Appendix~\ref{appendix:convergence}.

\par\textbf{Warm-start.}
In Theorem~\ref{theorem:section:convergence} and Corollary~\ref{corollary:section:convergence}, a warm-start condition in terms of $\chi^{2}$-divergence is essential, requiring that the initial distribution remains sufficiently close to the target posterior distribution:
\begin{equation}\label{eq:warm:start}
\chi^{2}\big(\pi_{k+1}^{0}(\cdot|\vy_{[k+1]})\|\pi_{k+1}(\cdot|\vy_{[k+1]})\big)\leq\eta_{\chi}^{2}.
\end{equation}
This warm-start condition~\eqref{eq:warm:start} influences the convergence of Langevin diffusion, the prior error, and the score estimation error in Theorem~\ref{theorem:section:convergence}. Regarding the score estimation, a critical observation is that the score estimator $\what{\vs}_{k+1}(\cdot,\vy_{[k+1]})$~\eqref{eq:dsm} approximates the prediction score in the $L^{2}$-norm (Assumption~\ref{assumption:sm}), where the $L^{2}$-error is measured with respect to the approximated prediction distribution $\what{q}_{k+1}(\cdot|\vy_{[k]})$~\eqref{eq:prediction:distribution:hat}. However, the Girsanov theorem~\cite{chen2023sampling} indicates that the score estimation error is bounded by the $L^{2}$-error of the score with respect to the law of Langevin sampling using the exact score. This discrepancy necessitates the warm-start condition in $\chi^{2}$-divergence, as it ensures the out-of-distribution generalization of the score estimator from the approximated prediction distribution to the law of Langevin sampling using the exact score~\cite{Lee2022Convergence,Tang2024Adaptivity}. 

\par There are two commonly-used initialization strategies:
\begin{enumerate}
\item \emph{Purly prior-based initialization.} Setting $\pi_{k+1}^{0}(\cdot|\vy_{[k+1]}) \coloneqq q_{k+1}(\cdot|\vy_{[k]})$, i.e., solely using the prior information, yields $\eta_{\chi}^{2}\leq\kappa-1$ via~\eqref{eq:condition:number}. Indeed, 
\begin{align}
\chi^{2}\big(\pi_{k+1}^{0}(\cdot|\vy_{[k+1]})\|\pi_{k+1}(\cdot|\vy_{[k+1]})\big)
&=\int\Big(\frac{\pi_{k+1}^{0}(\vx|\vy_{[k+1]})}{\pi_{k+1}(\vx|\vy_{[k+1]})}\Big)^{2}\pi_{k+1}(\vx|\vy_{[k+1]})\d\vx-1 \nonumber \\
&\leq \kappa\int\Big(\frac{\pi_{k+1}^{0}(\vx|\vy_{[k+1]})}{\pi_{k+1}(\vx|\vy_{[k+1]})}\Big)\pi_{k+1}(\vx|\vy_{[k+1]})\d\vx-1 = \kappa-1. \label{eq:init:discrepancy}
\end{align}
\item \emph{Purly likelihood-based initialization.} Alternatively, one may construct an initial distribution using maximum likelihood from the measurement likelihood alone, as described in~\cite[Section 5]{purohit2024posterior}.
\end{enumerate}

\par By similar arguments, we can also provide a convergence rate for posterior sampling at the initial time step.

\begin{theorem}\label{theorem:convergence:init}
Suppose Assumptions~\ref{assumption:posterior:smooth},~\ref{assumption:LSI:posterior},~\ref{assumption:bounded}, and~\ref{assumption:sm} hold. Then for any $\varepsilon\in(0,1)$, 
\begin{equation*}
(\varepsilon_{\tv}^{1})^{2}\lesssim \varepsilon^{2},
\end{equation*}
provided that the step size $h$, the number of the Langevin iterations $K$, and the score matching error $\Delta$ satisfy
\begin{equation*}
h \asymp \frac{\varepsilon^{2}}{dC_{\LSI}\lambda^{2}}, \quad
K \asymp \frac{dC_{\LSI}^{2}\lambda^{2}}{\varepsilon^{2}}, \quad \Delta \lesssim \frac{1}{\kappa}\Big(\frac{\varepsilon^{2}}{C^{\prime}C_{\LSI}^{2}(\eta_{\chi}+1)}\Big)^{\frac{1}{\alpha}}.
\end{equation*}
Here, logarithmic factors are omitted, and the constant $\alpha \in (0,1)$ is defined in Theorem~\ref{theorem:section:convergence}, and $C^{\prime}$ is a constant only depending on $d$ and $B$.
\end{theorem}

\par The proof of Theorem~\ref{theorem:convergence:init} is provided in Appendix~\ref{appendix:convergence}.

\subsection{Discussions on annealing}

\par In this subsection, we demonstrate theoretical advantages of annealing strategy introduced in Section~\ref{section:method:update}.

\par Theorem~\ref{theorem:section:convergence} and Corollary~\ref{corollary:section:convergence} reveal two critical factors affecting the posterior sampling error: the condition number $\kappa$ in~\eqref{eq:condition:number}, and the initial discrepancy $\eta_{\chi}^{2}$ in~\eqref{eq:warm:start}. Specifically, the score matching tolerance $\Delta$ decreases polynomially with both the condition number and the initial discrepancy. Consequently, addressing the challenge posed by large condition numbers and substantial initial discrepancy necessitates the use of larger deep neural networks and increased ensemble size, substantially raising computational costs.

\par We next demonstrate how the annealing strategy introduced in Section~\ref{section:method:update} effectively mitigates the fundamental theoretical bottlenecks of score-based sampling by reducing both the effective condition number and the initial discrepancy.

\par\textbf{Reduction of condition number.}
According to Theorem~\ref{theorem:section:convergence}, the convergence rate and the sampling complexity of the Langevin dynamics depend critically on the quantity $\kappa$ defined in~\eqref{eq:condition:number}. In highly ill-posed inverse problems, this factor can become arbitrarily large; see Remark~\ref{remark:cn}. However, by employing a sufficiently dense annealing schedule, the effective condition number at each temperature can be maintained near its optimal value of $1$, regardless of the underlying ill-posedness of the original posterior sampling. Specifically, from~\eqref{eq:bayes} and~\eqref{eq:interpolation}, for a temperature $\beta_{m}$, the posterior density $\pi_{k+1}^{m}(\cdot|\vy_{[k+1]})$ and the prior density $\pi_{k+1}^{m-1}(\cdot|\vy_{[k+1]})$ are given, respectively, as 
\begin{align*}
\pi_{k+1}^{m}(\vx|\vy_{[k+1]}) &\propto g_{k+1}^{\beta_{m}}(\vy_{k+1}|\vx)q_{k+1}(\vx|\vy_{[k]}), \\
\pi_{k+1}^{m-1}(\vx|\vy_{[k+1]}) &\propto g_{k+1}^{\beta_{m-1}}(\vy_{k+1}|\vx)q_{k+1}(\vx|\vy_{[k]}).
\end{align*}
Then the effective condition number $\kappa_m$ for the transition from $m-1$ to $m$ is given by:
\begin{align*}
\kappa_m 
&\coloneqq \frac{\sup_{\vx} g_{k+1}^{\beta_{m}-\beta_{m-1}}(\vy_{k+1}|\vx)}{\int g_{k+1}^{\beta_{m}-\beta_{m-1}}(\vy_{k+1}|\vx)\pi_{k+1}^{m-1}(\vx|\vy_{[k+1]})\mathrm{d}\vx} \\
&=\sup_{\vx}g_{k+1}^{\beta_{m}-\beta_{m-1}}(\vy_{k+1}|\vx)\frac{\int g_{k+1}^{\beta_{m-1}}(\vy_{k+1}|\vx)q_{k+1}(\vx|\vy_{[k]})\d\vx}{\int g_{k+1}^{\beta_{m}}(\vy_{k+1}|\vx)q_{k+1}(\vx|\vy_{[k]})\d\vx}.
\end{align*}
It is apparent that $\kappa=\prod_{m=1}^{M}\kappa_{m}$. Therefore, if the noise schedule is proper chosen, we have $\kappa_{m}\simeq\kappa^{1/M}$. This result demonstrates that annealing significantly reduces the condition number of the sampling problem.

\par\textbf{Reduction of initial discrepancy.}
The second primary advantage of the annealing strategy lies in its control over the initial discrepancy. In the vanilla Langevin sampling, as shown in~\eqref{eq:init:discrepancy}, the initial discrepancy is $\eta_{\chi}^{2}\leq\kappa-1$. In contrast, at the $m$-th stage of annealed Langevin sampling, we have
\begin{equation*}
\eta_{\chi,m}^{2} \coloneqq \chi^{2}(\pi_{k+1}^{m}(\cdot|\vy_{[k+1]})\| \pi_{k+1}^{m-1}(\cdot|\vy_{[k+1]})) \leq \kappa_{m}-1 \simeq \kappa^{1/M}-1.
\end{equation*}
This significantly smaller discrepancy ensures that, at each annealing stage, the Langevin sampler is initialized within a high-probability region of the target distribution, thereby accelerating convergence.

\begin{remark}
We provide some theoretical insights of the theoretical benefits of annealing. However, a rigorous non-asymptotic convergence analysis for the full annealing schedule presents additional challenges. In particular, it requires additional assumptions on the annealing geometry, controlling the error propagation across the intermediate annealing stages, and bounding the LSI constants of the intermediate distributions. These difficulties make a complete formal analysis considerably more involved than the single-stage result in Theorem~\ref{theorem:section:convergence}, and we leave a full theoretical treatment of annealing as an important direction for future work.
\end{remark}

\subsection{Convergence analysis for assimilation}
\label{section:convergence:assimilation}

Building upon the posterior estimation convergence results for score-based Langevin sampling established in the preceding subsection, we now analyze the convergence properties of score-based sequential Langevin sampling (SSLS). Corollary~\ref{corollary:section:convergence} establishes an error recursion of the form
\begin{equation}\label{eq:contraction}
\varepsilon_{\tv}^{k+1} \lesssim A(\varepsilon_{\tv}^{k})^{\gamma}.
\end{equation}
Combining this recursion with the initial error bound $\varepsilon_{\tv}^{1} \lesssim \varepsilon$ from Theorem~\ref{theorem:convergence:init}, a straightforward induction yields
\begin{equation*}
\varepsilon_{\tv}^{k+1} \lesssim A^{\sum_{i=0}^{k-1}\gamma^{i}}\varepsilon^{\gamma^{k}}=A^{\frac{1-\gamma^{k}}{1-\gamma}}\varepsilon^{\gamma^{k}}.
\end{equation*}
The following theorem characterizes the convergence behavior of the assimilation process under SSLS.

\begin{theorem}[Error of assimilation in TV distance]\label{theorem:section:convergence:assimilation}
Suppose Assumptions~\ref{assumption:posterior:smooth}, \ref{assumption:LSI:posterior}, \ref{assumption:bounded}, and \ref{assumption:sm} hold. Then, for all time steps $k \in \bbN$ and any error tolerance $\varepsilon \in (0,1)$, the total variation error satisfies
\begin{equation*}
\varepsilon_{\tv}^{k+1} \leq \left\{C(C_{\LSI}\eta_{\chi}+T)C_{\LSI}^{\frac{1}{4}}\right\}^{\frac{1-\gamma^{k}}{2(1-\gamma)}} \varepsilon^{\gamma^{k}},
\end{equation*}
where the step size $h$, the number of Langevin iterations $K$, and the score matching error $\Delta$ for the initial time step and $k$-th time step are given in Theorem~\ref{theorem:convergence:init} and Corollary~\ref{corollary:section:convergence}, respectively. Here the constant $\gamma \in (0,1)$ are as defined in Theorem~\ref{theorem:section:convergence}, and $C$ is a constant depending only on $d$ and $B$.
\end{theorem}

\par The proof of Theorem~\ref{theorem:section:convergence:assimilation} is provided in Appendix~\ref{appendix:convergence}.

\par\textbf{Consistency over finite time horizons.}
Theorem~\ref{theorem:section:convergence:assimilation} establishes that SSLS yields a consistent estimator of the posterior distribution. As noted in Remark~\ref{remark:cn}, the score matching error $\Delta$ vanishes as the ensemble size grows to infinity. Consequently, Theorem~\ref{theorem:section:convergence:assimilation} implies that the SSLS error vanishes asymptotically over any finite time horizon when the ensemble size and the number of Langevin iterations tend to infinity simultaneously. This is a notable advantage over the Kalman filter and the standard diffusion-based methods discussed in Section~\ref{section:related:diffusion}, which exhibit inherent inconsistency in assimilation scenarios where linearity and Gaussianity assumptions are violated.

\par\textbf{Long-time behavior and error stability.}
We examine the long-time behavior of the error bound in Theorem~\ref{theorem:section:convergence:assimilation} by separately analyzing the prefactor and the dependence on the initial error. This long-time behavior is governed entirely by the contraction rate $\gamma\in(0,1)$.
\begin{enumerate}
\item \emph{Forgetting of initial error.}
As $k\to\infty$, we have $\gamma^{k}\to 0$ (since $\gamma\in(0,1)$), which implies $\varepsilon^{\gamma^{k}}\to 1$. Consequently, the bound asymptotically loses its dependence on the magnitude of the initial error $\varepsilon$. This demonstrates a crucial long-time stability property: the long-run performance of the algorithm is insensitive to initialization.
\item \emph{Convergence to the error floor.}
Taking the limit $k\to\infty$ for the prefactor $A$ in the recursion yields the time-uniform asymptotic error bound
\begin{equation}\label{eq:error:floor}
\limsup_{k\to\infty}\varepsilon_{\tv}^{k+1} \leq \left\{C(C_{\LSI}\eta_{\chi}+T)C_{\LSI}^{\frac{1}{4}}\right\}^{\frac{1}{2(1-\gamma)}},
\end{equation}
effectively confining the total variation error to a stable neighborhood around the true posterior.
\end{enumerate}

\par Intuitively, the error should not be expected to decay to zero without imposing further structural assumptions on the sequential algorithm, since at each assimilation step $k$, fresh approximation errors are inevitably introduced by the time discretization $h$, the finite number of Langevin iterations $K$, and the score matching error $\Delta$. The presence of such a persistent error floor in~\eqref{eq:error:floor} is theoretically sound and standard in the sequential filtering literature; see, e.g.,~\cite{Crisan2002survey} and~\cite[Chapter~7]{Moral2004Feynman}. Furthermore, this bound is valuable because it guarantees that the assimilation error does not accumulate indefinitely from local errors, a property intimately connected to the ergodicity and stability of nonlinear filters~\cite{Bain2009Fundamentals,Kelly2014Well,Tong2016Nonlinear}.

\par\textbf{Convergence in Wasserstein distance.}
Theorem~\ref{theorem:section:convergence:assimilation} provides a convergence rate for assimilation in TV distance. Now we aim to establish convergence rate in Wasserstein distance. 

\begin{definition}[Wasserstein-1 distance]
Let $\mu$ and $\pi$ be two probability measures. The Wasserstein-1 distance between $\mu$ and $\pi$ is defined by 
\begin{equation*}
W_{1}(\mu,\pi) \coloneqq \inf\left\{\mathbb{E}\left[\|X-Y\|_{2}\right]:\,\mathrm{Law}(X)=\mu,\,\mathrm{Law}(Y)=\pi\right\}.
\end{equation*}
\end{definition}

Before proceeding, we introduce a truncation operator with a radius $R_{0}\geq 1$,
\begin{equation*}
\Pi_{R_{0}}:\mathbb{R}^{d}\to\mathbb{R}^{d}, \quad \vx\mapsto \vx\mathbbm{1}\{\|\vx\|_{2}\leq R_{0}\}.
\end{equation*}
The convergence rate of the truncated estimated distribution $(\Pi_{R_{0}})_{\sharp}\what{\pi}_{k+1}(\cdot|\vy_{[k+1]})$ in Wasserstein distance is stated in the following corollary.

\begin{corollary}[Error of assimilation in Wasserstein distance]\label{corollary:section:convergence:wasserstein}
Under the same assumptions as Theorem~\ref{theorem:section:convergence:assimilation}. For all time steps $k \in \bbN$ and any error tolerance $\varepsilon \in (0,1)$, the Wasserstein error satisfies
\begin{equation*}
W_{1}^{2}((\Pi_{R_{0}})_{\sharp}\what{\pi}_{k+1}(\cdot|\vy_{[k+1]}),\pi_{k+1}(\cdot|\vy_{[k+1]})) \lesssim C_{\LSI}\left\{C(C_{\LSI}\eta_{\chi}+T)C_{\LSI}^{\frac{1}{4}}\right\}^{\frac{1-\gamma^{k}}{1-\gamma}}\varepsilon^{2\gamma^{k}},
\end{equation*}
where logarithmic factors are omitted, $C$ is a constant depending only on $d$ and $B$, and the truncation radius $R_{0}$ is given as 
\begin{equation*}
R_{0}^{2}=C_{\LSI}\log\Big(e^{\frac{d}{2}}C_{\LSI}\{C(C_{\LSI}\eta_{\chi}+T)C_{\LSI}^{\frac{1}{4}}\}^{-\frac{1-\gamma^{k}}{1-\gamma}} \varepsilon^{-2\gamma^{k}}\Big).
\end{equation*}
\end{corollary}

\par The proof of Corollary~\ref{corollary:section:convergence:wasserstein} is provided in Appendix~\ref{appendix:convergence}.


\section{Numerical Experiments}
\label{section:experiments}

\par In this section, we demonstrate the effectiveness of score-based sequential Langevin sampling (SSLS) through numerical experiments.
\begin{enumerate}[(1)]
\item In Section~\ref{section:experiment:doublewell}, we examine the assimilation of Langevin diffusion with a double-well potential. This investigation compares SSLS against the auxiliary particle filter (APF) and ensemble Kalman filter (EnKF) in scenarios featuring state mutations and model nonlinearity.
\item In Section~\ref{section:experiment:Kolmogorov}, we apply SSLS to the assimilation of Kolmogorov flow under sparse and partial observations. We highlight the crucial role of the prior score through comparisons with standard maximum likelihood estimation (MLE) and demonstrate methods for quantifying the uncertainty of states estimated by SSLS.
\end{enumerate}

\subsection{The double-well potential}
\label{section:experiment:doublewell}

\par A classic problem in molecular dynamics involves a one-dimensional Langevin diffusion with a double-well potential, governed by the nonlinear SDE:
\begin{equation}\label{eq:dw:sde}
\d X_{t}=-\nabla U(X_{t})\dt+\beta\d B_{t},
\end{equation}
where $U(x):=x^{4} -2x^{2}$ is a double-well potential, $\beta>0$ is the temperature parameter, and $(B_{t})_{t\geq 0}$ denotes the standard Brownian motion. In this experiment, we focus on the dynamics model defined as the Euler-Maruyama discretization of~\eqref{eq:dw:sde}
\begin{equation}\label{eq:dw:dynamics}
X_{k+1}=X_{k}-\delta t\nabla U(X_k)+\beta\sqrt{\delta t}V_{k},
\end{equation}
where $\delta t>0$ and $V_{k}\sim N(0,1)$. As we will demonstrate, this dynamic model exhibits state mutations that introduce ill-posedness to the assimilation process. This experiment is designed to showcase SSLS's effectiveness in handling such mutations.

\par\textbf{State mutations in dynamics model.}
Since the dynamics model~\eqref{eq:dw:dynamics} has similar behaviors as its time-continuous counterpart~\eqref{eq:dw:sde} when $\delta t$ is sufficiently small, we illustrate state mutations in~\eqref{eq:dw:sde} for the sake of simplicity. The dynamics model~\eqref{eq:dw:sde} exhibits two local stable states at $x=-1$ and $x=1$. A particle initialized at any position is drawn toward one of these stable states by the drift term in~\eqref{eq:dw:sde}, while the diffusion term models thermal collisions with the environment, introducing stochasticity into the particle's trajectory. At low temperatures (small $\beta$), the particle typically remains confined near one stable state, making only rare transitions to the other potential well. At higher temperatures (large $\beta$), the particle transitions more frequently between the two potential wells, leading to state mutations.

\par When state mutations occur, the true state can deviate significantly from the support of the prediction distribution. Such deviation results in an extremely large condition number~\eqref{eq:condition:number}, indicating severe ill-posedness in the posterior sampling, as discussed by Remark~\ref{remark:cn}.

\par\textbf{Reference states generation.}
To evaluate the assimilation methods, we simulate the dynamics model~\eqref{eq:dw:dynamics} with temperature $\beta=0.3$ and time step $\delta t=0.1$ to generate true states. To effectively demonstrate how different assimilation methods respond to state mutations, we manually induce mutations by switching states from $X_{k}$ to $-X_{k}$ every 20 time steps.

\par\textbf{Baseline.}
For comparison, we evaluate SSLS alongside two widely-used assimilation techniques: the auxiliary particle filter (APF)~\cite{pitt1999filtering} and the ensemble Kalman filter~\cite{houtekamer1998data}. In SSLS, prediction scores at each assimilation step are learned from 1000 particles. For fair comparison, we maintain the same ensemble size of 1000 for both APF and EnKF. 

\subsubsection{Linear measurement model}

\par We first consider a linear measurement model with Gaussian additive noise
\begin{equation}\label{eq:dw:m1}
Y_{k}=X_{k}+\sigma_{\obs}W_{k},
\end{equation}
where $k\in\bbN$ and $W_{k}\sim N(0,1)$. Figure~\ref{fig:DW_0.1} plots the results of assimilation for the state-space model~\eqref{eq:dw:dynamics} and~\eqref{eq:dw:m1}. The observation noise level is set as $\sigma_{\obs}=0.1$.

\begin{figure}[htbp]
\centering
\includegraphics[width=0.75\linewidth]{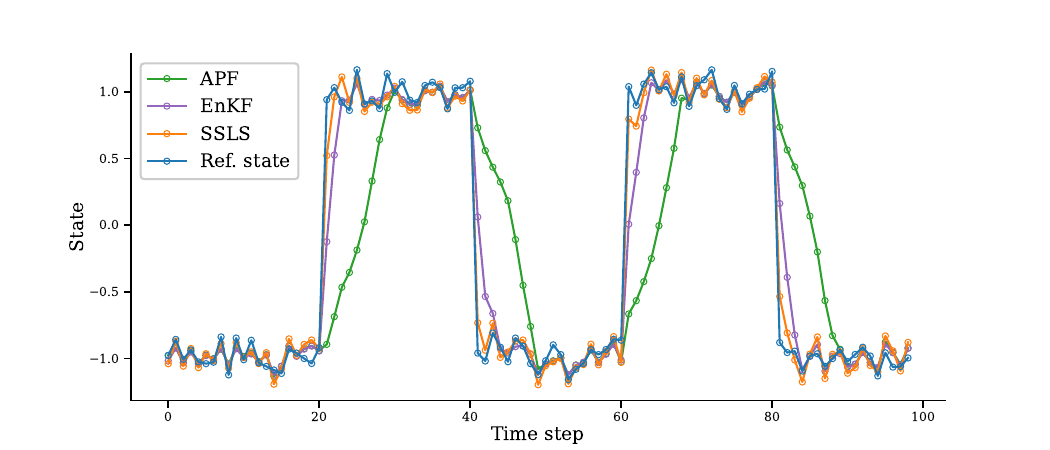}
\caption{Results of assimilation for Langevin diffusion with a double-well potential~\eqref{eq:dw:dynamics} with a linear measurement model~\eqref{eq:dw:m1}. The ensemble mean of SSLS, APF, and EnKF at each time steps are shown in the figure.}
\label{fig:DW_0.1}
\end{figure}

\par As shown in Figure~\ref{fig:DW_0.1}, all assimilation methods provide estimates that closely track the reference states before state mutations occur. However, APF exhibits a notable delay following state mutations. This delay arises because APF approximates the prediction distribution using a weighted particle set. Due to state mutations, particles sampled from the prediction distribution may be far from the observation, resulting in small assigned weights. These particles thus contribute minimally to the posterior distribution approximation~\cite[Chapter 11.6]{Sarkka20232023Bayesian}. This phenomenon, known as particle degeneracy, persists even though APF offers some improvement over the standard PF.

\par In contrast, SSLS and EnKF avoid reliance on particle approximations. SSLS employs a score network, while EnKF uses a Gaussian distribution to approximate the prediction distribution. Both methods can generalize to regions near the observation where predicted particles may be absent, enabling them to respond more rapidly to state mutations.

\subsubsection{Nonlinear measurement model}
As illustrated in Figure~\ref{fig:DW_0.1}, SSLS outperforms EnKF due to the nonlinearity of dynamics model~\eqref{eq:dw:dynamics}. This advantage arises because EnKF only captures linear components while disregarding higher-order structures of the dynamics model. To further demonstrate the advantages of SSLS over EnKF in nonlinear settings, we consider a nonlinear measurement model:
\begin{equation}\label{eq:dw:m2}
Y_{k}=\exp(X_{k}-\gamma_{k})+\sigma_{\obs}W_{k},
\end{equation}
where $\gamma_{k}=0.6$, $\sigma_{\obs}=0.2$, and $W_{k}\sim N(0,1)$.
\begin{figure}[htbp]
\centering
\includegraphics[width=0.75\linewidth]{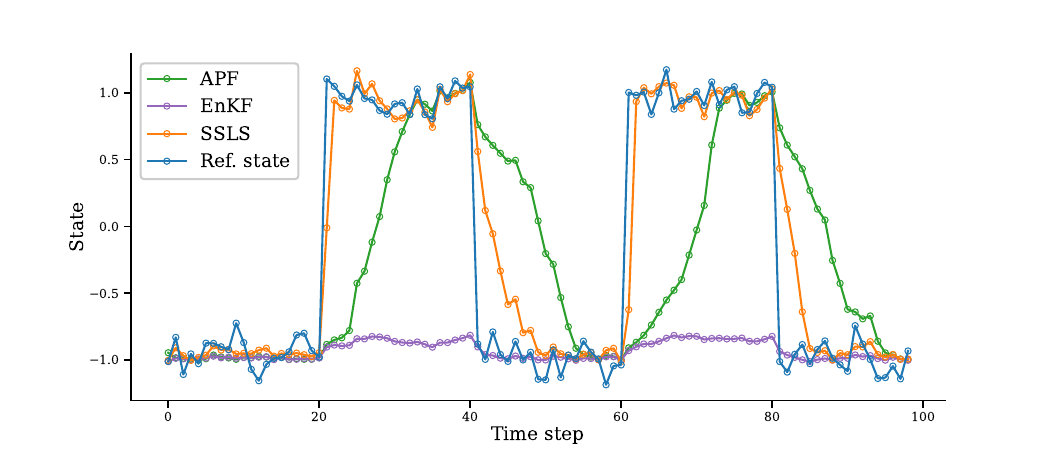}
\caption{Results of assimilation for Langevin diffusion with a double-well potential~\eqref{eq:dw:dynamics} with a nonlinear measurement model~\eqref{eq:dw:m2}. The ensemble mean of SSLS, APF, and EnKF at each time steps are shown in the figure.}
\label{fig:DW_nonlinear_0.2}
\end{figure}

\par Figure~\ref{fig:DW_nonlinear_0.2} presents the numerical results, revealing that EnKF fails to effectively assimilate observation data in the fully nonlinear state-space model~\eqref{eq:dw:dynamics} and~\eqref{eq:dw:m2}. In contrast, SSLS and APF maintain their performance despite the nonlinear measurement model. These results establish SSLS as a robust method for handling nonlinear scenarios, even under full nonlinearity.

\subsubsection{Multi-modal measurement model}
\label{section:experiment:doublewell:multi-modal}
To demonstrate the effectiveness of different annealing strategies, we consider a new measurement model characterized by a multi-modal noise distribution:$$  Y_k = X_k + W_k,$$where$$  W_k \sim
  \begin{cases}
    0.1 \mathcal{N}(-0.2, 1) + 0.1 \mathcal{N}(0.5, 1)  & \text{if } k \in \{20, 40\}, \\
    0.2 \mathcal{N}(0, 1) & \text{otherwise}.
  \end{cases}$$
This setup introduces multi-modality into the likelihood and posterior during state transitions. It mimics scenarios with reduced observational confidence --- a situation common when dealing with inconsistent measurements, particularly when states undergo complex and drastic spatial-temporal changes. Under this configuration, we evaluate three variants of SSLS, specifically the versions without annealing, with annealing on the likelihood, and with annealing on the posterior. We also compare these variants against APF and EnKF.

As illustrated in Figure \ref{fig:annealing-ablation}, SSLS with either annealing strategy generally detects and adapts to drastic state transitions more rapidly than the version without annealing. Notably, this superior performance is maintained even when the number of Langevin iterations is reduced by half, which is particularly evident following the second jump. Furthermore, SSLS with annealing on the posterior slightly outperforms the likelihood-based approach, likely due to the relatively low noise level in the measurement model. Regardless of the annealing setting, SSLS consistently demonstrates superior performance over both APF and EnKF.

\begin{figure}
  \centering
  \includegraphics[width=0.8\textwidth]{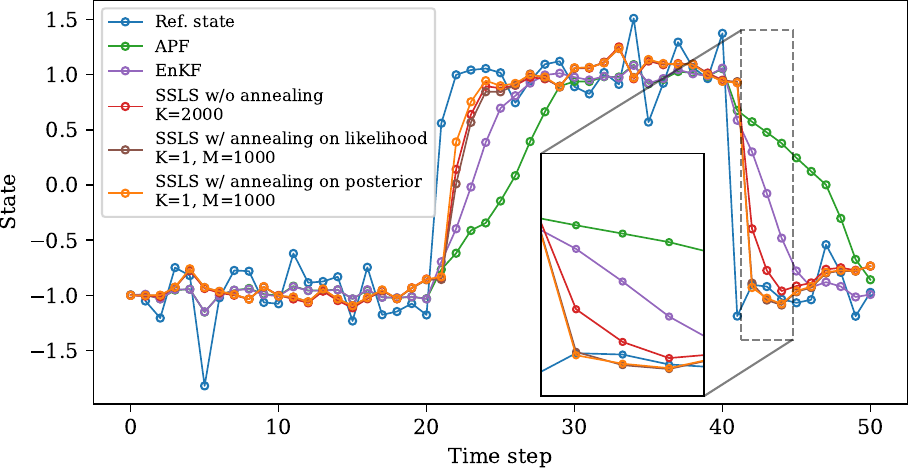}%
  \caption{Performance of SSLS (with different annealing strategies), APF, and EnKF for the double-well system \eqref{eq:dw:dynamics} under multi-modal measurement noise. The curves represent the ensemble mean of each method over time.}
  \label{fig:annealing-ablation}
\end{figure}

\subsection{Kolmogorov flow}
\label{section:experiment:Kolmogorov}

\par In this example, we consider the assimilation of a Kolmogorov flow, which arises in atmospherical sciences and fluid dynamics~\cite{Cotter2009Bayesian,Rozet2023score}. Kolmogorov flow is a viscous and incompressible fluid flow governed by the Navier-Stokes (NS) equation on the two-dimensional torus $[0,2\pi]^{2}$,
\begin{equation}\label{P3:kolmogorov}
\left\{
\begin{aligned}
\partial_{t}\vu&=-(\vu\cdot\nabla)\vu+\frac{1}{\mathrm{Re}} \nabla^2\vu-\frac{1}{\rho}\nabla p+\mF, \\
0&=\nabla \cdot \vu,
\end{aligned}\right.
\end{equation}
where $\vu$ represents the velocity field, $\mathrm{Re}$ is the Reynolds number, $\rho$ denotes the fluid density, $p$ is the pressure field, and $\mF$ is the external forcing. This example uses the periodic boundary conditions, a large Reynolds number $\mathrm{Re}=10^{3}$, a constant density $\rho\equiv 1$ and an external forcing $\mF$ corresponding to Kolmogorov forcing with linear damping. Our objective is to track a velocity field describing the solutions to the NS equation~\eqref{P3:kolmogorov} with unknown initial condition.

\par\textbf{Reference states generation.} 
Let $\vu_{0}$ be a initial random state sampled from a Gaussian random field. The NS equation~\eqref{P3:kolmogorov} is evolved from this initial state $\vu_{0}$. Following a warm-up period of $T_{0}=10$, reference states are downsampled from the trajectory between $T_{0}=10$ and $T=20$ with a spatial resolution of $128\times 128$ and a temporal resolution of $\Delta t=0.2$. Here the NS equation~\eqref{P3:kolmogorov} is solved using the \texttt{jax-cfd} package~\cite{Kochkov2021Machine}. In Appendix \ref{section:experiments:appendix:Kolmogorov}, we further compare our method against a score-based posterior sampling baseline, the Ensemble Kalman Diffusion Guidance (EnKG) \cite{zheng2025ensemble}. We conduct this comparison on the Navier-Stokes (NS) equation under partial observations. Additional experimental details are provided therein.

\par\textbf{Initial prior distribution shift.} 
The SSLS assimilation process begins with a set of independent random realizations of $\vu_{0}$. A crucial consideration is that the initial reference state $\vu_{T_{0}}$ follows a distribution that differs from the initial distribution employed in SSLS. Such initial distributional shifts are prevalent in practical applications, where the true distribution of the initial state often remains unknown. Therefore, successful assimilation under these distributional shifts represents both a critical requirement and a significant challenge in the field.

\subsubsection{Assimilation with sparse or partial observation data}

\par This subsection demonstrates the effectiveness of SSLS under various sparse and partial observation scenarios through three distinct tasks: (i) super-resolution, (ii) sparse reconstruction, and (iii) box reconstruction. For the super-resolution task, observations are generated by applying average pooling to the reference states, followed by the addition of pointwise Gaussian noise. The sparse reconstruction task employs a measurement model combining uniform-stride downsampling with Gaussian perturbation. In the box reconstruction task, measurements within a specified domain are unavailable, while measurements outside this domain are perturbed by Gaussian noise.

\begin{figure}[htbp]
\centering
\includegraphics[width=0.75\linewidth]{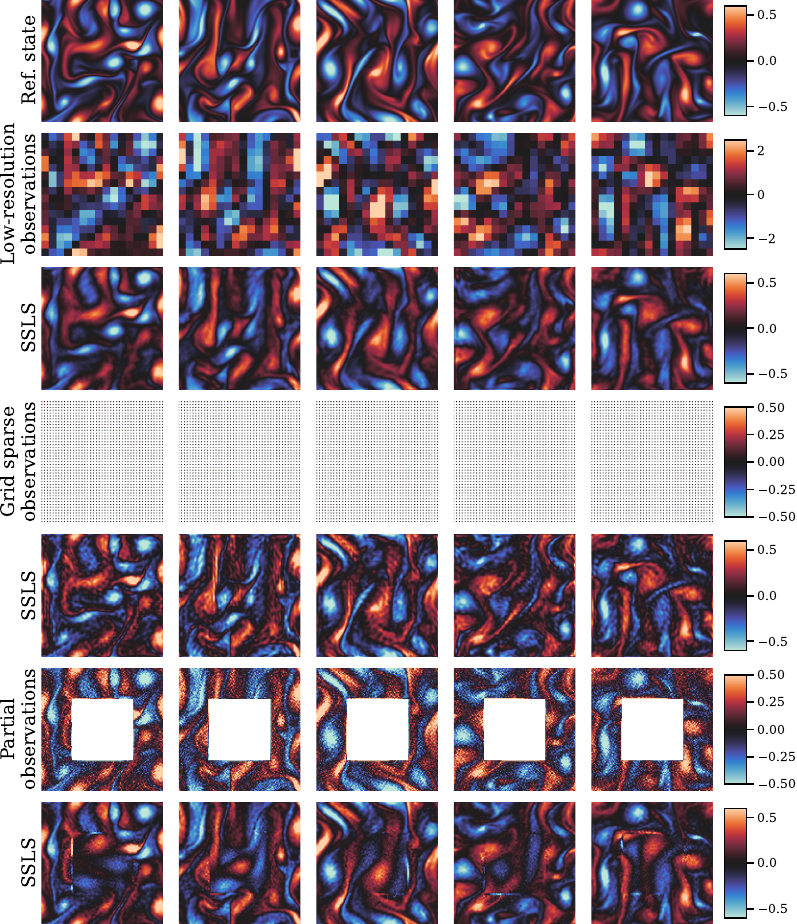}
\caption{Results of assimilation for Kolmogorov flow~\eqref{P3:kolmogorov} with different measurement models. Each column corresponds to distinct time steps (states are plotted for every 10 time steps). The first row displays the reference state. (i) Super-resolution: The 2nd and 3rd rows display the noisy observations with 8x average pooling and the corresponding SSLS estimations, respectively. (ii) Sparse reconstruction: The 4th and 5th rows show the noisy observations with a uniform mask and the corresponding SSLS estimations, respectively. (iii) Box reconstruction: The 6th and 7th rows demonstrate the noisy observations with a centering square mask and the corresponding SSLS estimations, respectively.}
\label{fig:KolmogorovEvolution}
\end{figure}

\par Figure~\ref{fig:KolmogorovEvolution} presents the experimental results of SSLS under these measurement models, visualized through the vorticity field $\omega=\nabla\times\vu$. The results demonstrate the exceptional performance of SSLS across all three tasks:

\begin{enumerate}[(i)]
\item In the super-resolution task, while the observation data lacks most micro-structures present in the reference vorticity field, SSLS successfully reconstructs the majority of these intricate details.
\item The sparse reconstruction results showcase SSLS's remarkable capability to reconstruct the field even when 88.72\% of observation points are masked. This robustness to sparse observations is particularly valuable in applications where measurement acquisition is costly or limited. 
\item In the box reconstruction task, SSLS demonstrates impressive performance by accurately reconstructing the field even within the completely masked region where no observations are available.
\end{enumerate}
Notably, across aforementioned tasks, the estimated states exhibit significant deviations from the observation data, indicating the substantial influence of prediction information on the assimilation process. The effectiveness of this prediction information will be further examined through ablation studies in subsequent experiments. Furthermore, a mathematical analysis of this efficiency under sparse or partial measurements is presented in Section~\ref{section:experiment:Kolmogorov:ablation}.

\subsubsection{Ablation study: influence of the prediction score}
\label{section:experiment:Kolmogorov:ablation}

\par Previous experiments demonstrate that the estimated states closely align with reference states, even under significant observational noise and occlusion. This remarkable performance underscores the fundamental importance of prior information encoded in the prediction score. We now investigate the specific contribution of the prediction score in SSLS through comprehensive comparisons with score-free methods.

\begin{figure}[htbp]
\centering
\includegraphics[width=0.75\linewidth]{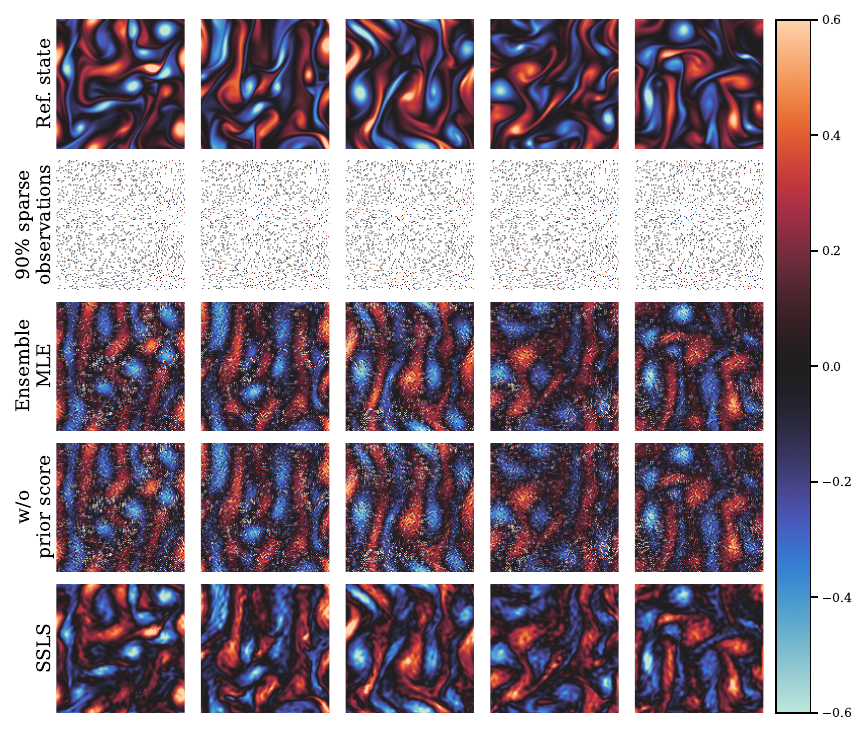}
\caption{Comparison of results of SSLS and methods without prediction score. From top to bottom: the reference state, observations, estimations of the ensemble MLE, estimations of Langevin sampling without the prediction score, and estimations of SSLS. Here the noise level is set as $\sigma_{\rm obs}=0.3$ and let 90\% points be randomly masked.}
\label{fig:KolmogorovLikelihood}
\end{figure}

\par\textbf{Baseline.}
To evaluate the impact of the prediction score in SSLS, we employ two comparative methods:
\begin{enumerate}[(i)]
\item Langevin sampling without prediction score, and
\item ensemble maximum likelihood estimation (MLE).
\end{enumerate}
Method (i) employs Langevin sampling where the drift term consists solely of the log-likelihood gradient, with the prediction score component removed. This formulation can be interpreted as a noise-augmented gradient method for maximum likelihood estimation. Method (ii) represents a pure gradient-based MLE approach, obtained by additionally removing the Gaussian noise term from the Langevin sampling formulation.

\par To ensure fair comparison, both methods are initialized using the approximated prediction distribution~\eqref{eq:prediction:distribution:hat}, consistent with the SSLS framework. Consequently, although these methods do not explicitly incorporate the prediction score, their state estimates inherently reflect the influence of historical observations through their initialization.

\par\textbf{Influence of the prediction score.}
As illustrated in Figure~\ref{fig:KolmogorovLikelihood}, both Langevin sampling without the prediction score (method (i)) and ensemble MLE (method (ii)) produce notably non-smooth state estimations. This phenomenon has a clear mathematical explanation: the log-likelihood gradient exists only at locations with available measurements and vanishes elsewhere. This gradient absence presents a fundamental challenge in assimilation with sparse or partial measurements. In such scenarios, methods (i) and (ii) can only update in measured locations, while unmeasured locations either experience random perturbations (method (i)) or remain unchanged (method (ii)). This spatially inconsistent updating mechanism leads to discontinuous and non-smooth state estimations in both methods.

\par In contrast, SSLS produces estimations that exhibit strong alignment with the reference states, attributed to the prediction score's incorporation of dynamics-based spatiotemporal correlations. This score term enables meaningful updates even at unmeasured locations by leveraging the physical constraints and dynamics embedded within it. The score effectively bridges information gaps between measured and unmeasured regions, ensuring appropriate smoothness and physical consistency throughout the domain. This capacity to maintain physical coherence while assimilating sparse measurements allows SSLS to achieve superior reconstruction quality, particularly in regions where observational data is limited or absent.

\subsubsection{Uncertainty quantification in assimilation}

\par The preceding experiments demonstrate that the states estimated by the SSLS closely align with reference states, even when significant occlusion and noise are present. However, point estimation alone proves insufficient, particularly in contexts where estimation reliability is paramount. In such high-stakes scenarios, the quantification of estimation uncertainties becomes critical for informed decision-making~\cite{Sullivan2015Introduction,Adler2025Deep}.

\par In this experiment, we will demonstrate how to quantify the uncertainties associated with the states estimated by SSLS. To this end, we consider the random reconstruction task as an example, where the observation data are obtained by masking 95\% of grid points randomly and perturbating with Gaussian noise. The experimental results are shown in Figure~\ref{fig:KolmogorovUQ}.
\begin{figure}[htbp]
\centering
\includegraphics[width=0.75\linewidth]{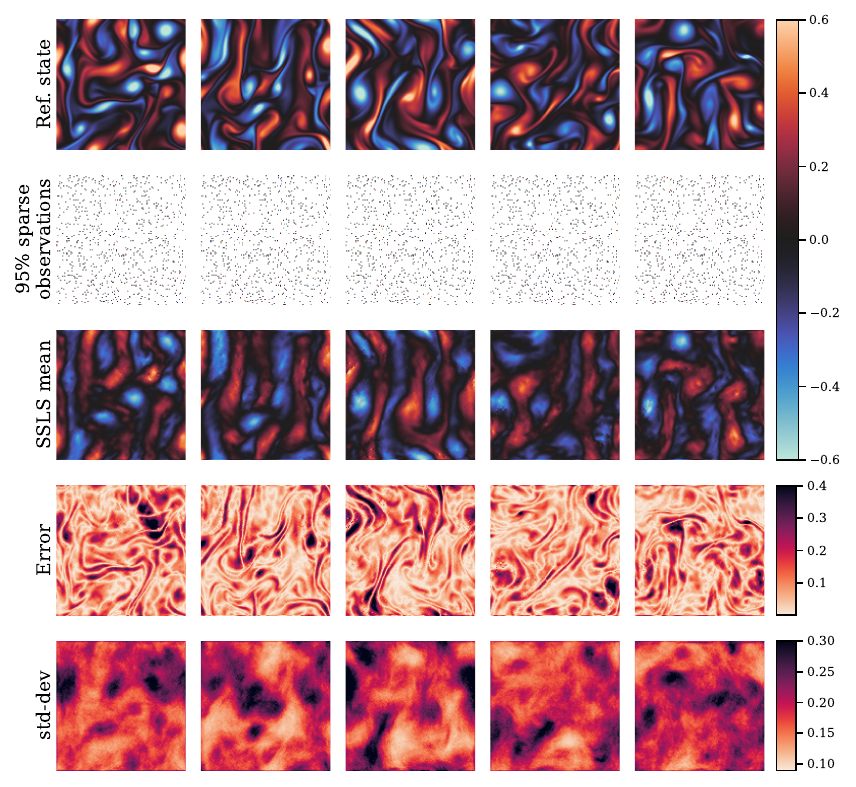}
\caption{Quantify the uncertainty associated with states estimated by SSLS. From top to bottom: the reference states, observations (95\% random mask), the SSLS assimilated states, point-wise error (in absolute value) and standard deviation. The noise level is set as $\sigma_{\rm obs}=0.4$.}
\label{fig:KolmogorovUQ}
\end{figure}

\par\textbf{Standard deviation and uncertainty.}
A notable advantage of SSLS is its ability to generate multiple ensemble samples from the posterior distribution, enabling the computation of standard deviations that illuminate the quality of the estimated states~\cite{Patel2021GAN,Adler2025Deep}. The estimated pointwise standard deviations are presented in the last row of Figure~\ref{fig:KolmogorovUQ}.

\par The first and last rows of Figure~\ref{fig:KolmogorovUQ} reveal that the standard deviation concentrates in regions of high reference vorticity magnitude. This empirical observation aligns with physical intuition. Vorticity, defined as the curl of the velocity field, quantifies the local fluid rotation. Regions of high vorticity indicate intense rotational and swirling motion in the fluid. The uniformly distributed measurement positions prove insufficient to capture high-frequency information in these high-vorticity regions, resulting in greater uncertainty in the estimated states for these areas.

\par\textbf{Well-calibrated uncertainty estimation.}
One critical measure of uncertainty estimation quality is calibration. A well-calibrated uncertainty estimation ensures that the estimated standard deviation aligns with the pointwise error~\cite{Raad2022Probabilistic,antoran2023uncertainty,Raad2024Conditional}.
\begin{figure}[htbp]
\centering
\includegraphics[width=0.75\linewidth]{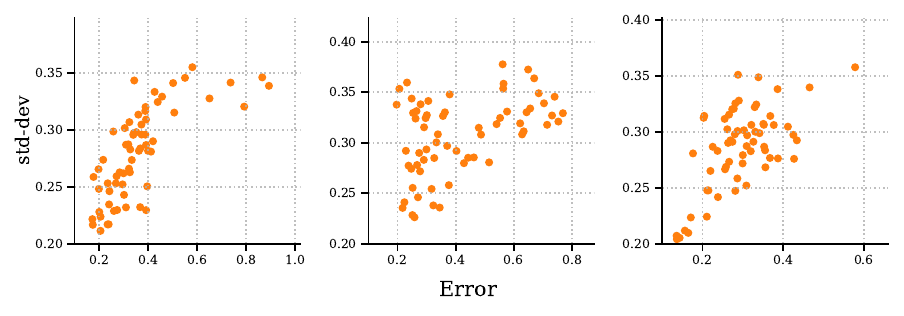}
\caption{The correlation between the standard deviation and estimation error of the SSLS. The standard deviation and error  are down-sampled by max pooling for clearer visualization. From left to right: the results at three equally separated time points of the assimilation process.}
\label{fig:KolmogorovBiasVariance}
\end{figure}

\par The last two rows of Figure~\ref{fig:KolmogorovUQ} demonstrate that the standard deviation estimated by SSLS correlates with the pointwise error. To analyze this correlation quantitatively, Figure~\ref{fig:KolmogorovBiasVariance} plots the standard deviation against the error for pixels down-sampled via max-pooling with a kernel size of 16. The correlation is examined at three equally spaced time points, revealing a consistent positive correlation between standard deviation and error across all temporal snapshots.

\par Both Figures~\ref{fig:KolmogorovUQ} and~\ref{fig:KolmogorovBiasVariance} establish that SSLS provides well-calibrated uncertainty estimations, with the computed standard deviation serving as a reliable indicator of state estimation error. This robust uncertainty quantification proves crucial for assessing the reliability of estimated states and offers valuable insights for optimizing observational positions and refining the model.


\section{Related work}
\label{section:related}

\subsection{Diffusion models for Bayesian inverse problems}
\label{section:related:diffusion}

\par Bayesian inverse problems are closely linked to data assimilation, as demonstrated in Section~\ref{section:method:update}. In recent years, diffusion methods have gained prominence as an effective technique for posterior sampling in Bayesian inverse problems, such as~\cite{chung2023diffusion,song2023pseudo,Song2023Loss,purohit2024posterior}. This section provides a comprehensive review of the existing literature on diffusion-based approaches for posterior sampling.

\par We first introduce the setup of Bayesian inverse problems. Suppose the forward model is defined as
\begin{equation*}
\mU=\calG(\mZ,\mW), \quad \mZ\sim p_{\mZ},
\end{equation*}
where $\calG$ represents a given measurement model, $p_{\mZ}$ is a known prior distribution, and $\mW$ is a random variable with a known distribution. As a result, the measurement likelihood $p_{\mU|\mZ}(\vu|\vz)$ can be obtained from the measurement model $\calG$ and the distribution of $\mW$. 

\par Bayesian inverse problems aim to estimate the posterior distribution $p_{\mZ|\mU}(\vz|\vu)$ given the observation $\vu$, the prior $p_{\mZ}$, and the measurement likelihood $p_{\mU|\mZ}$. It is evident that the update step (Section~\ref{section:method:update}) within the recursive Bayesian filtering framework exactly corresponds to solving a Bayesian inverse problem.

\par\textbf{Diffusion model with guidance.}
The mainstream technique in diffusion models for conditional or posterior sampling is guidance~\cite{guo2024gradient}. As an illustrative example, we consider the diffusion model with the following forward process
\begin{equation*}
\d\mZ_{s}=-\mZ_{s}\ds+\sqrt{2}\d\mB_{s}, \quad \mZ_{0}\sim p_{\mZ},~ s\in(0,T),
\end{equation*}
where $(\mB_{s})_{s\geq 0}$ is a standard Brownian motion. Denote by $p_{\mZ_{s}}$ the law of $\mZ_{s}$ for each $s\in(0,T)$. Following Bayes' rule, the time-reversal process for sampling from the posterior distribution $p_{\mZ|\mU}(\cdot|\vu)$ reads~\cite{chung2023diffusion}
\begin{equation}\label{eq:related:diffusion:reversal}
\begin{aligned}
\d\bar{\mZ}_{s}&=\big\{\bar{\mZ}_{s}+2\overbrace{\nabla_{\vz}\log p_{\mU|\mZ_{T-s}}(\vu|\bar{\mZ}_{s})}^{\text{intractable}}+2\overbrace{\nabla_{\vz}\log p_{\mZ_{T-s}}(\bar{\mZ}_{s})}^{\text{score}}\big\}\ds+\sqrt{2}\d\mB_{s}, \\
\bar{\mZ}_{0}&\sim N(\mathbf{0},\mI_{d}), ~ s\in(0,T),
\end{aligned} 
\end{equation}
where $p_{\mU|\mZ_{s}}(\vu|\vz)$ is the time-dependent likelihood, and $p_{\mZ_{s}}(\vz)$ is the time-dependent prior. It worth noting that the score in~\eqref{eq:related:diffusion:reversal} can be estimated by denoising score matching~\cite{Vincent2011Connection,song2021score}, while the gradient of log-likelihood is typically intractable.

\par\textbf{Linear Gaussian inverse problems.}
When the measurement model $\calG$ is linear, and both the prior and the likelihood are Gaussian, that is,
\begin{equation*}
\calG(\mZ,\mW):=\mG\mZ+\sigma_{\obs}\mW, \quad \mZ\sim p_{\mZ}=N(\bm{\mu},\mathbf{\Sigma}),~\mW\sim N(\mathbf{0},\mI_{d}),
\end{equation*}
the gradient of time-dependent log-likelihood in~\eqref{eq:related:diffusion:reversal} can be estimated without bias. For a detailed derivation, please refer to~\cite[Lemma 1]{guo2024gradient}. However, this linear scenario corresponds to the assimilation with linear state-space model. The solution to the linear assimilation can be obtained using the ensemble Kalman filter~\cite{Sarkka20232023Bayesian}. Therefore, there is no necessity to utilize diffusion models for the linear assimilation.

\par\textbf{Nonlinear inverse problems.}
Researchers' interests lie in nonlinear inverse problems, which corresponds to the nonlinear assimilation. One of the most widely-used diffusion-based approaches for nonlinear inverse problem is the diffusion posterior sampling (DPS)~\cite{chung2023diffusion}, which estimates the gradient of time-dependent log-likelihood in~\eqref{eq:related:diffusion:reversal} by exchanging the expectation with likelihood, that is,
\begin{align}
p_{\mU|\mZ_{s}}(\vu|\vz)
&=\int p_{\mU|\mZ_{0}}(\vu|\vz_{0})p_{\mZ_{0}|\mZ_{s}}(\vz_{0}|\vz)\d\vz_{0} \nonumber \\
&=\bbE\big[p_{\mU|\mZ_{0}}(\vu|\mZ_{0})|\mZ_{s}=\vz\big]\approx p_{\mU|\mZ_{0}}(\vu|\bbE[\mZ_{0}|\mZ_{s}=\vz]). \label{eq:related:diffusion:DPS}
\end{align}
However, it is important to note that this approximation introduces a bias term known as Jensen's gap, as pointed out by~\cite{chung2023diffusion}. Therefore, DPS can not yield a consistent estimation in nonlinear Bayesian inverse problems. In contrast, the score-based LMC proposed by this work is consistent in nonlinear scenarios, as shown in Theorem~\ref{theorem:section:convergence}.

\subsection{Diffusion-based methods for assimilation}

\par In the preceding subsection, we discussed diffusion-based methods for Bayesian inverse problems. Building upon these methods, a variety of diffusion-based approaches have been developed for data assimilation, which can be broadly classified into data-driven methods and filtering methods. A comparison of these techniques, along with commonly used classical methods, is presented in Table~\ref{table:comparison}.

\begin{table}[htbp]
\caption{A comparison of methods for nonlinear assimilation.}
\centering
\begin{tabular}{lcl}
\toprule 
Method & Consistent & Approximation or assumption \\ 
\midrule 
3D-Var/4D-Var & \ding{56} & ~(i) Linearization \\ 
& & (ii) Gaussian prior and likelihood \\ 
\midrule
SDA~\cite{Rozet2023score} & \ding{56} & Jensen's gap~\cite{chung2023diffusion} \\ 
SOAD~\cite{li2024state} & \ding{52} & Gaussian prior and likelihood \\  
\midrule
EnKF & \ding{56} & ~(i) Linearization \\ 
& & (ii) Gaussian prior and likelihood \\ 
PF & \ding{52} & None \\ 
SF~\cite{Bao2024score} & \ding{56} & Damping function \\ 
SSLS (ours) & \ding{52} & None \\  
\bottomrule
\end{tabular}
\label{table:comparison}
\end{table}

\par\textbf{Data-driven methods for assimilation.} 
Data-driven methods focus on estimating the posterior distribution of all latent states $\mZ:=\mX_{[k]}$ given all available observation data $\mU:=\mY_{[k]}$. By substituting $\mZ$ and $\mU$ into the time-reversal process~\eqref{eq:related:diffusion:reversal}, this line of methods reformulate the assimialtion as a single Bayesian inverse problem~\cite{Rozet2023score,rozet2023scoreqg,li2024state}. 

\par The prior score in~\eqref{eq:related:diffusion:reversal} can be approximated by utilizing random copies of $\mZ=\mX_{[k+1]}$ through denoising score matching, as discussed in~\cite{Rozet2023score,li2024state}. This characteristic makes these methods data-driven, as they rely on empirical data rather than explicit knowledge of the underlying physical models, i.e., the states transition dynamics. In these data-driven approaches, the physical mechanism is implicitly incorporated into the prior score.

\par Then it remains to estimate the gradient of time-dependent log-likelihood in~\eqref{eq:related:diffusion:reversal}. For example, the score-based data assimilation (SDA)~\cite{Rozet2023score,rozet2023scoreqg} estimates the gradient of time-dependent log-likelihood in a similar manner as DPS~\eqref{eq:related:diffusion:DPS}. In~\cite{li2024state}, the authors propose the state-observation augmented diffusion (SOAD) method, which involves converting a nonlinear state-space model into a linear one through variable augmentation. The gradient of time-dependent log-likelihood of augmented linear state-space model can be estimated without bias, as discussed in the previous section.

\par Computing the joint distribution of the states across all time steps is computationally inefficient due to the increasing dimensionality of the diffusion model as the number of time steps increases. This limitation of the data-driven methods hinders the application of these methods to high-dimensional data assimilation tasks. Additionally, data-driven methods are unable to fully leverage the underlying physical mechanisms. In contrast, the dimensionality of SSLS proposed in this study remains constant, regardless of the assimilation time. Moreover, SSLS effectively integrates physical principles with observation data.

\par\textbf{Filtering methods for assimilation.}
Another category of diffusion-based assimilation methods are developed within the Bayesian filtering framework (Section~\ref{section:method:Bayes}), with examples including the score-based filter (SF)~\cite{Bao2024score}. In the update at the $(k+1)$-th time step, SF sets $\mU:=\mY_{k+1}$, $\vu:=\vy_{k+1}$, and $\mZ_{0}\sim\what{q}_{k+1}(\cdot|\vy{[k]})$. SF estimates the prior score in\eqref{eq:related:diffusion:reversal} using sliced score matching~\cite{song2020Sliced}, while approximating the gradient of the time-dependent log-likelihood through a separation of variables:
\begin{equation*}
\nabla_{\vz}\log p_{\mU|\mZ_{s}}(\vu|\vz)=h(s)\nabla_{\vz}\log p_{\mU|\mZ_{0}}(\vu|\vz), \quad s\in(0,T),
\end{equation*}
Here, the damping function $h$ is a monotonically decreasing function in the interval $[0,T]$, with $h(0)=1$ and $h(T)=0$. However, this separation of variable approximation is often inconsistent, and the optimal choice of the damping function remains unresolved. In contrast, our method is consistent and does not rely on such heuristic approximations.


\section{Conclusions}
\label{section:conclusion}

\par This paper presents score-based sequential Langevin sampling, a novel approach for nonlinear assimilation within a recursive Bayesian filtering framework. The theoretical analysis establishes SSLS convergence in TV-distance under mild conditions, providing insights into error behavior with respect to hyper-parameters. Extensive numerical experiments demonstrate SSLS's exceptional performance in high-dimensional and nonlinear scenarios, particularly with sparse or partial measurements. Furthermore, SSLS effectively quantifies state estimation uncertainty, enabling error calibration.

\par Several promising directions exist for future research and methodological enhancement. While the current framework assumes a known state-space model, practical applications often involve uncertain parameters in both dynamics and measurement models. Future work will extend SSLS to enable simultaneous estimation of latent states and model parameters. Additionally, we aim to address the computational burden of score network training at each time step through techniques such as in-context learning. 

\bibliographystyle{abbrv}
\bibliography{reference}

\begin{thebibliography}{10}

\bibitem{Adler2025Deep}
J.~Adler and O.~{\"O}ktem.
\newblock Deep {B}ayesian inversion.
\newblock In T.~A. Bubba, editor, {\em Data-driven Models in Inverse Problems}, volume~31 of {\em Radon Series on Computational and Applied Mathematics}, pages 359--412. Berlin, Boston: De Gruyter, 2025.

\bibitem{Anderson2009Spatially}
J.~L. Anderson.
\newblock Spatially and temporally varying adaptive covariance inflation for ensemble filters.
\newblock {\em Tellus A}, 61(1):72--83, 2009.

\bibitem{Anderson1999Monte}
J.~L. Anderson and S.~L. Anderson.
\newblock A {M}onte {C}arlo implementation of the nonlinear filtering problem to produce ensemble assimilations and forecasts.
\newblock {\em Monthly Weather Review}, 121:2741--2758, 1999.

\bibitem{antoran2023uncertainty}
J.~Antoran, R.~Barbano, J.~Leuschner, J.~M. Hern{\'a}ndez-Lobato, and B.~Jin.
\newblock Uncertainty estimation for computed tomography with a linearised deep image prior.
\newblock {\em Transactions on Machine Learning Research}, 2023.

\bibitem{Bain2009Fundamentals}
A.~Bain and D.~Crisan.
\newblock {\em Fundamentals of Stochastic Filtering}, volume~60 of {\em Stochastic Modelling and Applied Probability (SMAP)}.
\newblock Springer New York, NY, first edition, 2009.

\bibitem{Bakr2014Analysis}
D.~Bakr, I.~Gentil, and M.~Ledoux.
\newblock {\em Analysis and Geometry of Markov Diffusion Operators}, volume 348 of {\em Grundlehren der mathematischen Wissenschaften (GL)}.
\newblock Springer Cham, first edition, 2014.

\bibitem{Bakry1985Diffusions}
D.~Bakry and M.~{\'E}mery.
\newblock Diffusions hypercontractives.
\newblock In J.~Az{\'e}ma and M.~Yor, editors, {\em S{\'e}minaire de Probabilit{\'e}s XIX 1983/84}, pages 177--206. Springer Berlin Heidelberg, 1985.

\bibitem{Bao2024score}
F.~Bao, Z.~Zhang, and G.~Zhang.
\newblock A score-based filter for nonlinear data assimilation.
\newblock {\em Journal of Computational Physics}, 514:113207, 2024.

\bibitem{Bengtsson2008Curse}
T.~Bengtsson, P.~Bickel, and B.~Li.
\newblock Curse-of-dimensionality revisited: {C}ollapse of the particle filter in very large scale systems.
\newblock In D.~Nolan and T.~Speed, editors, {\em Institute of Mathematical Statistics Collections}, Probability and Statistics: Essays in Honor of David A. Freedman, pages 316--334. Institute of Mathematical Statistics, 2008.

\bibitem{Beskos2015Sequential}
A.~Beskos, A.~Jasra, E.~A. Muzaffer, and A.~M. Stuart.
\newblock Sequential {M}onte {C}arlo methods for {B}ayesian elliptic inverse problems.
\newblock {\em Statistics and Computing}, 25:727--737, 2015.

\bibitem{Bhar2010Stochastic}
R.~Bhar.
\newblock {\em Stochastic Filtering with Applications in Finance}.
\newblock World Scientific, 2010.

\bibitem{Bickel2008Sharp}
P.~Bickel, B.~Li, and T.~Bengtsson.
\newblock Sharp failure rates for the bootstrap particle filter in high dimensions.
\newblock In B.~Clarke and S.~Ghosal, editors, {\em Institute of Mathematical Statistics Collections}, Pushing the Limits of Contemporary Statistics: Contributions in Honor of Jayanta K. Ghosh, pages 318--329. Institute of Mathematical Statistics, 2008.

\bibitem{Brocker2012Evaluating}
J.~Br{\"o}cker.
\newblock Evaluating raw ensembles with the continuous ranked probability score.
\newblock {\em Quarterly Journal of the Royal Meteorological Society}, 138(667):1611--1617, 2012.

\bibitem{Brosse2018Normalizing}
N.~Brosse, A.~Durmus, and {\'E}.~Moulines.
\newblock Normalizing constants of log-concave densities.
\newblock {\em Electronic Journal of Statistics}, 12(1):851 -- 889, 2018.

\bibitem{chang2025provable}
J.~Chang, C.~Duan, Y.~Jiao, R.~Li, J.~Z. Yang, and C.~Yuan.
\newblock Provable diffusion posterior sampling for {B}ayesian inversion, 2025.
\newblock arXiv:2512.08022.

\bibitem{Chen2011Dimension}
H.-B. Chen, S.~Chewi, and J.~Niles-Weed.
\newblock Dimension-free log-{S}obolev inequalities for mixture distributions.
\newblock {\em Journal of Functional Analysis}, 281(11):109236, 2021.

\bibitem{chen2023sampling}
S.~Chen, S.~Chewi, J.~Li, Y.~Li, A.~Salim, and A.~Zhang.
\newblock Sampling is as easy as learning the score: theory for diffusion models with minimal data assumptions.
\newblock In {\em The Eleventh International Conference on Learning Representations}, 2023.

\bibitem{Chewi2024log}
S.~Chewi.
\newblock Log-concave sampling, 2024.
\newblock unfinished draft.

\bibitem{Chewi2024Analysis}
S.~Chewi, M.~A. Erdogdu, M.~Li, R.~Shen, and M.~S. Zhang.
\newblock Analysis of {L}angevin {M}onte {C}arlo from {P}oincar{\'e} to log-{S}obolev.
\newblock {\em Foundations of Computational Mathematics}, 2024.

\bibitem{chung2023diffusion}
H.~Chung, J.~Kim, M.~T. Mccann, M.~L. Klasky, and J.~C. Ye.
\newblock Diffusion posterior sampling for general noisy inverse problems.
\newblock In {\em The Eleventh International Conference on Learning Representations}, 2023.

\bibitem{Cotter2009Bayesian}
S.~L. Cotter, M.~Dashti, J.~C. Robinson, and A.~M. Stuart.
\newblock Bayesian inverse problems for functions and applications to fluid mechanics.
\newblock {\em Inverse Problems}, 25(11):115008, 2009.

\bibitem{Crisan2002survey}
D.~Crisan and A.~Doucet.
\newblock A survey of convergence results on particle filtering methods for practitioners.
\newblock {\em IEEE Transactions on Signal Processing}, 50(3):736--746, 2002.

\bibitem{DelMoral2006Sequential}
P.~Del~Moral, A.~Doucet, and A.~Jasra.
\newblock Sequential {M}onte {C}arlo samplers.
\newblock {\em Journal of the Royal Statistical Society Series B: Statistical Methodology}, 68(3):411--436, 05 2006.

\bibitem{ding2024characteristic}
Z.~Ding, C.~Duan, Y.~Jiao, R.~Li, J.~Z. Yang, and P.~Zhang.
\newblock Characteristic learning for provable one step generation, 2024.
\newblock arXiv:2405.05512.

\bibitem{Doucet2001Sequential}
A.~Doucet, N.~Freitas, and N.~Gordon, editors.
\newblock {\em Sequential Monte Carlo Methods in Practice}.
\newblock Information Science and Statistics (ISS). Springer New York, NY, first edition, 2001.

\bibitem{Duchi2024Information}
J.~C. Duchi.
\newblock Information theory and statistics, 2024.
\newblock unfinished draft.

\bibitem{Efron2011Tweedie}
B.~Efron.
\newblock {T}weedie's formula and selection bias.
\newblock {\em Journal of the American Statistical Association}, 106(496):1602--1614, 2011.

\bibitem{Elliott2013option}
R.~J. Elliott and T.~K. Siu.
\newblock Option pricing and filtering with hidden {M}arkov-modulated pure-jump processes.
\newblock {\em Applied Mathematical Finance}, 20(1):1--25, 2013.

\bibitem{Evensen2022Data}
G.~Evensen, F.~C. Vossepoel, and P.~J. van Leeuwen.
\newblock {\em Data Assimilation Fundamentals: A Unified Formulation of the State and Parameter Estimation Problem}.
\newblock Springer Textbooks in Earth Sciences, Geography and Environment (STEGE). Springer Cham, first edition, 2022.

\bibitem{Rudiger2012Pricing}
R.~Frey and T.~Schmidt.
\newblock Pricing and hedging of credit derivatives via the innovations approach to nonlinear filtering.
\newblock {\em Finance and Stochastics}, 16:105--133, 2012.

\bibitem{Ge2020Estimating}
R.~Ge, H.~Lee, and J.~Lu.
\newblock Estimating normalizing constants for log-concave distributions: algorithms and lower bounds.
\newblock In {\em Proceedings of the 52nd Annual ACM SIGACT Symposium on Theory of Computing}, STOC 2020, pages 579--586. Association for Computing Machinery, 2020.

\bibitem{Gneiting2007Probabilistic}
T.~Gneiting, F.~Balabdaoui, and A.~E. Raftery.
\newblock Probabilistic forecasts, calibration and sharpness.
\newblock {\em Journal of the Royal Statistical Society: Series B (Statistical Methodology)}, 69(2):243--268, 2007.

\bibitem{Gordon1993Novel}
N.~Gordon, D.~Salmond, and A.~Smith.
\newblock Novel approach to nonlinear/non-{G}aussian {B}ayesian state estimation.
\newblock {\em IEE Proceedings on Radar and Signal Processing}, 140:107--113, 1993.

\bibitem{Grenioux2024Stochastic}
L.~Grenioux, M.~Noble, M.~Gabri\'{e}, and A.~Oliviero~Durmus.
\newblock Stochastic localization via iterative posterior sampling.
\newblock In {\em Proceedings of the 41st International Conference on Machine Learning}, volume 235 of {\em Proceedings of Machine Learning Research}, pages 16337--16376. PMLR, 2024.

\bibitem{guo2024gradient}
Y.~Guo, H.~Yuan, Y.~Yang, M.~Chen, and M.~Wang.
\newblock Gradient guidance for diffusion models: {A}n optimization perspective, 2024.
\newblock arXiv:2404.14743.

\bibitem{Ho2020Denoising}
J.~Ho, A.~Jain, and P.~Abbeel.
\newblock Denoising diffusion probabilistic models.
\newblock In H.~Larochelle, M.~Ranzato, R.~Hadsell, M.~Balcan, and H.~Lin, editors, {\em Advances in Neural Information Processing Systems}, volume~33, pages 6840--6851. Curran Associates, Inc., 2020.

\bibitem{houtekamer1998data}
P.~L. Houtekamer and H.~L. Mitchell.
\newblock Data assimilation using an ensemble {K}alman filter technique.
\newblock {\em Monthly weather review}, 126(3):796--811, 1998.

\bibitem{hyvarinen2005Estimation}
A.~Hyv{\"a}rinen.
\newblock Estimation of non-normalized statistical models by score matching.
\newblock {\em Journal of Machine Learning Research}, 6(24):695--709, 2005.

\bibitem{Jalal2021Robust}
A.~Jalal, M.~Arvinte, G.~Daras, E.~Price, A.~G. Dimakis, and J.~Tamir.
\newblock Robust compressed sensing {MRI} with deep generative priors.
\newblock In M.~Ranzato, A.~Beygelzimer, Y.~Dauphin, P.~Liang, and J.~W. Vaughan, editors, {\em Advances in Neural Information Processing Systems}, volume~34, pages 14938--14954. Curran Associates, Inc., 2021.

\bibitem{Jiao2023deep}
Y.~Jiao, G.~Shen, Y.~Lin, and J.~Huang.
\newblock Deep nonparametric regression on approximate manifolds: {N}onasymptotic error bounds with polynomial prefactors.
\newblock {\em The Annals of Statistics}, 51(2):691 -- 716, 2023.

\bibitem{Kantas2014Sequential}
N.~Kantas, A.~Beskos, and A.~Jasra.
\newblock Sequential {M}onte {C}arlo methods for high-dimensional inverse problems: {A} case study for the {N}avier-{S}tokes equations.
\newblock {\em SIAM/ASA Journal on Uncertainty Quantification}, 2(1):464--489, 2014.

\bibitem{Katsafados2020Numerical}
P.~Katsafados, E.~Mavromatidis, and C.~Spyrou.
\newblock {\em Numerical Weather Prediction and Data Assimilation}.
\newblock John Wiley \& Sons, Ltd, 2020.

\bibitem{Kelly2014Well}
D.~T.~B. Kelly, K.~J.~H. Law, and A.~M. Stuart.
\newblock Well-posedness and accuracy of the ensemble {K}alman filter in discrete and continuous time.
\newblock {\em Nonlinearity}, 27(10):2579, sep 2014.

\bibitem{Kitagawa1996Monte}
G.~Kitagawa.
\newblock Monte {C}arlo filter and smoother for non-{G}aussian nonlinear state space models.
\newblock {\em Journal of Computational and Graphical Statistics}, 5(1):1--25, 1996.

\bibitem{Kochkov2021Machine}
D.~Kochkov, J.~A. Smith, A.~Alieva, Q.~Wang, M.~P. Brenner, and S.~Hoyer.
\newblock Machine learning-accelerated computational fluid dynamics.
\newblock {\em Proceedings of the National Academy of Sciences}, 118(21):e2101784118, 2021.

\bibitem{kohler2021rate}
M.~Kohler and S.~Langer.
\newblock On the rate of convergence of fully connected deep neural network regression estimates.
\newblock {\em The Annals of Statistics}, 49(4):2231--2249, 2021.

\bibitem{Law2015Data}
K.~Law, A.~Stuart, and K.~Zygalakis.
\newblock {\em Data Assimilation: A Mathematical Introduction}, volume~62 of {\em Texts in Applied Mathematics (TAM)}.
\newblock Springer Cham, first edition, 2015.

\bibitem{LeDimet1986Variational}
F.-X. Le~Dimet and O.~Talagrand.
\newblock Variational algorithms for analysis and assimilation of meteorological observations: theoretical aspects.
\newblock {\em Tellus A}, 38A(2):97--110, 1986.

\bibitem{Lee2022Convergence}
H.~Lee, J.~Lu, and Y.~Tan.
\newblock Convergence for score-based generative modeling with polynomial complexity.
\newblock In S.~Koyejo, S.~Mohamed, A.~Agarwal, D.~Belgrave, K.~Cho, and A.~Oh, editors, {\em Advances in Neural Information Processing Systems}, volume~35, pages 22870--22882. Curran Associates, Inc., 2022.

\bibitem{li2024state}
Z.~Li, B.~Dong, and P.~Zhang.
\newblock State-observation augmented diffusion model for nonlinear assimilation with unknown dynamics, 2025.

\bibitem{Majda2012Filtering}
A.~J. Majda and J.~Harlim.
\newblock {\em Filtering Complex Turbulent Systems}.
\newblock Cambridge University Press, 2012.

\bibitem{Mandel2012convergence}
J.~Mandel, L.~Cobb, and J.~D. Beezley.
\newblock On the convergence of the ensemble {K}alman filter.
\newblock {\em Applications of Mathematics}, 56:533--541, 2012.

\bibitem{Moral2004Feynman}
P.~Moral.
\newblock {\em Feynman-Kac Formulae: Genealogical and Interacting Particle Systems with Applications}.
\newblock Probability and Its Applications (PIA). Springer New York, NY, first edition, 2004.

\bibitem{Patel2021GAN}
D.~V. Patel and A.~A. Oberai.
\newblock {GAN}-based priors for quantifying uncertainty in supervised learning.
\newblock {\em SIAM/ASA Journal on Uncertainty Quantification}, 9(3):1314--1343, 2021.

\bibitem{Pavliotis2014Stochastic}
G.~A. Pavliotis.
\newblock {\em Stochastic Processes and Applications: Diffusion Processes, the Fokker-Planck and Langevin Equations}, volume~60 of {\em Texts in Applied Mathematics (TAM)}.
\newblock Springer New York, NY, 2014.

\bibitem{pitt1999filtering}
M.~K. Pitt and N.~Shephard.
\newblock Filtering via simulation: {A}uxiliary particle filters.
\newblock {\em Journal of the American statistical association}, 94(446):590--599, 1999.

\bibitem{purohit2024posterior}
V.~Purohit, M.~Repasky, J.~Lu, Q.~Qiu, Y.~Xie, and X.~Cheng.
\newblock Posterior sampling via {L}angevin dynamics based on generative priors, 2024.
\newblock arXiv:2410.02078.

\bibitem{Raad2022Probabilistic}
R.~Raad, D.~Patel, C.-C. Hsu, V.~Kothapalli, D.~Ray, B.~Varghese, D.~Hwang, I.~Gill, V.~Duddalwar, and A.~A. Oberai.
\newblock Probabilistic medical image imputation via deep adversarial learning.
\newblock {\em Engineering with Computers}, 38:3975--3986, 2022.

\bibitem{Raad2024Conditional}
R.~Raad, D.~Ray, B.~Varghese, D.~Hwang, I.~Gill, V.~Duddalwar, and A.~A. Oberai.
\newblock Conditional generative learning for medical image imputation.
\newblock {\em Scientific Reports}, 14(171), 2024.

\bibitem{Rassoul2015Course}
F.~Rassoul-Agha and T.~Sepp{\"a}l{\"a}inen.
\newblock {\em A Course on Large Deviations with an Introduction to Gibbs Measures}, volume 162 of {\em Graduate Studies in Mathematics}.
\newblock American Mathematical Society (AMS), 2015.

\bibitem{Reich2019Data}
S.~Reich.
\newblock Data assimilation: {T}he {S}chr{\"o}dinger perspective.
\newblock {\em Acta Numerica}, 28:635--711, 2019.

\bibitem{Reich2015Probabilistic}
S.~Reich and C.~Cotter.
\newblock {\em Probabilistic Forecasting and Bayesian Data Assimilation}.
\newblock Cambridge University Press, 2015.

\bibitem{Rozet2023score}
F.~Rozet and G.~Louppe.
\newblock Score-based data assimilation.
\newblock In A.~Oh, T.~Naumann, A.~Globerson, K.~Saenko, M.~Hardt, and S.~Levine, editors, {\em Advances in Neural Information Processing Systems}, volume~36, pages 40521--40541. Curran Associates, Inc., 2023.

\bibitem{rozet2023scoreqg}
F.~Rozet and G.~Louppe.
\newblock Score-based data assimilation for a two-layer quasi-geostrophic model, 2023.
\newblock arXiv:2310.01853.

\bibitem{Sacher2008Sampling}
W.~Sacher and P.~Bartello.
\newblock Sampling errors in ensemble {K}alman filtering. {P}art {I}: {T}heory.
\newblock {\em Monthly Weather Review}, 136:3035--3049, 2008.

\bibitem{Sarkka20232023Bayesian}
S.~S{\"a}rkk{\"a} and L.~Svensson.
\newblock {\em Bayesian filtering and smoothing}.
\newblock Institute of Mathematical Statistics Textbooks. Cambridge University Press, second edition, 2023.

\bibitem{schmidt2020nonparametric}
J.~Schmidt-Hieber.
\newblock Nonparametric regression using deep neural networks with {ReLU} activation function.
\newblock {\em The Annals of Statistics}, 48(4):1875--1897, 2020.

\bibitem{si2024latent}
P.~Si and P.~Chen.
\newblock {Latent-EnSF}: {A} latent ensemble score filter for high-dimensional data assimilation with sparse observation data, 2024.
\newblock arXiv:2409.00127.

\bibitem{Snyder2008Obstacles}
C.~Snyder, T.~Bengtsson, P.~Bickel, and J.~Anderson.
\newblock Obstacles to high-dimensional particle filtering.
\newblock {\em Monthly weather review}, 136:4629--4640, 2008.

\bibitem{song2023pseudo}
J.~Song, A.~Vahdat, M.~Mardani, and J.~Kautz.
\newblock Pseudoinverse-guided diffusion models for inverse problems.
\newblock In {\em International Conference on Learning Representations}, 2023.

\bibitem{Song2023Loss}
J.~Song, Q.~Zhang, H.~Yin, M.~Mardani, M.-Y. Liu, J.~Kautz, Y.~Chen, and A.~Vahdat.
\newblock Loss-guided diffusion models for plug-and-play controllable generation.
\newblock In A.~Krause, E.~Brunskill, K.~Cho, B.~Engelhardt, S.~Sabato, and J.~Scarlett, editors, {\em Proceedings of the 40th International Conference on Machine Learning}, volume 202 of {\em Proceedings of Machine Learning Research}, pages 32483--32498. PMLR, 23--29 Jul 2023.

\bibitem{Song2019Generative}
Y.~Song and S.~Ermon.
\newblock Generative modeling by estimating gradients of the data distribution.
\newblock In H.~Wallach, H.~Larochelle, A.~Beygelzimer, F.~d\textquotesingle Alch\'{e}-Buc, E.~Fox, and R.~Garnett, editors, {\em Advances in Neural Information Processing Systems}, volume~32. Curran Associates, Inc., 2019.

\bibitem{song2020Sliced}
Y.~Song, S.~Garg, J.~Shi, and S.~Ermon.
\newblock Sliced score matching: {A} scalable approach to density and score estimation.
\newblock In R.~P. Adams and V.~Gogate, editors, {\em Proceedings of The 35th Uncertainty in Artificial Intelligence Conference}, volume 115 of {\em Proceedings of Machine Learning Research}, pages 574--584. PMLR, 22--25 Jul 2020.

\bibitem{song2021score}
Y.~Song, J.~Sohl-Dickstein, D.~P. Kingma, A.~Kumar, S.~Ermon, and B.~Poole.
\newblock Score-based generative modeling through stochastic differential equations.
\newblock In {\em International Conference on Learning Representations}, 2021.

\bibitem{Spantini2022Coupling}
A.~Spantini, R.~Baptista, and Y.~Marzouk.
\newblock Coupling techniques for nonlinear ensemble filtering.
\newblock {\em SIAM Review}, 64(4):921--953, 2022.

\bibitem{Sullivan2015Introduction}
T.~Sullivan.
\newblock {\em Introduction to Uncertainty Quantification}, volume~63 of {\em Texts in Applied Mathematics (TAM)}.
\newblock Springer Cham, first edition, 2015.

\bibitem{Tang2024Adaptivity}
R.~Tang and Y.~Yang.
\newblock Adaptivity of diffusion models to manifold structures.
\newblock In S.~Dasgupta, S.~Mandt, and Y.~Li, editors, {\em Proceedings of The 27th International Conference on Artificial Intelligence and Statistics}, volume 238 of {\em Proceedings of Machine Learning Research}, pages 1648--1656. PMLR, 02--04 May 2024.

\bibitem{Thelen2022comprehensive1}
A.~Thelen, X.~Zhang, O.~Fink, Y.~Lu, S.~Ghosh, D.~Byeng~Youn, M.~D. Todd, S.~Mahadevan, C.~Hu, and Z.~Hu.
\newblock A comprehensive review of digital twin -- part 1: modeling and twinning enabling technologies.
\newblock {\em Structural and Multidisciplinary Optimization}, 65(354), 2022.

\bibitem{Thelen2022comprehensive2}
A.~Thelen, X.~Zhang, O.~Fink, Y.~Lu, S.~Ghosh, D.~Byeng~Youn, M.~D. Todd, S.~Mahadevan, C.~Hu, and Z.~Hu.
\newblock A comprehensive review of digital twin -- part 2: roles of uncertainty quantification and optimization, a battery digital twin, and perspectives.
\newblock {\em Structural and Multidisciplinary Optimization}, 66(1), 2023.

\bibitem{Tong2016Nonlinear}
X.~T. Tong, A.~J. Majda, and D.~Kelly.
\newblock Nonlinear stability and ergodicity of ensemble based {K}alman filters.
\newblock {\em Nonlinearity}, 29(2):657, jan 2016.

\bibitem{Tsybakov2009Introduction}
A.~B. Tsybakov.
\newblock {\em Introduction to Nonparametric Estimation}.
\newblock Springer Series in Statistics (SSS). Springer New York, NY, first edition, 2009.

\bibitem{Vempala2019Rapid}
S.~Vempala and A.~Wibisono.
\newblock Rapid convergence of the unadjusted {L}angevin algorithm: {I}soperimetry suffices.
\newblock In H.~Wallach, H.~Larochelle, A.~Beygelzimer, F.~d\textquotesingle Alch\'{e}-Buc, E.~Fox, and R.~Garnett, editors, {\em Advances in Neural Information Processing Systems}, volume~32. Curran Associates, Inc., 2019.

\bibitem{Villani2009Optimal}
C.~Villani.
\newblock {\em Optimal Transport: Old and New}, volume 338 of {\em Grundlehren der mathematischen Wissenschaften (GL)}.
\newblock Springer Berlin, Heidelberg, first edition, 2009.

\bibitem{Vincent2011Connection}
P.~Vincent.
\newblock A connection between score matching and denoising autoencoders.
\newblock {\em Neural Computation}, 23(7):1661--1674, 2011.

\bibitem{Wainwright2019high}
M.~J. Wainwright.
\newblock {\em High-Dimensional Statistics: A Non-Asymptotic Viewpoint}.
\newblock Cambridge Series in Statistical and Probabilistic Mathematics. Cambridge University Press, 2019.

\bibitem{wu2024annealing}
D.~Wu and Y.~Xie.
\newblock Annealing flow generative models towards sampling high-dimensional and multi-modal distributions, 2025.

\bibitem{yang2025generative}
S.~Yang, C.~Nai, X.~Liu, W.~Li, J.~Chao, J.~Wang, L.~Wang, X.~Li, X.~Chen, B.~Lu, et~al.
\newblock Generative assimilation and prediction for weather and climate.
\newblock {\em arXiv preprint arXiv:2503.03038}, 2025.

\bibitem{zheng2025ensemble}
H.~Zheng, W.~Chu, A.~Wang, N.~B. Kovachki, R.~Baptista, and Y.~Yue.
\newblock Ensemble {K}alman diffusion guidance: {A} derivative-free method for inverse problems.
\newblock {\em Transactions on Machine Learning Research}, 2025.

\end{thebibliography}

\appendix


\section*{Outline of Appendices}
\par The supplementary material comprises several appendices containing notation summaries, additional derivations, theoretical proofs, experimental results, and implementation details:
\begin{enumerate}[(I)]
\item Appendix~\ref{appendix:notations} provides a summary of notation used throughout the paper.
\item Appendix~\ref{appendix:method} details the recursive Bayesian filtering framework underlying our approach and presents derivations of denoising score matching.
\item Appendices~\ref{appendix:convergence} to~\ref{appendix:auxiliary} collect the proofs of all theoretical results.
\begin{enumerate}
\item Appendix~\ref{appendix:convergence} proves the main results of Section~\ref{section:convergence}; 
supporting results are established in the subsequent appendices.
\item Appendix~\ref{appendix:convergence:decomposition} decomposes the posterior sampling error 
into three components: the Langevin Monte Carlo convergence error, the prior error, 
and the score estimation error.
\item Appendix~\ref{appendix:LMC} establishes a convergence rate for Langevin Monte Carlo.
\item Appendix~\ref{appendix:prior} bounds the prior error.
\item Appendix~\ref{appendix:score} derives a bound on the score estimation error.
\item Appendix~\ref{appendix:init} provides an error bound for posterior sampling 
at the initial time step.
\item Appendix~\ref{appendix:wasserstein} establishes relation between TV error and Wasserstein error.
\item Appendix~\ref{appendix:auxiliary} collects auxiliary lemmas used throughout the proofs.
\end{enumerate}
\item Appendix~\ref{section:experiments:appendix} presents additional numerical experiments:
\begin{enumerate}
    \item Section~\ref{section:experiment:appendix:linearGaussian} evaluates SSLS on a linear Gaussian state-space model, where analytical solution is given by Kalman filter. 
    \item Section~\ref{section:experiments:appendix:Lorenz} evaluates SSLS on the benchmark Lorenz-96 problem, and compares it with APF.
    \item Section~\ref{section:experiments:appendix:Kolmogorov} compares SSLS with the baseline EnKG method \cite{zheng2025ensemble}.
    \item Section~\ref{section:experiments:appendix:budget} reports the detailed computational time and memory load of different assimilation methods.
    \item Section~\ref{appendix:experiment:sensitivity} reports the sensitivity analysis of the hyper-parameters of SSLS, consistent with the theoretical results.
\end{enumerate}
\item The implementation details are documented in Appendix~\ref{appendix:experiment:details}.
\end{enumerate}

\section{A Summary of Notations}\label{appendix:notations}

\par Table~\ref{table:notations} summarizes the notations used in Sections~\ref{section:method} and~\ref{section:convergence} for easy reference and cross-checking.
\begin{table}[htbp]
\caption{The list of notations defined in Sections~\ref{section:method} and~\ref{section:convergence}.}
\centering 
\begin{tabular}{cl}
\toprule 
Symbols & Description \\ 
\midrule 
$\calF_{k}$ & The dynamics model at the $k$-th time step, defined as~\eqref{eq:dynamic}. \\
$\calG_{k}$ & The measurement model at the $k$-th time step, defined as~\eqref{eq:measurement}. \\ 
\midrule 
$\mX_{k}$ & The state at the $k$-th time step. \\ 
$\mY_{k}$ & The observation of $\mX_{k}$. \\ 
$\what{\mX}_{k}$ & The estimated state at the $k$-th time step using SSLS. \\
$\underline{\mX}_{k}$ & \makecell[l]{The predicted state at the $k$-th time step using the dynamics model,\\ defined as~\eqref{eq:predict}.} \\  
\midrule
$\rho_{k}$ & The state transition density at the $k$-th time step, specified by~\eqref{eq:dynamic}. \\ 
$g_{k}$ & The measurement likelihood at the $k$-th time step, specified by~\eqref{eq:measurement}. \\
$\pi_{k}$ & The posterior at the $k$-th time step, defined as~\eqref{eq:assimilation}. \\
$\what{\pi}_{k}$ or $\what{\pi}_{k}^{T}$ & \makecell[l]{The estimated posterior at the $k$-th time step using SSLS \\ with terminal time $T$, which is the law of $\what{\mX}_{k}$.}. \\
$q_{k}$ & \makecell[l]{The prediction distribution  at the $k$-th time step, defined as~\eqref{eq:prediction:distribution}, \\ serving as the prior in the posterior $\pi_{k}$.} \\
$\what{q}_{k}$ & \makecell[l]{The approximated prediction distribution at the $k$-th time step, \\ defined as~\eqref{eq:prediction:distribution:hat}, which is the law of $\underline{\mX}_{k}$.} \\
\midrule
$\mZ_{t}$ & The stochastic process specified by the Langevin diffusion~\eqref{eq:method:update:LD}. \\
$\what{\mZ}_{t}$ & The stochastic process specified by the score-based Langevin sampling~\eqref{eq:section:convergence:SLMC}. \\
$\what{\vb}_{k}$ & The drift term in the score-based Langevin Monte Carlo~\eqref{eq:method:update:posterior:score}. \\
\midrule
$(\beta_{m})_{m=1}^{M}$ & A sequence of inverse temperatures for annealing. \\
$\what{\mZ}_{t}^{m}$ & \makecell[l]{The stochastic process specified by the annealed Langevin \\ Monte Carlo~\eqref{eq:method:update:ALMC} with a inverse temperature $\beta_{m}$.}  \\
$\what{\vb}_{k}^{m}$ & \makecell[l]{The drift term in the score-based annealed Langevin \\ Monte Carlo~\eqref{eq:method:update:ALMC:score} with a inverse temperature $\beta_{m}$.} \\
\bottomrule
\end{tabular}
\label{table:notations}
\end{table}

\section{Proofs in Section~\ref{section:method}}
\label{appendix:method}

\par In this section, we provide proofs in Section~\ref{section:method}. The derivation of the recursive Bayesian framework is shown in Appendix~\ref{appendix:method:recursive}, and the proof of the denoising score matching is demonstrated in Appendix~\ref{appendix:method:sm}. 

\subsection{Recursive Bayesian framework}
\label{appendix:method:recursive}

\par This section verifies the recursion~\eqref{eq:prediction:recursion}. Indeed,
\begin{align*}
&\pi_{k+1}(\vx_{k+1}|\vy_{[k+1]}) \\
&=\frac{p_{\mY_{k+1}|\mX_{k+1},\mY_{[k]}}(\vy_{k+1}|\vx_{k+1},\vy_{[k]})}{p_{\mY_{k+1}|\mY_{[k]}}(\vy_{k+1}|\vy_{[k]})}p_{\mX_{k+1}|\mY_{[k]}}(\vx_{k+1}|\vy_{[k]}) \\
&=\frac{g_{k+1}(\vy_{k+1}|\vx_{k+1})}{p_{\mY_{k+1}|\mY_{[k]}}(\vy_{k+1}|\vy_{[k]})}\int p_{\mX_{k+1}|\mX_{k},\mY_{[k]}}(\vx_{k+1}|\vx_{k},\vy_{[k]})\pi_{k}(\vx_{k}|\vy_{[k]})\d\vx_{k} \\
&=\frac{g_{k+1}(\vy_{k+1}|\vx_{k+1})}{p_{\mY_{k+1}|\mY_{[k]}}(\vy_{k+1}|\vy_{[k]})}\int\rho_{k}(\vx_{k+1}|\vx_{k})\pi_{k}(\vx_{k}|\vy_{[k]})\d\vx_{k},
\end{align*}
where the first equality follows from Bayes's rule. The second equality invokes Chapman-Kolmogorov identity and the fact that $\mY_{k+1}$ is independent of $\mY_{[k]}$ given $\mX_{k+1}$. The last equality is owing to the fact that $\mX_{k+1}$ is independent of $\mY_{[k]}$ given $\mX_{k}$.

\subsection{Denoising score matching}
\label{appendix:method:sm}

\par In this section, we provide the derivations of the denoising score matching, which has been proven by~\cite{Vincent2011Connection,Ho2020Denoising,Song2019Generative}.

\par Let $\underline{\mX}_{k+1}$ be a random variable drawn from the prediction distribution $\what{q}_{k+1}(\cdot|\vy_{[k]})$, and let $\vepsilon$ be a standard Gaussian noise independent of $\underline{\mX}_{k+1}$. For each fixed noise level $\sigma>0$, define 
\begin{equation}\label{eq:appendix:dsm:1}
\underline{\mX}_{k+1}^{\sigma}=\underline{\mX}_{k+1}+\sigma\vepsilon.
\end{equation}
It is evident that $\underline{\mX}_{k+1}^{\sigma}$ obeys the Gaussian smoothed prediction distribution, that is,
\begin{align*}
q_{k+1}^{\sigma}(\vx^{\sigma}|\vy_{[k]})
&:=\int p_{\underline{\mX}_{k+1}^{\sigma}|\underline{\mX}_{k+1}}(\vx^{\sigma}|\vx)\what{q}_{k+1}(\vx|\vy_{[k]})\d\vx \\
&=\int\underbrace{\gamma_{d}(\vx^{\sigma};\vx,\sigma^{2}\mI_{d})}_{\text{Gaussian kernel}}\what{q}_{k+1}(\vx|\vy_{[k]})\d\vx.
\end{align*}
Observe that the score of the Gaussian kernel is given as 
\begin{equation}\label{eq:appendix:dsm:2}
\nabla_{\vx^{\sigma}}\log p_{\underline{\mX}_{k+1}^{\sigma}|\underline{\mX}_{k+1}}(\vx^{\sigma}|\vx)=-\frac{\vx^{\sigma}-\vx}{\sigma^{2}}.
\end{equation}

\par\textbf{\bfseries Step (I).} We first show that for each function $\vs$,
\begin{equation}\label{eq:appendix:dsm:3}
\begin{aligned}
&\bbE_{\underline{\mX}_{k+1}^{\sigma}\sim p_{\underline{\mX}_{k+1}^{\sigma}|\mY_{[k]}}(\cdot|\vy_{[k]})}\big[\|\vs(\underline{\mX}_{k+1}^{\sigma},\vy_{[k]})-\nabla_{\vx^{\sigma}}\log p_{\underline{\mX}_{k+1}^{\sigma}|\mY_{[k]}}(\underline{\mX}_{k+1}^{\sigma}|\vy_{[k]})\|_{2}^{2}\big] \\
&=\bbE_{\underline{\mX}_{k+1}\sim\what{q}_{k+1}(\cdot|\vy_{[k]})}\bbE_{\underline{\mX}_{k+1}^{\sigma}\sim p_{\underline{\mX}_{k+1}^{\sigma}|\underline{\mX}_{k+1}}}\big[\|\vs(\underline{\mX}_{k+1}^{\sigma},\vy_{[k]}) \\
&\quad-\nabla_{\vx_{\sigma}}\log p_{\underline{\mX}_{k+1}^{\sigma}|\underline{\mX}_{k+1}}(\underline{\mX}_{k+1}^{\sigma}|\underline{\mX}_{k+1})\|_{2}^{2}\big]+c,
\end{aligned} 
\end{equation}
where $c$ is a constant independent of $\vs$. Indeed, 
\begin{equation}\label{eq:appendix:dsm:4}
\begin{aligned}
&\bbE_{\underline{\mX}_{k+1}^{\sigma}\sim p_{\underline{\mX}_{k+1}^{\sigma}|\mY_{[k]}}(\cdot|\vy_{[k]})}\big[\|\vs(\underline{\mX}_{k+1}^{\sigma},\vy_{[k]})-\nabla_{\vx_{\sigma}}\log p_{\underline{\mX}_{k+1}^{\sigma}|\mY_{[k]}}(\underline{\mX}_{k+1}^{\sigma}|\vy_{[k]})\|_{2}^{2}\big] \\
&=\underbrace{\bbE_{\underline{\mX}_{k+1}^{\sigma}\sim p_{\underline{\mX}_{k+1}^{\sigma}|\mY_{[k]}}(\cdot|\vy_{[k]})}\big[\|\vs(\underline{\mX}_{k+1}^{\sigma},\vy_{[k]})\|_{2}^{2}\big]}_{\text{(i)}} \\
&\quad-2\underbrace{\bbE_{\underline{\mX}_{k+1}^{\sigma}\sim p_{\underline{\mX}_{k+1}^{\sigma}|\mY_{[k]}}(\cdot|\vy_{[k]})}\big[\vs(\underline{\mX}_{k+1}^{\sigma},\vy_{[k]})\cdot\nabla_{\vx_{\sigma}}\log p_{\underline{\mX}_{k+1}^{\sigma}|\mY_{[k]}}(\underline{\mX}_{k+1}^{\sigma}|\vy_{[k]})\big]}_{\text{(i)}}+c_{1},
\end{aligned} 
\end{equation}
where $c_{1}$ is a constant independent of $\vs$. For the term (i) in~\eqref{eq:appendix:dsm:4}, we have 
\begin{align}
&\bbE_{\underline{\mX}_{k+1}^{\sigma}\sim p_{\underline{\mX}_{k+1}^{\sigma}|\mY_{[k]}}(\cdot|\vy_{[k]})}\big[\|\vs(\underline{\mX}_{k+1}^{\sigma},\vy_{[k]})\|_{2}^{2}\big] \nonumber \\
&=\int\|\vs(\vx_{\sigma},\vy_{[k]})\|_{2}^{2}p_{\underline{\mX}_{k+1}^{\sigma}|\mY_{[k]}}(\vx_{\sigma}|\vy_{[k]})\d\vx_{\sigma} \nonumber \\
&=\int\|\vs(\vx_{\sigma},\vy_{[k]})\|_{2}^{2}\Big(\int p_{\underline{\mX}_{k+1}^{\sigma}|\underline{\mX}_{k+1},\mY_{[k]}}(\vx_{\sigma}|\vx,\vy_{[k]})q_{k+1}(\vx|\vy_{[k]})\d\vx\Big)\d\vx_{\sigma} \nonumber \\
&=\int\|\vs(\vx_{\sigma},\vy_{[k]})\|_{2}^{2}\Big(\int p_{\underline{\mX}_{k+1}^{\sigma}|\underline{\mX}_{k+1}}(\vx_{\sigma}|\vx)q_{k+1}(\vx|\vy_{[k]})\d\vx\Big)\d\vx_{\sigma} \nonumber \\
&=\int\Big(\int\|\vs(\vx_{\sigma},\vy_{[k]})\|_{2}^{2}p_{\underline{\mX}_{k+1}^{\sigma}|\underline{\mX}_{k+1}}(\vx_{\sigma}|\vx)\d\vx_{\sigma}\Big)q_{k+1}(\vx|\vy_{[k]})\d\vx \nonumber \\
&=\bbE_{\underline{\mX}_{k+1}\sim\what{q}_{k+1}(\cdot|\vy_{[k]})}\bbE_{\underline{\mX}_{k+1}^{\sigma}\sim p_{\underline{\mX}_{k+1}^{\sigma}|\underline{\mX}_{k+1}}}\big[\|\vs(\underline{\mX}_{k+1}^{\sigma},\vy_{[k]})\|_{2}^{2}\big], \label{eq:appendix:dsm:5}
\end{align}
where the second equality follows from Chapman-Kolmogorov identity, the third equality is due to the fact that $\underline{\mX}_{k+1}^{\sigma}$ is conditionally independent of $\mY_{[k]}$ given $\underline{\mX}_{k+1}$. For the term (ii) in~\eqref{eq:appendix:dsm:4}, by the same argument, 
\begin{align}
&\bbE_{\underline{\mX}_{k+1}^{\sigma}\sim p_{\underline{\mX}_{k+1}^{\sigma}|\mY_{[k]}}(\cdot|\vy_{[k]})}\big[\vs(\underline{\mX}_{k+1}^{\sigma},\vy_{[k]})\cdot\nabla_{\vx_{\sigma}}\log p_{\underline{\mX}_{k+1}^{\sigma}|\mY_{[k]}}(\underline{\mX}_{k+1}^{\sigma}|\vy_{[k]})\big] \nonumber \\
&=\int\vs(\vx_{\sigma},\vy_{[k]})\cdot\nabla_{\vx_{\sigma}}p_{\underline{\mX}_{k+1}^{\sigma}|\mY_{[k]}}(\vx_{\sigma}|\vy_{[k]})\d\vx_{\sigma} \nonumber \\
&=\int\vs(\vx_{\sigma},\vy_{[k]})\cdot\nabla_{\vx_{\sigma}}\Big(\int p_{\underline{\mX}_{k+1}^{\sigma}|\underline{\mX}_{k+1}}(\vx_{\sigma}|\vx)q_{k+1}(\vx|\vy_{[k]})\d\vx\Big)\d\vx_{\sigma} \nonumber \\
&=\int\vs(\vx_{\sigma},\vy_{[k]})\cdot\Big(\int\nabla_{\vx_{\sigma}}p_{\underline{\mX}_{k+1}^{\sigma}|\underline{\mX}_{k+1}}(\vx_{\sigma}|\vx)q_{k+1}(\vx|\vy_{[k]})\d\vx\Big)\d\vx_{\sigma} \nonumber \\
&=\int\Big(\int\vs(\vx_{\sigma},\vy_{[k]})\cdot\nabla_{\vx_{\sigma}}p_{\underline{\mX}_{k+1}^{\sigma}|\underline{\mX}_{k+1}}(\vx_{\sigma}|\vx)\d\vx_{\sigma}\Big)q_{k+1}(\vx|\vy_{[k]})\d\vx \nonumber \\
&=\bbE_{\underline{\mX}_{k+1}\sim\what{q}_{k+1}(\cdot|\vy_{[k]})}\bbE_{\underline{\mX}_{k+1}^{\sigma}\sim p_{\underline{\mX}_{k+1}^{\sigma}|\underline{\mX}_{k+1}}}\big[\vs(\underline{\mX}_{k+1}^{\sigma},\vy_{[k]})\cdot\nabla\log p_{\underline{\mX}_{k+1}^{\sigma}|\underline{\mX}_{k+1}}(\underline{\mX}_{k+1}^{\sigma}|\underline{\mX}_{k+1})\big]. \label{eq:appendix:dsm:6}
\end{align}
Plugging~\eqref{eq:appendix:dsm:5} and~\eqref{eq:appendix:dsm:6} into~\eqref{eq:appendix:dsm:4} completes the proof of~\eqref{eq:appendix:dsm:3}. 

\par\textbf{\bfseries Step (II).} We next reformulate the right-hand side of~\eqref{eq:appendix:dsm:3} as 
\begin{align*}
&\bbE_{\underline{\mX}_{k+1}\sim\what{q}_{k+1}(\cdot|\vy_{[k]})}\bbE_{\underline{\mX}_{k+1}^{\sigma}\sim p_{\underline{\mX}_{k+1}^{\sigma}|\underline{\mX}_{k+1}}}\big[\|\vs(\underline{\mX}_{k+1}^{\sigma},\vy_{[k]})-\nabla_{\vx}\log p_{\underline{\mX}_{k+1}^{\sigma}|\underline{\mX}_{k+1}}(\underline{\mX}_{k+1}^{\sigma}|\underline{\mX}_{k+1})\|_{2}^{2}\big] \\
&=\bbE_{\underline{\mX}_{k+1}\sim\what{q}_{k+1}(\cdot|\vy_{[k]})}\bbE_{\underline{\mX}_{k+1}^{\sigma}\sim p_{\underline{\mX}_{k+1}^{\sigma}|\underline{\mX}_{k+1}}}\big[\big\|\vs(\underline{\mX}_{k+1}^{\sigma},\vy_{[k]})+\frac{1}{\sigma^{2}}(\underline{\mX}_{k+1}^{\sigma}-\underline{\mX}_{k+1})\big\|_{2}^{2}\big] \\
&=\frac{1}{\sigma^{2}}\bbE_{\underline{\mX}_{k+1}\sim\what{q}_{k+1}(\cdot|\vy_{[k]})}\bbE_{\vepsilon\sim\mathcal{N}(\mathbf{0},\mI_{d})}\big[\|\sigma\vs(\underline{\mX}_{k+1}+\sigma\vepsilon,\vy_{[k]})+\vepsilon\|_{2}^{2}\big],
\end{align*}
where the first equality is due to~\eqref{eq:appendix:dsm:2}, and the second equality used~\eqref{eq:appendix:dsm:1}. Combining this with~\eqref{eq:appendix:dsm:3} yields 
\begin{equation*}
\begin{aligned}
&\bbE_{\underline{\mX}_{k+1}^{\sigma}\sim p_{\underline{\mX}_{k+1}^{\sigma}|\mY_{[k]}}(\cdot|\vy_{[k]})}\Big[\|\vs(\underline{\mX}_{k+1}^{\sigma},\vy_{[k]})-\nabla\log p_{\underline{\mX}_{k+1}^{\sigma}|\mY_{[k]}}(\underline{\mX}_{k+1}^{\sigma}|\vy_{[k]})\|_{2}^{2}\Big] \\
&=\frac{1}{\sigma^{2}}\bbE_{\underline{\mX}_{k+1}\sim\what{q}_{k+1}(\cdot|\vy_{[k]})}\bbE_{\vepsilon\sim\mathcal{N}(\mathbf{0},\mI_{d})}\Big[\|\sigma\vs(\underline{\mX}_{k+1}+\sigma\vepsilon,\vy_{[k]})+\vepsilon\|_{2}^{2}\Big]+c,
\end{aligned} 
\end{equation*}
which achieves the population risk of the denoising score matching. Consequently, 
\begin{equation*}
\nabla_{\vx_{\sigma}}\log q_{k+1}^{\sigma}(\cdot|\vy_{[k+1]})=\argmin_{\vs}\bbE_{\underline{\mX}_{k+1}\sim\what{q}_{k+1}(\cdot|\vy_{[k]})}\bbE_{\vepsilon\sim\mathcal{N}(\mathbf{0},\mI_{d})}\big[\|\sigma\vs(\underline{\mX}_{k+1}+\sigma\vepsilon,\vy_{[k]})+\vepsilon\|_{2}^{2}\big].
\end{equation*}
Finally, approximate the above population risk by its empirical counterpart yields~\eqref{eq:dsm}.

\section{Proofs in Section~\ref{section:convergence}}
\label{appendix:convergence}

\par In this section, we provide proofs of results in Section~\ref{section:convergence}.

\begin{proof}[Proof of Proposition~\ref{proposition:lower:bound}]
For any $t\in(0,1)$, we have 
\begin{align*}
\frac{\d}{\dt}\log q_{k+1}(\vx_{*}+t(\vx-\vx_{*})|\vy_{[k]}) 
&= \langle \nabla\log q_{k+1}(\vx_{*}+t(\vx-\vx_{*})|\vy_{[k]}),\vx-\vx_{*} \rangle \\
&\geq -\|\nabla\log q_{k+1}(\vx_{*}+t(\vx-\vx_{*})|\vy_{[k]})\|_{2}\|\vx-\vx_{*}\|_{2} \\
&\geq -B(1+\|\vx_{*}+t(\vx-\vx_{*})\|_{2})\|\vx-\vx_{*}\|_{2} \\
&\geq -B(1+\|\vx_{*}\|_{2}+t\|\vx-\vx_{*}\|_{2})\|\vx-\vx_{*}\|_{2} \\
&= -B(1+\|\vx_{*}\|_{2})\|\vx-\vx_{*}\|_{2}-tB\|\vx-\vx_{*}\|_{2}^{2} \\
&\geq -B(1+\|\vx_{*}\|_{2}^{2})-\frac{B}{2}\|\vx-\vx_{*}\|_{2}^{2}-tB\|\vx-\vx_{*}\|_{2}^{2},
\end{align*}
where the first inequality is due to Cauchy-Schwarz inequality, the second inequality is owing to Assumption~\ref{assumption:bounded} (ii), and third inequality holds from the triangular inequality, and the last inequality invokes AM-GM inequality $ab\leq\frac{a^{2}}{2}+\frac{b^{2}}{2}$ and $(a+b)^{2}\leq 2a^{2}+2b^{2}$. Taking integral on both sides of the inequality yields
\begin{align*}
\log q_{k+1}(\vx|\vy_{[k]})-\log q_{k+1}(\vx_{*}|\vy_{[k]})
&= \int_{0}^{1}\frac{\d}{\dt}\log q_{k+1}(\vx_{*}+t(\vx-\vx_{*})|\vy_{[k]})\dt \\
&\geq -B(1+\|\vx_{*}\|_{2}^{2})-B\|\vx-\vx_{*}\|_{2}^{2} \\
&\geq -B(1+3\|\vx_{*}\|_{2}^{2})-2B\|\vx\|_{2}^{2},
\end{align*}
where the last inequality used the triangular inequality. Therefore, 
\begin{equation*}
\log q_{k+1}(\vx|\vy_{[k]})\geq\log q_{k+1}(\vx_{*}|\vy_{[k]})-B(1+3\|\vx_{*}\|_{2}^{2})-2B\|\vx\|_{2}^{2},
\end{equation*}
which implies 
\begin{equation*}
q_{k+1}(\vx|\vy_{[k]})\geq \frac{q_{k+1}(\vx_{*}|\vy_{[k]})}{\exp(B(1+3\|\vx_{*}\|_{2}^{2}))}\exp(-2B\|\vx\|_{2}^{2}).
\end{equation*}
This completes the proof.
\end{proof}

\begin{proof}[Proof of Theorem~\ref{theorem:section:convergence}]
Substituting Lemmas~\ref{lemma:appendix:discretization},~\ref{appendix:prior:3}, and~\ref{proposition:sm:error} into Lemma~\ref{lemma:appendix:convergence:decomposition}
completes the proof.
\end{proof}

\begin{proof}[Proof of Corollary~\ref{corollary:section:convergence}]
A direct conclusion of Theorem~\ref{theorem:section:convergence}. 
\end{proof}

\begin{proof}[Proof of Theorem~\ref{theorem:convergence:init}]
A direct conclusion of Lemma~\ref{lemma:appendix:init}.
\end{proof}

\begin{proof}[Proof of Theorem~\ref{theorem:section:convergence:assimilation}]
A direct conclusion of Corollary~\ref{corollary:section:convergence} and Theorem~\ref{theorem:convergence:init}.
\end{proof}

\begin{proof}[Proof of Corollary~\ref{corollary:section:convergence:wasserstein}]
Combining Theorem~\ref{theorem:section:convergence:assimilation} and Lemma~\ref{lemma:w1:tv} completes the proof.
\end{proof}

\section{Error Decomposition of Posterior Sampling}
\label{appendix:convergence:decomposition}

\par Recall the Langevin diffusion for the ($k+1$)-th time step of the data assimilation 
\begin{equation}\label{eq:LD}
\d\mZ_{t}=\nabla_{\vx}\log\pi_{k+1}(\mZ_{t}|\vy_{[k+1]})\dt+\sqrt{2}\d\mB_{t}, \quad \mZ_{0}\sim\pi_{k+1}^{0}(\cdot|\vy_{[k+1]}),~t\geq 0.
\end{equation}
Denote by $\pi_{k+1}^{t}$ the law of $\mZ_{t}$ for each $t\geq 0$. The Langevin Monte Carlo is defined as the Euler-Maruyama discretization of the Langevin diffusion. The interpolation of the Langevin Monte Carlo is given as, for each $0\leq\ell\leq K-1$,
\begin{equation}\label{eq:LMC}
\d\bar{\mZ}_{t}=\nabla_{\vx}\log\pi_{k+1}(\bar{\mZ}_{\ell h}|\vy_{[k+1]})\dt+\sqrt{2}\d\mB_{t}, \quad \ell h\leq t\leq(\ell+1)h,
\end{equation}
where $\bar{\mZ}_{0}\sim\pi_{k+1}^{0}(\cdot|\vy_{[k+1]})$. Denote by $\bar{\pi}_{k+1}^{t}$ the law of $\bar{\mZ}_{t}$ for each $0\leq t\leq Kh=T$. We next introduce the interpolation of the score-based Langevin Monte Carlo
\begin{equation}\label{eq:SLMC}
\d\what{\mZ}_{t}=\what{\vb}_{k+1}(\what{\mZ}_{\ell h}|\vy_{[k+1]})\dt+\sqrt{2}\d\mB_{t}, \quad \ell h\leq t\leq(\ell+1)h,
\end{equation}
where $\what{\mZ}_{0}\sim\pi_{k+1}^{0}(\cdot|\vy_{[k+1]})$, and the estimator of posterior score function is given as 
\begin{equation*}
\what{\vb}_{k+1}(\vx|\vy_{[k+1]})=\nabla_{\vx}\log g_{k+1}(\vy_{k+1}|\vx)+\what{\vs}_{k+1}(\vx,\vy_{[k]}).
\end{equation*}
Here the prediction score $\what{\vs}_{k+1}$ is defined as~\eqref{eq:dsm}. Denote by $\what{\pi}_{k+1}^{t}$ the law of $\what{\mZ}_{t}$ for each $0\leq t\leq Kh=T$. Recall the prediction distribution~\eqref{eq:prediction:distribution}
\begin{equation*}
q_{k+1}(\vx|\vy_{[k]})=\int\rho_{k}(\vx|\vx_{k})\pi_{k}(\vx_{k}|\vy_{[k]})\d\vx_{k},
\end{equation*}
which serves as the prior in the recursive Bayesian framework. Recall the approximated prediction distribution~\eqref{eq:prediction:distribution:hat}
\begin{equation*}
\what{q}_{k+1}(\vx|\vy_{[k]}):=\int\rho_{k}(\vx|\vx_{k})\what{\pi}_{k}^{T}(\vx_{k}|\vy_{[k]})\d\vx_{k}.
\end{equation*} 

\par The following lemma decomposes the TV distance between $\pi_{k+1}$ and $\what{\pi}_{k+1}^{T}$ into three parts: the convergence of the Langevin Monte Carlo, the prior error, and the error of score matching.

\begin{lemma}[Error decomposition]
\label{lemma:appendix:convergence:decomposition}
Suppose Assumptions~\ref{assumption:posterior:smooth} and~\ref{assumption:LSI:posterior} hold. Let $\pi_{k+1}$ be the stationary distribution of the Langevin diffusion~\eqref{eq:LD}, and let $\what{\pi}_{k+1}^{T}$ be the law of the score-based Langevin Monte Carlo~\eqref{eq:SLMC}. Assume the step size $h>0$ satisfies $400dC_{\LSI}\lambda^{2}h\leq 1$. Then for each $k\in\bbN$,
\begin{align*}
&\|\pi_{k+1}(\cdot|\vy_{[k+1]})-\what{\pi}_{k+1}^{T}(\cdot|\vy_{[k+1]})\|_{\tv}^{2} \\
&\lesssim \underbrace{\|\pi_{k+1}(\cdot|\vy_{[k+1]})-\bar{\pi}_{k+1}^{T}(\cdot|\vy_{[k+1]})\|_{\tv}^{2}}_{\text{convergence of Langevin Monte Carlo}} \\
&\quad +\underbrace{(C_{\LSI}\eta_{\chi}+T)\bbE^{\frac{1}{2}}\left[\|\nabla_{\vx}\log q_{k+1}(\mX_{k+1}|\vy_{[k]})-\nabla\log\what{q}_{k+1}(\mX_{k+1}|\vy_{[k]})\|_{2}^{4}\right]}_{\text{prior error}} \\
&\quad+\underbrace{(C_{\LSI}\eta_{\chi}+T)\bbE^{\frac{1}{2}}\left[\|\nabla_{\vx}\log\widehat{q}_{k+1}(\mX_{k+1}|\vy_{[k]})-\what{\vs}_{k+1}(\mX_{k+1},\vy_{[k]})\|_{2}^{4}\right]}_{\text{score estimation error}},
\end{align*}
where the expectation $\bbE[\cdot]$ is taken with respect to $\mX_{k+1}\sim\pi_{k+1}(\cdot|\vy_{[k+1]})$.
\end{lemma}

\begin{remark}
The convergence of the Langevin Monte Carlo will be analyzed in Appendix~\ref{appendix:LMC}. The prior error characterizes the error of the approximated prediction distribution $\what{q}_{k+1}$, which is induced by the error of the posterior distribution $\what{\pi}_{k}^{T}$ in the previous time step. The detailed analysis will be shown in Appendix~\ref{appendix:prior}. Finally, we will investigate the score estimation error in Appendix~\ref{appendix:score}.
\end{remark}

\begin{proof}[Proof of Lemma~\ref{lemma:appendix:convergence:decomposition}]
According to the triangular inequality of TV distance, we have 
\begin{align*}
&\|\pi_{k+1}(\cdot|\vy_{[k+1]})-\what{\pi}_{k+1}^{T}(\cdot|\vy_{[k+1]})\|_{\tv}^{2} \\
&\leq 2\|\pi_{k+1}(\cdot|\vy_{[k+1]})-\bar{\pi}_{k+1}^{T}(\cdot|\vy_{[k+1]})\|_{\tv}^{2}+2\|\bar{\pi}_{k+1}^{T}(\cdot|\vy_{[k+1]})-\what{\pi}_{k+1}^{T}(\cdot|\vy_{[k+1]})\|_{\tv}^{2}. 
\end{align*}
For the second term, we invoke Girsanov theorem~\cite{chen2023sampling} to show that 
\begin{align*}
&\|\bar{\pi}_{k+1}^{T}(\cdot|\vy_{[k+1]})-\what{\pi}_{k+1}^{T}(\cdot|\vy_{[k+1]})\|_{\tv}^{2} \\
&\leq 2\kl\big(\bar{\pi}_{k+1}^{T}(\cdot|\vy_{[k+1]}),\what{\pi}_{k+1}^{T}(\cdot|\vy_{[k+1]})\big) \\
&\leq\sum_{\ell=0}^{K-1}h\bbE_{\bar{\mZ}_{\ell h}}\big[\|\nabla_{\vx}\log\pi_{k+1}(\bar{\mZ}_{\ell h}|\vy_{[k+1]})-\what{\vb}_{k+1}(\bar{\mZ}_{\ell h},\vy_{[k+1]})\|_{2}^{2}\big] \\
&=\sum_{\ell=0}^{K-1}h\bbE_{\bar{\mZ}_{\ell h}}\big[\|\nabla_{\vx}\log q_{k+1}(\bar{\mZ}_{\ell h}|\vy_{[k]})-\what{\vs}_{k+1}(\bar{\mZ}_{\ell h},\vy_{[k]})\|_{2}^{2}\big],
\end{align*}
where the first inequality follows from Pinsker's inequality (Lemma~\ref{lemma:tv:kl}), and the second inequality invokes Girsanov theorem~\cite{chen2023sampling}. For each summand, we have 
\begin{align*}
&\bbE_{\bar{\mZ}_{\ell h}}\big[\|\nabla_{\vx}\log q_{k+1}(\bar{\mZ}_{\ell h}|\vy_{[k]})-\what{\vs}_{k+1}(\bar{\mZ}_{\ell h},\vy_{[k]})\|_{2}^{2}\big] \\
&=\int \|\nabla_{\vx}\log q_{k+1}(\vz|\vy_{[k]})-\what{\vs}_{k+1}(\vz,\vy_{[k]})\|_{2}^{2}\frac{\bar{\pi}_{k+1}^{\ell h}(\vz|\vy_{[k+1]})}{\pi_{k+1}(\vz|\vy_{[k+1]})}\pi_{k+1}(\vz|\vy_{[k+1]})\d\vz \\
&\leq \left(\int \|\nabla_{\vx}\log q_{k+1}(\vz|\vy_{[k]})-\what{\vs}_{k+1}(\vz,\vy_{[k]})\|_{2}^{4}\pi_{k+1}(\vz|\vy_{[k+1]})\d\vz\right)^{\frac{1}{2}} \\
&\quad \times\left(\int\Big(\frac{\bar{\pi}_{k+1}^{\ell h}(\vz|\vy_{[k+1]})}{\pi_{k+1}(\vz|\vy_{[k+1]})}\Big)^{2}\pi_{k+1}(\vz|\vy_{[k+1]})\d\vz\right)^{\frac{1}{2}} \\
&= \bbE^{\frac{1}{2}}\left[\|\nabla_{\vx}\log q_{k+1}(\mX_{k+1}|\vy_{[k]})-\what{\vs}_{k+1}(\mX_{k+1},\vy_{[k]})\|_{2}^{4}\right] \\
&\quad \times\left(\chi^{2}\big(\bar{\pi}_{k+1}^{\ell h}(\cdot|\vy_{[k+1]})\|\pi_{k+1}(\cdot|\vy_{[k+1]})\big)+1\right)^{\frac{1}{2}},
\end{align*}
where the inequality holds from Cauchy-Schwarz inequality. Further, using Lemma~\ref{lemma:appendix:discretization} implies that for step size $h>0$ satisfies $400dC_{\LSI}\lambda^{2}h\leq 1$,
\begin{align*}
&\sum_{\ell=0}^{K-1}h\bbE_{\bar{\mZ}_{\ell h}}\big[\|\nabla_{\vx}\log q_{k+1}(\bar{\mZ}_{\ell h}|\vy_{[k]})-\what{\vs}_{k+1}(\bar{\mZ}_{\ell h},\vy_{[k]})\|_{2}^{2}\big] \\
&\leq h\bbE^{\frac{1}{2}}\left[\|\nabla_{\vx}\log q_{k+1}(\mX_{k+1}|\vy_{[k]})-\what{\vs}_{k+1}(\mX_{k+1},\vy_{[k]})\|_{2}^{4}\right] \\
&\quad \times \sum_{\ell=0}^{K-1}\Big(\exp\Big(-\frac{\ell h}{5C_{\LSI}}\Big)\chi^{2}\big(\pi_{k+1}^{0}(\cdot|\vy_{[k+1]})\|\pi_{k+1}(\cdot|\vy_{[k+1]})\big)+2\Big)^{\frac{1}{2}} \\
&\leq h\bbE^{\frac{1}{2}}\left[\|\nabla_{\vx}\log q_{k+1}(\mX_{k+1}|\vy_{[k]})-\what{\vs}_{k+1}(\mX_{k+1},\vy_{[k]})\|_{2}^{4}\right] \\
&\quad \times\left(\sum_{\ell=0}^{K-1}\exp\Big(-\frac{\ell h}{10C_{\LSI}}\Big)\chi^{2}\big(\pi_{k+1}^{0}(\cdot|\vy_{[k+1]})\|\pi_{k+1}(\cdot|\vy_{[k+1]})\big)^{\frac{1}{2}}+2K\right) \\
&\leq \bbE^{\frac{1}{2}}\left[\|\nabla_{\vx}\log q_{k+1}(\mX_{k+1}|\vy_{[k]})-\what{\vs}_{k+1}(\mX_{k+1},\vy_{[k]})\|_{2}^{4}\right] \\
&\quad \times\left(\frac{20}{3}C_{\LSI}\chi^{2}\big(\pi_{k+1}^{0}(\cdot|\vy_{[k+1]})\|\pi_{k+1}(\cdot|\vy_{[k+1]})\big)^{\frac{1}{2}}+2T\right),
\end{align*}
where the last inequality is due to 
\begin{equation*}
\sum_{\ell=0}^{K-1}\exp\Big(-\frac{\ell h}{10C_{\LSI}}\Big)\leq\frac{20C_{\LSI}}{3h}.
\end{equation*}
Therefore, it follows from the triangular inequality that 
\begin{align*}
&\|\bar{\pi}_{k+1}^{T}(\cdot|\vy_{[k+1]})-\what{\pi}_{k+1}^{T}(\cdot|\vy_{[k+1]})\|_{\tv}^{2} \\
&\leq \left(\frac{20}{3}C_{\LSI}\eta_{\chi}+2T\right)\bbE^{\frac{1}{2}}\left[\|\nabla_{\vx}\log q_{k+1}(\mX_{k+1}|\vy_{[k]})-\what{\vs}_{k+1}(\mX_{k+1},\vy_{[k]})\|_{2}^{4}\right] \\
&\leq 28(C_{\LSI}\eta_{\chi}+T)\bbE^{\frac{1}{2}}\left[\|\nabla_{\vx}\log q_{k+1}(\mX_{k+1}|\vy_{[k]})-\nabla\log\what{q}_{k+1}(\mX_{k+1}|\vy_{[k]})\|_{2}^{4}\right] \\
&\quad +28(C_{\LSI}\eta_{\chi}+T)\bbE^{\frac{1}{2}}\left[\|\nabla_{\vx}\log\widehat{q}_{k+1}(\mX_{k+1}|\vy_{[k]})-\what{\vs}_{k+1}(\mX_{k+1},\vy_{[k]})\|_{2}^{4}\right],
\end{align*}
which completes the proof.
\end{proof}

\section{Convergence of Langevin Monte Carlo}
\label{appendix:LMC}

\par In this section, we aim to analyze the convergence of the Langevin Monte Carlo
\begin{equation*}
\|\pi_{k+1}(\cdot|\vy_{[k+1]})-\bar{\pi}_{k+1}^{T}(\cdot|\vy_{[k+1]})\|_{\tv}^{2}.
\end{equation*}
Indeed, we provide a stronger convergence result in $\chi^{2}$-divergence rather than the TV distance.

\begin{lemma}\label{lemma:appendix:discretization}
Suppose Assumptions~\ref{assumption:posterior:smooth} and~\ref{assumption:LSI:posterior} hold. Then
\begin{equation*}
\chi^{2}\big(\bar{\pi}_{k+1}^{T}(\cdot|\vy_{[k+1]})\|\pi_{k+1}(\cdot|\vy_{[k+1]})\big)\leq\exp\Big(-\frac{T}{5C_{\LSI}}\Big)\eta_{\chi}^{2}+140dC_{\LSI}\lambda^{2}h,
\end{equation*}
where $T=Kh$, and the step size $h>0$ satisfies $400dC_{\LSI}\lambda^{2}h\leq 1$.
\end{lemma}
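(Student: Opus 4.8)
The plan is to run the standard interpolation / entropy-dissipation argument for unadjusted Langevin, but carried out in $\chi^2$-divergence rather than in KL. Write $\rho_t := \bar{\pi}_{k+1}^t(\cdot|\vy_{[k+1]})$ for the law of the interpolated chain $\bar{\mZ}_t$ in~\eqref{eq:LMC}, abbreviate $\pi := \pi_{k+1}(\cdot|\vy_{[k+1]})$, and set $f_t := \rho_t/\pi$, so that $\chi^2(\rho_t\|\pi) = \var_\pi(f_t) = \bbE_\pi[f_t^2]-1$. Over each sub-interval $[\ell h,(\ell+1)h)$ the interpolated process solves a Fokker--Planck equation with frozen drift, $\partial_t\rho_t = -\nabla\cdot(\rho_t b_t) + \Delta\rho_t$, where $b_t(x) = \bbE[\nabla_{\vx}\log\pi(\bar{\mZ}_{\ell h})\mid\bar{\mZ}_t=x]$. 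First I would differentiate $\chi^2$ along this flow; integration by parts yields the exact identity
\[
\frac{\dd}{\dd t}\chi^2(\rho_t\|\pi) = -2\bbE_\pi[\|\nabla f_t\|_2^2] + 2\bbE_{\rho_t}[\nabla f_t\cdot v_t],
\]
where $v_t := b_t - \nabla_{\vx}\log\pi$ is the drift discretization error. The first term is the good dissipation (a Dirichlet form); the second is the error to be controlled.

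Second, I would split the cross term by Young's inequality, $2\bbE_{\rho_t}[\nabla f_t\cdot v_t] = 2\bbE_\pi[f_t\,\nabla f_t\cdot v_t] \le \tfrac12\bbE_\pi[\|\nabla f_t\|_2^2] + 2\bbE_\pi[f_t^2\|v_t\|_2^2]$, retaining three-quarters of the dissipation. By Jensen and the $\lambda$-Lipschitz bound of Assumption~\ref{assumption:posterior:smooth}, $\|v_t(x)\|_2^2 \le \lambda^2\,\bbE[\|\bar{\mZ}_t-\bar{\mZ}_{\ell h}\|_2^2\mid\bar{\mZ}_t=x]$, so the error reduces to the $f_t^2$-weighted one-step displacement moment $\lambda^2\,\bbE[f_t(\bar{\mZ}_t)\|\bar{\mZ}_t-\bar{\mZ}_{\ell h}\|_2^2]$, with $\bar{\mZ}_t-\bar{\mZ}_{\ell h} = (t-\ell h)\nabla_{\vx}\log\pi(\bar{\mZ}_{\ell h}) + \sqrt2(\mB_t-\mB_{\ell h})$.

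\textbf{The main obstacle} is precisely this $f_t^2$-weighting, which is absent in the KL analysis (there the error is the unweighted $\bbE_{\rho_t}[\|v_t\|_2^2]$) and cannot be removed by an $L^\infty$ bound under our assumptions. I plan to dispatch it with two integration-by-parts estimates. For the drift contribution I would use the self-bounding inequality $\bbE_\pi[f^2\|\nabla_{\vx}\log\pi\|_2^2] \le 4\bbE_\pi[\|\nabla f\|_2^2] + 2\lambda d\,\bbE_\pi[f^2]$, obtained by integrating $\int f^2\,\nabla\log\pi\cdot\nabla\pi$ by parts and using $-\Delta\log\pi \le \lambda d$ (again Assumption~\ref{assumption:posterior:smooth}) together with Young. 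For the Brownian contribution I would condition on $\bar{\mZ}_{\ell h}$ and apply Gaussian (Stein) integration by parts to trade the factor $\|\mB_t-\mB_{\ell h}\|_2^2$ for derivatives of $f_t$, producing $dh$ times a bounded multiple of $\bbE_\pi[f_t^2]$ plus a further Dirichlet-form term. Collecting these, the error is bounded by $O(d\lambda^2 h)\big(h\,\bbE_\pi[\|\nabla f_t\|_2^2] + 1 + \chi^2(\rho_t\|\pi)\big)$; the step-size hypothesis $400\,d C_{\LSI}\lambda^2 h \le 1$ then lets the surviving Dirichlet-form piece be absorbed into the retained dissipation.

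Finally, I would convert dissipation into contraction. Assumption~\ref{assumption:LSI:posterior} gives a Poincar\'e inequality with constant $C_P \le C_{\LSI}$, hence $\bbE_\pi[\|\nabla f_t\|_2^2] \ge C_{\LSI}^{-1}\var_\pi(f_t) = C_{\LSI}^{-1}\chi^2(\rho_t\|\pi)$. After the absorptions this yields a differential inequality of the form $\frac{\dd}{\dd t}\chi^2(\rho_t\|\pi) \le -\tfrac{1}{5C_{\LSI}}\chi^2(\rho_t\|\pi) + 28\,d\lambda^2 h$. Grönwall's inequality on $[0,T]$ with $T=Kh$ and the warm-start initial value $\chi^2(\rho_0\|\pi) = \chi^2(\pi_{k+1}^0\|\pi_{k+1}) \le \eta_\chi^2$ then gives $\exp(-T/(5C_{\LSI}))\,\eta_\chi^2 + 5C_{\LSI}\cdot 28\,d\lambda^2 h = \exp(-T/(5C_{\LSI}))\,\eta_\chi^2 + 140\,d C_{\LSI}\lambda^2 h$, which is the claim. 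The bookkeeping of the numerical constants ($1/5$, $140$, $400$) is routine once the two weighted integration-by-parts bounds are in place.
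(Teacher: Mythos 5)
Your architecture matches the paper's almost step for step: the same frozen-drift Fokker--Planck identity (Lemma~\ref{lemma:appendix:LMC:2}), the same weighted Young/Cauchy--Schwarz split that sacrifices part of the dissipation to expose the $f_t^2$-weighted drift error (Lemma~\ref{lemma:appendix:LMC:3}), the same self-bounding integration-by-parts estimate $\bbE_{\pi}[f^{2}\|\nabla\log\pi\|_{2}^{2}]\lesssim\bbE_{\pi}[\|\nabla f\|_{2}^{2}]+d\lambda\,\bbE_{\pi}[f^{2}]$ for the score-weighted moment (this is exactly Lemma~\ref{lemma:appendix:LMC:8} after multiplying through by $\bbE_{\pi}[\phi_t^{2}]$), the same Poincar\'e/LSI conversion of the Dirichlet form into contraction, and the same Gr\"onwall-plus-geometric-sum bookkeeping that produces the constants $1/5$, $140$, $400$. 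The one place you genuinely diverge is the weighted Brownian-increment moment $\bbE[\|\mB_{t}-\mB_{\ell h}\|_{2}^{2}\,f_t(\bar{\mZ}_{t})]$: the paper handles it with the Donsker--Varadhan variational principle plus the $\chi^{2}$ Chernoff bound and the LSI (Lemma~\ref{lemma:appendix:LMC:7}), whereas you propose Gaussian/Stein integration by parts. Your route does close, and in fact yields a slightly sharper coefficient ($(t-\ell h)^{2}$ rather than $C_{\LSI}(t-\ell h)$ in front of the Dirichlet form), but only if the Cauchy--Schwarz after the Stein identity is applied with the weight $\sqrt{f_t}$, i.e.\ $\bbE[\vxi\cdot\nabla f_t]\le(\bbE[\|\vxi\|_{2}^{2}f_t])^{1/2}(\bbE_{\rho_t}[\|\nabla f_t\|_{2}^{2}/f_t])^{1/2}$, since $\bbE_{\rho_t}[\|\nabla f_t\|_{2}^{2}/f_t]=\bbE_{\pi}[\|\nabla f_t\|_{2}^{2}]$ is the \emph{unweighted} Dirichlet form that the retained dissipation can absorb. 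An unweighted Cauchy--Schwarz at that point leaves $\bbE_{\pi}[f_t\|\nabla f_t\|_{2}^{2}]$, which cannot be absorbed, and iterating Stein instead produces uncontrolled second derivatives of $f_t$; this is the one non-routine step your sketch glosses over, and it is precisely the difficulty the paper's change of measure to $\psi_{k+1}^{t}$ and Donsker--Varadhan argument are designed to sidestep. With that caveat made explicit, your proof goes through and buys a marginally cleaner discretization term at the price of losing the reusable $\chi^{2}$-along-the-trajectory machinery the paper also needs for the score-estimation error in Appendix~\ref{appendix:sm}.
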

\begin{remark}
The proof of Lemma~\ref{lemma:appendix:discretization} is inspired by~\cite[Theorem 2.1]{Lee2022Convergence}, and~\cite[Theorem 4]{Chewi2024Analysis}. We show the proof in this section for the sake of completeness. The first term in Lemma~\ref{lemma:appendix:discretization} converges to zero exponentially as the time $T$ increases, which corresponds to the convergence of the Langevin diffusion~\eqref{eq:LD}. The second term is linear with respect to the step size $h$, induced by the Euler-Maruyama approximation.
\end{remark}

\par Recall the Langevin Monte Carlo~\eqref{eq:LMC}. For each $0\leq\ell\leq K-1$ and $\ell h\leq t\leq(\ell+1)h$, 
\begin{align}
\bar{\mZ}_{t}
&=\bar{\mZ}_{\ell h}+\int_{\ell h}^{t}\nabla_{\vx}\log\pi_{k+1}(\bar{\mZ}_{\ell h}|\vy_{[k+1]})\ds+\sqrt{2}\int_{\ell h}^{t}\d\mB_{s} \nonumber \\
&=\bar{\mZ}_{\ell h}+(t-\ell h)\nabla_{\vx}\log\pi_{k+1}(\bar{\mZ}_{\ell h}|\vy_{[k+1]})+\sqrt{2}(\mB_{t}-\mB_{\ell h}), \label{eq:appendix:discretization:1}
\end{align}
where $\mB_{t}-\mB_{\ell h}\sim\mathcal{N}(\mathbf{0},(t-\ell h)\mI_{d})$ is independent of $\bar{\mZ}_{\ell h}$.

\subsection{Differential inequality for the chi-squared divergence}

\par The most crucial recipe in the proof of Lemma~\ref{lemma:appendix:discretization} is the following differential inequality for the $\chi^{2}$-divergence, which is inspired by~\cite[Theorem 4]{Chewi2024Analysis} and~\cite[Theorem 4.2]{Lee2022Convergence}. 

\par Before proceeding, we introduce some notations and properties. Define the Radon-Nikodym derivative of $\bar{\pi}_{k+1}^{t}$ with respect to $\pi_{k+1}$
\begin{equation}\label{eq:phi}
\phi_{k+1}^{t}(\vx|\vy_{[k+1]}):=\frac{\bar{\pi}_{k+1}^{t}(\vx|\vy_{[k+1]})}{\pi_{k+1}(\vx|\vy_{[k+1]})}, \quad \vx\in\bbR^{d}.
\end{equation}
Apparently, we find 
\begin{equation}\label{eq:phi:property}
\bbE_{\mZ}[\phi_{k+1}^{t}(\mZ|\vy_{[k+1]})^{2}]
=\bbE_{\bar{\mZ}_{t}}[\phi_{k+1}^{t}(\bar{\mZ}_{t}|\vy_{[k+1]})].
\end{equation}
Further, we define 
\begin{equation}\label{eq:psi}
\psi_{k+1}^{t}(\vx|\vy_{[k+1]}):=\frac{\phi_{k+1}^{t}(\vx|\vy_{[k+1]})}{\bbE_{\mZ}[\phi_{k+1}^{t}(\mZ|\vy_{[k+1]})^{2}]}, \quad \vx\in\bbR^{d}.
\end{equation}
Then it is straightforward that 
\begin{align}
\bbE_{\bar{\mZ}_{t}}\big[\psi_{k+1}^{t}(\bar{\mZ}_{t}|\vy_{[k+1]})\big]
&=\int\psi_{k+1}^{t}(\vx|\vy_{[k+1]})\phi_{k+1}^{t}(\vx|\vy_{[k+1]})\pi_{k+1}(\vx|\vy_{[k+1]})\d\vx \nonumber \\
&=\int\frac{\phi_{k+1}^{t}(\vx|\vy_{[k+1]})^{2}}{\bbE_{\mZ}[\phi_{k+1}^{t}(\mZ|\vy_{[k+1]})^{2}]}\pi_{k+1}(\vx|\vy_{[k+1]})\d\vx=1. \label{eq:psi:property}
\end{align}

\par The following lemmas shows that the derivative of the $\chi^{2}$-divergence can be bounded by two parts: Dirichlet energy and the discretization error.
\begin{lemma}\label{lemma:appendix:LMC:3}
For each $\ell h\leq t\leq(\ell+1)h$, it holds that 
\begin{align*}
&\frac{\d}{\dt}\chi^{2}\big(\bar{\pi}_{k+1}^{t}(\cdot|\vy_{[k+1]})\|\pi_{k+1}(\cdot|\vy_{[k+1]})\big) \\
&\leq-\underbrace{\bbE_{\mZ}\big[\|\nabla_{\vx}\phi_{k+1}^{t}(\mZ|\vy_{[k+1]})\|_{2}^{2}\big]}_{\text{Dirichlet energy}} \\
&\quad+\underbrace{\bbE_{\mZ}\big[\phi_{k+1}^{t}(\mZ|\vy_{[k+1]})^{2}\big]\bbE_{(\bar{\mZ}_{\ell h},\bar{\mZ}_{t})}\big[\|\ve_{k+1}(\bar{\mZ}_{\ell h},\bar{\mZ}_{t}|\vy_{[k+1]})\|_{2}^{2}\psi_{k+1}^{t}(\bar{\mZ}_{t}|\vy_{[k+1]})\big]}_{\text{discretization error}}
\end{align*}
where the pointwise discretization error is defined as 
\begin{equation*}
\ve_{k+1}(\vx_{\ell h},\vx|\vy_{[k+1]}):=\nabla_{\vx}\log\pi_{k+1}(\vx_{\ell h}|\vy_{[k+1]})-\nabla_{\vx}\log\pi_{k+1}(\vx|\vy_{[k+1]}).
\end{equation*}
Here the expectation $\bbE_{\mZ}[\cdot]$ is taken with respect to $\mZ\sim\pi_{k+1}(\cdot|\vy_{[k+1]})$, and the expectation $\bbE_{(\bar{\mZ}_{\ell h},\bar{\mZ}_{t})}[\cdot]$ is taken with respect to $(\bar{\mZ}_{\ell h},\bar{\mZ}_{t})\sim\bar{\pi}_{k+1}^{\ell h,t}(\cdot|\vy_{[k+1]})$.
\end{lemma}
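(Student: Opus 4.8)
The plan is to differentiate the $\chi^{2}$-divergence along the interpolation \eqref{eq:LMC} and exploit the Fokker--Planck equation for the marginal law $\bar{\pi}_{k+1}^{t}$. Writing $\chi^{2}\big(\bar{\pi}_{k+1}^{t}\|\pi_{k+1}\big)=\bbE_{\mZ}\big[(\phi_{k+1}^{t})^{2}\big]-1$ with $\phi_{k+1}^{t}=\bar{\pi}_{k+1}^{t}/\pi_{k+1}$ as in \eqref{eq:phi}, differentiation under the integral sign and $\partial_{t}\phi_{k+1}^{t}=(\partial_{t}\bar{\pi}_{k+1}^{t})/\pi_{k+1}$ give $\frac{\dd}{\dd t}\chi^{2}=2\int\phi_{k+1}^{t}\,\partial_{t}\bar{\pi}_{k+1}^{t}$. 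The first task is therefore to identify $\partial_{t}\bar{\pi}_{k+1}^{t}$.

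First I would establish the continuity equation for the interpolated process. Because the drift on $[\ell h,(\ell+1)h]$ is frozen at the past state $\bar{\mZ}_{\ell h}$, the marginal does \emph{not} solve the Fokker--Planck equation of the exact diffusion. Instead, applying It\^o's formula to $f(\bar{\mZ}_{t})$ for a test function $f$, taking expectations, and conditioning the frozen drift on the current position $\bar{\mZ}_{t}$, one obtains $\partial_{t}\bar{\pi}_{k+1}^{t}=-\nabla_{\vx}\cdot\big(\vu_{t}\,\bar{\pi}_{k+1}^{t}\big)+\Delta\bar{\pi}_{k+1}^{t}$, where the effective drift is the conditional expectation $\vu_{t}(\vx)=\bbE\big[\nabla_{\vx}\log\pi_{k+1}(\bar{\mZ}_{\ell h}\,|\,\vy_{[k+1]})\mid\bar{\mZ}_{t}=\vx\big]$. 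This ``mimicking'' identity, replacing the past-dependent drift by its conditional expectation given the present position without altering the one-time marginals, is the technical crux and where I expect the main difficulty to lie.

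With the continuity equation in hand, I would substitute it into $\frac{\dd}{\dd t}\chi^{2}$, integrate by parts, and use the stationarity relation $\nabla_{\vx}\pi_{k+1}=\pi_{k+1}\nabla_{\vx}\log\pi_{k+1}$. Decomposing $\vu_{t}=\nabla_{\vx}\log\pi_{k+1}+(\vu_{t}-\nabla_{\vx}\log\pi_{k+1})$, the exact-drift part cancels precisely against the cross term produced by the Laplacian, leaving
\[
\frac{\dd}{\dd t}\chi^{2}=-2\bbE_{\mZ}\big[\|\nabla_{\vx}\phi_{k+1}^{t}\|_{2}^{2}\big]+2\int\phi_{k+1}^{t}\,\pi_{k+1}\,\nabla_{\vx}\phi_{k+1}^{t}\cdot\big(\vu_{t}-\nabla_{\vx}\log\pi_{k+1}\big),
\]
where by definition $\vu_{t}(\vx)-\nabla_{\vx}\log\pi_{k+1}(\vx)=\bbE\big[\ve_{k+1}(\bar{\mZ}_{\ell h},\bar{\mZ}_{t}\,|\,\vy_{[k+1]})\mid\bar{\mZ}_{t}=\vx\big]$.

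Finally I would apply Young's inequality $2a\cdot b\leq\|a\|_{2}^{2}+\|b\|_{2}^{2}$ with $a=\sqrt{\pi_{k+1}}\,\nabla_{\vx}\phi_{k+1}^{t}$ and $b=\sqrt{\pi_{k+1}}\,\phi_{k+1}^{t}\big(\vu_{t}-\nabla_{\vx}\log\pi_{k+1}\big)$, which absorbs one copy of the Dirichlet energy and turns the leading term into $-\bbE_{\mZ}\big[\|\nabla_{\vx}\phi_{k+1}^{t}\|_{2}^{2}\big]$, leaving the remainder $\int\pi_{k+1}(\phi_{k+1}^{t})^{2}\,\|\bbE[\ve_{k+1}\mid\bar{\mZ}_{t}=\cdot]\|_{2}^{2}$. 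To cast this into the stated form I would bound it via conditional Jensen, $\|\bbE[\ve_{k+1}\mid\cdot]\|_{2}^{2}\leq\bbE[\|\ve_{k+1}\|_{2}^{2}\mid\cdot]$, then use $\pi_{k+1}(\phi_{k+1}^{t})^{2}=\bar{\pi}_{k+1}^{t}\,\phi_{k+1}^{t}$ together with the normalization $\phi_{k+1}^{t}=\bbE_{\mZ}[(\phi_{k+1}^{t})^{2}]\,\psi_{k+1}^{t}$ from \eqref{eq:psi}, and collapse the inner conditional expectation by the tower property into an expectation over the joint law of $(\bar{\mZ}_{\ell h},\bar{\mZ}_{t})$. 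This yields exactly $\bbE_{\mZ}[(\phi_{k+1}^{t})^{2}]\,\bbE_{(\bar{\mZ}_{\ell h},\bar{\mZ}_{t})}\big[\|\ve_{k+1}\|_{2}^{2}\,\psi_{k+1}^{t}\big]$, completing the bound.
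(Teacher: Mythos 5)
Your proposal is correct and follows essentially the same route as the paper: the same mimicking Fokker--Planck equation with the conditional-expectation drift, the same integration by parts isolating the Dirichlet energy, and the same absorption of one copy of that energy into the cross term. The only cosmetic difference is that the paper unfolds the conditional expectation into the joint law of $(\bar{\mZ}_{\ell h},\bar{\mZ}_{t})$ \emph{before} applying Cauchy--Schwarz, whereas you apply Young's inequality first and then pass through conditional Jensen and the tower property; both orderings yield the identical bound.
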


\begin{remark}
According to~\cite[Lemma 6]{Vempala2019Rapid}, the law of the Langevin diffusion~\eqref{eq:LD} $\pi_{k+1}^{t}(\cdot|\vy_{[k+1]})$ satisfies 
\begin{equation*}
\frac{\d}{\dt}\chi^{2}\big(\pi_{k+1}^{t}(\cdot|\vy_{[k+1]})\|\pi_{k+1}(\cdot|\vy_{[k+1]})\big)\leq-2\bbE_{\mZ}\big[\|\nabla_{\vx}\phi_{k+1}^{t}(\mZ|\vy_{[k+1]})\|_{2}^{2}\big],
\end{equation*}
which can also be derived from~\eqref{eq:lemma:appendix:LMC:3:1}. Compared with this inequality, the differential inequality of the Langevin Monte Carlo in Lemma~\ref{lemma:appendix:LMC:3} has an additional term known as the discretization error.
\end{remark}

\par We first introduce the Fokker-Planck equation associated to the Langevin Monte Carlo~\eqref{eq:LMC}, which has appeared in~\cite[Proposition 17]{Chewi2024Analysis}.

\begin{lemma}\label{lemma:appendix:LMC:2}
For each $\ell h\leq t\leq(\ell+1)h$, the law of Langevin Monte Carlo~\eqref{eq:LMC} satisfies the Fokker-Planck equation
\begin{equation*}
\frac{\partial}{\partial t}\bar{\pi}_{k+1}^{t}(\vx|\vy_{[k+1]})=-\nabla_{\vx}\cdot\big(\bar{\pi}_{k+1}^{t}(\vx|\vy_{[k+1]})\bar{\vb}_{k+1}^{t}(\vx|\vy_{[k+1]})\big)+\Delta_{\vx}\bar{\pi}_{k+1}^{t}(\vx|\vy_{[k+1]}),
\end{equation*}
where the drift term is given as 
\begin{equation}\label{eq:lemma:appendix:LMC:2:1}
\bar{\vb}_{k+1}^{t}(\vx|\vy_{[k+1]})=\bbE\big[\nabla_{\vx}\log\pi_{k+1}(\bar{\mZ}_{\ell h}|\vy_{[k+1]})|\bar{\mZ}_{t}=\vx,\mY_{[k+1]}=\vy_{[k+1]}\big].
\end{equation}
\end{lemma}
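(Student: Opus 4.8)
The plan is to derive the Fokker--Planck equation in its weak (distributional) form by testing against smooth compactly supported functions, and then to upgrade to the pointwise statement using the explicit conditional-Gaussian structure of the interpolation~\eqref{eq:appendix:discretization:1}. The one genuinely non-classical ingredient is that within each interval the drift of~\eqref{eq:LMC} is frozen at $\bar{\mZ}_{\ell h}$, so the resulting drift in the marginal equation is its \emph{Markovian projection}, i.e.\ its conditional expectation given the current state.

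First I would fix a test function $f\in C_{0}^{\infty}(\bbR^{d})$ and compute $\frac{\d}{\dt}\bbE[f(\bar{\mZ}_{t})]$ for $\ell h\leq t\leq(\ell+1)h$. Applying It\^o's formula to $f(\bar{\mZ}_{t})$ along~\eqref{eq:LMC}, and noting that the diffusion coefficient $\sqrt{2}\,\mI_{d}$ contributes a quadratic variation $2\mI_{d}\dt$ (hence a Laplacian with unit coefficient), gives
\[
\d f(\bar{\mZ}_{t})=\nabla_{\vx}f(\bar{\mZ}_{t})\cdot\nabla_{\vx}\log\pi_{k+1}(\bar{\mZ}_{\ell h}|\vy_{[k+1]})\dt+\Delta_{\vx}f(\bar{\mZ}_{t})\dt+\sqrt{2}\,\nabla_{\vx}f(\bar{\mZ}_{t})\cdot\d\mB_{t}.
\]
Taking expectations removes the martingale term, leaving only the frozen-drift and the Laplacian contributions.

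Second comes the crucial conditioning step. By the tower property, conditioning on $\bar{\mZ}_{t}=\vx$,
\[
\bbE\big[\nabla_{\vx}f(\bar{\mZ}_{t})\cdot\nabla_{\vx}\log\pi_{k+1}(\bar{\mZ}_{\ell h}|\vy_{[k+1]})\big]=\bbE\big[\nabla_{\vx}f(\bar{\mZ}_{t})\cdot\bar{\vb}_{k+1}^{t}(\bar{\mZ}_{t}|\vy_{[k+1]})\big],
\]
with $\bar{\vb}_{k+1}^{t}$ exactly the conditional expectation~\eqref{eq:lemma:appendix:LMC:2:1}. Rewriting both surviving terms as integrals against $\bar{\pi}_{k+1}^{t}$ and integrating by parts (boundary terms vanish since $f$ has compact support) yields
\[
\int f(\vx)\,\partial_{t}\bar{\pi}_{k+1}^{t}(\vx|\vy_{[k+1]})\dx=\int f(\vx)\Big[-\nabla_{\vx}\cdot\big(\bar{\pi}_{k+1}^{t}\bar{\vb}_{k+1}^{t}\big)+\Delta_{\vx}\bar{\pi}_{k+1}^{t}\Big]\dx,
\]
and since $f$ is arbitrary the claimed equation holds weakly. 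To reach the stated pointwise form I would invoke~\eqref{eq:appendix:discretization:1}: conditionally on $\bar{\mZ}_{\ell h}$, the variable $\bar{\mZ}_{t}$ is Gaussian with covariance $2(t-\ell h)\mI_{d}$, so $\bar{\pi}_{k+1}^{t}$ is a Gaussian mixture and hence smooth in $\vx$ for $t>\ell h$, legitimizing the interchanges above and making $\bar{\vb}_{k+1}^{t}$ well defined.

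The main obstacle I anticipate is not the formal calculation but the analytic justification of the regularity and integrability it silently uses: differentiating $\bbE[f(\bar{\mZ}_{t})]$ under the expectation, verifying that the It\^o term is a true martingale so its mean vanishes, and making sense of $\partial_{t}\bar{\pi}_{k+1}^{t}$ and its spatial derivatives pointwise. All three are handled once one exploits the explicit conditional-Gaussian structure of the interpolation together with the $\lambda$-Lipschitz bound on the score (Assumption~\ref{assumption:posterior:smooth}) to control the frozen drift; this is precisely the route followed in~\citet[Proposition 17]{Chewi2024Analysis}, on which the statement is modeled.
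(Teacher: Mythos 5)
Your proposal is correct, but it reaches the Fokker--Planck equation by a different route than the paper. You work in the weak formulation: apply It\^o's formula to a test function $f\in C_{0}^{\infty}(\bbR^{d})$, take expectations to kill the martingale term, use the tower property to replace the frozen drift $\nabla_{\vx}\log\pi_{k+1}(\bar{\mZ}_{\ell h}|\vy_{[k+1]})$ by its conditional expectation given $\bar{\mZ}_{t}$, and integrate by parts; you then upgrade from the weak to the pointwise statement via the conditional-Gaussian (hence smooth Gaussian-mixture) structure of $\bar{\pi}_{k+1}^{t}$. The paper instead disintegrates first: conditionally on $\bar{\mZ}_{\ell h}=\vx_{\ell h}$ the drift is a constant, so the transition kernel $\bar{\pi}_{k+1}^{t|\ell h}$ satisfies the classical Fokker--Planck equation pointwise; multiplying by the law of $\bar{\mZ}_{\ell h}$ and integrating out $\vx_{\ell h}$, the factorization $\bar{\pi}_{k+1}^{t,\ell h}=\bar{\pi}_{k+1}^{t}\,\bar{\pi}_{k+1}^{\ell h|t}$ produces exactly the conditional expectation~\eqref{eq:lemma:appendix:LMC:2:1} as the drift. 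Both arguments hinge on the same key observation -- the marginal drift is the Markovian projection of the frozen drift -- and both silently use the same regularity (smoothness of the Gaussian mixture, integrability of the drift under the Lipschitz score bound) to justify interchanging derivatives and integrals. The paper's route has the small advantage of landing directly on the pointwise equation, whereas yours requires the extra weak-to-strong upgrade; yours has the advantage of making the probabilistic origin of the conditional expectation (the tower property) completely transparent. Either is acceptable.
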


\begin{proof}[Proof of Lemma~\ref{lemma:appendix:LMC:2}]
Let $\bar{\pi}_{k+1}^{t|\ell h}(\cdot|\vx_{\ell h},\vy_{[k+1]})$ denote the conditional distribution of $\bar{\mZ}_{t}$ given $\bar{\mZ}_{\ell h}=\vx_{\ell h}$ and $\mY_{[k+1]}=\vy_{[k+1]}$, which satisfies the Fokker-Planck equation~\cite[Theorem 2.2]{Pavliotis2014Stochastic}
\begin{align}
&\frac{\partial}{\partial t}\bar{\pi}_{k+1}^{t|\ell h}(\vx|\vx_{\ell h},\vy_{[k+1]}) \nonumber \\
&=\nabla_{\vx}\cdot\Big(-\bar{\pi}_{k+1}^{t|\ell h}(\vx|\vx_{\ell h},\vy_{[k+1]})\nabla_{\vx}\log\pi_{k+1}(\vx_{\ell h}|\vy_{[k+1]})\Big)+\Delta_{\vx}\bar{\pi}_{k+1}^{t|\ell h}(\vx|\vx_{\ell h},\vy_{[k+1]}). \label{eq:lemma:appendix:LMC:2:2}
\end{align}
Multiplying both sides of the equality by $\bar{\pi}_{k+1}^{\ell h}(\vx_{\ell h}|\vy_{[k+1]})$ and then integrating with respect to $\vx_{\ell h}\in\bbR^{d}$ deduces
\begin{align*}
&\frac{\partial}{\partial t}\bar{\pi}_{k+1}^{t}(\vx|\vy_{[k+1]}) \\
&=\frac{\partial}{\partial t}\int\bar{\pi}_{k+1}^{t|\ell h}(\vx|\vx_{\ell h},\vy_{[k+1]})\bar{\pi}_{k+1}^{\ell h}(\vx_{\ell h}|\vy_{[k+1]})\d\vx_{\ell h} \\
&=\int\frac{\partial}{\partial t}\bar{\pi}_{k+1}^{t|\ell h}(\vx|\vx_{\ell h},\vy_{[k+1]})\bar{\pi}_{k+1}^{\ell h}(\vx_{\ell h}|\vy_{[k+1]})\d\vx_{\ell h} \\
&=-\int\nabla_{\vx}\cdot\Big(\bar{\pi}_{k+1}^{t|\ell h}(\vx|\vx_{\ell h},\vy_{[k+1]})\nabla_{\vx}\log\pi_{k+1}(\vx_{\ell h}|\vy_{[k+1]})\Big)\bar{\pi}_{k+1}^{\ell h}(\vx_{\ell h}|\vy_{[k+1]})\d\vx_{\ell h} \\
&\quad+\int\Delta_{\vx}\bar{\pi}_{k+1}^{t|\ell h}(\vx|\vx_{\ell h},\vy_{[k+1]})\bar{\pi}_{k+1}^{\ell h}(\vx_{\ell h}|\vy_{[k+1]})\d\vx_{\ell h} \\
&=-\int\nabla_{\vx}\cdot\Big(\bar{\pi}_{k+1}^{t,\ell h}(\vx,\vx_{\ell h}|\vy_{[k+1]})\nabla_{\vx}\log\pi_{k+1}(\vx_{\ell h}|\vy_{[k+1]})\Big)\d\vx_{\ell h} \\
&\quad+\int\Delta_{\vx}\bar{\pi}_{k+1}^{t,\ell h}(\vx,\vx_{\ell h}|\vy_{[k+1]})\d\vx_{\ell h} \\
&=-\nabla_{\vx}\cdot\Big(\bar{\pi}_{k+1}^{t}(\vx|\vy_{[k+1]})\int\bar{\pi}_{k+1}^{\ell h|t}(\vx_{\ell h}|\vx,\vy_{[k+1]})\nabla_{\vx}\log\pi_{k+1}(\vx_{\ell h}|\vy_{[k+1]})\d\vx_{\ell h}\Big) \\
&\quad+\Delta_{\vx}\bar{\pi}_{k+1}^{t}(\vx|\vy_{[k+1]})\int\bar{\pi}_{k+1}^{\ell h|t}(\vx_{\ell h}|\vx,\vy_{[k+1]})\d\vx_{\ell h} \\
&=-\nabla_{\vx}\cdot\Big(\bar{\pi}_{k+1}^{t}(\vx|\vy_{[k+1]})\bar{\vb}_{k+1}^{t}(\vx|\vy_{[k+1]})\Big)+\Delta_{\vx}\bar{\pi}_{k+1}^{t}(\vx|\vy_{[k+1]}),
\end{align*}
where the first equality holds from Chapman-Kolmogorov identity, the third equality follows from~\eqref{eq:lemma:appendix:LMC:2:2}, and the last equality invokes~\eqref{eq:lemma:appendix:LMC:2:1}. This completes the proof.
\end{proof}

\par Now we are ready to prove Lemma~\ref{lemma:appendix:LMC:3}.
\begin{proof}[Proof of Lemma~\ref{lemma:appendix:LMC:3}]
According to the definition of the $\chi^{2}$-divergence, we have 
\begin{align}
&\frac{\d}{\dt}\chi^{2}\big(\bar{\pi}_{k+1}^{t}(\cdot|\vy_{[k+1]})\|\pi_{k+1}(\cdot|\vy_{[k+1]})\big) \nonumber \\
&=2\int\frac{\partial\bar{\pi}_{k+1}^{t}}{\partial t}(\vx|\vy_{[k+1]})\frac{\bar{\pi}_{k+1}^{t}(\vx|\vy_{[k+1]})}{\pi_{k+1}(\vx|\vy_{[k+1]})}\d\vx \nonumber \\
&=2\int\nabla_{\vx}\cdot\big(-\bar{\pi}_{k+1}^{t}(\vx|\vy_{[k+1]})\bar{\vb}_{k+1}^{t}(\vx|\vy_{[k+1]})\big)\phi_{k+1}^{t}(\vx|\vy_{[k+1]})\d\vx \nonumber \\
&\quad+2\int\Delta_{\vx}\bar{\pi}_{k+1}^{t}(\vx|\vy_{[k+1]})\phi_{k+1}^{t}(\vx|\vy_{[k+1]})\d\vx \nonumber \\
&=2\underbrace{\bbE_{\bar{\mZ}_{t}}\big[(\bar{\vb}_{k+1}^{t}-\nabla_{\vx}\log\pi_{k+1})(\bar{\mZ}_{t}|\vy_{[k+1]})\cdot\nabla_{\vx}\phi_{k+1}^{t}(\bar{\mZ}_{t}|\vy_{[k+1]})\big]}_{\text{($\star$)}} \nonumber \\
&\quad-2\bbE_{\mZ}\big[\|\nabla_{\vx}\phi_{k+1}^{t}(\mZ|\vy_{[k+1]})\|_{2}^{2}\big], \label{eq:lemma:appendix:LMC:3:1}
\end{align}
where the first inequality invokes the chain rule, the second equality holds from Fokker-Planck equation (Lemma~\ref{lemma:appendix:LMC:2}) and~\eqref{eq:phi}, and the last equation used the Green's formula and Lemma~\ref{lemma:Dirichlet:energy}. Here the expectation $\bbE_{\bar{\mZ}_{t}}[\cdot]$ is taken with respect to $\bar{\mZ}_{t}\sim\bar{\pi}_{k+1}^{t}(\cdot|\vy_{[k+1]})$, while the expectation $\bbE_{\mZ}[\cdot]$ is taken with respect to $\mZ\sim\pi_{k+1}(\cdot|\vy_{[k+1]})$. Now it remains to estimate the term ($\star$) in~\eqref{eq:lemma:appendix:LMC:3:1}. Notice that
\begin{align*}
&\bbE_{\bar{\mZ}_{t}}\big[\bar{\vb}_{k+1}^{t}(\bar{\mZ}_{t}|\vy_{[k+1]})\cdot\nabla_{\vx}\phi_{k+1}^{t}(\bar{\mZ}_{t}|\vy_{[k+1]})\big] \\
&=\int\bar{\pi}_{k+1}^{t}(\vx|\vy_{[k+1]})\bar{\vb}_{k+1}^{t}(\vx|\vy_{[k+1]})\cdot\nabla_{\vx}\phi_{k+1}^{t}(\vx|\vy_{[k+1]})\d\vx \\
&=\iint\bar{\pi}_{k+1}^{\ell h,t}(\vx_{\ell h},\vx|\vy_{[k+1]})\nabla_{\vx}\log\pi_{k+1}(\vx_{\ell h}|\vy_{[k+1]})\cdot\nabla_{\vx}\phi_{k+1}^{t}(\vx|\vy_{[k+1]})\d\vx_{\ell h}\d\vx \\
&=\bbE_{(\bar{\mZ}_{\ell h},\bar{\mZ}_{t})}\big[\nabla_{\vx}\log\pi_{k+1}(\bar{\mZ}_{\ell h}|\vy_{[k+1]})\cdot\nabla_{\vx}\phi_{k+1}^{t}(\bar{\mZ}_{t}|\vy_{[k+1]})\big],
\end{align*}
where we used the definition of $\bar{\vb}_{k+1}^{t}(\vx|\vy_{[k+1]})$ as~\eqref{eq:lemma:appendix:LMC:2:1}. As a consequence, 
\begin{align}
(\star)
&=\bbE_{\bar{\mZ}_{t}}\big[(\bar{\vb}_{k+1}^{t}-\nabla_{\vx}\log\pi_{k+1})(\bar{\mZ}_{t}|\vy_{[k+1]})\cdot\nabla_{\vx}\phi_{k+1}^{t}(\bar{\mZ}_{t}|\vy_{[k+1]})\big] \nonumber \\
&=\bbE_{(\bar{\mZ}_{\ell h},\bar{\mZ}_{t})}\big[\ve_{k+1}(\bar{\mZ}_{\ell h},\bar{\mZ}_{t}|\vy_{[k+1]})\cdot\nabla_{\vx}\phi_{k+1}^{t}(\bar{\mZ}_{t}|\vy_{[k+1]})\big] \nonumber \\
&=\bbE_{(\bar{\mZ}_{\ell h},\bar{\mZ}_{t})}\Big[\ve_{k+1}(\bar{\mZ}_{\ell h},\bar{\mZ}_{t}|\vy_{[k+1]})\sqrt{\phi_{k+1}^{t}(\bar{\mZ}_{t}|\vy_{[k+1]})}\cdot\frac{\nabla_{\vx}\phi_{k+1}^{t}(\bar{\mZ}_{t}|\vy_{[k+1]})}{\sqrt{\phi_{k+1}^{t}(\bar{\mZ}_{t}|\vy_{[k+1]})}}\Big] \nonumber \\
&\leq\bbE_{(\bar{\mZ}_{\ell h},\bar{\mZ}_{t})}^{\frac{1}{2}}\big[\|\ve_{k+1}(\bar{\mZ}_{\ell h},\bar{\mZ}_{t}|\vy_{[k+1]})\|_{2}^{2}\phi_{k+1}^{t}(\bar{\mZ}_{t}|\vy_{[k+1]})\big]\bbE_{\mZ}^{\frac{1}{2}}\big[\|\nabla_{\vx}\phi_{k+1}^{t}(\mZ|\vy_{[k+1]})\|_{2}^{2}\big] \nonumber \\
&\leq\frac{1}{2}\bbE_{(\bar{\mZ}_{\ell h},\bar{\mZ}_{t})}\big[\|\ve_{k+1}(\bar{\mZ}_{\ell h},\bar{\mZ}_{t}|\vy_{[k+1]})\|_{2}^{2}\phi_{k+1}^{t}(\bar{\mZ}_{t}|\vy_{[k+1]})\big] \nonumber \\
&\quad+\frac{1}{2}\bbE_{\mZ}\big[\|\nabla_{\vx}\phi_{k+1}^{t}(\mZ|\vy_{[k+1]})\|_{2}^{2}\big] \nonumber \\
&=\frac{1}{2}\bbE_{\mZ}\big[\phi_{k+1}^{t}(\mZ|\vy_{[k+1]})^{2}\big]\bbE_{(\bar{\mZ}_{\ell h},\bar{\mZ}_{t})}\big[\|\ve_{k+1}(\bar{\mZ}_{\ell h},\bar{\mZ}_{t}|\vy_{[k+1]})\|_{2}^{2}\psi_{k+1}^{t}(\bar{\mZ}_{t}|\vy_{[k+1]})\big] \nonumber \\
&\quad+\frac{1}{2}\bbE_{\mZ}\big[\|\nabla_{\vx}\phi_{k+1}^{t}(\mZ|\vy_{[k+1]})\|_{2}^{2}\big], \label{eq:lemma:appendix:LMC:3:2}
\end{align}
where the first inequality invokes the Cauchy-Schwarz inequality, the second inequality follows from $ab\leq(a^{2}+b^{2})/2$. Substituting~\eqref{eq:lemma:appendix:LMC:3:2} into~\eqref{eq:lemma:appendix:LMC:3:1} completes the proof.
\end{proof}

\subsection{Dirichlet energy and chi-squared divergence}

\par We relate the Dirichlet energy to $\chi^{2}$-divergence by the following lemma.
\begin{lemma}\label{lemma:appendix:LMC:4}
Suppose Assumption~\ref{assumption:LSI:posterior} holds. Then 
\begin{equation*}
\frac{1}{2C_{\LSI}}\chi^{2}\big(\bar{\pi}_{k+1}^{t}(\cdot|\vy_{[k+1]})\|\pi_{k+1}(\cdot|\vy_{[k+1]})\big)\leq\bbE_{\mZ}\big[\|\nabla_{\vx}\phi_{k+1}^{t}(\mZ|\vy_{[k+1]})\|_{2}^{2}\big].
\end{equation*}
\end{lemma}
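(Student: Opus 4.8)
The plan is to recognize the right-hand side as a Dirichlet form and reduce the claim to a Poincar\'e inequality for $\pi_{k+1}$, which is a standard consequence of the log-Sobolev inequality in Assumption~\ref{assumption:LSI:posterior}. First I would record the elementary identity $\chi^{2}\big(\bar{\pi}_{k+1}^{t}(\cdot|\vy_{[k+1]})\|\pi_{k+1}(\cdot|\vy_{[k+1]})\big)=\var_{\mZ}\big(\phi_{k+1}^{t}(\mZ|\vy_{[k+1]})\big)$, where $\mZ\sim\pi_{k+1}(\cdot|\vy_{[k+1]})$; this holds because $\bbE_{\mZ}[\phi_{k+1}^{t}]=\int\bar{\pi}_{k+1}^{t}=1$ by the definition~\eqref{eq:phi}, so that $\chi^{2}=\bbE_{\mZ}[(\phi_{k+1}^{t})^{2}]-1=\var_{\mZ}(\phi_{k+1}^{t})$ (consistent with~\eqref{eq:phi:property}). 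Hence the target inequality is exactly the Poincar\'e inequality $\var_{\mZ}(\phi_{k+1}^{t})\le 2C_{\LSI}\,\bbE_{\mZ}[\|\nabla_{\vx}\phi_{k+1}^{t}\|_{2}^{2}]$ applied to the test function $\phi_{k+1}^{t}$.

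The key step is therefore to derive a Poincar\'e inequality from the log-Sobolev inequality with the same constant $C_{\LSI}$. I would obtain this by linearization: apply Assumption~\ref{assumption:LSI:posterior} to $f=1+\epsilon g$ for a fixed smooth $g$ and small $\epsilon>0$, and expand both sides to second order in $\epsilon$. A short Taylor computation gives $\ent\big((1+\epsilon g)^{2}\big)=2\epsilon^{2}\var_{\mZ}(g)+o(\epsilon^{2})$, while the right-hand side equals $2C_{\LSI}\epsilon^{2}\,\bbE_{\mZ}[\|\nabla_{\vx}g\|_{2}^{2}]$ exactly. Dividing by $2\epsilon^{2}$ and letting $\epsilon\to0$ yields the Poincar\'e inequality $\var_{\mZ}(g)\le C_{\LSI}\,\bbE_{\mZ}[\|\nabla_{\vx}g\|_{2}^{2}]$. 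Taking $g=\phi_{k+1}^{t}$ then gives $\chi^{2}\le C_{\LSI}\,\bbE_{\mZ}[\|\nabla_{\vx}\phi_{k+1}^{t}\|_{2}^{2}]$, which is in fact sharper than the stated bound; the factor $2C_{\LSI}$ in the lemma leaves ample slack to absorb the approximation arguments below.

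I expect the main technical obstacle to be regularity, namely that $\phi_{k+1}^{t}$ is a ratio of densities and need not belong to $C_{0}^{\infty}(\bbR^{d})$, whereas Assumption~\ref{assumption:LSI:posterior} (and hence the linearized Poincar\'e inequality) is stated only for compactly supported smooth functions. If the Dirichlet energy $\bbE_{\mZ}[\|\nabla_{\vx}\phi_{k+1}^{t}\|_{2}^{2}]$ is infinite the inequality is trivial, so I may assume it is finite, which places $\phi_{k+1}^{t}$ in the weighted Sobolev space $H^{1}(\pi_{k+1})$. I would then approximate $\phi_{k+1}^{t}$ by a sequence $g_{n}\in C_{0}^{\infty}(\bbR^{d})$ converging to $\phi_{k+1}^{t}$ in $H^{1}(\pi_{k+1})$ (via truncation together with mollification), apply the Poincar\'e inequality to each $g_{n}$, and pass to the limit, using that both the variance and the Dirichlet energy are continuous under $H^{1}(\pi_{k+1})$ convergence. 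This density argument, combined with the linearization, completes the proof.
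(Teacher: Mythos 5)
Your proof is correct, but it takes a genuinely different route from the paper's. The paper proves Lemma~\ref{lemma:appendix:LMC:4} by invoking its Lemma~\ref{lemma:LSI:chi:energy}, which is established by substituting $f=\phi^{q/2}$ into the log-Sobolev inequality, appealing to monotonicity of the R\'enyi divergence in the order $q$ to get $\ent_{\pi}(\phi^{2})\geq(\chi^{2}+1)\log(\chi^{2}+1)$, and then using $(x+1)\log(x+1)\geq x$; this yields $\chi^{2}\leq 2C_{\LSI}\,\bbE_{\pi}[\|\nabla\phi\|_{2}^{2}]$. You instead observe that $\bbE_{\mZ}[\phi_{k+1}^{t}]=1$, so $\chi^{2}=\var_{\mZ}(\phi_{k+1}^{t})$, and derive the Poincar\'e inequality with constant $C_{\LSI}$ by the standard linearization $f=1+\epsilon g$ of Assumption~\ref{assumption:LSI:posterior} (the paper itself records this implication, without proof, just before its Lemma~\ref{lemma:LSI:Lip}). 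Your route is more elementary — it avoids the R\'enyi machinery entirely — and it gives the sharper constant $\chi^{2}\leq C_{\LSI}\,\bbE_{\mZ}[\|\nabla_{\vx}\phi_{k+1}^{t}\|_{2}^{2}]$, a factor of $2$ better than what is stated, so the lemma follows a fortiori. What the paper's nonlinear argument buys in exchange is a genuine comparison between $\chi^{2}$ and the entropy of $\phi^{2}$ (the quantity the LSI directly controls), which is the form reused elsewhere in their analysis; your version only needs the linearized (Poincar\'e) consequence. Your handling of the regularity issue — reducing to finite Dirichlet energy and approximating $\phi_{k+1}^{t}$ in $H^{1}(\pi_{k+1})$ by compactly supported smooth functions — is appropriate, and indeed more careful than the paper, which applies the $C_{0}^{\infty}$ inequality to density ratios without comment.
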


\begin{proof}[Proof of Lemma~\ref{lemma:appendix:LMC:4}]
A direct conclusion of Lemma~\ref{lemma:LSI:chi:energy}.
\end{proof}

\subsection{Discretization error}

\par The main results for the discretization error is stated as follows.
\begin{lemma}\label{lemma:appendix:LMC:5}
Suppose Assumptions~\ref{assumption:posterior:smooth} and~\ref{assumption:LSI:posterior} hold. Then for each $\ell h\leq t\leq(\ell+1)h$, 
\begin{align*}
&\bbE_{\mZ}\big[\phi_{k+1}^{t}(\mZ|\vy_{[k+1]})^{2}\big]\bbE_{(\bar{\mZ}_{\ell h},\bar{\mZ}_{t})}\Big[\|\ve_{k+1}(\bar{\mZ}_{\ell h},\bar{\mZ}_{t}|\vy_{[k+1]})\|_{2}^{2}\psi_{k+1}^{t}(\bar{\mZ}_{t}|\vy_{[k+1]})\Big] \\
&\leq 80C_{\LSI}\lambda^{2}(t-\ell h)\bbE_{\mZ}[\|\nabla_{\vx}\phi_{k+1}^{t}(\mZ|\vy_{[k+1]})\|_{2}^{2}]+20d\lambda^{2}(t-\ell h)\bbE_{\mZ}\big[\phi_{k+1}^{t}(\mZ|\vy_{[k+1]})^{2}\big],
\end{align*}
where the step size $h>0$ satisfies $4\lambda h\leq 1$.
\end{lemma}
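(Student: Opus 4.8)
The plan is to reduce the whole quantity to a single weighted second moment of the one-step displacement, and then to control that moment by splitting the displacement into its drift and Brownian parts, disposing of the awkward weight $\phi_{k+1}^{t}(\bar{\mZ}_{t})$ through two integration-by-parts arguments. First I would exploit the normalization~\eqref{eq:psi}, which gives the pointwise identity $\bbE_{\mZ}[\phi_{k+1}^{t}(\mZ|\vy_{[k+1]})^{2}]\,\psi_{k+1}^{t}(\cdot|\vy_{[k+1]})=\phi_{k+1}^{t}(\cdot|\vy_{[k+1]})$; substituting this into the left-hand side collapses the product of the two expectations into $\bbE_{(\bar{\mZ}_{\ell h},\bar{\mZ}_{t})}[\|\ve_{k+1}(\bar{\mZ}_{\ell h},\bar{\mZ}_{t}|\vy_{[k+1]})\|_{2}^{2}\,\phi_{k+1}^{t}(\bar{\mZ}_{t}|\vy_{[k+1]})]$. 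The $\lambda$-Lipschitzness of the posterior score (Assumption~\ref{assumption:posterior:smooth}) bounds the pointwise error by $\|\ve_{k+1}\|_{2}\le\lambda\|\bar{\mZ}_{t}-\bar{\mZ}_{\ell h}\|_{2}$, so it remains to estimate $A:=\bbE[\|\bar{\mZ}_{t}-\bar{\mZ}_{\ell h}\|_{2}^{2}\,\phi_{k+1}^{t}(\bar{\mZ}_{t}|\vy_{[k+1]})]$. Using the update formula~\eqref{eq:appendix:discretization:1}, I would write $\bar{\mZ}_{t}-\bar{\mZ}_{\ell h}=(t-\ell h)\nabla_{\vx}\log\pi_{k+1}(\bar{\mZ}_{\ell h}|\vy_{[k+1]})+\sqrt{2}(\mB_{t}-\mB_{\ell h})$, square through $\|a+b\|_{2}^{2}\le 2\|a\|_{2}^{2}+2\|b\|_{2}^{2}$, and apply Lipschitzness a second time to replace $\nabla_{\vx}\log\pi_{k+1}(\bar{\mZ}_{\ell h})$ by $\nabla_{\vx}\log\pi_{k+1}(\bar{\mZ}_{t})$ at the cost of another $\lambda\|\bar{\mZ}_{t}-\bar{\mZ}_{\ell h}\|_{2}$ term. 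This yields a self-referential bound $A\lesssim (t-\ell h)^{2}\bbE[\|\nabla_{\vx}\log\pi_{k+1}(\bar{\mZ}_{t})\|_{2}^{2}\phi_{k+1}^{t}(\bar{\mZ}_{t})]+\bbE[\|\mB_{t}-\mB_{\ell h}\|_{2}^{2}\phi_{k+1}^{t}(\bar{\mZ}_{t})]+(t-\ell h)^{2}\lambda^{2}A$, whose last term is absorbed into the left side because $t-\ell h\le h$ with $4\lambda h\le1$.

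For the drift term I would change measure through $\bar{\pi}_{k+1}^{t}=\phi_{k+1}^{t}\pi_{k+1}$, turning $\bbE[\|\nabla_{\vx}\log\pi_{k+1}(\bar{\mZ}_{t})\|_{2}^{2}\phi_{k+1}^{t}(\bar{\mZ}_{t})]$ into $\bbE_{\mZ}[\|\nabla_{\vx}\log\pi_{k+1}(\mZ)\|_{2}^{2}(\phi_{k+1}^{t}(\mZ))^{2}]$. Integrating by parts against $\pi_{k+1}\propto e^{-V}$, using $\Delta V=\tr(\nabla^{2}V)\le d\lambda$ (again Assumption~\ref{assumption:posterior:smooth}) together with Young's inequality, gives the key estimate $\bbE_{\mZ}[\|\nabla_{\vx}\log\pi_{k+1}\|_{2}^{2}(\phi_{k+1}^{t})^{2}]\le 2d\lambda\,\bbE_{\mZ}[(\phi_{k+1}^{t})^{2}]+4\,\bbE_{\mZ}[\|\nabla_{\vx}\phi_{k+1}^{t}\|_{2}^{2}]$, which is precisely what injects both the dimension and the Dirichlet energy on the right.

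For the Brownian term, conditioning on $\bar{\mZ}_{\ell h}$ leaves $\bar{\mZ}_{t}$ equal to a deterministic shift plus $\sqrt{2}(\mB_{t}-\mB_{\ell h})$ with $\mB_{t}-\mB_{\ell h}\sim N(\mathbf{0},(t-\ell h)\mI_{d})$, so I would apply Gaussian integration by parts (Stein's identity) in the increment to obtain $\bbE[\|\mB_{t}-\mB_{\ell h}\|_{2}^{2}\phi_{k+1}^{t}(\bar{\mZ}_{t})]=(t-\ell h)d\,\bbE_{\mZ}[(\phi_{k+1}^{t})^{2}]+\sqrt{2}(t-\ell h)\,\bbE[(\mB_{t}-\mB_{\ell h})\cdot\nabla_{\vx}\phi_{k+1}^{t}(\bar{\mZ}_{t})]$. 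The cross term is controlled by Cauchy–Schwarz, using the identity $\bbE_{\bar{\mZ}_{t}}[\|\nabla_{\vx}\phi_{k+1}^{t}\|_{2}^{2}/\phi_{k+1}^{t}]=\bbE_{\mZ}[\|\nabla_{\vx}\phi_{k+1}^{t}\|_{2}^{2}]$, followed by Young's inequality to absorb a copy of $\bbE[\|\mB_{t}-\mB_{\ell h}\|_{2}^{2}\phi_{k+1}^{t}]$. Collecting the drift and diffusion estimates yields $A\lesssim(t-\ell h)\,d\,\bbE_{\mZ}[(\phi_{k+1}^{t})^{2}]+(t-\ell h)^{2}\bbE_{\mZ}[\|\nabla_{\vx}\phi_{k+1}^{t}\|_{2}^{2}]$; I would then upgrade the stray $(t-\ell h)^{2}$ on the Dirichlet energy to $C_{\LSI}(t-\ell h)$ via $t-\ell h\le h\le 1/(4\lambda)\le C_{\LSI}$, invoking the standard relation $C_{\LSI}\lambda\ge 1$ available under Assumptions~\ref{assumption:posterior:smooth} and~\ref{assumption:LSI:posterior}, and track the constants to land on $80C_{\LSI}\lambda^{2}(t-\ell h)$ and $20d\lambda^{2}(t-\ell h)$ after multiplying back by $\lambda^{2}$.

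The main obstacle is the weight $\phi_{k+1}^{t}(\bar{\mZ}_{t})$, which couples the displacement (built from $\bar{\mZ}_{\ell h}$ and the Brownian increment) to the endpoint $\bar{\mZ}_{t}$: in particular the increment $\mB_{t}-\mB_{\ell h}$ is \emph{not} independent of this weight, so the dimension factor $d$ can be extracted only through the Gaussian integration-by-parts step, and the residual cross term must be matched against the Dirichlet energy rather than discarded. The accompanying integration-by-parts identity for $\bbE_{\mZ}[\|\nabla_{\vx}\log\pi_{k+1}\|_{2}^{2}(\phi_{k+1}^{t})^{2}]$ is equally delicate, since it is what trades the a-priori uncontrolled score moment for the smoothness bound $\Delta V\le d\lambda$ together with $\bbE_{\mZ}[\|\nabla_{\vx}\phi_{k+1}^{t}\|_{2}^{2}]$; getting both integration-by-parts steps to close with the correct powers of $(t-\ell h)$, $\lambda$, and $C_{\LSI}$ is the crux of the argument.
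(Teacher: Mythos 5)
Your argument is correct, and its skeleton coincides with the paper's: you collapse the normalization via $\psi_{k+1}^{t}\,\bbE_{\mZ}[\phi_{k+1}^{t}(\mZ|\vy_{[k+1]})^{2}]=\phi_{k+1}^{t}$, split the displacement into drift and Brownian parts with a self-referential absorption under $4\lambda h\leq 1$ (the paper's Lemma~\ref{lemma:appendix:LMC:6}), and handle the weighted score moment by integration by parts against $\pi_{k+1}\propto e^{-V}$ with $\Delta V\leq d\lambda$ and Young's inequality — your bound $\bbE_{\mZ}[\|\nabla V\|_{2}^{2}(\phi_{k+1}^{t})^{2}]\leq 2d\lambda\,\bbE_{\mZ}[(\phi_{k+1}^{t})^{2}]+4\,\bbE_{\mZ}[\|\nabla\phi_{k+1}^{t}\|_{2}^{2}]$ is exactly the paper's Lemma~\ref{lemma:appendix:LMC:8} after multiplying through by $\bbE_{\mZ}[(\phi_{k+1}^{t})^{2}]$.

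Where you genuinely diverge is the weighted Brownian moment $\bbE[\|\mB_{t}-\mB_{\ell h}\|_{2}^{2}\,\phi_{k+1}^{t}(\bar{\mZ}_{t})]$. The paper (Lemma~\ref{lemma:appendix:LMC:7}) attacks it via the Donsker--Varadhan variational principle, bounding the resulting KL term through the log-Sobolev inequality and the log-moment-generating function through a sub-exponential Chernoff bound for the $\chi^{2}$ variable, arriving at $3d(t-\ell h)+8C_{\LSI}(t-\ell h)\,\bbE_{\mZ}[\|\nabla\phi_{k+1}^{t}\|_{2}^{2}]/\bbE_{\mZ}[(\phi_{k+1}^{t})^{2}]$. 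You instead condition on $\bar{\mZ}_{\ell h}$ and apply Gaussian integration by parts to the increment, producing $(t-\ell h)d\,\bbE_{\mZ}[(\phi_{k+1}^{t})^{2}]$ plus a cross term that Cauchy--Schwarz (via $\bbE_{\bar{\mZ}_{t}}[\|\nabla\phi_{k+1}^{t}\|_{2}^{2}/\phi_{k+1}^{t}]=\bbE_{\mZ}[\|\nabla\phi_{k+1}^{t}\|_{2}^{2}]$) and Young absorb, yielding $2d(t-\ell h)\,\bbE_{\mZ}[(\phi_{k+1}^{t})^{2}]+2(t-\ell h)^{2}\,\bbE_{\mZ}[\|\nabla\phi_{k+1}^{t}\|_{2}^{2}]$. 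This is more elementary — no variational principle, no Herbst-type MGF bound, and no LSI needed at this step — and in fact slightly sharper, since $(t-\ell h)^{2}\leq\tfrac{C_{\LSI}}{4}(t-\ell h)$ under $4\lambda h\leq 1$ and $C_{\LSI}\lambda\geq 1$ (Lemma~\ref{lemma:LSI:Lip}), which is the only place Assumption~\ref{assumption:LSI:posterior} enters your version, and only to cast the result in the stated form. The trade-off is that your route leans on the explicit Gaussian structure of the increment and on $\phi_{k+1}^{t}$ being positive and weakly differentiable with $\nabla\phi_{k+1}^{t}\in L^{2}(\pi_{k+1})$; these are the same regularity conventions the paper already uses implicitly, so nothing is lost. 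The constants you obtain are smaller than $80$ and $20$, so weakening to the stated bound is immediate.
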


\par To verify Lemma~\ref{lemma:appendix:LMC:5}, we provide some auxiliary lemmas.
\begin{lemma}\label{lemma:appendix:LMC:6}
Suppose Assumption~\ref{assumption:posterior:smooth} holds. Then for each $\ell h\leq t\leq(\ell+1)h$, 
\begin{align*}
&\bbE_{(\bar{\mZ}_{\ell h},\bar{\mZ}_{t})}\Big[\|\ve_{k+1}(\bar{\mZ}_{\ell h},\bar{\mZ}_{t}|\vy_{[k+1]})\|_{2}^{2}\psi_{k+1}^{t}(\bar{\mZ}_{t}|\vy_{[k+1]})\Big]  \\
&\leq 8\lambda^{2}(t-\ell h)^{2}\bbE_{\bar{\mZ}_{t}}\Big[\|\nabla_{\vx}\log\pi_{k+1}(\bar{\mZ}_{t}|\vy_{[k+1]})\|_{2}^{2}\psi_{k+1}^{t}(\bar{\mZ}_{t}|\vy_{[k+1]})\Big] \\
&\quad+6\lambda^{2}\bbE_{(\mB_{\ell h},\mB_{t},\bar{\mZ}_{t})}\Big[\|\mB_{t}-\mB_{\ell h}\|_{2}^{2}\psi_{k+1}^{t}(\bar{\mZ}_{t}|\vy_{[k+1]})\Big],
\end{align*}
where the step size $h>0$ satisfies $4\lambda h\leq 1$.
\end{lemma}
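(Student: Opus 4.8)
The plan is to control the pointwise discretization error purely through the Lipschitz regularity of the posterior score (Assumption~\ref{assumption:posterior:smooth}) together with the explicit one-step expansion~\eqref{eq:appendix:discretization:1}. First I would apply $\lambda$-Lipschitzness to write
\[
\|\ve_{k+1}(\bar{\mZ}_{\ell h},\bar{\mZ}_{t}|\vy_{[k+1]})\|_{2}^{2}\leq\lambda^{2}\|\bar{\mZ}_{t}-\bar{\mZ}_{\ell h}\|_{2}^{2},
\]
and then substitute $\bar{\mZ}_{t}-\bar{\mZ}_{\ell h}=(t-\ell h)\nabla_{\vx}\log\pi_{k+1}(\bar{\mZ}_{\ell h}|\vy_{[k+1]})+\sqrt{2}(\mB_{t}-\mB_{\ell h})$ from~\eqref{eq:appendix:discretization:1}. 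Applying $\|\va+\vb\|_{2}^{2}\leq 2\|\va\|_{2}^{2}+2\|\vb\|_{2}^{2}$ then produces a bound involving $(t-\ell h)^{2}\|\nabla_{\vx}\log\pi_{k+1}(\bar{\mZ}_{\ell h}|\vy_{[k+1]})\|_{2}^{2}$ and $\|\mB_{t}-\mB_{\ell h}\|_{2}^{2}$.

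The crux is that the right-hand side of the lemma is phrased in terms of the score evaluated at $\bar{\mZ}_{t}$, whereas this expansion naturally yields the score at $\bar{\mZ}_{\ell h}$. To bridge the gap I would invoke the triangle inequality together with $\lambda$-Lipschitzness once more, $\|\nabla_{\vx}\log\pi_{k+1}(\bar{\mZ}_{\ell h}|\vy_{[k+1]})\|_{2}^{2}\leq 2\|\nabla_{\vx}\log\pi_{k+1}(\bar{\mZ}_{t}|\vy_{[k+1]})\|_{2}^{2}+2\|\ve_{k+1}(\bar{\mZ}_{\ell h},\bar{\mZ}_{t}|\vy_{[k+1]})\|_{2}^{2}$, which reintroduces $\|\ve_{k+1}\|_{2}^{2}$ on the right and thus produces a self-referential inequality of the shape $\|\ve_{k+1}\|_{2}^{2}\leq 4\lambda^{2}(t-\ell h)^{2}\|\ve_{k+1}\|_{2}^{2}+4\lambda^{2}(t-\ell h)^{2}\|\nabla_{\vx}\log\pi_{k+1}(\bar{\mZ}_{t}|\vy_{[k+1]})\|_{2}^{2}+4\lambda^{2}\|\mB_{t}-\mB_{\ell h}\|_{2}^{2}$. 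The step-size restriction $4\lambda h\leq 1$ is precisely what forces the self-coefficient $4\lambda^{2}(t-\ell h)^{2}\leq 4\lambda^{2}h^{2}\leq 1/4$, so this term can be absorbed into the left-hand side. Dividing through by $1-4\lambda^{2}(t-\ell h)^{2}\geq 3/4$ then leaves a pointwise bound with coefficient $16/3$ on each surviving term, which is comfortably below the prescribed constants $8$ and $6$.

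Finally I would weight the resulting pointwise inequality by the nonnegative density ratio $\psi_{k+1}^{t}(\bar{\mZ}_{t}|\vy_{[k+1]})$ and take expectation over the joint law of $(\bar{\mZ}_{\ell h},\bar{\mZ}_{t})$ and the driving Brownian increment; since the bound holds for every realization and $\psi_{k+1}^{t}\geq 0$, the expectation passes through term-by-term, yielding exactly the stated estimate. The main obstacle is the bookkeeping in the self-referential step: one must check that the induced coefficient on $\|\ve_{k+1}\|_{2}^{2}$ stays strictly below $1$ under $4\lambda h\leq 1$ and that, after the division, the numerical constants remain within the claimed bounds. Everything else reduces to routine applications of the Cauchy--Schwarz-type splitting $\|\va+\vb\|_{2}^{2}\leq 2\|\va\|_{2}^{2}+2\|\vb\|_{2}^{2}$.
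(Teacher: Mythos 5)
Your proposal is correct and follows essentially the same route as the paper: Lipschitzness of the score, the explicit one-step formula~\eqref{eq:appendix:discretization:1}, a triangle-inequality bridge from the score at $\bar{\mZ}_{\ell h}$ to the score at $\bar{\mZ}_{t}$ whose self-referential term is absorbed using $4\lambda h\leq 1$, and finally weighting by $\psi_{k+1}^{t}\geq 0$ and taking expectations. The only (cosmetic) difference is that you absorb the self-term directly in $\|\ve_{k+1}\|_{2}^{2}$, obtaining the coefficient $16/3$ on both surviving terms, whereas the paper first rearranges to bound $\|\nabla_{\vx}\log\pi_{k+1}(\bar{\mZ}_{\ell h}|\vy_{[k+1]})\|_{2}^{2}$ and then substitutes back; both land within the stated constants $8$ and $6$.
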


\begin{proof}[Proof of Lemma~\ref{lemma:appendix:LMC:6}]
Recall the solution to the Langevin Monte Carlo~\eqref{eq:appendix:discretization:1}, which implies
\begin{equation}\label{eq:lemma:appendix:LMC:6:1}
\|\bar{\mZ}_{t}-\bar{\mZ}_{\ell h}\|_{2}^{2}\leq 2(t-\ell h)^{2}\|\nabla_{\vx}\log\pi_{k+1}(\bar{\mZ}_{\ell h}|\vy_{[k+1]})\|_{2}^{2}+4\|\mB_{t}-\mB_{\ell h}\|_{2}^{2},
\end{equation}
where the first inequality invokes the triangular inequality, and last inequality holds from the fact $\ell h\leq t\leq(\ell+1)h$. According to Assumption~\ref{assumption:posterior:smooth}, we have 
\begin{align}
&\|\nabla_{\vx}\log\pi_{k+1}(\bar{\mZ}_{t}|\vy_{[k+1]})-\nabla_{\vx}\log\pi_{k+1}(\bar{\mZ}_{\ell h}|\vy_{[k+1]})\|_{2}^{2} \nonumber \\
&\leq\lambda^{2}\|\bar{\mZ}_{t}-\bar{\mZ}_{\ell h}\|_{2}^{2} \nonumber \\
&\leq 2\lambda^{2}(t-\ell h)^{2}\|\nabla_{\vx}\log\pi_{k+1}(\bar{\mZ}_{\ell h}|\vy_{[k+1]})\|_{2}^{2}+4\lambda^{2}\|\mB_{t}-\mB_{\ell h}\|_{2}^{2}, \label{eq:lemma:appendix:LMC:6:2}
\end{align}
where the last inequality used~\eqref{eq:appendix:discretization:1}. As a consequence, for each step size $h$ with $4\lambda h\leq 1$, 
\begin{align*}
&\|\nabla_{\vx}\log\pi_{k+1}(\bar{\mZ}_{\ell h}|\vy_{[k+1]})\|_{2}^{2} \\
&\leq 2\|\nabla_{\vx}\log\pi_{k+1}(\bar{\mZ}_{t}|\vy_{[k+1]})-\nabla_{\vx}\log\pi_{k+1}(\bar{\mZ}_{\ell h}|\vy_{[k+1]})\|_{2}^{2}+2\|\nabla_{\vx}\log\pi_{k+1}(\bar{\mZ}_{t}|\vy_{[k+1]})\|_{2}^{2} \\
&\leq 4\lambda^{2}(t-\ell h)^{2}\|\nabla_{\vx}\log\pi_{k+1}(\bar{\mZ}_{\ell h}|\vy_{[k+1]})\|_{2}^{2}+8\lambda^{2}\|\mB_{t}-\mB_{\ell h}\|_{2}^{2}+2\|\nabla_{\vx}\log\pi_{k+1}(\bar{\mZ}_{t}|\vy_{[k+1]})\|_{2}^{2} \\
&\leq \frac{1}{4}\|\nabla_{\vx}\log\pi_{k+1}(\bar{\mZ}_{\ell h}|\vy_{[k+1]})\|_{2}^{2}+8\lambda^{2}\|\mB_{t}-\mB_{\ell h}\|_{2}^{2}+2\|\nabla_{\vx}\log\pi_{k+1}(\bar{\mZ}_{t}|\vy_{[k+1]})\|_{2}^{2},
\end{align*}
where the first inequality follows from the triangular inequality. Rearranging this inequality yields 
\begin{align}
&\|\nabla_{\vx}\log\pi_{k+1}(\bar{\mZ}_{\ell h}|\vy_{[k+1]})\|_{2}^{2} \nonumber \\
&\leq 16\lambda^{2}\|\mB_{t}-\mB_{\ell h}\|_{2}^{2}+4\|\nabla_{\vx}\log\pi_{k+1}(\bar{\mZ}_{t}|\vy_{[k+1]})\|_{2}^{2}. \label{eq:lemma:appendix:LMC:6:3}
\end{align}
Then substituting~\eqref{eq:lemma:appendix:LMC:6:3} into~\eqref{eq:lemma:appendix:LMC:6:2} yields
\begin{align}
&\|\nabla_{\vx}\log\pi_{k+1}(\bar{\mZ}_{t}|\vy_{[k+1]})-\nabla_{\vx}\log\pi_{k+1}(\bar{\mZ}_{\ell h}|\vy_{[k+1]})\|_{2}^{2} \nonumber \\
&\leq 8\lambda^{2}(t-\ell h)^{2}\|\nabla_{\vx}\log\pi_{k+1}(\bar{\mZ}_{t}|\vy_{[k+1]})\|_{2}^{2}+\{32\lambda^{4}h^{2}+4\lambda^{2}\}\|\mB_{t}-\mB_{\ell h}\|_{2}^{2} \nonumber \\
&\leq 8\lambda^{2}(t-\ell h)^{2}\|\nabla_{\vx}\log\pi_{k+1}(\bar{\mZ}_{t}|\vy_{[k+1]})\|_{2}^{2}+6\lambda^{2}\|\mB_{t}-\mB_{\ell h}\|_{2}^{2}, \label{eq:lemma:appendix:LMC:6:4}
\end{align}
where we used the inequality $4\lambda h\leq 1$. Multiplying both sides of~\eqref{eq:lemma:appendix:LMC:6:4} by $\psi_{k+1}^{t}$ and taking expectation complete the proof.
\end{proof}

\par We bound the first term in Lemma~\ref{lemma:appendix:LMC:5} by the following lemma, which is based on~\cite[Lemma 20]{Chewi2024Analysis}.
\begin{lemma}\label{lemma:appendix:LMC:8}
Suppose Assumption~\ref{assumption:posterior:smooth} holds. Then for each $\ell h\leq t\leq(\ell+1)h$, 
\begin{align*}
&\bbE_{\bar{\mZ}_{t}}\big[\|\nabla_{\vx}\log\pi_{k+1}(\bar{\mZ}_{t}|\vy_{[k+1]})\|_{2}^{2}\psi_{k+1}^{t}(\bar{\mZ}_{t}|\vy_{[k+1]})\big] \\
&\leq\frac{4\bbE_{\mZ}[\|\nabla_{\vx}\phi_{k+1}^{t}(\mZ|\vy_{[k+1]})\|_{2}^{2}]}{\bbE_{\mZ}[\phi_{k+1}^{t}(\mZ|\vy_{[k+1]})^{2}]}+2d\lambda,
\end{align*}
where the expectation $\bbE_{\mZ}[\cdot]$ is taken with respect to $\mZ\sim\pi_{k+1}(\cdot|\vy_{[k+1]})$.
\end{lemma}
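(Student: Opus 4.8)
The plan is to reduce the stated bound to a weighted integration-by-parts estimate for the posterior $\pi_{k+1}(\cdot|\vy_{[k+1]})$. First I would re-express the left-hand side, an expectation against $\bar{\pi}_{k+1}^{t}$ reweighted by $\psi_{k+1}^{t}$, as an expectation under the stationary measure. Since $\bar{\pi}_{k+1}^{t}=\phi_{k+1}^{t}\pi_{k+1}$ by~\eqref{eq:phi} and $\psi_{k+1}^{t}=\phi_{k+1}^{t}/\bbE_{\mZ}[(\phi_{k+1}^{t})^{2}]$ by~\eqref{eq:psi}, the reweighting satisfies $\psi_{k+1}^{t}\bar{\pi}_{k+1}^{t}=(\phi_{k+1}^{t})^{2}\pi_{k+1}/\bbE_{\mZ}[(\phi_{k+1}^{t})^{2}]$, so the left-hand side equals $\bbE_{\mZ}[\|\nabla_{\vx}\log\pi_{k+1}\|_{2}^{2}(\phi_{k+1}^{t})^{2}]/\bbE_{\mZ}[(\phi_{k+1}^{t})^{2}]$. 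After this reduction it suffices to establish the integrated bound
\[
\bbE_{\mZ}\big[\|\nabla_{\vx}\log\pi_{k+1}(\mZ|\vy_{[k+1]})\|_{2}^{2}(\phi_{k+1}^{t})^{2}\big]\leq 2d\lambda\,\bbE_{\mZ}\big[(\phi_{k+1}^{t})^{2}\big]+4\bbE_{\mZ}\big[\|\nabla_{\vx}\phi_{k+1}^{t}\|_{2}^{2}\big],
\]
after which dividing through by $\bbE_{\mZ}[(\phi_{k+1}^{t})^{2}]$ yields the lemma.

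Second, I would prove this bound by integration by parts, abbreviating $s=\nabla_{\vx}\log\pi_{k+1}$ and $g=\phi_{k+1}^{t}$. Using $\nabla_{\vx}\pi_{k+1}=\pi_{k+1}\,s$, I write $\|s\|_{2}^{2}g^{2}\pi_{k+1}=g^{2}\,s\cdot\nabla_{\vx}\pi_{k+1}$ and integrate by parts to move the derivative off $\pi_{k+1}$; this produces a divergence term $-\int(\nabla_{\vx}\cdot s)g^{2}\pi_{k+1}=-\int(\Delta_{\vx}\log\pi_{k+1})g^{2}\pi_{k+1}$ and a cross term $-2\int (s\cdot\nabla_{\vx}g)\,g\,\pi_{k+1}$. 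For the divergence term, Assumption~\ref{assumption:posterior:smooth} forces $\nabla_{\vx}^{2}\log\pi_{k+1}\succeq-\lambda\mI_{d}$, hence $-\Delta_{\vx}\log\pi_{k+1}=-\tr(\nabla_{\vx}^{2}\log\pi_{k+1})\leq d\lambda$ pointwise, giving the contribution $d\lambda\,\bbE_{\mZ}[g^{2}]$. For the cross term, Young's inequality $2\|s\|_{2}|g|\,\|\nabla_{\vx}g\|_{2}\leq\tfrac12\|s\|_{2}^{2}g^{2}+2\|\nabla_{\vx}g\|_{2}^{2}$ bounds it by $\tfrac12\bbE_{\mZ}[\|s\|_{2}^{2}g^{2}]+2\bbE_{\mZ}[\|\nabla_{\vx}g\|_{2}^{2}]$. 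Absorbing the half-sized copy of $\bbE_{\mZ}[\|s\|_{2}^{2}g^{2}]$ into the left-hand side and multiplying by two delivers the displayed bound.

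The main obstacle is the analytic justification of the integration by parts, namely controlling boundary terms at infinity so that no stray contributions appear; I would handle this by approximating $g=\phi_{k+1}^{t}$ by compactly supported smooth functions and passing to the limit, using the regularity and tail decay of $\bar{\pi}_{k+1}^{t}$ and $\pi_{k+1}$. The only other nontrivial input is the Hessian trace estimate $-\tr(\nabla_{\vx}^{2}\log\pi_{k+1})\leq d\lambda$, which is immediate once the $\lambda$-Lipschitz condition on the score in Assumption~\ref{assumption:posterior:smooth} is read as the two-sided operator bound $\|\nabla_{\vx}^{2}\log\pi_{k+1}\|_{\op}\leq\lambda$; everything else is elementary Young and Cauchy--Schwarz bookkeeping, matching the structure of~\citet[Lemma 20]{Chewi2024Analysis}.
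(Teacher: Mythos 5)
Your proposal is correct and follows essentially the same route as the paper: both rewrite the $\psi$-weighted expectation as an expectation under the tilted measure $(\phi_{k+1}^{t})^{2}\pi_{k+1}/\bbE_{\mZ}[(\phi_{k+1}^{t})^{2}]$, integrate by parts against $\nabla_{\vx}\pi_{k+1}$, bound the Laplacian term by $d\lambda$ via the operator bound $\|\nabla_{\vx}^{2}\log\pi_{k+1}\|_{\op}\leq\lambda$, and absorb half of the left-hand side with Young's inequality. The only difference is cosmetic: you carry $\phi_{k+1}^{t}$ and $\nabla\phi_{k+1}^{t}$ directly, whereas the paper phrases the cross term through $\nabla\sqrt{\mu_{k+1}^{t}/\pi_{k+1}}$ and converts back via Lemmas~\ref{lemma:score:fisher} and~\ref{lemma:psi:phi}.
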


\begin{proof}[Proof of Lemma~\ref{lemma:appendix:LMC:8}]
According to~\eqref{eq:psi:property}, we define a change of measure
\begin{equation*}
\mu_{k+1}^{t}(\vx|\vy_{[k+1]}):=\psi_{k+1}^{t}(\vx|\vy_{[k+1]})\bar{\pi}_{k+1}^{t}(\vx|\vy_{[k+1]}).
\end{equation*}
Then it suffices to consider the expectation under this change of measures
\begin{align*}
&\bbE_{\widetilde{\mZ}_{t}}\big[\|\nabla_{\vx}\log\pi_{k+1}(\widetilde{\mZ}_{t}|\vy_{[k+1]})\|_{2}^{2}\big] \\
&=\bbE_{\bar{\mZ}_{t}}\big[\|\nabla_{\vx}\log\pi_{k+1}(\bar{\mZ}_{t}|\vy_{[k+1]})\|_{2}^{2}\psi_{k+1}^{t}(\bar{\mZ}_{t}|\vy_{[k+1]})\big],
\end{align*}
where $\widetilde{\mZ}_{t}$ is a random variable with probability density $\mu_{k+1}^{t}(\cdot|\vy_{[k+1]})$. We first verify that 
\begin{align}
&\bbE_{\widetilde{\mZ}_{t}}\big[\|\nabla_{\vx}\log\pi_{k+1}(\widetilde{\mZ}_{t}|\vy_{[k+1]})\|_{2}^{2}\big] \nonumber \\
&\leq\underbrace{\bbE_{\widetilde{\mZ}_{t}}\big[\Delta_{\vx}\log\pi_{k+1}(\widetilde{\mZ}_{t}|\vy_{[k+1]})\big]}_{\text{(i)}}+\underbrace{\bbE_{\mZ}\Big[\nabla_{\vx}\log\pi_{k+1}(\mZ|\vy_{[k+1]})\cdot\nabla_{\vx}\frac{\mu_{k+1}^{t}(\mZ|\vy_{[k+1]})}{\pi_{k+1}(\mZ|\vy_{[k+1]})}\Big]}_{\text{(ii)}}. \label{eq:lemma:appendix:LMC:8:1}
\end{align}
Indeed, according to the Green's formula, we obtain 
\begin{align*}
&\bbE_{\widetilde{\mZ}_{t}}\big[\Delta_{\vx}\log\pi_{k+1}(\widetilde{\mZ}_{t}|\vy_{[k+1]})\big] \\
&=\int\Delta_{\vx}\log\pi_{k+1}(\vx|\vy_{[k+1]})\frac{\mu_{k+1}^{t}(\vx|\vy_{[k+1]})}{\pi_{k+1}(\vx|\vy_{[k+1]})}\pi_{k+1}(\vx|\vy_{[k+1]})\d\vx \\
&=-\int\nabla_{\vx}\log\pi_{k+1}(\vx|\vy_{[k+1]})\cdot\nabla_{\vx}\Big(\frac{\mu_{k+1}^{t}(\vx|\vy_{[k+1]})}{\pi_{k+1}(\vx|\vy_{[k+1]})}\pi_{k+1}(\vx|\vy_{[k+1]})\Big)\d\vx \\
&=-\bbE_{\mZ}\Big[\nabla_{\vx}\log\pi_{k+1}(\mZ|\vy_{[k+1]})\cdot\nabla_{\vx}\frac{\mu_{k+1}^{t}(\mZ|\vy_{[k+1]})}{\pi_{k+1}(\mZ|\vy_{[k+1]})}\Big]+\bbE_{\widetilde{\mZ}_{t}}\big[\|\nabla_{\vx}\log\pi_{k+1}(\widetilde{\mZ}_{t}|\vy_{[k+1]})\|_{2}^{2}\big].
\end{align*}
For the term (i) in~\eqref{eq:lemma:appendix:LMC:8:2}, it follows from Assumption~\ref{assumption:posterior:smooth} that 
\begin{equation}\label{eq:lemma:appendix:LMC:8:2}
\bbE_{\widetilde{\mZ}_{t}}\big[\Delta_{\vx}\log\pi_{k+1}(\widetilde{\mZ}_{t}|\vy_{[k+1]})\big]\leq d\lambda.
\end{equation}
For the term (ii) in~\eqref{eq:lemma:appendix:LMC:8:2}, we find
\begin{align}
&\bbE_{\mZ}\Big[\nabla_{\vx}\log\pi_{k+1}(\mZ|\vy_{[k+1]})\cdot\nabla_{\vx}\frac{\mu_{k+1}^{t}(\mZ|\vy_{[k+1]})}{\pi_{k+1}(\mZ|\vy_{[k+1]})}\Big] \nonumber \\
&=2\bbE_{\mZ}\Big[\nabla_{\vx}\log\pi_{k+1}(\mZ|\vy_{[k+1]})\sqrt{\frac{\mu_{k+1}^{t}(\mZ|\vy_{[k+1]})}{\pi_{k+1}(\mZ|\vy_{[k+1]})}}\cdot\nabla_{\vx}\sqrt{\frac{\mu_{k+1}^{t}(\mZ|\vy_{[k+1]})}{\pi_{k+1}(\mZ|\vy_{[k+1]})}}\Big] \nonumber \\
&\leq\frac{1}{2}\bbE_{\mZ}\Big[\big\|\nabla_{\vx}\log\pi_{k+1}(\mZ|\vy_{[k+1]})\big\|_{2}^{2}\frac{\mu_{k+1}^{t}(\mZ|\vy_{[k+1]})}{\pi_{k+1}(\mZ|\vy_{[k+1]})}\Big]+2\bbE_{\mZ}\Big[\big\|\nabla_{\vx}\sqrt{\frac{\mu_{k+1}^{t}(\mZ|\vy_{[k+1]})}{\pi_{k+1}(\mZ|\vy_{[k+1]})}}\big\|_{2}^{2}\Big] \nonumber \\
&=\frac{1}{2}\bbE_{\widetilde{\mZ}_{t}}\big[\|\nabla_{\vx}\log\pi_{k+1}(\widetilde{\mZ}_{t}|\vy_{[k+1]})\|_{2}^{2}\big]+2\bbE_{\mZ}\Big[\|\nabla_{\vx}\sqrt{\frac{\mu_{k+1}^{t}(\mZ|\vy_{[k+1]})}{\pi_{k+1}(\mZ|\vy_{[k+1]})}}\|_{2}^{2}\Big], \label{eq:lemma:appendix:LMC:8:3}
\end{align}
where the second equality holds from the chain rule, and the inequality follows from Young's inequality. Substituting~\eqref{eq:lemma:appendix:LMC:8:2} and~\eqref{eq:lemma:appendix:LMC:8:3} into~\eqref{eq:lemma:appendix:LMC:8:1} implies 
\begin{equation}\label{eq:lemma:appendix:LMC:8:4}
\bbE_{\widetilde{\mZ}_{t}}\big[\|\nabla_{\vx}\log\pi_{k+1}(\widetilde{\mZ}_{t}|\vy_{[k+1]})\|_{2}^{2}\big]\leq 4\bbE_{\mZ}\Big[\|\nabla_{\vx}\sqrt{\frac{\mu_{k+1}^{t}(\mZ|\vy_{[k+1]})}{\pi_{k+1}(\mZ|\vy_{[k+1]})}}\|_{2}^{2}\Big]+2d\lambda. 
\end{equation}
Applying Lemma~\ref{lemma:score:fisher} to~\eqref{eq:lemma:appendix:LMC:8:4} yields 
\begin{align*}
&\bbE_{\widetilde{\mZ}_{t}}\big[\|\nabla_{\vx}\log\pi_{k+1}(\widetilde{\mZ}_{t}|\vy_{[k+1]})\|_{2}^{2}\big] \\
&\leq \bbE_{\widetilde{\mZ}_{t}}\Big[\|\nabla_{\vx}\log\frac{\mu_{k+1}^{t}(\widetilde{\mZ}_{t}|\vy_{[k+1]})}{\pi_{k+1}(\widetilde{\mZ}_{t}|\vy_{[k+1]})}\big\|_{2}^{2}\Big]+2d\lambda \\
&=\bbE_{\bar{\mZ}_{t}}\Big[\|\nabla_{\vx}\log\frac{\mu_{k+1}^{t}(\bar{\mZ}_{t}|\vy_{[k+1]})}{\pi_{k+1}(\bar{\mZ}_{t}|\vy_{[k+1]})}\|_{2}^{2}\psi_{k+1}^{t}(\bar{\mZ}_{t}|\vy_{[k+1]})\Big]+2d\lambda.
\end{align*}
Finally, using Lemma~\ref{lemma:psi:phi} completes the proof.
\end{proof}

\par For the second term in Lemma~\ref{lemma:appendix:LMC:5}, we have the following result by a similar argument as~\cite[Lemma 19]{Chewi2024Analysis}.
\begin{lemma}\label{lemma:appendix:LMC:7}
Suppose Assumptions~\ref{assumption:posterior:smooth} and~\ref{assumption:LSI:posterior} hold. Then for each $\ell h\leq t\leq(\ell+1)h$, 
\begin{align*}
&\bbE\big[\|\mB_{t}-\mB_{\ell h}\|_{2}^{2}\psi_{k+1}^{t}(\bar{\mZ}_{t}|\vy_{[k+1]})\big] \\
&\leq 3d(t-\ell h)+8C_{\LSI}(t-\ell h)\frac{\bbE_{\mZ}[\|\nabla_{\vx}\phi_{k+1}^{t}(\mZ|\vy_{[k+1]})\|_{2}^{2}]}{\bbE_{\mZ}[\phi_{k+1}^{t}(\mZ|\vy_{[k+1]})^{2}]},
\end{align*}
where the expectation $\bbE_{\mZ}[\cdot]$ is taken with respect to $\mZ\sim\pi_{k+1}(\cdot|\vy_{[k+1]})$.
\end{lemma}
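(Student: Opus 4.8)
The plan is to handle the Brownian increment $W:=\mB_{t}-\mB_{\ell h}\sim N(\mathbf{0},(t-\ell h)\mI_{d})$ by Gaussian integration by parts (Stein's identity). The difficulty is that $\psi_{k+1}^{t}(\bar{\mZ}_{t}|\vy_{[k+1]})$ is correlated with $W$, since $\bar{\mZ}_{t}$ contains it. From the interpolation~\eqref{eq:appendix:discretization:1}, conditionally on $\bar{\mZ}_{\ell h}$ we have $\bar{\mZ}_{t}=\bar{\mZ}_{\ell h}+(t-\ell h)\nabla_{\vx}\log\pi_{k+1}(\bar{\mZ}_{\ell h}|\vy_{[k+1]})+\sqrt{2}\,W$, where the drift depends only on $\bar{\mZ}_{\ell h}$ and is therefore a constant independent of $W$. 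Hence $\bar{\mZ}_{t}$ depends on $W$ only through the additive term $\sqrt{2}W$, so that $\partial_{W_{i}}[\psi_{k+1}^{t}(\bar{\mZ}_{t})]=\sqrt{2}\,(\partial_{i}\psi_{k+1}^{t})(\bar{\mZ}_{t})$.

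First I would apply the one-dimensional Gaussian identity $\bbE[W_{i}G(W)]=(t-\ell h)\,\bbE[\partial_{W_{i}}G(W)]$ with $G(W)=W_{i}\psi_{k+1}^{t}(\bar{\mZ}_{t})$, and sum over the $d$ coordinates. This produces
\begin{align*}
\bbE\big[\|W\|_{2}^{2}\psi_{k+1}^{t}(\bar{\mZ}_{t}|\vy_{[k+1]})\big]
&=d(t-\ell h)\,\bbE\big[\psi_{k+1}^{t}(\bar{\mZ}_{t}|\vy_{[k+1]})\big] \\
&\quad+\sqrt{2}(t-\ell h)\,\bbE\big[W\cdot\nabla_{\vx}\psi_{k+1}^{t}(\bar{\mZ}_{t}|\vy_{[k+1]})\big].
\end{align*}
The first term collapses to $d(t-\ell h)$ using $\bbE[\psi_{k+1}^{t}(\bar{\mZ}_{t}|\vy_{[k+1]})]=1$ from~\eqref{eq:psi:property}, yielding the leading contribution.

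Next I would control the cross term. Writing $\nabla_{\vx}\psi_{k+1}^{t}=\psi_{k+1}^{t}\nabla_{\vx}\log\psi_{k+1}^{t}$ and applying the Cauchy--Schwarz inequality with weight $\psi_{k+1}^{t}$, and abbreviating $A:=\bbE[\|W\|_{2}^{2}\psi_{k+1}^{t}(\bar{\mZ}_{t})]$, I get $\bbE[W\cdot\nabla_{\vx}\psi_{k+1}^{t}(\bar{\mZ}_{t})]\leq A^{1/2}\big(\bbE[\|\nabla_{\vx}\log\psi_{k+1}^{t}(\bar{\mZ}_{t})\|_{2}^{2}\psi_{k+1}^{t}(\bar{\mZ}_{t})]\big)^{1/2}$. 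The key algebraic observation is that this Fisher-type quantity equals the normalized Dirichlet energy $I:=\bbE_{\mZ}[\|\nabla_{\vx}\phi_{k+1}^{t}(\mZ)\|_{2}^{2}]/\bbE_{\mZ}[(\phi_{k+1}^{t}(\mZ))^{2}]$. Indeed, since $\psi_{k+1}^{t}=\phi_{k+1}^{t}/\bbE_{\mZ}[(\phi_{k+1}^{t})^{2}]$ gives $\nabla_{\vx}\log\psi_{k+1}^{t}=\nabla_{\vx}\log\phi_{k+1}^{t}$, and $\bar{\pi}_{k+1}^{t}=\phi_{k+1}^{t}\pi_{k+1}$ by~\eqref{eq:phi}, a direct change of variables shows $\bbE_{\bar{\mZ}_{t}}[\|\nabla_{\vx}\log\psi_{k+1}^{t}\|_{2}^{2}\psi_{k+1}^{t}]=I$ exactly.

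Finally I would close the argument. The preceding steps yield the self-referential inequality $A\leq d(t-\ell h)+\sqrt{2}(t-\ell h)\,A^{1/2}I^{1/2}$; viewing this as a quadratic in $A^{1/2}$ and bootstrapping with Young's inequality gives $A\lesssim d(t-\ell h)+(t-\ell h)^{2}I$. To match the stated bound I would then convert the $(t-\ell h)^{2}$ factor into $C_{\LSI}(t-\ell h)$ via $t-\ell h\leq h$, the step-size restriction $4\lambda h\leq 1$, and the standard fact that a $\lambda$-smooth log-Sobolev measure satisfies $C_{\LSI}\geq 1/\lambda$ (where Assumption~\ref{assumption:LSI:posterior} enters), producing $A\leq 3d(t-\ell h)+8C_{\LSI}(t-\ell h)I$. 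The main obstacle is the second step: disentangling the correlation between the Gaussian increment $W$ and the tilt $\psi_{k+1}^{t}(\bar{\mZ}_{t})$, which forces the integration-by-parts maneuver together with the quadratic bootstrap rather than a naive second-moment estimate.
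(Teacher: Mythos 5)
Your argument is correct, but it follows a genuinely different route from the paper's. The paper handles the correlation between the Brownian increment and the tilt $\psi_{k+1}^{t}(\bar{\mZ}_{t}|\vy_{[k+1]})$ via the Donsker--Varadhan variational principle (Lemma~\ref{lemma:DonskerVaradhan}): it splits the target into $\bbE[\|\mB_t-\mB_{\ell h}\|_2^2]=d(t-\ell h)$, plus $\frac{1}{s}\kl(\mu_{k+1}^{t}\|\bar{\pi}_{k+1}^{t})$ --- which is bounded by $C_{\LSI}\,\bbE_{\mZ}[\|\nabla\phi_{k+1}^{t}\|_2^2]/\bbE_{\mZ}[(\phi_{k+1}^{t})^2]$ using R\'enyi monotonicity, the log-Sobolev inequality, and Lemmas~\ref{lemma:kl:fisher},~\ref{lemma:score:fisher},~\ref{lemma:psi:phi} --- plus a chi-squared Chernoff bound (Lemma~\ref{lemma:chernoff:chi:square}) for the log-moment-generating function, optimizing $s$ via $8s(t-\ell h)=1$. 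You instead use Gaussian integration by parts to peel off the exact leading term $d(t-\ell h)\,\bbE[\psi_{k+1}^{t}]=d(t-\ell h)$, then control the cross term by weighted Cauchy--Schwarz and the identity $\bbE_{\bar{\mZ}_t}[\|\nabla\log\psi_{k+1}^{t}\|_2^2\psi_{k+1}^{t}]=\bbE_{\mZ}[\|\nabla\phi_{k+1}^{t}\|_2^2]/\bbE_{\mZ}[(\phi_{k+1}^{t})^2]$ (which is exactly Lemma~\ref{lemma:psi:phi} after noting $\nabla\log(\psi\phi)=2\nabla\log\phi$), closing with a quadratic bootstrap. Your approach is more elementary --- it needs neither the sub-exponential concentration of $\|\mB_t-\mB_{\ell h}\|_2^2$ nor the LSI in any essential way, and your intermediate bound $2d(t-\ell h)+2(t-\ell h)^2 I$ is if anything sharper for small step sizes. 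What the paper's route buys is that the stated inequality holds with no restriction on $t-\ell h$, whereas you must invoke $t-\ell h\leq h\leq 1/(4\lambda)\leq C_{\LSI}/4$ to convert $(t-\ell h)^2 I$ into $8C_{\LSI}(t-\ell h)I$; strictly speaking you therefore prove the lemma only under the step-size condition $4\lambda h\leq 1$, which the lemma statement does not carry. This is harmless in context, since the only place the lemma is used (Lemma~\ref{lemma:appendix:LMC:5}) already assumes that condition, but it is worth recording as a hypothesis if you keep your proof. One further small point: the Stein identity and the manipulation $\nabla\psi=\psi\nabla\log\psi$ require $\psi_{k+1}^{t}$ to be positive and differentiable with enough decay for the Gaussian boundary terms to vanish; the paper assumes equivalent regularity implicitly throughout, so this is consistent with its standing conventions.
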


\begin{proof}[Proof of Lemma~\ref{lemma:appendix:LMC:7}]
According to Donsker-Varadhan variational principle (Lemma~\ref{lemma:DonskerVaradhan}), for each $s>0$, we have  
\begin{align*}
&\bbE\big[\|\mB_{t}-\mB_{\ell h}\|_{2}^{2}\psi_{k+1}^{t}(\bar{\mZ}_{t}|\vy_{[k+1]})\big]-\bbE[\|\mB_{t}-\mB_{\ell h}\|_{2}^{2}] \\
&=\frac{1}{s}\bbE\big[s(\|\mB_{t}-\mB_{\ell h}\|_{2}^{2}-\bbE[\|\mB_{t}-\mB_{\ell h}\|_{2}^{2}])\psi_{k+1}^{t}(\bar{\mZ}_{t}|\vy_{[k+1]})\big] \\
&\leq\frac{1}{s}\kl\big(\mu_{k+1}^{t}(\cdot|\vy_{[k+1]})\|\bar{\pi}_{k+1}^{t}(\cdot|\vy_{[k+1]})\big) \\
&\quad+\frac{1}{s}\log\bbE\big[\exp\{s(\|\mB_{t}-\mB_{\ell h}\|_{2}^{2}-\bbE[\|\mB_{t}-\mB_{\ell h}\|_{2}^{2}])\}\big].
\end{align*}
Rearranging the above inequality yields 
\begin{align}
&\bbE\big[\|\mB_{t}-\mB_{\ell h}\|_{2}^{2}\psi_{k+1}^{t}(\bar{\mZ}_{t}|\vy_{[k+1]})\big] \nonumber \\
&\leq\underbrace{\bbE[\|\mB_{t}-\mB_{\ell h}\|_{2}^{2}]}_{\textbf{(i)}}+\frac{1}{s}\underbrace{\kl\big(\mu_{k+1}^{t}(\cdot|\vy_{[k+1]})\|\bar{\pi}_{k+1}^{t}(\cdot|\vy_{[k+1]})\big)}_{\textbf{(ii)}} \nonumber \\
&\quad+\frac{1}{s}\underbrace{\log\bbE\big[\exp\{s(\|\mB_{t}-\mB_{\ell h}\|_{2}^{2}-\bbE[\|\mB_{t}-\mB_{\ell h}\|_{2}^{2}])\}\big]}_{\text{(iii)}}. \label{eq:lemma:appendix:LMC:7:1}
\end{align}
For the term (i) in~\eqref{eq:lemma:appendix:LMC:7:1}, it holds that 
\begin{equation}\label{eq:lemma:appendix:LMC:7:2}
\bbE[\|\mB_{t}-\mB_{\ell h}\|_{2}^{2}]=d(t-\ell h).
\end{equation}
For the term (ii) in~\eqref{eq:lemma:appendix:LMC:7:1}, we find 
\begin{align}
&\kl\big(\mu_{k+1}^{t}(\cdot|\vy_{[k+1]})\|\bar{\pi}_{k+1}^{t}(\cdot|\vy_{[k+1]})\big) \nonumber  \\
&=\int\mu_{k+1}^{t}(\vx|\vy_{[k+1]})\log\psi_{k+1}^{t}(\vx|\vy_{[k+1]})\d\vx \nonumber  \\
&=\frac{1}{2}\int\mu_{k+1}^{t}(\vx|\vy_{[k+1]})\log\frac{\phi_{k+1}^{t}(\vx|\vy_{[k+1]})^{2}}{\bbE_{\bar{\mZ}_{t}}[\phi_{k+1}^{t}(\bar{\mZ}_{t}|\vy_{[k+1]})]^{2}}\d\vx \nonumber  \\
&=\frac{1}{2}\int\mu_{k+1}^{t}(\vx|\vy_{[k+1]})\Big\{\log\frac{\phi_{k+1}^{t}(\vx|\vy_{[k+1]})^{2}}{\bbE_{\bar{\mZ}_{t}}[\phi_{k+1}^{t}(\bar{\mZ}_{t}|\vy_{[k+1]})]}-\log\bbE_{\bar{\mZ}_{t}}[\phi_{k+1}^{t}(\bar{\mZ}_{t}|\vy_{[k+1]})]\Big\}\d\vx \nonumber  \\
&\leq\frac{1}{2}\int\mu_{k+1}^{t}(\vx|\vy_{[k+1]})\log\frac{\phi_{k+1}^{t}(\vx|\vy_{[k+1]})^{2}}{\bbE_{\bar{\mZ}_{t}}[\phi_{k+1}^{t}(\bar{\mZ}_{t}|\vy_{[k+1]})]}\d\vx \nonumber  \\
&=\frac{1}{2}\int\mu_{k+1}^{t}(\vx|\vy_{[k+1]})\log\big\{\psi_{k+1}^{t}(\vx|\vy_{[k+1]})\phi_{k+1}^{t}(\vx|\vy_{[k+1]})\big\}\d\vx \nonumber  \\
&=\frac{1}{2}\kl\big(\mu_{k+1}^{t}(\cdot|\vy_{[k+1]})\|\pi_{k+1}(\cdot|\vy_{[k+1]})\big) \nonumber  \\
&\leq\frac{C_{\LSI}}{4}\bbE_{\widetilde{\mZ}_{t}}\Big[\big\|\nabla\log\big\{\psi_{k+1}^{t}(\widetilde{\mZ}_{t}|\vy_{[k+1]})\phi_{k+1}^{t}(\widetilde{\mZ}_{t}|\vy_{[k+1]})\big\}\big\|_{2}^{2}\Big] \nonumber \\
&=C_{\LSI}\frac{\bbE_{\mZ}[\|\nabla_{\vx}\phi_{k+1}^{t}(\mZ|\vy_{[k+1]})\|_{2}^{2}]}{\bbE_{\mZ}[\phi_{k+1}^{t}(\mZ|\vy_{[k+1]})^{2}]}, \label{eq:lemma:appendix:LMC:7:3}
\end{align}
where the second equality holds from~\eqref{eq:phi:property} and~\eqref{eq:psi}, the second inequality invokes Lemmas~\ref{lemma:kl:fisher} and~\ref{lemma:score:fisher}, and the last equality is due to Lemma~\ref{lemma:psi:phi}. Finally, we consider the term (iii) in~\eqref{eq:lemma:appendix:LMC:7:1}. Applying Lemma~\ref{lemma:chernoff:chi:square} deduces 
\begin{equation}\label{eq:lemma:appendix:LMC:7:4}
\log\bbE\big[\exp\{s\|\mB_{t}-\mB_{\ell h}\|_{2}^{2}-\bbE[\|\mB_{t}-\mB_{\ell h}\|_{2}^{2}]\}\big]\leq 2ds(t-\ell h),
\end{equation}
provided that $4s(t-\ell h)\leq 1$. Substituting~\eqref{eq:lemma:appendix:LMC:7:2},~\eqref{eq:lemma:appendix:LMC:7:3}, and~\eqref{eq:lemma:appendix:LMC:7:4} into~\eqref{eq:lemma:appendix:LMC:7:1} implies 
\begin{align*}
&\bbE\big[\|\mB_{t}-\mB_{\ell h}\|_{2}^{2}\psi_{k+1}^{t}(\bar{\mZ}_{t}|\vy_{[k+1]})\big] \\
&\leq d(t-\ell h)+\frac{C_{\LSI}}{s}\frac{\bbE_{\mZ}[\|\nabla_{\vx}\phi_{k+1}^{t}(\mZ|\vy_{[k+1]})\|_{2}^{2}]}{\bbE_{\mZ}[\phi_{k+1}^{t}(\mZ|\vy_{[k+1]})^{2}]}+2d(t-\ell h),
\end{align*}
for each $s>0$ such that $4s(t-\ell h)\leq 1$. Letting $8s(t-\ell h)=1$ completes the proof.
\end{proof}

\par With the help of the preceding three lemmas, we can now prove Lemma~\ref{lemma:appendix:LMC:5}.
\begin{proof}[Proof of Lemma~\ref{lemma:appendix:LMC:5}]
Applying Lemmas~\ref{lemma:appendix:LMC:8} and~\ref{lemma:appendix:LMC:7} into Lemma~\ref{lemma:appendix:LMC:6} implies 
\begin{align*}
&\bbE_{(\bar{\mZ}_{\ell h},\bar{\mZ}_{t})}\Big[\|\ve_{k+1}(\bar{\mZ}_{\ell h},\bar{\mZ}_{t}|\vy_{[k+1]})\|_{2}^{2}\psi_{k+1}^{t}(\bar{\mZ}_{t}|\vy_{[k+1]})\Big]  \\
&\leq 8\lambda^{2}(t-\ell h)^{2}\Big\{\frac{4\bbE_{\mZ}[\|\nabla_{\vx}\phi_{k+1}^{t}(\mZ|\vy_{[k+1]})\|_{2}^{2}]}{\bbE_{\mZ}[\phi_{k+1}^{t}(\mZ|\vy_{[k+1]})^{2}]}+2d\lambda\Big\} \\
&\quad+6\lambda^{2}\Big\{3d(t-\ell h)+8C_{\LSI}(t-\ell h)\frac{\bbE_{\mZ}[\|\nabla_{\vx}\phi_{k+1}^{t}(\mZ|\vy_{[k+1]})\|_{2}^{2}]}{\bbE_{\mZ}[\phi_{k+1}^{t}(\mZ|\vy_{[k+1]})^{2}]}\Big\} \\
&=\big\{32\lambda^{2}(t-\ell h)^{2}+48C_{\LSI}\lambda^{2}(t-\ell h)\big\}\frac{\bbE_{\mZ}[\|\nabla_{\vx}\phi_{k+1}^{t}(\mZ|\vy_{[k+1]})\|_{2}^{2}]}{\bbE_{\mZ}[\phi_{k+1}^{t}(\mZ|\vy_{[k+1]})^{2}]} \\
&\quad+16d\lambda^{3}(t-\ell h)^{2}+16d\lambda^{2}(t-\ell h) \\
&\leq 80C_{\LSI}\lambda^{2}(t-\ell h)\frac{\bbE_{\mZ}[\|\nabla_{\vx}\phi_{k+1}^{t}(\mZ|\vy_{[k+1]})\|_{2}^{2}]}{\bbE_{\mZ}[\phi_{k+1}^{t}(\mZ|\vy_{[k+1]})^{2}]}+20d\lambda^{2}(t-\ell h),
\end{align*}
where the last inequality holds from $4\lambda h\leq 1$. This completes the proof.
\end{proof}

\subsection{Proof of the convergence of Langevin Monte Carlo}

\par Combining Lemmas~\ref{lemma:appendix:LMC:3},~\ref{lemma:appendix:LMC:4}, and~\ref{lemma:appendix:LMC:5} achieves the following recursion of $\chi^{2}$-divergence.
\begin{lemma}\label{lemma:appendix:LMC:9}
Suppose Assumptions~\ref{assumption:posterior:smooth} and~\ref{assumption:LSI:posterior} hold. Then for each $0\leq\ell\leq K-1$,
\begin{align*}
&\chi^{2}\big(\bar{\pi}_{k+1}^{(\ell+1)h}(\cdot|\vy_{[k+1]})\|\pi_{k+1}(\cdot|\vy_{[k+1]})\big) \\
&\leq\exp\Big(-\frac{h}{5C_{\LSI}}\Big)\chi^{2}\big(\bar{\pi}_{k+1}^{\ell h}(\cdot|\vy_{[k+1]})\|\pi_{k+1}(\cdot|\vy_{[k+1]})\big)+20d\lambda^{2}h^{2},
\end{align*}
where the step size $h>0$ satisfies $400dC_{\LSI}\lambda^{2}h\leq 1$.
\end{lemma}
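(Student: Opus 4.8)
The plan is to convert the three preceding lemmas into a single differential inequality for the map $t \mapsto \chi^{2}\big(\bar{\pi}_{k+1}^{t}(\cdot|\vy_{[k+1]})\,\|\,\pi_{k+1}(\cdot|\vy_{[k+1]})\big)$ on the sub-interval $[\ell h,(\ell+1)h]$, and then to integrate that inequality by Grönwall's lemma to extract the one-step recursion. Abbreviate this divergence by $\chi^{2}(t)$ and write $E(t):=\bbE_{\mZ}[\|\nabla_{\vx}\phi_{k+1}^{t}(\mZ|\vy_{[k+1]})\|_{2}^{2}]$ for the Dirichlet energy; I would also use the elementary identity $\bbE_{\mZ}[\phi_{k+1}^{t}(\mZ|\vy_{[k+1]})^{2}]=1+\chi^{2}(t)$, which holds because $\phi_{k+1}^{t}$ is the density of $\bar{\pi}_{k+1}^{t}$ relative to $\pi_{k+1}$.

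First I would substitute the discretization-error bound of Lemma~\ref{lemma:appendix:LMC:5} into the differential inequality of Lemma~\ref{lemma:appendix:LMC:3}. Using $t-\ell h\leq h$ together with the identity above, this yields
\begin{equation*}
\frac{\d}{\dt}\chi^{2}(t)\leq-\big(1-80C_{\LSI}\lambda^{2}h\big)E(t)+20d\lambda^{2}h\big(1+\chi^{2}(t)\big).
\end{equation*}
The step-size hypothesis $400dC_{\LSI}\lambda^{2}h\leq 1$ gives $80C_{\LSI}\lambda^{2}h\leq 1/5$ (since $d\geq 1$), so the Dirichlet-energy coefficient stays at least $4/5$; note also that, under the standard normalization $C_{\LSI}\lambda\geq 1$, the same hypothesis forces $4\lambda h\leq 1$, the weaker condition required by Lemma~\ref{lemma:appendix:LMC:5}. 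I would then invoke the log-Sobolev bound of Lemma~\ref{lemma:appendix:LMC:4}, namely $E(t)\geq\chi^{2}(t)/(2C_{\LSI})$, to convert the $-E(t)$ contribution into $-\tfrac{2}{5C_{\LSI}}\chi^{2}(t)$.

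The remaining step absorbs the stray $\chi^{2}$-multiplier produced by the discretization error: since $400dC_{\LSI}\lambda^{2}h\leq 1$ forces $20d\lambda^{2}h\leq 1/(20C_{\LSI})$, the term $20d\lambda^{2}h\,\chi^{2}(t)$ merges into the decay term, leaving a coefficient $-\tfrac{7}{20C_{\LSI}}\leq-\tfrac{1}{5C_{\LSI}}$. This produces the clean inequality
\begin{equation*}
\frac{\d}{\dt}\chi^{2}(t)\leq-\frac{1}{5C_{\LSI}}\chi^{2}(t)+20d\lambda^{2}h,\qquad \ell h\leq t\leq(\ell+1)h.
\end{equation*}
Integrating this over an interval of length $h$ by Grönwall's inequality, the homogeneous part contributes the factor $\exp(-h/(5C_{\LSI}))$ multiplying $\chi^{2}(\ell h)$, while the inhomogeneous part is controlled using $1-e^{-ah}\leq ah$ by $20d\lambda^{2}h\cdot h=20d\lambda^{2}h^{2}$, which is exactly the stated additive term.

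I expect the main obstacle to be the constant bookkeeping in the middle step: one must check simultaneously that the Dirichlet-energy coefficient remains positive after subtracting the discretization contribution, and that the leftover $\chi^{2}(t)$ factor from the discretization error can be absorbed into the LSI-driven decay while preserving an effective rate of at least $1/(5C_{\LSI})$. Both hinge on the precise form of the condition $400dC_{\LSI}\lambda^{2}h\leq 1$ and on the relation $C_{\LSI}\lambda\geq 1$ used to deduce $4\lambda h\leq 1$; once these constants are pinned down, the conclusion is a routine application of Grönwall's inequality.
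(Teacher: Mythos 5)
Your proposal is correct and follows essentially the same route as the paper: substitute the discretization bound (Lemma~\ref{lemma:appendix:LMC:5}) into the differential inequality (Lemma~\ref{lemma:appendix:LMC:3}), use the log-Sobolev bound (Lemma~\ref{lemma:appendix:LMC:4}) together with $\bbE_{\mZ}[\phi^{2}]=1+\chi^{2}$ and the step-size condition to absorb the stray $\chi^{2}$ term into a net decay rate of $1/(5C_{\LSI})$, then integrate by Gr\"onwall. Your final bound on the inhomogeneous term via $1-e^{-ah}\leq ah$ is in fact a cleaner statement of the estimate the paper uses (the paper's displayed claim $e^{h/(5C_{\LSI})}-1\leq h/(5C_{\LSI})$ is literally false for positive arguments, but the prefactor $e^{-h/(5C_{\LSI})}$ turns it into exactly your inequality, so the conclusion stands).
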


\begin{proof}[Proof of Lemma~\ref{lemma:appendix:LMC:9}]
Plugging Lemma~\ref{lemma:appendix:LMC:5} into Lemma~\ref{lemma:appendix:LMC:3} implies
\begin{align}
&\frac{\d}{\dt}\chi^{2}\big(\bar{\pi}_{k+1}^{t}(\cdot|\vy_{[k+1]})\|\pi_{k+1}(\cdot|\vy_{[k+1]})\big) \nonumber \\
&\leq-\bbE_{\mZ}\Big[\big\|\nabla_{\vx}\phi_{k+1}^{t}(\mZ|\vy_{[k+1]})\big\|_{2}^{2}\Big]+80C_{\LSI}\lambda^{2}h\bbE_{\mZ}[\|\nabla_{\vx}\phi_{k+1}^{t}(\mZ|\vy_{[k+1]})\|_{2}^{2}] \nonumber \\
&\quad+20d\lambda^{2}(t-\ell h)\bbE_{\mZ}[\phi_{k+1}^{t}(\mZ|\vy_{[k+1]})^{2}] \nonumber \\
&\leq-\frac{4}{5}\underbrace{\bbE_{\mZ}\Big[\big\|\nabla_{\vx}\phi_{k+1}^{t}(\mZ|\vy_{[k+1]})\big\|_{2}^{2}\Big]}_{\text{(i)}}+20d\lambda^{2}(t-\ell h)\underbrace{\bbE_{\mZ}[\phi_{k+1}^{t}(\mZ|\vy_{[k+1]})^{2}]}_{\text{(ii)}}, \label{eq:lemma:appendix:LMC:9:1}
\end{align}
where the second inequality invokes $400C_{\LSI}\lambda^{2}h\leq 1$. For the term (i) in~\eqref{eq:lemma:appendix:LMC:9:1}, it follows from Lemma~\ref{lemma:appendix:LMC:4} that 
\begin{equation}\label{eq:lemma:appendix:LMC:9:2}
\frac{1}{2C_{\LSI}}\chi^{2}\big(\bar{\pi}_{k+1}^{t}(\cdot|\vy_{[k+1]})\|\pi_{k+1}(\cdot|\vy_{[k+1]})\big)\leq\bbE_{\mZ}\Big[\big\|\nabla_{\vx}\phi_{k+1}^{t}(\mZ|\vy_{[k+1]})\big\|_{2}^{2}\Big].
\end{equation}
For the term (ii) in~\eqref{eq:lemma:appendix:LMC:9:1}, using the definition of $\chi^{2}$-divergence and~\eqref{eq:phi},
\begin{equation}\label{eq:lemma:appendix:LMC:9:3}
\bbE_{\mZ}[\phi_{k+1}^{t}(\mZ|\vy_{[k+1]})^{2}]\leq\chi^{2}\big(\bar{\pi}_{k+1}^{t}(\cdot|\vy_{[k+1]})\|\pi_{k+1}(\cdot|\vy_{[k+1]})\big)+1.
\end{equation}
Substituting~\eqref{eq:lemma:appendix:LMC:9:2} and~\eqref{eq:lemma:appendix:LMC:9:3} into~\eqref{eq:lemma:appendix:LMC:9:1} yields that for $h$ satisfying $100dC_{\LSI}\lambda^{2}h\leq 1$,
\begin{align}
&\frac{\d}{\dt}\chi^{2}\big(\bar{\pi}_{k+1}^{t}(\cdot|\vy_{[k+1]})\|\pi_{k+1}(\cdot|\vy_{[k+1]})\big) \nonumber \\
&\leq-\frac{1}{5C_{\LSI}}\chi^{2}\big(\bar{\pi}_{k+1}^{t}(\cdot|\vy_{[k+1]})\|\pi_{k+1}(\cdot|\vy_{[k+1]})\big)+20d\lambda^{2}h. \label{eq:lemma:appendix:LMC:9:4}
\end{align}
Multiplying both sides of~\eqref{eq:lemma:appendix:LMC:9:4} by $\exp(\frac{t}{5C_{\LSI}})$ deduces 
\begin{equation}\label{eq:lemma:appendix:LMC:9:5}
\frac{\d}{\dt}\Big(\exp\Big(\frac{t}{5C_{\LSI}}\Big)\chi^{2}\big(\bar{\pi}_{k+1}^{t}(\cdot|\vy_{[k+1]})\|\pi_{k+1}(\cdot|\vy_{[k+1]})\big)\Big)\leq 20d\lambda^{2}h\exp\Big(\frac{t}{5C_{\LSI}}\Big).
\end{equation}
Before proceeding, we verify a useful inequality
\begin{equation}\label{eq:lemma:appendix:LMC:9:6}
\exp\Big(\frac{h}{5C_{\LSI}}\Big)-1\leq\frac{h}{5C_{\LSI}}.
\end{equation}
In fact, since that $400C_{\LSI}\lambda^{2}h\leq 1$ and $C_{\LSI}\lambda\geq 1$ (Lemma~\ref{lemma:LSI:Lip}), it holds that 
\begin{equation*}
0<\frac{h}{5C_{\LSI}}\leq\frac{h}{5C_{\LSI}}\frac{1}{400C_{\LSI}\lambda^{2}h}<1,
\end{equation*}
which implies~\eqref{eq:lemma:appendix:LMC:9:6} directly. Then integrating both sides of~\eqref{eq:lemma:appendix:LMC:9:5} from $\ell h$ to $(\ell+1)h$ yields 
\begin{align*}
&\chi^{2}\big(\bar{\pi}_{k+1}^{(\ell+1)h}(\cdot|\vy_{[k+1]})\|\pi_{k+1}(\cdot|\vy_{[k+1]})\big) \\
&\leq\exp\Big(-\frac{h}{5C_{\LSI}}\Big)\chi^{2}\big(\bar{\pi}_{k+1}^{\ell h}(\cdot|\vy_{[k+1]})\|\pi_{k+1}(\cdot|\vy_{[k+1]})\big) \\
&\quad+100dC_{\LSI}\lambda^{2}h\exp\Big(-\frac{h}{5C_{\LSI}}\Big)\Big\{\exp\Big(\frac{h}{5C_{\LSI}}\Big)-1\Big\} \\
&\leq\exp\Big(-\frac{h}{5C_{\LSI}}\Big)\chi^{2}\big(\bar{\pi}_{k+1}^{\ell h}(\cdot|\vy_{[k+1]})\|\pi_{k+1}(\cdot|\vy_{[k+1]})\big)+20d\lambda^{2}h^{2},
\end{align*}
where the last inequality invokes~\eqref{eq:lemma:appendix:LMC:9:6}. This completes the proof.
\end{proof}

\begin{proof}[Proof of Lemma~\ref{lemma:appendix:discretization}]
It is straightforward from Lemma~\ref{lemma:appendix:LMC:9} that
\begin{align*}
&\chi^{2}\big(\bar{\pi}_{k+1}^{\ell h}(\cdot|\vy_{[k+1]})\|\pi_{k+1}(\cdot|\vy_{[k+1]})\big) \\
&\leq\exp\Big(-\frac{\ell h}{5C_{\LSI}}\Big)\chi^{2}\big(\pi_{k+1}^{0}(\cdot|\vy_{[k+1]})\|\pi_{k+1}(\cdot|\vy_{[k+1]})\big)+\frac{20d\lambda^{2}h^{2}}{1-\exp(-\frac{h}{5C_{\LSI}})} \\
&\leq\exp\Big(-\frac{\ell h}{5C_{\LSI}}\Big)\chi^{2}\big(\pi_{k+1}^{0}(\cdot|\vy_{[k+1]})\|\pi_{k+1}(\cdot|\vy_{[k+1]})\big)+140dC_{\LSI}\lambda^{2}h,
\end{align*}
where the last inequality used $1-\exp(-\frac{h}{5C_{\LSI}})\geq\frac{3h}{20C_{\LSI}}$ for $0<\frac{h}{5C_{\LSI}}\leq\frac{1}{4}$. Indeed, combining the condition $400dC_{\LSI}\lambda^{2}h\leq 1$ and Lemma~\ref{lemma:LSI:Lip} implies $4h\lambda\leq 1$, which deduces that $\frac{h}{5C_{\LSI}}\leq\frac{1}{20C_{\LSI}\lambda}\leq\frac{1}{4}$. This completes the proof.
\end{proof}

\section{Prior Error}
\label{appendix:prior}

\par In this section, we provide an error bound for prior mismatch in Lemma~\ref{lemma:appendix:convergence:decomposition}. Before proceeding, we introduce two auxiliary lemmas.
\begin{lemma}\label{appendix:prior:1}
Suppose Assumption~\ref{assumption:bounded} (i) holds. Then for any $\vx\in\bbR^{d}$, 
\begin{equation*}
|q_{k+1}(\vx|\vy_{[k]})-\what{q}_{k+1}(\vx|\vy_{[k]})|\leq 2B\|\pi_{k}(\cdot|\vy_{[k]})-\what{\pi}_{k}^{T}(\cdot|\vy_{[k]})\|_{\tv}.
\end{equation*}
\end{lemma}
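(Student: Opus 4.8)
The plan is to exploit that both $q_{k+1}(\cdot|\vy_{[k]})$ and $\what{q}_{k+1}(\cdot|\vy_{[k]})$ are built from the \emph{same} transition kernel $\rho_k$, integrated against two different measures: the true previous posterior $\pi_k(\cdot|\vy_{[k]})$ in~\eqref{eq:prediction:distribution} versus its SSLS estimate $\what{\pi}_k^{T}(\cdot|\vy_{[k]})$ in~\eqref{eq:prediction:distribution:hat}. First I would subtract the two integral representations and use linearity to collect the common kernel into a single integrand,
\[
q_{k+1}(\vx|\vy_{[k]})-\what{q}_{k+1}(\vx|\vy_{[k]})=\int\rho_k(\vx|\vx_{k})\big(\pi_k(\vx_{k}|\vy_{[k]})-\what{\pi}_k^{T}(\vx_{k}|\vy_{[k]})\big)\d\vx_{k}.
\]

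Next I would pass to absolute values, bound the integrand via the triangle inequality, and invoke the pointwise upper bound $\rho_k(\vx|\vx_{k})\leq B$ from Assumption~\ref{assumption:transition}(i). Factoring out the constant $B$ reduces the right-hand side to the $L^{1}$-distance between the two posteriors,
\[
|q_{k+1}(\vx|\vy_{[k]})-\what{q}_{k+1}(\vx|\vy_{[k]})|\leq B\int\big|\pi_k(\vx_{k}|\vy_{[k]})-\what{\pi}_k^{T}(\vx_{k}|\vy_{[k]})\big|\d\vx_{k}.
\]
The concluding step is to identify this $L^{1}$-distance with twice the total variation distance, $\int|\pi_k-\what{\pi}_k^{T}|\d\vx_{k}=2\tv(\pi_k,\what{\pi}_k^{T})=2\varepsilon_{\tv}^{k}$, which yields the claimed bound $2B\varepsilon_{\tv}^{k}$, uniform in $\vx$ because $B$ controls $\rho_k$ pointwise.

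This is essentially a one-line estimate, so I do not anticipate a substantive obstacle; the only point warranting attention is the normalization convention for total variation. The factor $2$ in the statement indicates that $\tv(\mu,\nu)=\tfrac12\int|\mu-\nu|$ is in force, and I would ensure the $L^{1}$-to-$\tv$ conversion uses precisely this convention so that the constant matches. The uniformity in $\vx$ is exactly what is needed downstream, since Lemma~\ref{appendix:prior:3} upgrades this pointwise control of the density gap to an $L^{\infty}$-bound on the difference of the prediction scores.
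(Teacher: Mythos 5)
Your proposal is correct and follows essentially the same route as the paper: subtract the two integral representations sharing the kernel $\rho_k$, bound the integrand by $B$ via Assumption~\ref{assumption:transition}(i) (the paper phrases these two steps as Jensen's and H\"older's inequalities), and convert the resulting $L^1$-distance to $2\tv$ using the paper's convention $\tv(\mu,\pi)=\tfrac12\int|\mu-\pi|$. Your attention to the normalization convention is exactly the right check, and it matches the definition in Appendix~\ref{appendix:auxiliary}.
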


\begin{proof}[Proof of Lemma~\ref{appendix:prior:1}]
It is straightforward that for any $\vx\in\bbR^{d}$, 
\begin{align*}
|q_{k+1}(\vx|\vy_{[k]})-\what{q}_{k+1}(\vx|\vy_{[k]})|
&\leq\int\rho_{k}(\vx|\vx_{k})|\pi_{k}(\vx_{k}|\vy_{[k]})-\what{\pi}_{k}^{T}(\vx_{k}|\vy_{[k]})|\d\vx_{k} \\
&\leq 2B\|\pi_{k}(\cdot|\vy_{[k]})-\what{\pi}_{k}^{T}(\cdot|\vy_{[k]})\|_{\tv},
\end{align*}
where the first inequality follows from the Jensen's inequality, and the second inequality invokes Assumption~\ref{assumption:bounded} (i) and the H{\"o}lder's inequality. This completes the proof.
\end{proof}

\begin{lemma}\label{appendix:prior:2}
Suppose Assumption~\ref{assumption:bounded} (i) holds. Then for any $\vx\in\bbR^{d}$, 
\begin{equation*}
\|\nabla_{\vx}q_{k+1}(\vx|\vy_{[k]})-\nabla_{\vx}\what{q}_{k+1}(\vx|\vy_{[k]})\|_{\infty}\leq 2B\|\pi_{k}(\cdot|\vy_{[k]})-\what{\pi}_{k}^{T}(\cdot|\vy_{[k]})\|_{\tv}.
\end{equation*}
\end{lemma}

\begin{proof}[Proof of Lemma~\ref{appendix:prior:2}]
It is straightforward that for any $\vx\in\bbR^{d}$ and any $i\in\{1,\ldots,d\}$, 
\begin{align*}
\Big|\frac{\partial q_{k+1}}{\partial x_{i}}(\vx|\vy_{[k]})-\frac{\partial \what{q}_{k+1}}{\partial x_{i}}(\vx|\vy_{[k]})\Big|
&\leq\int\Big|\frac{\partial \rho_{k}}{\partial x_{i}}(\vx|\vx_{k})\Big||\pi_{k}(\vx_{k}|\vy_{[k]})-\what{\pi}_{k}^{T}(\vx_{k}|\vy_{[k]})|\d\vx_{k} \\
&\leq 2B\|\pi_{k}(\cdot|\vy_{[k]})-\what{\pi}_{k}^{T}(\cdot|\vy_{[k]})\|_{\tv},
\end{align*}
where the first inequality follows from the Jensen's inequality, and the second inequality invokes Assumption~\ref{assumption:bounded} (i) and the H{\"o}lder's inequality. This completes the proof.
\end{proof}

\begin{proposition}\label{appendix:prior:3}
Suppose Assumptions~\ref{assumption:posterior:smooth},~\ref{assumption:LSI:posterior} and~\ref{assumption:bounded} hold. Then 
\begin{align*}
&\bbE^{\frac{1}{2}}\left[\|\nabla_{\vx}\log q_{k+1}(\mX_{k+1}|\vy_{[k]})-\nabla\log\what{q}_{k+1}(\mX_{k+1}|\vy_{[k]})\|_{2}^{4}\right] \\
&\leq CC_{\LSI}^{\frac{1}{4}}\|\pi_{k}(\cdot|\vy_{[k]})-\what{\pi}_{k}^{T}(\cdot|\vy_{[k]})\|_{\tv}^{2\gamma},
\end{align*}
where $C$ is a constant only depending on $d$ and $B$, and
\begin{equation*}
\gamma \coloneqq \frac{1+96BC_{\LSI}}{1+128BC_{\LSI}}.
\end{equation*}
\end{proposition}

\begin{proof}[Proof of Proposition~\ref{appendix:prior:3}]
For any $R\geq 1$, we have the decomposition using the truncation arguments, 
\begin{align}
&\bbE^{\frac{1}{2}}\left[\|\nabla_{\vx}\log q_{k+1}(\mX_{k+1}|\vy_{[k]})-\nabla\log\what{q}_{k+1}(\mX_{k+1}|\vy_{[k]})\|_{2}^{4}\right] \nonumber \\
&= \left(\int\|\nabla_{\vx}\log q_{k+1}(\vx|\vy_{[k]})-\nabla\log\what{q}_{k+1}(\vx|\vy_{[k]})\|_{2}^{4}\pi_{k+1}(\vx|\vy_{[k]})\d\vx\right)^{\frac{1}{2}} \nonumber \\
&\leq \left(\int\|\nabla_{\vx}\log q_{k+1}(\vx|\vy_{[k]})-\nabla\log\what{q}_{k+1}(\vx|\vy_{[k]})\|_{2}^{4}\mathbbm{1}\{\|\vx\|_{2}\leq R\}\pi_{k+1}(\vx|\vy_{[k]})\d\vx\right)^{\frac{1}{2}} \nonumber \\
&\quad +\left(\int\|\nabla_{\vx}\log q_{k+1}(\vx|\vy_{[k]})-\nabla\log\what{q}_{k+1}(\vx|\vy_{[k]})\|_{2}^{4}\mathbbm{1}\{\|\vx\|_{2}> R\}\pi_{k+1}(\vx|\vy_{[k]})\d\vx\right)^{\frac{1}{2}} \nonumber \\
&\eqqcolon E_{1}+E_{2}, \label{eq:proof:lemma:appendix:prior:3:1}
\end{align}
where the inequality is due to Jensen's inequality. We consider the summand $E_{1}$ in~\eqref{eq:proof:lemma:appendix:prior:3:1}. According to Assumption~\ref{assumption:bounded}, for any $\|\vx\|_{2}\leq R$, we have 
\begin{equation}\label{eq:proof:lemma:appendix:prior:3:2}
\Big|\Big(\frac{1}{\widehat{q}_{k+1}}\frac{\partial\widehat{q}_{k+1}}{\partial x_{i}}\Big)(\vx|\vy_{[k]})\Big|\leq \frac{B}{H}\exp\Big(\frac{R^{2}}{V^{2}}\Big),
\end{equation}
and 
\begin{equation}\label{eq:proof:lemma:appendix:prior:3:3}
\Big|\frac{1}{\widehat{q}_{k+1}}(\vx|\vy_{[k]})\Big|\leq \frac{1}{H}\exp\Big(\frac{R^{2}}{V^{2}}\Big).
\end{equation}
Here $H$ and $V$ are given in Proposition~\ref{proposition:lower:bound}. Therefore, for any $\|\vx\|_{2}\leq R$, it follows from the triangular inequality that 
\begin{align*}
&\Big|\Big(\frac{1}{q_{k+1}}\frac{\partial q_{k+1}}{\partial x_{i}}\Big)(\vx|\vy_{[k]})-\Big(\frac{1}{\what{q}_{k+1}}\frac{\partial\what{q}_{k+1}}{\partial x_{i}}\Big)(\vx|\vy_{[k]})\Big| \\
&\leq\Big|\frac{1}{q_{k+1}(\vx|\vy_{[k]})}\frac{\partial q_{k+1}}{\partial x_{i}}(\vx|\vy_{[k]})-\frac{1}{q_{k+1}(\vx|\vy_{[k]})}\frac{\partial\what{q}_{k+1}}{\partial x_{i}}(\vx|\vy_{[k]})\Big| \\
&\quad+\Big|\frac{1}{q_{k+1}(\vx|\vy_{[k]})}\frac{\partial\what{q}_{k+1}}{\partial x_{i}}(\vx|\vy_{[k]})-\frac{1}{\what{q}_{k+1}(\vx|\vy_{[k]})}\frac{\partial\what{q}_{k+1}}{\partial x_{i}}(\vx|\vy_{[k]})\Big| \\
&\leq\frac{1}{q_{k+1}(\vx|\vy_{[k]})}\Big|\frac{\partial q_{k+1}}{\partial x_{i}}(\vx|\vy_{[k]})-\frac{\partial\what{q}_{k+1}}{\partial x_{i}}(\vx|\vy_{[k]})\Big| \\
&\quad+\frac{1}{q_{k+1}(\vx|\vy_{[k]})}\Big|\Big(\frac{1}{\what{q}_{k+1}}\frac{\partial\what{q}_{k+1}}{\partial x_{i}}\Big)(\vx|\vy_{[k]})\Big||\what{q}_{k+1}(\vx|\vy_{[k]})-q_{k+1}(\vx|\vy_{[k]})| \\
&\leq \frac{B}{H}\exp\Big(\frac{R^{2}}{V^{2}}\Big)\Big|\frac{\partial q_{k+1}}{\partial x_{i}}(\vx|\vy_{[k]})-\frac{\partial\what{q}_{k+1}}{\partial x_{i}}(\vx|\vy_{[k]})\Big| \\
&\quad +\frac{B}{H^{2}}\exp\Big(\frac{2R^{2}}{V^{2}}\Big)|\what{q}_{k+1}(\vx|\vy_{[k]})-q_{k+1}(\vx|\vy_{[k]})| \\
&\leq \frac{2B}{H^{2}}\exp\Big(\frac{2R^{2}}{V^{2}}\Big)\|\pi_{k}(\cdot|\vy_{[k]})-\what{\pi}_{k}^{T}(\cdot|\vy_{[k]})\|_{\tv},
\end{align*}
where the second inequality is owing to~\eqref{eq:proof:lemma:appendix:prior:3:2} and~\eqref{eq:proof:lemma:appendix:prior:3:3}. As a consequence,
\begin{align}
E_{1}
&=\left(\int\|\nabla_{\vx}\log q_{k+1}(\vx|\vy_{[k]})-\nabla\log\what{q}_{k+1}(\vx|\vy_{[k]})\|_{2}^{4}\mathbbm{1}\{\|\vx\|_{2}\leq R\}\pi_{k+1}(\vx|\vy_{[k]})\d\vx\right)^{\frac{1}{2}} \nonumber \\
&\lesssim \frac{dB^{2}}{H^{4}}\exp\Big(\frac{4R^{2}}{V^{2}}\Big)\|\pi_{k}(\cdot|\vy_{[k]})-\what{\pi}_{k}^{T}(\cdot|\vy_{[k]})\|_{\tv}^{2}. \label{eq:proof:lemma:appendix:prior:3:4}
\end{align}
For the summand $E_{2}$ in~\eqref{eq:proof:lemma:appendix:prior:3:1}, 
\begin{align}
E_{2}
&=\left(\int\|\nabla_{\vx}\log q_{k+1}(\vx|\vy_{[k]})-\nabla\log\what{q}_{k+1}(\vx|\vy_{[k]})\|_{2}^{4}\mathbbm{1}\{\|\vx\|_{2}> R\}\pi_{k+1}(\vx|\vy_{[k]})\d\vx\right)^{\frac{1}{2}} \nonumber \\
&\lesssim B^{2}\left(\int(1+\|\vx\|_{2}^{4})\mathbbm{1}\{\|\vx\|_{2}> R\}\pi_{k+1}(\vx|\vy_{[k+1]})\d\vx\right)^{\frac{1}{2}} \nonumber \\
&\lesssim B^{2}\left(\int(1+\|\vx\|_{2}^{8})\pi_{k+1}(\vx|\vy_{[k+1]})\d\vx\right)^{\frac{1}{4}}\left(\int\mathbbm{1}\{\|\vx\|_{2}> R\}\pi_{k+1}(\vx|\vy_{[k+1]})\d\vx\right)^{\frac{1}{4}} \nonumber \\
&\lesssim B^{2}e^{\frac{d}{4}}C_{\LSI}\exp\Big(-\frac{R^2}{16C_{\LSI}}\Big), \label{eq:proof:lemma:appendix:prior:3:5}
\end{align}
where the first inequality holds from Assumption~\ref{assumption:bounded}, the second inequality invokes Cauchy-Schwarz inequality, and the last inequality is owing to Lemmas~\ref{lem:tail:bound} and~\ref{lem:k:moment}. Substituting~\eqref{eq:proof:lemma:appendix:prior:3:4} and~\eqref{eq:proof:lemma:appendix:prior:3:5} into~\eqref{eq:proof:lemma:appendix:prior:3:1} yields  
\begin{align*}
&\bbE^{\frac{1}{2}}\left[\|\nabla_{\vx}\log q_{k+1}(\mX_{k+1}|\vy_{[k]})-\nabla\log\what{q}_{k+1}(\mX_{k+1}|\vy_{[k]})\|_{2}^{4}\right] \\
&\leq \inf_{R\geq 1}\left\{\frac{dB^{2}}{H^{4}}\exp\Big(\frac{4R^{2}}{V^{2}}\Big)\|\pi_{k}(\cdot|\vy_{[k]})-\what{\pi}_{k}^{T}(\cdot|\vy_{[k]})\|_{\tv}^{2}+B^{2}e^{\frac{d}{4}}C_{\LSI}\exp\Big(-\frac{R^2}{16C_{\LSI}}\Big)\right\}.
\end{align*}
By setting 
\begin{equation*}
R^{2}=\frac{64V^{2}+C_{\LSI}}{V^{2}+64C_{\LSI}}\log\Big(\frac{H^{2}e^{\frac{d}{4}}C_{\LSI}}{d\|\pi_{k}(\cdot|\vy_{[k]})-\what{\pi}_{k}^{T}(\cdot|\vy_{[k]})\|_{\tv}^{2}}\Big),
\end{equation*}
we have 
\begin{align*}
&\bbE^{\frac{1}{2}}\left[\|\nabla_{\vx}\log q_{k+1}(\mX_{k+1}|\vy_{[k]})-\nabla\log\what{q}_{k+1}(\mX_{k+1}|\vy_{[k]})\|_{2}^{4}\right] \\
&\leq CB^{2}C_{\LSI}^{\frac{1}{4}}\|\pi_{k}(\cdot|\vy_{[k]})-\what{\pi}_{k}^{T}(\cdot|\vy_{[k]})\|_{\tv}^{2\gamma},
\end{align*}
where $C$ is a constant only depending on $d$ and $B$, and $\gamma \coloneqq \frac{V^{2}+48C_{\LSI}}{V^{2}+64C_{\LSI}}$. Substituting $H$ and $V$ in Proposition~\ref{proposition:lower:bound} completes the proof.
\end{proof}

\section{Score estimation error}
\label{appendix:score}

This section focuses on the error of the score matching in Lemma~\ref{lemma:appendix:convergence:decomposition}.

\begin{proposition}\label{proposition:sm:error}
Suppose Assumptions~\ref{assumption:posterior:smooth},~\ref{assumption:bounded},~\ref{assumption:LSI:posterior}, and~\ref{assumption:sm} hold. Then 
\begin{align*}
&\bbE^{\frac{1}{2}}\left[\|\nabla_{\vx}\log\widehat{q}_{k+1}(\mX_{k+1}|\vy_{[k]})-\what{\vs}_{k+1}(\mX_{k+1},\vy_{[k]})\|_{2}^{4}\right] \\
&\leq C^{\prime}C_{\LSI}(\kappa\Delta)^{\alpha}\log^{d+2}\Big(\frac{C_{\LSI}}{\kappa\Delta}\Big),
\end{align*}
where $C^{\prime}$ is a constant only depending on $d$, $H$ and $V$, and
\begin{equation*}
\alpha \coloneqq \frac{1}{2+8BC_{\LSI}}.
\end{equation*}
\end{proposition}

\begin{proof}[Proof of Proposition~\ref{proposition:sm:error}]
For any $R\geq 1$, we have the decomposition using the truncation arguments, 
\begin{align}
&\bbE^{\frac{1}{2}}\left[\|\nabla_{\vx}\log\widehat{q}_{k+1}(\mX_{k+1}|\vy_{[k]})-\what{\vs}_{k+1}(\mX_{k+1},\vy_{[k]})\|_{2}^{4}\right] \nonumber \\
&= \left(\int\|\nabla_{\vx}\log\widehat{q}_{k+1}(\vx|\vy_{[k]})-\what{\vs}_{k+1}(\vx,\vy_{[k]})\|_{2}^{4}\pi_{k+1}(\vx|\vy_{[k]})\d\vx\right)^{\frac{1}{2}} \nonumber \\
&\leq \left(\int\|\nabla_{\vx}\log\widehat{q}_{k+1}(\vx|\vy_{[k]})-\what{\vs}_{k+1}(\vx,\vy_{[k]})\|_{2}^{4}\mathbbm{1}\{\|\vx\|_{2}\leq R\}\pi_{k+1}(\vx|\vy_{[k]})\d\vx\right)^{\frac{1}{2}} \nonumber \\
&\quad +\left(\int\|\nabla_{\vx}\log\widehat{q}_{k+1}(\vx|\vy_{[k]})-\what{\vs}_{k+1}(\vx,\vy_{[k]})\|_{2}^{4}\mathbbm{1}\{\|\vx\|_{2}> R\}\pi_{k+1}(\vx|\vy_{[k]})\d\vx\right)^{\frac{1}{2}} \nonumber \\
&\eqqcolon E_{1}+E_{2}, \label{eq:proof:lemma:appendix:score:1}
\end{align}
where the inequality is due to Jensen's inequality. For the summand $E_{1}$ in~\eqref{eq:proof:lemma:appendix:score:1}, it follows from Assumption~\ref{assumption:bounded} that 
\begin{align}
E_{1}
&= \left(\int\|\nabla_{\vx}\log\widehat{q}_{k+1}(\vx|\vy_{[k]})-\what{\vs}_{k+1}(\vx,\vy_{[k]})\|_{2}^{4}\mathbbm{1}\{\|\vx\|_{2}\leq R\}\pi_{k+1}(\vx|\vy_{[k]})\d\vx\right)^{\frac{1}{2}} \nonumber \\
&\lesssim B^{\frac{3}{2}}R^{\frac{3}{2}}\left(\int\|\nabla_{\vx}\log\widehat{q}_{k+1}(\vx|\vy_{[k]})-\what{\vs}_{k+1}(\vx,\vy_{[k]})\|_{2}\mathbbm{1}\{\|\vx\|_{2}\leq R\}\pi_{k+1}(\vx|\vy_{[k]})\d\vx\right)^{\frac{1}{2}}. \label{eq:proof:lemma:appendix:score:2}
\end{align}
Under Assumptions~\ref{assumption:bounded} and~\ref{assumption:sm}, using Proposition~\ref{proposition:lower:bound} and Cauchy-Schwarz inequality implies 
\begin{align*}
&\int\|\nabla_{\vx}\log\widehat{q}_{k+1}(\vx|\vy_{[k]})-\what{\vs}_{k+1}(\vx,\vy_{[k]})\|_{2}\pi_{k+1}(\vx|\vy_{[k+1]})\mathbbm{1}\{\|\vx\|_{2}\leq R\}\d\vx \\
&\leq \kappa\int\|\nabla_{\vx}\log\widehat{q}_{k+1}(\vx|\vy_{[k]})-\what{\vs}_{k+1}(\vx,\vy_{[k]})\|_{2}\frac{q_{k+1}(\vx|\vy_{[k]})}{\widehat{q}_{k+1}(\vx|\vy_{[k]})}\widehat{q}_{k+1}(\vx|\vy_{[k]})\mathbbm{1}\{\|\vx\|_{2}\leq R\}\d\vx \\
&\leq \kappa\left(\int\|\nabla_{\vx}\log\widehat{q}_{k+1}(\vx|\vy_{[k]})-\what{\vs}_{k+1}(\vx,\vy_{[k]})\|_{2}^{2}\widehat{q}_{k+1}(\vx|\vy_{[k]})\d\vx\right)^{\frac{1}{2}} \\
&\quad \times\left(\int\Big(\frac{q_{k+1}(\vx|\vy_{[k]})}{\widehat{q}_{k+1}(\vx|\vy_{[k]})}\Big)^{2}\widehat{q}_{k+1}(\vx|\vy_{[k]})\mathbbm{1}\{\|\vx\|_{2}\leq R\}\d\vx\right)^{\frac{1}{2}} \\
&\leq \kappa\Delta\left(\int\Big(\frac{q_{k+1}(\vx|\vy_{[k]})}{\widehat{q}_{k+1}(\vx|\vy_{[k]})}\Big)^{2}\widehat{q}_{k+1}(\vx|\vy_{[k]})\mathbbm{1}\{\|\vx\|_{2}\leq R\}\d\vx\right)^{\frac{1}{2}} \\
&\leq B\kappa\Delta\left(\int\frac{\mathbbm{1}\{\|\vx\|_{2}\leq R\}}{\widehat{q}_{k+1}(\vx|\vy_{[k]})}\d\vx\right)^{\frac{1}{2}} \\
&\leq \sqrt{\frac{A_{d}R^{d}}{H}}B\kappa\exp\Big(\frac{R^{2}}{2V^{2}}\Big)\Delta,
\end{align*}
where $A_{d}$ is the volume of the $d$-dimensional unit ball. Substituting this into~\eqref{eq:proof:lemma:appendix:score:2} yields 
\begin{equation}\label{eq:proof:lemma:appendix:score:3}
E_{1} \lesssim B^{2}R^{\frac{3}{2}+\frac{d}{4}}\Big(\frac{A_{d}}{H}\Big)^{\frac{1}{4}}\kappa^{\frac{1}{2}}\exp\Big(\frac{R^{2}}{4V^{2}}\Big)\Delta^{\frac{1}{2}}.
\end{equation}
For the summand $E_{2}$ in~\eqref{eq:proof:lemma:appendix:score:1}, by similar arguments as~\eqref{eq:proof:lemma:appendix:prior:3:5}, we have  
\begin{equation}\label{eq:proof:lemma:appendix:score:4}
E_{2} \lesssim B^{2}e^{\frac{d}{4}}C_{\LSI}\exp\Big(-\frac{R^2}{16C_{\LSI}}\Big).
\end{equation}
By setting 
\begin{equation*}
R^{2}=\frac{16V^{2}C_{\LSI}}{V^{2}+4C_{\LSI}}\log\Big(\frac{e^{d/4}H^{1/4}C_{\LSI}}{A_{d}^{1/4}\sqrt{\kappa\Delta}}\Big)
\end{equation*}
in~\eqref{eq:proof:lemma:appendix:score:3} and~\eqref{eq:proof:lemma:appendix:score:4}, and substituting them into~\eqref{eq:proof:lemma:appendix:score:1} completes the proof.
\end{proof}

\section{Convergence Analysis for the Initial Step}
\label{appendix:init}

\par In this section, we consider the assimilation in the first time step. The Langevin diffusion is given as 
\begin{equation}\label{eq:LD:init}
\d\mZ_{t}=\nabla_{\vx}\log\pi_{1}(\mZ_{t}|\vy_{1})\dt+\sqrt{2}\d\mB_{t}, \quad \mZ_{0}\sim\pi_{1}^{0}(\cdot|\vy_{1}),~t\geq 0.
\end{equation}
Denote by $\pi_{1}^{t}$ the law of $\mZ_{t}$ for each $t\geq 0$. The Langevin Monte Carlo is defined as the Euler-Maruyama discretization of the Langevin diffusion. The interpolation of the Langevin Monte Carlo is given as, for each $0\leq\ell\leq K-1$,
\begin{equation}\label{eq:LMC:init}
\d\bar{\mZ}_{t}=\nabla_{\vx}\log\pi_{1}(\bar{\mZ}_{\ell h}|\vy_{[k+1]})\dt+\sqrt{2}\d\mB_{t}, \quad \ell h\leq t\leq(\ell+1)h,
\end{equation}
where $\bar{\mZ}_{0}\sim\pi_{1}^{0}(\cdot|\vy_{1})$. Denote by $\bar{\pi}_{1}^{t}$ the law of $\bar{\mZ}_{t}$ for each $0\leq t\leq Kh=T$. We next introduce the interpolation of the score-based Langevin Monte Carlo
\begin{equation}\label{eq:SLMC:init}
\d\what{\mZ}_{t}=\what{\vb}_{1}(\what{\mZ}_{\ell h}|\vy_{[k+1]})\dt+\sqrt{2}\d\mB_{t}, \quad \ell h\leq t\leq(\ell+1)h,
\end{equation}
where $\what{\mZ}_{0}\sim\pi_{1}^{0}(\cdot|\vy_{1})$, and the estimator of posterior score function is given as 
\begin{equation*}
\what{\vb}_{1}(\vx|\vy_{1})=\nabla_{\vx}\log g_{1}(\vy_{1}|\vx)+\what{\vs}_{1}(\vx).
\end{equation*}
Here $\what{\vs}_{1}$ is an estimator of $\nabla_{\vx}\log q_{1}$. Denote by $\what{\pi}_{1}^{t}$ the law of $\what{\mZ}_{t}$ for each $0\leq t\leq Kh=T$. 

\par By the same arguments as Theorem~\ref{theorem:section:convergence}, we have the error bounds for the initial time step.

\begin{lemma}
\label{lemma:appendix:init}
Suppose Assumptions~\ref{assumption:posterior:smooth},~\ref{assumption:LSI:posterior},~\ref{assumption:bounded}, and~\ref{assumption:sm} hold. Then for each $k\in\bbN$ and each terminal time $T=Kh$, 
\begin{align*}
&(\varepsilon_{\tv}^{1})^{2} \\
&\lesssim\underbrace{\exp\Big(-\frac{T}{5C_{\LSI}}\Big)\eta_{\chi}^{2}}_{\text{convergence of Langevin dynamics}}+\underbrace{dC_{\LSI}\lambda^{2}h}_{\text{discretization error}}+\underbrace{C^{\prime}(C_{\LSI}\eta_{\chi}+T)C_{\LSI}(\kappa\Delta)^{\alpha}\log^{d+2}\Big(\frac{C_{\LSI}}{\kappa\Delta}\Big)}_{\text{score estimation error}},
\end{align*}
where $C$ and $C^{\prime}$ are constants only depending on $d$ and $B$, and $\alpha \coloneqq \frac{1}{2+16BC_{\LSI}}$. Here the step size $h$ and the initial distribution $\pi_{1}^{0}(\cdot|\vy_{1})$ satisfies
\begin{equation*}
h\lesssim\frac{1}{dC_{\LSI}\lambda^{2}}, \quad 
\chi^{2}\big(\pi_{1}^{0}(\cdot|\vy_{1})\|\pi_{1}(\cdot|\vy_{1})\big)\leq\eta_{\chi}^{2}.
\end{equation*}
\end{lemma}

\section{Error Bounds in Wasserstein Distance}
\label{appendix:wasserstein}

\begin{lemma}\label{lemma:w1:tv}
Suppose Assumption~\ref{assumption:LSI:posterior} holds. Let $R_{0}\geq 1$. Then for all time step $k\in\mathbb{N}$, we have 
\begin{equation*}
W_{1}^{2}((\Pi_{R_{0}})_{\sharp}\what{\pi}_{k+1}(\cdot|\vy_{[k+1]}),\pi_{k+1}(\cdot|\vy_{[k+1]})) \lesssim C_{\LSI}(\varepsilon_{\tv}^{k+1})^{2}\log\Bigg(\frac{e^{\frac{d}{2}}C_{\LSI}}{(\varepsilon_{\tv}^{k+1})^{2}}\Bigg),
\end{equation*}
where the truncation radius $R_{0}$ is given as 
\begin{equation*}
R_{0}^{2}=C_{\LSI}\log\Bigg(\frac{e^{\frac{d}{2}}C_{\LSI}}{(\varepsilon_{\tv}^{k+1})^{2}}\Bigg).
\end{equation*}
\end{lemma}

\begin{proof}[Proof of Lemma~\ref{lemma:w1:tv}]
According to the triangular inequality, we have 
\begin{align}
&W_{1}((\Pi_{R_{0}})_{\sharp}\what{\pi}_{k+1}(\cdot|\vy_{[k+1]}),\pi_{k+1}(\cdot|\vy_{[k+1]})) \nonumber \\
&\leq \underbrace{W_{1}((\Pi_{R_{0}})_{\sharp}\what{\pi}_{k+1}(\cdot|\vy_{[k+1]}),(\Pi_{R_{0}})_{\sharp}\pi_{k+1}(\cdot|\vy_{[k+1]}))}_{E_{1}} \nonumber \\
&\quad +\underbrace{W_{1}((\Pi_{R_{0}})_{\sharp}\pi_{k+1}(\cdot|\vy_{[k+1]}),\pi_{k+1}(\cdot|\vy_{[k+1]}))}_{E_{2}}. \label{eq:lemma:w1:tv:1}
\end{align}
For the summand $E_{1}$ in~\eqref{eq:lemma:w1:tv:1}, since both $(\Pi_{R_{0}})_{\sharp}\what{\pi}_{k+1}(\cdot|\vy_{[k+1]})$ and $(\Pi_{R_{0}})_{\sharp}\pi_{k+1}(\cdot|\vy_{[k+1]})$ have compact support, we can bound the Wasserstein distance by TV distance. Specifically, it follows from~\cite[Theorem 6.15]{Villani2009Optimal} that 
\begin{align}
&W_{1}((\Pi_{R_{0}})_{\sharp}\what{\pi}_{k+1}(\cdot|\vy_{[k+1]}),(\Pi_{R_{0}})_{\sharp}\pi_{k+1}(\cdot|\vy_{[k+1]})) \nonumber \\
&\leq R_{0}\|(\Pi_{R_{0}})_{\sharp}\what{\pi}_{k+1}(\cdot|\vy_{[k+1]})-(\Pi_{R_{0}})_{\sharp}\pi_{k+1}(\cdot|\vy_{[k+1]})\|_{\tv} \nonumber \\
&\leq R_{0}\|\what{\pi}_{k+1}(\cdot|\vy_{[k+1]})-\pi_{k+1}(\cdot|\vy_{[k+1]})\|_{\tv} \leq R_{0}\varepsilon_{\tv}^{k+1}, \label{eq:lemma:w1:tv:2}
\end{align} 
where the second inequality holds from the data processing inequality. For the summand $E_{2}$ in~\eqref{eq:lemma:w1:tv:1}, we construct a coupling of $((\Pi_{R_{0}})_{\sharp}\pi_{k+1}(\cdot|\vy_{[k+1]}),\pi_{k+1}(\cdot|\vy_{[k+1]}))$ as $(\Pi_{R_{0}}\mX_{k+1},\mX_{k+1})$ with $\mX_{k+1}\sim\pi_{k+1}(\cdot|\vy_{[k+1]})$. Then we have 
\begin{align}
&W_{1}^{2}((\Pi_{R_{0}})_{\sharp}\pi_{k+1}(\cdot|\vy_{[k+1]}),\pi_{k+1}(\cdot|\vy_{[k+1]})) \nonumber \\
&\leq \mathbb{E}^{2}\big[\|\Pi_{R_{0}}\mX_{k+1}-\mX_{k+1}\|_{2}\big] \nonumber \\
&\leq \mathbb{E}\big[\|\Pi_{R_{0}}\mX_{k+1}-\mX_{k+1}\|_{2}^{2}\big] \nonumber \\
&= \int\|\vx\mathbbm{1}\{\|\vx\|_{2}\leq R_{0}\}-\vx\|_{2}^{2}\pi_{k+1}(\vx|\vy_{[k+1]})\d\vx \nonumber \\
&= \int\|\vx\|_{2}^{2}\mathbbm{1}\{\|\vx\|_{2}>R_{0}\}\pi_{k+1}(\vx|\vy_{[k+1]})\d\vx \nonumber \\
&\leq \Big(\int\|\vx\|_{2}^{4}\pi_{k+1}(\vx|\vy_{[k+1]})\d\vx\Big)^{\frac{1}{2}}\Big(\int\mathbbm{1}\{\|\vx\|_{2}>R_{0}\}\pi_{k+1}(\vx|\vy_{[k+1]})\d\vx\Big)^{\frac{1}{2}} \nonumber \\
&\lesssim \exp\Big(\frac{d}{2}\Big)C_{\LSI}\exp\Big(-\frac{R_{0}^{2}}{8C_{\LSI}}\Big), \label{eq:lemma:w1:tv:3}
\end{align}
where the first inequality holds from the definition of Wasserstein distance, the second inequality invokes Jensen's inequality, the third inequality is owing to Cauchy-Schwarz inequality, and the last inequality is due to Lemmas~\ref{lem:tail:bound} and~\ref{lem:k:moment}. Substituting~\eqref{eq:lemma:w1:tv:2} and~\eqref{eq:lemma:w1:tv:3} into~\eqref{eq:lemma:w1:tv:3} implies 
\begin{equation*}
W_{1}^{2}((\Pi_{R_{0}})_{\sharp}\what{\pi}_{k+1}(\cdot|\vy_{[k+1]}),\pi_{k+1}(\cdot|\vy_{[k+1]})) \lesssim R_{0}^{2}(\varepsilon_{\tv}^{k+1})^{2}+\exp\Big(\frac{d}{2}\Big)C_{\LSI}\exp\Big(-\frac{R_{0}^{2}}{8C_{\LSI}}\Big).
\end{equation*}
Setting
\begin{equation*}
R_{0}^{2}=C_{\LSI}\log\Bigg(\frac{e^{\frac{d}{2}}C_{\LSI}}{(\varepsilon_{\tv}^{k+1})^{2}}\Bigg)
\end{equation*}
completes the proof.
\end{proof}


\section{Auxiliary Definitions and Lemmas}
\label{appendix:auxiliary}

\begin{lemma}[Second moment bound under LSI]
\label{lem:second:moment}
Suppose Assumption~\ref{assumption:LSI:posterior} holds. Then for any $k\in\mathbb{N}$,
\begin{equation*}
\int\|\vx\|_{2}^{2}\pi_{k+1}(\vx|\vy_{[k+1]})\d\vx \leq dC_{\LSI}.
\end{equation*}
\end{lemma}

\begin{proof}[Proof of Lemma~\ref{lem:second:moment}]
Since the distribution $\pi_{k+1}(\cdot|\vy_{[k+1]})$ satisfies log-Sobolev inequality with constant $C_{\LSI}$, it strictly implies the Poincar\'e inequality with the same constant $C_{\LSI}$. This means, for any continuously differentiable function $g: \mathbb{R}^d \to \mathbb{R}$, 
\begin{equation*}
\int g^{2}(\vx)\pi_{k+1}(\vx|\vy_{[k+1]})\d\vx \leq C_{\LSI}\int \|\nabla g(\vx)\|_{2}^{2}\pi_{k+1}(\vx|\vy_{[k+1]})\d\vx.
\end{equation*}
Consider the coordinate projection functions $g_{i}(\vx)=x_{i}$ for $i\in\{1,\dots,d\}$, then $\|\nabla g_{i}(\vx)\|_{2}^{2}=1$ for any $\vx\in\mathbb{R}^d$. Applying the Poincar\'e inequality to each $g_{i}$ concludes the proof.
\end{proof}

\begin{lemma}[Tail concentration under LSI]
\label{lem:tail:bound}
Suppose Assumption~\ref{assumption:LSI:posterior} holds. Then, for any $k\in\mathbb{N}$ and for any radius $R>0$, 
\begin{equation*}
\int\mathbbm{1}\{\|\vx\|_{2}>R\}\pi_{k+1}(\vx|\vy_{[k+1]})\d\vx \leq \exp\Big(\frac{d}{2}\Big)\exp\Big(-\frac{R^2}{4C_{\LSI}}\Big).
\end{equation*}
\end{lemma}

\begin{proof}[Proof of Lemma~\ref{lem:tail:bound}]
Applying Herbst's argument to a $1$-Lipschitz function $f(\vx)=\|\vx\|_2$ gives:
\begin{equation*}
\int\mathbbm{1}\{\|\vx\|_{2}-\mathbb{E}\|\vx\|_{2}>t\}\pi_{k+1}(\vx|\vy_{[k+1]})\d\vx \leq \exp\left(-\frac{t^2}{2C}\right).
\end{equation*}
Using Jensen's inequality and Lemma~\ref{lem:second:moment}, we have
\begin{equation*}
\int\|\vx\|_{2}\pi_{k+1}(\vx|\vy_{[k+1]})\d\vx \leq \sqrt{dC_{\LSI}}.
\end{equation*}
For any $R > \sqrt{dC_{\LSI}}$, setting $t=R-\int\|\vx\|_{2}\pi_{k+1}(\vx|\vy_{[k+1]})\d\vx$ yields
\begin{align*}
&\int\mathbbm{1}\{\|\vx\|_{2}>R\}\pi_{k+1}(\vx|\vy_{[k+1]})\d\vx \\
&\leq \exp\left(-\frac{1}{2C_{\LSI}}\left(R-\int\|\vx\|_{2}\pi_{k+1}(\vx|\vy_{[k+1]})\d\vx\right)^{2}\right) \\
&\leq \exp\left(-\frac{(R-\sqrt{dC_{\LSI}})^{2}}{2C_{\LSI}}\right).
\end{align*}
Note that $2(a-b)^{2}\geq a^{2}-2b^{2}$. Setting $a = R$ and $b = \sqrt{dC_{\LSI}}$ implies 
\begin{equation*}
(R - \sqrt{dC_{\LSI}})^2 \geq \frac{1}{2}R^{2}-dC_{\LSI}.
\end{equation*}
Therefore,
\begin{align*}
&\int\mathbbm{1}\{\|\vx\|_{2}>R\}\pi_{k+1}(\vx|\vy_{[k+1]})\d\vx \\
&\leq \exp\left(-\frac{R^2}{4C_{\LSI}}+\frac{d}{2}\right)=\exp\Big(\frac{d}{2}\Big)\exp\Big(-\frac{R^2}{4C_{\LSI}}\Big).
\end{align*}
If $R\leq\sqrt{dC_{\LSI}}$, the desired inequality holds trivially. This completes the proof.
\end{proof}

\begin{lemma}[The $p$-th moment bound under LSI]
\label{lem:k:moment}
Suppose Assumption~\ref{assumption:LSI:posterior} holds. Then for any $k\in\mathbb{N}$ and $p\in\mathbb{N}_{+}$,
\begin{equation*}
\int\|\vx\|_{2}^{p}\pi_{k+1}(\vx|\vy_{[k+1]})\d\vx \leq c_{p}e^{\frac{d}{2}}C_{\LSI}^{\frac{p}{2}},
\end{equation*}
where $c_{p}$ is a constant only depending on $p$.
\end{lemma}

\begin{proof}[Proof of Lemma~\ref{lem:k:moment}]
For non-negative random variable, we can represent the expectation in terms of tail probabiltiy as 
\begin{align*}
&\int\|\vx\|_{2}^{p}\pi_{k+1}(\vx|\vy_{[k+1]})\d\vx \\
&= \int_{0}^{\infty}pt^{p-1}\left(\int\mathbbm{1}\{\|\vx\|_{2}>t\}\pi_{k+1}(\vx|\vy_{[k+1]})\d\vx\right)\d t \\
&\leq \int_{0}^{\infty}pt^{p-1}\exp\Big(\frac{d}{2}\Big)\exp\Big(-\frac{t^2}{4C_{\LSI}}\Big)\d t \\
&\leq c_{p}e^{\frac{d}{2}}C_{\LSI}^{\frac{p}{2}},
\end{align*}
where the first inequality holds from Lemma~\ref{lem:tail:bound}.
\end{proof}

\begin{definition}[Kullback-Leibler divergence]
The KL-divergence between two distributions $\mu$ and $\pi$ is defined as
\begin{equation*}
\kl(\mu\|\pi)=\int\mu(\vx)\log\frac{\mu(\vx)}{\pi(\vx)}\d\vx.
\end{equation*}
\end{definition}

\par We then show the relationships between them.
\begin{lemma}\label{lemma:appendix:auxiliary:tv:chi}
For two distributions $\mu$ and $\pi$, 
\begin{equation*}
\|\mu-\pi\|_{\tv}^{2} \leq \frac{1}{4}\chi^{2}(\mu\|\pi).
\end{equation*}
\end{lemma}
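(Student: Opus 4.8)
The plan is to derive the bound directly from the Cauchy--Schwarz inequality, exploiting the fact that both quantities can be written as integrals of $|\mu-\pi|$ against the common reference measure $\pi$. First I would rewrite the total variation distance by inserting a factor of $\sqrt{\pi}$ and its reciprocal, namely
\begin{equation*}
\tv(\mu,\pi)=\frac{1}{2}\int|\mu(\vx)-\pi(\vx)|\dx=\frac{1}{2}\int\frac{|\mu(\vx)-\pi(\vx)|}{\sqrt{\pi(\vx)}}\sqrt{\pi(\vx)}\dx,
\end{equation*}
which is legitimate on the support of $\pi$ (and if $\mu$ is not absolutely continuous with respect to $\pi$ the right-hand side $\chi^{2}(\mu\|\pi)$ is infinite and the claim is trivial, so I may assume $\mu\ll\pi$).

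Next I would apply the Cauchy--Schwarz inequality to the two factors $|\mu-\pi|/\sqrt{\pi}$ and $\sqrt{\pi}$, obtaining
\begin{equation*}
\Big(\int\frac{|\mu(\vx)-\pi(\vx)|}{\sqrt{\pi(\vx)}}\sqrt{\pi(\vx)}\dx\Big)^{2}\leq\Big(\int\frac{(\mu(\vx)-\pi(\vx))^{2}}{\pi(\vx)}\dx\Big)\Big(\int\pi(\vx)\dx\Big).
\end{equation*}
The second factor on the right equals $1$ since $\pi$ is a probability density, while the first factor is exactly $\chi^{2}(\mu\|\pi)$ by the equivalent form of the $\chi^{2}$-divergence recorded in its definition. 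Combining these two displays yields $4\,\tv^{2}(\mu,\pi)\leq\chi^{2}(\mu\|\pi)$, which is the desired inequality after dividing by $4$.

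There is essentially no substantive obstacle here: the argument is a textbook consequence of Cauchy--Schwarz, and the only point requiring a word of care is the absolute-continuity condition, which I would dispatch at the outset by noting that the inequality is vacuous when $\chi^{2}(\mu\|\pi)=\infty$. I would therefore present this as a short self-contained computation rather than invoking any external result.
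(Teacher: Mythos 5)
Your proposal is correct and follows essentially the same route as the paper's proof: both split $|\mu-\pi|$ into $\bigl(|\mu-\pi|/\sqrt{\pi}\bigr)\cdot\sqrt{\pi}$, apply Cauchy--Schwarz, and use $\int\pi=1$ together with the equivalent form of $\chi^{2}(\mu\|\pi)$. Your extra remark about the absolutely-continuous case is a harmless refinement the paper omits.
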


\begin{proof}[Proof of Lemma~\ref{lemma:appendix:auxiliary:tv:chi}]
It is straightforward that 
\begin{align*}
\|\mu-\pi\|_{\tv}^{2}
&=\frac{1}{4}\Big(\int|\mu(\vx)-\pi(\vx)|\d\vx\Big)^{2} \\
&\leq\frac{1}{4}\Big(\int\frac{(\mu(\vx)-\pi(\vx))^{2}}{\pi(\vx)}\d\vx\Big)\Big(\int \pi(\vx)\d\vx\Big) \\
&=\frac{1}{4}\int\Big(\frac{\mu(\vx)}{\pi(\vx)}-1\Big)^{2}\pi(\vx)\d\vx=\frac{1}{4}\chi^{2}(\mu\|\pi),
\end{align*}
where the inequality follows from the Cauchy-Schwarz inequality. The proof is complete.
\end{proof}

\par The proof of Lemmas~\ref{lemma:tv:kl} and~\ref{lemma:kl:chi} can be found in~\cite[Lemmas 2.5 and 2.7]{Tsybakov2009Introduction}.
\begin{lemma}[Pinsker's inequality]
\label{lemma:tv:kl}
For two distributions $\mu$ and $\pi$, 
\begin{equation*}
\|\mu-\pi\|_{\tv}^{2}\leq\frac{1}{2}\kl(\mu\|\pi).
\end{equation*}
\end{lemma}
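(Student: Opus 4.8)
The plan is to prove this functional form of Pinsker's inequality directly, without invoking the data-processing inequality. Writing $p$ and $q$ for the densities of $\mu$ and $\pi$, I would first record that $\tv(\mu,\pi)=\frac{1}{2}\int|p-q|\dx$, and, since $\int p\dx=\int q\dx=1$, rewrite $\kl(\mu\|\pi)=\int q\,\varphi(p/q)\dx$ where $\varphi(t)=t\log t-t+1\ge 0$ (the extra $-t+1$ terms integrate to zero). In these terms the target $\tv^{2}\le\frac12\kl$ is equivalent to $\big(\int|p-q|\dx\big)^{2}\le 2\int q\,\varphi(p/q)\dx$.

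The engine of the argument is the elementary one-variable bound $\varphi(t)\ge\frac{3(t-1)^{2}}{2(t+2)}$ for all $t>0$. Granting it and substituting $t=p/q$ gives $\kl\ge\frac32\int\frac{(p-q)^{2}}{p+2q}\dx$, after which I would apply the Cauchy--Schwarz inequality in the form
\[
\Big(\int|p-q|\dx\Big)^{2}=\Big(\int\frac{|p-q|}{\sqrt{p+2q}}\,\sqrt{p+2q}\,\dx\Big)^{2}\le\Big(\int\frac{(p-q)^{2}}{p+2q}\dx\Big)\Big(\int(p+2q)\dx\Big),
\]
where $\int(p+2q)\dx=3$. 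Combining the two displays yields $\int\frac{(p-q)^{2}}{p+2q}\dx\ge\frac13\big(\int|p-q|\dx\big)^{2}$ and hence $\kl\ge\frac12\big(\int|p-q|\dx\big)^{2}=2\,\tv^{2}$, which is exactly the claim.

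The only step requiring genuine work is the scalar inequality $\varphi(t)\ge\frac{3(t-1)^{2}}{2(t+2)}$, and this is where I expect the main obstacle to lie. I would prove it by setting $g(t)=\varphi(t)(t+2)-\frac32(t-1)^{2}$ and checking $g\ge 0$ on $(0,\infty)$. A short computation gives $g(1)=g'(1)=g''(1)=0$ together with $g'''(t)=2(t-1)/t^{2}$, so $g''$ attains its minimum value $0$ at $t=1$ and is therefore nonnegative; hence $g$ is convex, and since in addition $g'(1)=0$ the point $t=1$ is the global minimizer, giving $g\ge g(1)=0$. This derivative bookkeeping is the delicate part; everything surrounding it is formal.

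As an alternative that matches the cited source, I could instead reduce to the two-point case: with $A=\{p\ge q\}$ and $P=\mu(A),\,Q=\pi(A)$ one has $\tv(\mu,\pi)=P-Q$, while the log-sum inequality gives $\kl(\mu\|\pi)\ge P\log\frac{P}{Q}+(1-P)\log\frac{1-P}{1-Q}$. The scalar Pinsker bound $P\log\frac{P}{Q}+(1-P)\log\frac{1-P}{1-Q}\ge 2(P-Q)^{2}$, proved by fixing $P$ and observing that the difference of the two sides vanishes to first order at $Q=P$ with nonnegative second derivative in $Q$, then closes the argument. Either route isolates the entire difficulty into a single elementary scalar estimate.
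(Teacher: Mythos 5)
Your argument is correct, and it is essentially the proof the paper points to: the paper gives no proof of its own but cites \citet[Lemma 2.5]{Tsybakov2009Introduction}, whose argument is exactly your main route — the pointwise bound $t\log t-t+1\ge\frac{3(t-1)^{2}}{2(t+2)}$ followed by Cauchy--Schwarz against the weight $p+2q$ (whose integral is $3$). Your derivative bookkeeping for the scalar inequality checks out ($g(1)=g'(1)=g''(1)=0$ and $g'''(t)=2(t-1)/t^{2}$), so there is nothing to fix.
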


\begin{lemma}
\label{lemma:kl:chi}
For two distributions $\mu$ and $\pi$, 
\begin{equation*}
\kl(\mu\|\pi)\leq\log(1+\chi^{2}(\mu\|\pi))\leq\chi^{2}(\mu\|\pi).
\end{equation*}
\end{lemma}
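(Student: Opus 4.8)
The plan is to prove the two inequalities separately, with the first relying on Jensen's inequality and the second on an elementary scalar estimate. The central observation that ties everything together is a reformulation of the quantity $1+\chi^{2}(\mu\|\pi)$ as an expectation of the likelihood ratio under $\mu$. Specifically, starting from the definition
\begin{equation*}
\chi^{2}(\mu\|\pi)=\int\Big(\frac{\mu(\vx)}{\pi(\vx)}\Big)^{2}\pi(\vx)\d\vx-1,
\end{equation*}
I would rewrite the integrand as $\big(\mu(\vx)/\pi(\vx)\big)\cdot\mu(\vx)$, so that $1+\chi^{2}(\mu\|\pi)=\bbE_{\mu}[\mu/\pi]$, where the expectation is taken with respect to $\vx\sim\mu$. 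This identity is the key bridge between the $\chi^{2}$-divergence and a quantity amenable to Jensen's inequality.

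First I would establish the left inequality $\kl(\mu\|\pi)\leq\log(1+\chi^{2}(\mu\|\pi))$. Writing the KL divergence as $\kl(\mu\|\pi)=\bbE_{\mu}[\log(\mu/\pi)]$ and invoking Jensen's inequality for the concave function $\log$, I obtain
\begin{equation*}
\bbE_{\mu}\Big[\log\frac{\mu}{\pi}\Big]\leq\log\bbE_{\mu}\Big[\frac{\mu}{\pi}\Big]=\log\big(1+\chi^{2}(\mu\|\pi)\big),
\end{equation*}
where the final equality uses the reformulation above. This completes the first bound.

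For the right inequality $\log(1+\chi^{2}(\mu\|\pi))\leq\chi^{2}(\mu\|\pi)$, I would simply apply the elementary scalar inequality $\log(1+x)\leq x$, valid for all $x>-1$, with $x=\chi^{2}(\mu\|\pi)\geq 0$. Chaining the two bounds yields the stated double inequality.

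There is no substantive obstacle here — the argument is elementary and self-contained. The only point requiring mild care is the justification of the likelihood-ratio identity $1+\chi^{2}(\mu\|\pi)=\bbE_{\mu}[\mu/\pi]$ and ensuring the Jensen step is applied to the correct base measure (namely $\mu$, not $\pi$); getting the measure right is what makes the $\log\bbE_{\mu}[\mu/\pi]$ term collapse to $\log(1+\chi^{2})$ rather than something else. Beyond that, the proof is two lines.
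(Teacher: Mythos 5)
Your proposal is correct and matches the paper's own proof essentially step for step: both write $\kl(\mu\|\pi)=\bbE_{\mu}[\log(\mu/\pi)]$, apply Jensen's inequality under $\mu$ together with the identity $\bbE_{\mu}[\mu/\pi]=\int(\mu/\pi)^{2}\pi=1+\chi^{2}(\mu\|\pi)$, and conclude with $\log(1+x)\leq x$. There is nothing to add.
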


\begin{proof}[Proof of Lemma~\ref{lemma:kl:chi}]
It is straightforward that 
\begin{align*}
\kl(\mu\|\pi)
&=\int \mu(\vx)\log\frac{\mu(\vx)}{\pi(\vx)}\d\vx=\bbE_{\mX\sim\mu}\Big[\log\frac{\mu(\mX)}{\pi(\mX)}\Big] \\
&\leq\log\bbE_{\mX\sim\mu}\Big[\frac{\mu(\mX)}{\pi(\mX)}\Big]=\log\int\Big(\frac{\mu(\vx)}{\pi(\vx)}\Big)^{2}\pi(\vx)\d\vx \\
&=\log(1+\chi^{2}(\mu\|\pi))\leq\chi^{2}(\mu\|\pi),
\end{align*}
where the inequality follows from the Jensen's inequality. The proof is complete.
\end{proof}

\begin{definition}[Poincar{\'e} inequality]
A distributoin $\pi$ satisfies a Poincar{\'e} inequality with constant $C_{\PI}$, that is, for each function $f\in C_{0}^{\infty}(\bbR^{d})$,
\begin{equation*}
\var(f)\leq C_{\PI}\bbE\big[\|\nabla f\|_{2}^{2}\big],
\end{equation*}
where the expectation and variance are taken with respect to the distribution $\pi$.
\end{definition}

\par Notice that the log-Sobolev inequality implies a Poincar{\'e} inequality with the same constant. Thus~\cite[Lemma E.5]{Lee2022Convergence} gives the following lemma.

\begin{lemma}\label{lemma:LSI:Lip}
Let $\pi$ be a distribution such that $\log\pi$ is $C^{1}$ and $\lambda$-smooth. Further, $\pi$ satisfies the log-Sobolev inequality with constant $C_{\LSI}$. Then $\lambda C_{\LSI}\geq 1$. 
\end{lemma}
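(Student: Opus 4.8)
The plan is to prove the chain $\lambda C_{\LSI}\ge\lambda C_{\PI}\ge 1$, thereby reducing the log-Sobolev statement to a lower bound on the Poincar\'e constant together with the fact that LSI dominates the Poincar\'e inequality. \textbf{Step 1 (LSI $\Rightarrow$ Poincar\'e).} Linearizing the log-Sobolev inequality, i.e. applying it to $f=1+\epsilon g$ and expanding in $\epsilon$, gives $\var_\pi(g)\le C_{\LSI}\bbE_\pi[\|\nabla g\|_2^2]$; hence $\pi$ satisfies a Poincar\'e inequality with a constant $C_{\PI}\le C_{\LSI}$, and it suffices to show $\lambda C_{\PI}\ge 1$.

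\textbf{Step 2 (upper bound on the covariance).} Test the Poincar\'e inequality against the linear functional $f(\vx)=\vv\cdot\vx$ for an arbitrary unit vector $\vv$. Since $\nabla f\equiv\vv$, this yields $\vv^{\top}\Sigma\vv=\var_\pi(f)\le C_{\PI}|\vv|^2=C_{\PI}$, where $\Sigma:=\bbE_\pi[(\mX-\bbE_\pi[\mX])(\mX-\bbE_\pi[\mX])^{\top}]$ is the covariance of $\pi$, so $\|\Sigma\|_{\op}\le C_{\PI}$. \textbf{Step 3 (lower bound on the covariance).} Write $V:=-\log\pi$, so that $\lambda$-smoothness of $\log\pi$ means $\nabla^2 V(\vx)\preceq\lambda\mI_d$ pointwise. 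Integration by parts supplies the identities $\bbE_\pi[\nabla V]=\mathbf 0$, $\bbE_\pi[(\vv\cdot\mX)\nabla V]=\vv$, and $\bbE_\pi[\nabla V\otimes\nabla V]=\bbE_\pi[\nabla^2 V]$. The first two show that $|\vv|^2=\bbE_\pi[(\vv\cdot\mX)(\vv\cdot\nabla V)]$ is exactly the covariance of $\vv\cdot\mX$ and $\vv\cdot\nabla V$, so Cauchy--Schwarz gives
$$|\vv|^2\le\sqrt{\var_\pi(\vv\cdot\mX)}\,\sqrt{\var_\pi(\vv\cdot\nabla V)}.$$
Using the third identity and smoothness, $\var_\pi(\vv\cdot\nabla V)\le\bbE_\pi[(\vv\cdot\nabla V)^2]=\vv^{\top}\bbE_\pi[\nabla^2 V]\vv\le\lambda|\vv|^2$, while $\var_\pi(\vv\cdot\mX)=\vv^{\top}\Sigma\vv$. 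Substituting and rearranging yields $\vv^{\top}\Sigma\vv\ge|\vv|^2/\lambda$, i.e. $\Sigma\succeq\lambda^{-1}\mI_d$ and thus $\|\Sigma\|_{\op}\ge\lambda^{-1}$. Chaining the two bounds gives $\lambda^{-1}\le\|\Sigma\|_{\op}\le C_{\PI}\le C_{\LSI}$, which is the claim.

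The main obstacle is analytic rather than algebraic: the test functions $\vx\mapsto\vv\cdot\vx$ and $\vx\mapsto\nabla V(\vx)$ are not compactly supported, so applying the Poincar\'e inequality and the integration-by-parts identities to them requires finite second moments and vanishing boundary terms. Both are available here, since LSI forces sub-Gaussian tails (hence all polynomial moments are finite) and $\log\pi\in C^1$ with the $\lambda$-smoothness bound controls the growth of $\nabla V$; rigorously one truncates with a smooth cutoff $\chi_R$ supported on a ball $B_R$, applies the inequalities to $\chi_R f$, and passes to the limit $R\to\infty$ by dominated convergence. This limiting step is precisely the content of \citet[Lemma E.5]{Lee2022Convergence}, to which the remaining routine verification can be deferred.
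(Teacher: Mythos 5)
Your proof is correct and follows essentially the same route as the paper: the paper simply observes that the log-Sobolev inequality implies a Poincar\'e inequality with the same constant and then invokes \citet[Lemma E.5]{Lee2022Convergence}, which is exactly your reduction. Your Steps 2--3 just spell out the standard covariance argument (testing Poincar\'e on linear functionals plus the identities $\bbE_\pi[\nabla V\otimes\nabla V]=\bbE_\pi[\nabla^2 V]\preceq\lambda\mI_d$ and Cauchy--Schwarz) underlying that cited lemma, so nothing is missing.
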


\begin{lemma}
\label{lemma:Dirichlet:energy}
For two distributions $\mu$ and $\pi$, it holds that 
\begin{equation*}
\int\Delta\mu(\vx)\frac{\mu(\vx)}{\pi(\vx)}\d\vx+\int\mu(\vx)\nabla\log\pi(\vx)\cdot\nabla\frac{\mu(\vx)}{\pi(\vx)}\d\vx=\bbE_{\pi}\Big[\big\|\nabla\frac{\mu}{\pi}\big\|_{2}^{2}\Big].
\end{equation*}
\end{lemma}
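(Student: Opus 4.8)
The plan is to derive the identity by a single integration by parts together with the product rule. Write $\phi:=\mu/\pi$, so that $\mu=\pi\phi$ and the target right-hand side is $\bbE_{\pi}[\|\nabla\phi\|_{2}^{2}]=\int\pi(\vx)\|\nabla\phi(\vx)\|_{2}^{2}\d\vx$. I would work under the standing hypothesis that $\mu$ and $\pi$ are smooth densities on $\bbR^{d}$ decaying, together with their gradients, fast enough that the boundary flux at infinity in Green's formula vanishes. This is consistent with the regularity available when the lemma is invoked inside Lemma~\ref{lemma:appendix:LMC:3}, namely the smoothness of the posterior score (Assumption~\ref{assumption:posterior:smooth}) and the log-Sobolev control (Assumption~\ref{assumption:LSI:posterior}).

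First I would rewrite the first integral $\int\Delta\mu\,\phi\,\d\vx$ by transferring one derivative via Green's formula, turning it into an integral of $\nabla\mu\cdot\nabla\phi$. I would then expand $\nabla\mu$ through the product rule applied to $\mu=\pi\phi$, namely $\nabla\mu=\phi\,\nabla\pi+\pi\,\nabla\phi$, which splits this contribution into a mixed piece carrying the factor $\phi\,\nabla\pi\cdot\nabla\phi$ and a pure piece carrying $\pi\,\|\nabla\phi\|_{2}^{2}$. The pure piece is already the Dirichlet energy we are aiming for, so the whole task reduces to showing that the mixed piece is exactly absorbed by the second integral on the left.

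The crux of the argument is therefore this cancellation. Using $\nabla\log\pi=\nabla\pi/\pi$ together with $\mu=\pi\phi$, the integrand $\mu\,\nabla\log\pi\cdot\nabla\phi$ collapses to $\phi\,\nabla\pi\cdot\nabla\phi$, which matches precisely the mixed term produced in the previous step. Combining the two integrals on the left, the $\nabla\pi\cdot\nabla\phi$ cross terms cancel and only the Dirichlet energy $\int\pi\,\|\nabla\phi\|_{2}^{2}\,\d\vx=\bbE_{\pi}[\|\nabla(\mu/\pi)\|_{2}^{2}]$ survives, which is the claimed right-hand side. In the write-up I would place the two integrals side by side so that the cancellation is manifest and the surviving term can be read off directly, keeping careful track of the factors introduced by the product rule.

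The only genuine obstacle is analytic rather than algebraic: rigorously justifying Green's formula, i.e. the vanishing of the boundary integral over large spheres. In the setting where the lemma is applied, $\mu=\bar{\pi}_{k+1}^{t}$ and $\pi=\pi_{k+1}$ are smooth, the ratio $\phi=\mu/\pi$ has finite second moment against $\pi$ (finite $\chi^{2}$-divergence), and $\log\pi$ is Lipschitz by Assumption~\ref{assumption:posterior:smooth}; these integrability and growth conditions are exactly what is needed to force the flux $\nabla\mu\,\phi$ to decay at infinity. I would state this integrability as the precise hypothesis under which the identity holds, after which the remaining manipulations are an elementary and routine computation.
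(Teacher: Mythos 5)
Your plan is exactly the paper's proof: integrate $\int\Delta\mu\,(\mu/\pi)\d\vx$ by parts via Green's formula, expand the resulting gradient by the product rule, and cancel the cross term against the second integral. But your final step contains a sign slip, and chasing it shows the identity cannot come out with the stated sign. Writing $\phi=\mu/\pi$, Green's formula gives $\int\Delta\mu\,\phi\d\vx=-\int\nabla\mu\cdot\nabla\phi\d\vx$, with a minus sign your sketch elides (``turning it into an integral of $\nabla\mu\cdot\nabla\phi$''). Keeping that minus sign and using $\nabla\mu=\phi\,\nabla\pi+\pi\,\nabla\phi$,
\begin{equation*}
\int\Delta\mu\,\phi\d\vx=-\int\phi\,\nabla\pi\cdot\nabla\phi\d\vx-\int\pi\,\|\nabla\phi\|_{2}^{2}\d\vx,
\end{equation*}
while $\int\mu\,\nabla\log\pi\cdot\nabla\phi\d\vx=+\int\phi\,\nabla\pi\cdot\nabla\phi\d\vx$. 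The cross terms cancel exactly as you say, but the survivor is $-\bbE_{\pi}\big[\|\nabla\phi\|_{2}^{2}\big]$, not $+\bbE_{\pi}\big[\|\nabla\phi\|_{2}^{2}\big]$; if instead you drop the Green's-formula minus sign, the cross terms add rather than cancel, so no reading of your argument yields the claimed positive identity. Indeed the stated equality is impossible in general: its left-hand side is minus a Dirichlet energy, hence nonpositive, while its right-hand side is nonnegative.

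In fairness, the sign defect originates in the paper: the lemma statement carries a typo, and the paper's own proof in fact establishes the negated identity $-\int\Delta\mu\,\frac{\mu}{\pi}\d\vx-\int\mu\,\nabla\log\pi\cdot\nabla\frac{\mu}{\pi}\d\vx=\bbE_{\pi}\big[\|\nabla\frac{\mu}{\pi}\|_{2}^{2}\big]$, which is also the version actually used in the proof of Lemma~\ref{lemma:appendix:LMC:3}, where the Dirichlet energy enters the differential inequality for the $\chi^{2}$-divergence with a negative sign. So your route is the right route; to make it correct you must retain the minus from Green's formula and conclude with the negated identity (equivalently, correct the statement). One genuine merit of your write-up over the paper's is the attention to boundary terms, which the paper passes over by simply invoking Green's formula; note, however, that a Lipschitz score (Assumption~\ref{assumption:posterior:smooth}) and finite $\chi^{2}$-divergence do not by themselves force the flux $\phi\,\nabla\mu$ to vanish on large spheres, so the decay needed to kill the boundary integral should be stated as a standing regularity hypothesis rather than presented as a consequence of Assumptions~\ref{assumption:posterior:smooth} and~\ref{assumption:LSI:posterior}.
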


\begin{proof}[Proof of Lemma~\ref{lemma:Dirichlet:energy}]
It is straightforward that 
\begin{align*}
&-\int\Delta\mu(\vx)\frac{\mu(\vx)}{\pi(\vx)}\d\vx-\int\mu(\vx)\nabla\log\pi(\vx)\cdot\nabla\frac{\mu(\vx)}{\pi(\vx)}\d\vx \\
&=\int\nabla\mu(\vx)\cdot\nabla\frac{\mu(\vx)}{\pi(\vx)}\d\vx-\int\frac{\mu(\vx)}{\pi(\vx)}\nabla\pi(\vx)\cdot\nabla\frac{\mu(\vx)}{\pi(\vx)}\d\vx \\
&=\int\pi(\vx)\Big(\frac{\nabla\mu(\vx)}{\pi(\vx)}-\frac{\mu(\vx)\nabla\pi(\vx)}{\pi(\vx)^{2}}\Big)\cdot\nabla\frac{\mu(\vx)}{\pi(\vx)}\d\vx \\
&=\int\big\|\nabla\frac{\mu(\vx)}{\pi(\vx)}\big\|_{2}^{2}\pi(\vx)\d\vx=\bbE_{\pi}\Big[\big\|\nabla\frac{\mu}{\pi}\big\|_{2}^{2}\Big],
\end{align*}
where the first equality holds from Green's formula. This completes the proof.
\end{proof}

\begin{lemma}
\label{lemma:LSI:chi:energy}
Let $\pi$ be a distribution satisfies the log-Sobolev inequality with constant $C_{\LSI}$. Then for each distribution $\mu$, it holds that 
\begin{equation*}
\frac{1}{2C_{\LSI}}\chi^{2}(\mu\|\pi)\leq\bbE_{\pi}\Big[\big\|\nabla\frac{\mu}{\pi}\big\|_{2}^{2}\Big].
\end{equation*}
\end{lemma}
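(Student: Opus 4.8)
The plan is to deduce the claim from the Poincar\'e inequality, which is the linearized (second-order) form of the log-Sobolev inequality (LSI) posited in Assumption~\ref{assumption:LSI:posterior}, applied to the likelihood ratio $\mu/\pi$. First I would record that the LSI $\ent(f^{2})\leq 2C_{\LSI}\bbE[\|\nabla f\|_{2}^{2}]$ implies a Poincar\'e inequality with the same constant. To see this one takes a perturbation $f=1+\varepsilon g$ and expands both sides to order $\varepsilon^{2}$: a short computation gives $\bbE[f^{2}]=1+2\varepsilon\bbE[g]+\varepsilon^{2}\bbE[g^{2}]$, $\ent(f^{2})=2\varepsilon^{2}\var(g)+O(\varepsilon^{3})$, while the right-hand side equals $2C_{\LSI}\varepsilon^{2}\bbE[\|\nabla g\|_{2}^{2}]$. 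Cancelling $\varepsilon^{2}$ and letting $\varepsilon\to 0$ yields $\var(g)\leq C_{\LSI}\bbE[\|\nabla g\|_{2}^{2}]$ for all admissible $g$, where the variance and expectation are taken under $\pi$. (Alternatively, this is the standard fact that LSI implies Poincar\'e with the same constant, which could simply be cited.)

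Next I would apply this Poincar\'e inequality to the particular choice $g=\mu/\pi$. Since $\mu$ is a probability density, $\bbE_{\pi}[\mu/\pi]=\int\mu\,\d\vx=1$, and by the very definition of the $\chi^{2}$-divergence, $\bbE_{\pi}[(\mu/\pi)^{2}]=1+\chi^{2}(\mu\|\pi)$. Consequently $\var_{\pi}(\mu/\pi)=\bbE_{\pi}[(\mu/\pi)^{2}]-(\bbE_{\pi}[\mu/\pi])^{2}=\chi^{2}(\mu\|\pi)$. Substituting into the Poincar\'e inequality gives $\chi^{2}(\mu\|\pi)\leq C_{\LSI}\,\bbE_{\pi}[\|\nabla(\mu/\pi)\|_{2}^{2}]$, which is in fact sharper than the stated bound; discarding a harmless factor of two produces exactly $\tfrac{1}{2C_{\LSI}}\chi^{2}(\mu\|\pi)\leq\bbE_{\pi}[\|\nabla(\mu/\pi)\|_{2}^{2}]$. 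The looser constant $\tfrac{1}{2C_{\LSI}}$ is presumably retained only for uniformity with the downstream estimates in Appendix~\ref{appendix:LMC}.

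I expect the only genuinely delicate point to be a function-space technicality rather than a substantive difficulty: Assumption~\ref{assumption:LSI:posterior} is stated for test functions $f\in C_{0}^{\infty}(\bbR^{d})$, whereas the ratio $\mu/\pi$ is in general neither smooth nor compactly supported. I would dispose of this by the usual truncation-and-mollification argument---approximating $\mu/\pi$ in the weighted Sobolev norm $\|g\|^{2}=\bbE_{\pi}[g^{2}]+\bbE_{\pi}[\|\nabla g\|_{2}^{2}]$ by smooth compactly supported functions, applying the inequality along the approximating sequence, and passing to the limit using Fatou's lemma on the left and $L^{1}(\pi)$-convergence on the right. Since everything else reduces to the elementary identities $\bbE_{\pi}[\mu/\pi]=1$ and $\var_{\pi}(\mu/\pi)=\chi^{2}(\mu\|\pi)$, no further obstacles arise.
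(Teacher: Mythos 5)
Your proposal is correct, and it is sharper than what the lemma asserts, but it follows a genuinely different route from the paper's proof. You linearize the log-Sobolev inequality into a Poincar\'e inequality with the same constant (a standard fact that the paper itself records just before Lemma~\ref{lemma:LSI:Lip}) and apply it to $g=\mu/\pi$, using $\bbE_{\pi}[\mu/\pi]=1$ and $\var_{\pi}(\mu/\pi)=\chi^{2}(\mu\|\pi)$ to obtain $\chi^{2}(\mu\|\pi)\leq C_{\LSI}\,\bbE_{\pi}[\|\nabla(\mu/\pi)\|_{2}^{2}]$, which improves the stated constant by a factor of two. The paper instead applies the full nonlinear LSI directly to $f=\mu/\pi$, so that the left-hand side is $\ent_{\pi}(\mu^{2}/\pi^{2})$, and then establishes the lower bound $\ent_{\pi}(\mu^{2}/\pi^{2})\geq(\chi^{2}(\mu\|\pi)+1)\log(\chi^{2}(\mu\|\pi)+1)\geq\chi^{2}(\mu\|\pi)$ via the monotonicity of the R\'enyi divergence in its order and the elementary inequality $(x+1)\log(x+1)\geq x$; this yields exactly the constant $2C_{\LSI}$ in the statement. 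Your argument is more elementary (it never needs the R\'enyi machinery) and buys a better constant; the paper's argument stays within the entropy formulation it uses elsewhere (compare Lemma~\ref{lemma:kl:fisher}) and in fact proves the stronger intermediate bound in terms of $(\chi^{2}+1)\log(\chi^{2}+1)$. Your remark about the $C_{0}^{\infty}$ test-function class versus the generally non-smooth, non-compactly-supported ratio $\mu/\pi$ is a legitimate technical point, but it applies equally to the paper's own proof, which also substitutes $\mu/\pi$ into the LSI without comment, so it is not a gap relative to the paper's standard of rigor.
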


\begin{proof}[Proof of Lemma~\ref{lemma:LSI:chi:energy}]
Recall the log-Sobolev inequality
\begin{equation}\label{eq:proof:lemma:chi2:fisher:1}
\ent_{\pi}(f^{2})\leq 2C_{\LSI}\bbE_{\pi}\big[\|\nabla f\|_{2}^{2}\big].
\end{equation}
Substituting $f(\vx)=(\mu(\vx)/\pi(\vx))^{q/2}$ into the left-hand side of~\eqref{eq:proof:lemma:kl:fisher:1} deduces 
\begin{align*}
\ent_{\pi}\Big(\frac{\mu^{q}}{\pi^{q}}\Big)
&=q\int\frac{\mu(\vx)^{q}}{\pi(\vx)^{q}}\log\frac{\mu(\vx)}{\pi(\vx)}\pi(\vx)\d\vx-\int\frac{\mu(\vx)^{q}}{\pi(\vx)^{q}}\pi(\vx)\d\vx\log\int\frac{\mu(\vx)^{q}}{\pi(\vx)^{q}}\pi(\vx)\d\vx \\
&=q\partial_{q}\int\frac{\mu(\vx)^{q}}{\pi(\vx)^{q}}\pi(\vx)\d\vx-\int\frac{\mu(\vx)^{q}}{\pi(\vx)^{q}}\pi(\vx)\d\vx\log\int\frac{\mu(\vx)^{q}}{\pi(\vx)^{q}}\pi(\vx)\d\vx,
\end{align*}
where the last equality used the chain rule. As a consequence, 
\begin{align*}
&\Big(\int\frac{\mu(\vx)^{q}}{\pi(\vx)^{q}}\pi(\vx)\d\vx\Big)^{-1}\ent_{\pi}\Big(\frac{\mu^{q}}{\pi^{q}}\Big) \\
&=q\partial_{q}\log\int\frac{\mu(\vx)^{q}}{\pi(\vx)^{q}}\pi(\vx)\d\vx-\log\int\frac{\mu(\vx)^{q}}{\pi(\vx)^{q}}\pi(\vx)\d\vx \\
&=q\partial_{q}\Big\{(q-1)\Big(\frac{1}{q-1}\log\int\frac{\mu(\vx)^{q}}{\pi(\vx)^{q}}\pi(\vx)\d\vx\Big)\Big\}-\log\int\frac{\mu(\vx)^{q}}{\pi(\vx)^{q}}\pi(\vx)\d\vx \\
&=\frac{q}{q-1}\log\int\frac{\mu(\vx)^{q}}{\pi(\vx)^{q}}\pi(\vx)\d\vx-\log\int\frac{\mu(\vx)^{q}}{\pi(\vx)^{q}}\pi(\vx)\d\vx \\
&\quad+q(q-1)\partial_{q}\Big(\frac{1}{q-1}\log\int\frac{\mu(\vx)^{q}}{\pi(\vx)^{q}}\pi(\vx)\d\vx\Big) \\
&\geq\frac{1}{q-1}\log\int\frac{\mu(\vx)^{q}}{\pi(\vx)^{q}}\pi(\vx)\d\vx,
\end{align*}
where the inequality invokes the fact that the R{\'e}nyi divergence is monotonic in the order $q$~\cite[Lemma 11]{Vempala2019Rapid}. Hence by setting $q=2$, 
\begin{align}
\ent_{\pi}\Big(\frac{\mu^{2}}{\pi^{2}}\Big)
&\geq\Big(\int\frac{\mu(\vx)^{2}}{\pi(\vx)^{2}}\pi(\vx)\d\vx\Big)\log\int\frac{\mu(\vx)^{2}}{\pi(\vx)^{2}}\pi(\vx)\d\vx \nonumber \\
&\geq\big(\chi^{2}(\mu\|\pi)+1\big)\log\big(\chi^{2}(\mu\|\pi)+1\big)\geq\chi^{2}(\mu\|\pi), \label{eq:proof:lemma:chi2:fisher:2}
\end{align}
where the last inequality holds from $(x+1)\log(x+1)\geq x$ for each $x\geq 0$. Combining~\eqref{eq:proof:lemma:chi2:fisher:1} and~\eqref{eq:proof:lemma:chi2:fisher:2} completes the proof.
\end{proof}

\par We next introduce Donsker-Varadhan variational principle~\cite[Theorem 5.4]{Rassoul2015Course}.
\begin{lemma}\label{lemma:DonskerVaradhan}
Let $\mu$ and $\pi$ be two distributions. Then for each function $\phi:\bbR^{d}\rightarrow\bbR$, 
\begin{equation*}
\bbE_{\mu}[\phi]\leq\kl(\mu\|\pi)+\log\bbE_{\pi}[\exp(\phi)].
\end{equation*}
\end{lemma}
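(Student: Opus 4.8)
The plan is to establish the bound by the classical change-of-measure argument, reducing everything to a single application of Jensen's inequality.

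First I would dispose of the degenerate cases. If $\bbE_{\pi}[\exp(\phi)]=+\infty$ the right-hand side is infinite and the claim is vacuous; if $\mu$ fails to be absolutely continuous with respect to $\pi$, then $\kl(\mu\|\pi)=+\infty$ and the bound again holds trivially. Hence I may assume $\mu\ll\pi$, so that the density $\mu/\pi$ is well defined $\pi$-almost everywhere, and $\bbE_{\pi}[\exp(\phi)]<\infty$.

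Under these reductions the core step is to merge the two left-hand quantities into one $\mu$-integral and exploit concavity of the logarithm. Writing
\begin{equation*}
\bbE_{\mu}[\phi]-\kl(\mu\|\pi)=\int\Big(\phi(\vx)-\log\frac{\mu(\vx)}{\pi(\vx)}\Big)\mu(\vx)\d\vx=\int\log\Big(\frac{\exp(\phi(\vx))\,\pi(\vx)}{\mu(\vx)}\Big)\mu(\vx)\d\vx,
\end{equation*}
I would then apply Jensen's inequality to the concave map $\log$ against the probability measure $\mu$, obtaining
\begin{equation*}
\int\log\Big(\frac{\exp(\phi(\vx))\,\pi(\vx)}{\mu(\vx)}\Big)\mu(\vx)\d\vx\leq\log\int\frac{\exp(\phi(\vx))\,\pi(\vx)}{\mu(\vx)}\mu(\vx)\d\vx=\log\bbE_{\pi}[\exp(\phi)].
\end{equation*}
Rearranging the resulting inequality $\bbE_{\mu}[\phi]-\kl(\mu\|\pi)\leq\log\bbE_{\pi}[\exp(\phi)]$ yields the assertion.

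The computation is essentially one line once the terms are combined, so the only genuine care lies in the integrability bookkeeping: I must justify splitting $\bbE_{\mu}[\phi]$ from the entropy term and handle the case in which $\bbE_{\mu}[\phi]=+\infty$ while $\kl(\mu\|\pi)<\infty$, which again forces the right-hand side to be infinite. The one subtle point is the set where $\mu(\vx)=0$, on which the integrand $\exp(\phi)\pi/\mu$ is formally $\tfrac{0}{0}$; since the outer integral is taken against $\mu$, this set carries no mass and can be discarded. If I wanted to sidestep even this cosmetic issue, I would take the equivalent route through the tilted Gibbs measure $\d\nu\propto\exp(\phi)\,\d\pi$: since $\log(\d\nu/\d\pi)=\phi-\log\bbE_{\pi}[\exp(\phi)]$, a direct expansion gives $\kl(\mu\|\nu)=\kl(\mu\|\pi)-\bbE_{\mu}[\phi]+\log\bbE_{\pi}[\exp(\phi)]$, and the nonnegativity of relative entropy delivers the bound immediately. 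I expect the Jensen version to be the main path, with the Gibbs-measure version as a clean fallback should measurability subtleties need a more transparent treatment.
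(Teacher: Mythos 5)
The paper does not prove this lemma at all: it simply cites it as the Donsker--Varadhan variational principle (Theorem 5.4 of Rassoul-Agha and Sepp\"al\"ainen). Your argument is a correct, self-contained proof of exactly the cited fact. The reduction to the case $\mu\ll\pi$ and $\bbE_{\pi}[\exp(\phi)]<\infty$ is handled properly, the single application of Jensen's inequality to $\int\log\bigl(\exp(\phi)\pi/\mu\bigr)\,\mathrm{d}\mu$ is the standard change-of-measure proof, and your fallback via the tilted measure $\mathrm{d}\nu\propto\exp(\phi)\,\mathrm{d}\pi$ together with $\kl(\mu\|\nu)\geq 0$ is the cleanest way to package the same computation. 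The only point worth being explicit about, since the lemma as stated imposes no integrability on $\phi$, is the splitting of $\bbE_{\mu}[\phi]-\kl(\mu\|\pi)$ into a single integral: this is legitimate once you note that when $\kl(\mu\|\pi)<\infty$ the negative part of $\log(\mu/\pi)$ is $\mu$-integrable, and the remaining case $\bbE_{\mu}[\phi]=+\infty$ can be settled by applying the inequality to the truncations $\phi\wedge n$ and letting $n\to\infty$ by monotone convergence, which forces $\log\bbE_{\pi}[\exp(\phi)]=+\infty$. You flag exactly this bookkeeping, so the proof is complete as proposed.
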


\par The following lemma provides the Chernoff bound for $\chi^{2}$-distribution, which can be found in~\cite[Example 2.8]{Wainwright2019high} and~\cite[Example 4.1.13]{Duchi2024Information}.
\begin{lemma}\label{lemma:chernoff:chi:square}
Let Let $\mX=(X_{1},\ldots,X_{d})$ be a vector of independent Gaussian random variables with zero mean and $\sigma^{2}$-variance. Then 
\begin{equation*}
\log\bbE\big[\exp\{s(\|\mX\|_{2}^{2}-\bbE[\|\mX\|_{2}^{2}])\}\big]\leq 2ds\sigma^{2}.
\end{equation*}
\end{lemma}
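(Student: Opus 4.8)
The plan is to reduce the claim to the moment generating function (MGF) of a sum of squared Gaussians and then control it with an elementary logarithmic inequality. Since $\|\mX\|_{2}^{2}=\sum_{i=1}^{d}X_{i}^{2}$ with the $X_{i}$ independent and $X_{i}\sim N(0,\sigma^{2})$, I would first record $\bbE[\|\mX\|_{2}^{2}]=d\sigma^{2}$ and, using independence, factor the centered MGF as
\[
\bbE\big[\exp\{s(\|\mX\|_{2}^{2}-\bbE[\|\mX\|_{2}^{2}])\}\big]=e^{-sd\sigma^{2}}\prod_{i=1}^{d}\bbE\big[\exp(sX_{i}^{2})\big].
\]
The single-coordinate MGF is then computed by a direct Gaussian integral (completing the square), which gives $\bbE[\exp(sX_{i}^{2})]=(1-2s\sigma^{2})^{-1/2}$, valid precisely when $2s\sigma^{2}<1$.

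Taking logarithms, the target quantity becomes
\[
\log\bbE\big[\exp\{s(\|\mX\|_{2}^{2}-d\sigma^{2})\}\big]=-\frac{d}{2}\log(1-2s\sigma^{2})-sd\sigma^{2}.
\]
Writing $u=2s\sigma^{2}$, the desired bound $2ds\sigma^{2}=du$ reduces to the scalar inequality $-\log(1-u)-u\leq 2u$. I would establish this from the power series $-\log(1-u)-u=\sum_{k\geq 2}u^{k}/k\leq \tfrac{1}{2}u^{2}/(1-u)$, which is at most $u^{2}\leq u/2\leq 2u$ on the range $u\leq 1/2$. This is exactly the regime $4s\sigma^{2}\leq 1$ in which the lemma is subsequently applied (with $\sigma^{2}=t-\ell h$, as in Lemma~\ref{lemma:appendix:LMC:7}); I would state this restriction explicitly, since it is both what keeps $s\sigma^{2}$ below the radius of convergence $1/2$ so that the Gaussian MGF is finite, and what controls the remainder in the logarithmic bound.

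There is essentially no serious obstacle: the result is the standard sub-exponential tail estimate for a chi-squared-type random variable (cf.\ the cited~\citet{Wainwright2019high,Duchi2024Information}), and the only points requiring care are (i) ensuring $s\sigma^{2}<1/2$ so the MGF exists, and (ii) choosing the elementary inequality sharply enough to land on the constant $2$ rather than a larger one. I would therefore carry the condition $4s\sigma^{2}\leq 1$ through the whole argument and verify $-\log(1-u)-u\leq 2u$ on $[0,1/2]$ directly, which closes the proof.
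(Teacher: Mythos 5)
Your proposal is correct and follows essentially the same route as the paper: compute the Gaussian moment generating function exactly via independence, take logarithms, and control $-\log(1-2s\sigma^{2})$ with the elementary inequality $-\log(1-u)-u\leq u^{2}\leq 2u$ on $u\leq 1/2$, which is exactly the paper's bound $-\log(1-2\lambda)\leq 2\lambda+4\lambda^{2}$ under $4\lambda\leq 1$ after the substitution $u=2\lambda$. If anything you are slightly more careful than the paper in making the restriction $4s\sigma^{2}\leq 1$ explicit and in passing from the sharper intermediate bound $2ds^{2}\sigma^{4}$ to the stated $2ds\sigma^{2}$.
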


\begin{proof}[Proof of Lemma~\ref{lemma:chernoff:chi:square}]
Before proceeding, we consider the Chernoff bound for the $Z^{2}$ with $Z\sim\mathcal{N}(0,1)$. For $4\lambda<1$, we have 
\begin{align}
\bbE[\exp(\lambda(Z^{2}-\bbE[Z^{2}]))]
&=\frac{1}{\sqrt{2\pi}}\int\exp(\lambda(z^{2}-1))\exp\Big(-\frac{z^{2}}{2}\Big)\dz \nonumber \\
&=\frac{\exp(-\lambda)}{\sqrt{1-2\lambda}}\leq\exp(2\lambda^{2}), \label{eq:lemma:chernoff:chi:square:1}
\end{align}
where the inequality holds from $-\log(1-2\lambda)\leq 2\lambda+4\lambda^{2}$ for $4\lambda\leq 1$. 

\par We next turn to verify the Chernoff bound for the $\chi^{2}$ random variable with $d$ degrees of freedom, denoted by $Y\sim\chi_{n}^{2}$. Note that $Y\stackrel{\d}{=}\sum_{i=1}^{d}Z_{i}^{2}$ where $Z_{1},\ldots,Z_{d}\sim^{\iid}\mathcal{N}(0,1)$. Then for $4\lambda<1$,
\begin{align*}
&\log\bbE\big[\exp\{\lambda(Y-\bbE[Y])\}\big] \\
&=\log\bbE\Big[\exp\Big\{\sum_{i=1}^{d}\lambda(Z_{i}^{2}-\bbE[Z_{i}^{2}])\Big\}\Big]=\sum_{i=1}^{d}\log\bbE[\lambda(Z_{i}^{2}-\bbE[Z_{i}^{2}])]\leq 2d\lambda^{2},
\end{align*}
where the inequality follows from~\eqref{eq:lemma:chernoff:chi:square:1}. Setting $\lambda=s\sigma^{2}$ completes the proof.
\end{proof}

\par The next lemma shows that the KL divergence can be bounded by the Fisher information.
\begin{lemma}
\label{lemma:kl:fisher}
Suppose the distribution $\pi$ satisfies the log-Sobolev inequality with constant $C_{\LSI}$. Then for each distribution $\mu$, 
\begin{equation*}
\kl(\mu\|\pi)\leq 2C_{\LSI}\bbE_{\pi}\Big[\|\nabla\sqrt{\frac{\mu}{\pi}}\|_{2}^{2}\Big].
\end{equation*}
\end{lemma}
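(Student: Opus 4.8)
The plan is to invoke the log-Sobolev inequality (Assumption~\ref{assumption:LSI:posterior}) directly, with the judicious choice of test function $f=\sqrt{\mu/\pi}$. This is the standard ``square-root trick'' that converts the LSI, stated for $\ent_{\pi}(f^{2})$, into a statement about the relative entropy $\kl(\mu\|\pi)$.

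Concretely, I would proceed as follows. First, set $f:=\sqrt{\mu/\pi}$, so that $f^{2}=\mu/\pi$, and plug this into the log-Sobolev inequality
\begin{equation*}
\ent_{\pi}(f^{2})\leq 2C_{\LSI}\bbE_{\pi}\big[\|\nabla f\|_{2}^{2}\big].
\end{equation*}
The right-hand side is already precisely $2C_{\LSI}\bbE_{\pi}\big[\|\nabla\sqrt{\mu/\pi}\|_{2}^{2}\big]$, so it remains only to identify the left-hand side with $\kl(\mu\|\pi)$. Recalling the definition $\ent_{\pi}(g)=\bbE_{\pi}[g\log g]-\bbE_{\pi}[g]\log\bbE_{\pi}[g]$ with $g=\mu/\pi$, the key normalization is
\begin{equation*}
\bbE_{\pi}\Big[\frac{\mu}{\pi}\Big]=\int\frac{\mu(\vx)}{\pi(\vx)}\pi(\vx)\d\vx=\int\mu(\vx)\d\vx=1,
\end{equation*}
which forces the second term $\bbE_{\pi}[g]\log\bbE_{\pi}[g]=1\cdot\log 1=0$ to vanish. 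The first term is then
\begin{equation*}
\bbE_{\pi}\Big[\frac{\mu}{\pi}\log\frac{\mu}{\pi}\Big]=\int\frac{\mu(\vx)}{\pi(\vx)}\log\frac{\mu(\vx)}{\pi(\vx)}\,\pi(\vx)\d\vx=\int\mu(\vx)\log\frac{\mu(\vx)}{\pi(\vx)}\d\vx=\kl(\mu\|\pi),
\end{equation*}
so that $\ent_{\pi}(\mu/\pi)=\kl(\mu\|\pi)$. Chaining these two identities with the LSI yields the claimed bound.

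There is no genuinely hard step here; the argument is a one-line substitution followed by the normalization computation above. The only point requiring minor care is a regularity/admissibility issue: the LSI in Assumption~\ref{assumption:LSI:posterior} is stated for $f\in C_{0}^{\infty}(\bbR^{d})$, whereas $\sqrt{\mu/\pi}$ need not be smooth or compactly supported. I would address this, if needed, by a standard density/approximation argument (truncate and mollify, then pass to the limit using monotone or dominated convergence on both sides), which is routine and does not affect the final constant. Thus the substance of the proof is the entropy-to-KL identity, and the remainder is bookkeeping.
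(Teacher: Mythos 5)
Your proposal is correct and follows essentially the same route as the paper: substitute $f=\sqrt{\mu/\pi}$ into the log-Sobolev inequality and use $\bbE_{\pi}[\mu/\pi]=1$ to identify $\ent_{\pi}(\mu/\pi)$ with $\kl(\mu\|\pi)$. Your remark on the admissibility of the test function is a minor point of rigor the paper leaves implicit, but the substance of the argument is identical.
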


\begin{proof}[Proof of Lemma~\ref{lemma:kl:fisher}]
Recall the log-Sobolev inequality
\begin{equation}\label{eq:proof:lemma:kl:fisher:1}
\ent_{\pi}(f^{2})\leq 2C_{\LSI}\bbE_{\pi}\big[\|\nabla f\|_{2}^{2}\big].
\end{equation}
Substituting $f^{2}(\vx)=\mu(\vx)/\pi(\vx)$ into the left-hand side of~\eqref{eq:proof:lemma:kl:fisher:1} deduces 
\begin{align*}
\ent_{\pi}\Big(\frac{\mu}{\pi}\Big)
&=\int\frac{\mu(\vx)}{\pi(\vx)}\log\frac{\mu(\vx)}{\pi(\vx)}\pi(\vx)\d\vx-\int\frac{\mu(\vx)}{\pi(\vx)}\pi(\vx)\d\vx\log\int\frac{\mu(\vx)}{\pi(\vx)}\pi(\vx)\d\vx \\
&=\int\mu(\vx)\log\frac{\mu(\vx)}{\pi(\vx)}\d\vx=\kl(\mu\|\pi).
\end{align*}
As a consequence, 
\begin{equation*}
\kl(\mu\|\pi)=\ent_{\pi}\Big(\frac{\mu}{\pi}\Big)\leq2C_{\LSI}\bbE_{\pi}\Big[\|\nabla\sqrt{\frac{\mu}{\pi}}\|_{2}^{2}\Big].
\end{equation*}
This completes the proof.
\end{proof}

\begin{lemma}
\label{lemma:score:fisher}
For two distributions $\mu$ and $\pi$, it holds that 
\begin{equation*}
\bbE_{\mu}\Big[\big\|\nabla\log\frac{\mu}{\pi}\big\|_{2}^{2}\Big]
=4\bbE_{\pi}\Big[\big\|\nabla\sqrt{\frac{\mu}{\pi}}\big\|_{2}^{2}\Big].
\end{equation*}
\end{lemma}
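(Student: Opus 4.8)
The plan is to prove the identity
\[
\bbE_{\mu}\Big[\big\|\nabla\log\tfrac{\mu}{\pi}\big\|_{2}^{2}\Big]
=4\bbE_{\pi}\Big[\big\|\nabla\sqrt{\tfrac{\mu}{\pi}}\big\|_{2}^{2}\Big]
\]
by a direct computation resting on the chain rule applied to the likelihood ratio $r:=\mu/\pi$, together with the elementary change of reference measure $\mu=r\,\pi$.

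I plan to start with the right-hand side and unfold the gradient of the square root.

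Let me sketch the computation.

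First I would set $r(\vx):=\mu(\vx)/\pi(\vx)$ and compute, by the chain rule,
\[
\nabla\sqrt{r}=\frac{\nabla r}{2\sqrt{r}},\qquad\text{hence}\qquad
\big\|\nabla\sqrt{r}\big\|_{2}^{2}=\frac{\|\nabla r\|_{2}^{2}}{4r}.
\]
Multiplying by $4$ and integrating against $\pi$ gives
\[
4\bbE_{\pi}\Big[\big\|\nabla\sqrt{r}\big\|_{2}^{2}\Big]
=\int\frac{\|\nabla r(\vx)\|_{2}^{2}}{r(\vx)}\pi(\vx)\d\vx
=\int\|\nabla r(\vx)\|_{2}^{2}\frac{\pi(\vx)}{r(\vx)}\d\vx,
\]
and since $\pi/r=\pi^{2}/\mu$, this equals $\int\|\nabla r\|_{2}^{2}\,(\pi^{2}/\mu)\,\d\vx$. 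On the other hand, for the left-hand side I would apply the chain rule to $\log r$,
\[
\nabla\log r=\frac{\nabla r}{r},\qquad
\big\|\nabla\log r\big\|_{2}^{2}=\frac{\|\nabla r\|_{2}^{2}}{r^{2}},
\]
so that, integrating against $\mu=r\pi$,
\[
\bbE_{\mu}\Big[\big\|\nabla\log r\big\|_{2}^{2}\Big]
=\int\frac{\|\nabla r(\vx)\|_{2}^{2}}{r(\vx)^{2}}\mu(\vx)\d\vx
=\int\frac{\|\nabla r(\vx)\|_{2}^{2}}{r(\vx)^{2}}r(\vx)\pi(\vx)\d\vx
=\int\frac{\|\nabla r(\vx)\|_{2}^{2}}{r(\vx)}\pi(\vx)\d\vx.
\]
The two expressions coincide, which completes the proof.

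This statement is essentially a bookkeeping identity — the classical relation between the relative Fisher information $\bbE_{\mu}[\|\nabla\log(\mu/\pi)\|_{2}^{2}]$ and the Dirichlet form of $\sqrt{\mu/\pi}$ — so there is no substantive obstacle. The only care required is to ensure the formal manipulations are justified: I would implicitly assume $\mu$ and $\pi$ are positive and sufficiently smooth (so that $r$ is differentiable and $\sqrt{r}\in C^{1}$) and that the integrals are finite, which is standard in this setting since $\pi$ satisfies a log-Sobolev inequality and the lemma is only ever invoked with $\mu=\mu_{k+1}^{t}$ absolutely continuous with respect to $\pi_{k+1}$. The \emph{mildly delicate} point to state cleanly is merely that $\sqrt{r}$ being differentiable where $r>0$ lets the chain-rule identity $\nabla\sqrt{r}=\nabla r/(2\sqrt{r})$ hold pointwise, after which both sides reduce to the same integral $\int r^{-1}\|\nabla r\|_{2}^{2}\,\pi\,\d\vx$.
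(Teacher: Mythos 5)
Your proposal is correct and is essentially the same computation as the paper's proof: both reduce the two sides via the chain rule and the change of measure $\mu = (\mu/\pi)\,\pi$ to the common integral $\int \|\nabla(\mu/\pi)\|_2^2\,(\pi/(\mu/\pi))\,\dx$, the only cosmetic difference being that the paper transforms the left-hand side directly into the right-hand side while you meet in the middle. Your closing remark about positivity and differentiability of the likelihood ratio is a reasonable caveat that the paper leaves implicit.
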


\begin{proof}[Proof of Lemma~\ref{lemma:score:fisher}]
It is straightforward that
\begin{align*}
\bbE_{\mu}\Big[\big\|\nabla\log\frac{\mu}{\pi}\big\|_{2}^{2}\Big]
&=\int\big\|\nabla\log\frac{\mu(\vx)}{\pi(\vx)}\big\|_{2}^{2}\mu(\vx)\d\vx=\int\frac{\pi(\vx)}{\mu(\vx)}\big\|\nabla\frac{\mu(\vx)}{\pi(\vx)}\big\|_{2}^{2}\pi(\vx)\d\vx \\
&=4\int\big\|\frac{1}{2}\sqrt{\frac{\pi(\vx)}{\mu(\vx)}}\nabla\frac{\mu(\vx)}{\pi(\vx)}\big\|_{2}^{2}\pi(\vx)\d\vx=4\int\big\|\nabla\sqrt{\frac{\mu(\vx)}{\pi(\vx)}}\big\|_{2}^{2}\pi(\vx)\d\vx \\
&=4\bbE_{\pi}\Big[\big\|\nabla\sqrt{\frac{\mu}{\pi}}\big\|_{2}^{2}\Big],
\end{align*}
which completes the proof.
\end{proof}

\begin{lemma}\label{lemma:psi:phi}
Let $\mu$ and $\pi$ be two distributions. Define $\phi$ as $\d\mu=\phi\d\pi$, and define $\psi\bbE_{\pi}[\phi^{2}]=\phi$. Then the following equality holds 
\begin{equation*}
\bbE_{\mu}\big[\psi\|\nabla\log(\psi\phi)\|_{2}^{2}\big]=\frac{4\bbE_{\pi}[\|\nabla_{\vx}\phi\|_{2}^{2}]}{\bbE_{\pi}[\phi^{2}]}.
\end{equation*}
\end{lemma}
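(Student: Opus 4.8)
The plan is to reduce everything to an elementary identity by exploiting that $\bbE_{\pi}[\phi^{2}]$ is a \emph{constant} in $\vx$, so it drops out of every gradient. First I would abbreviate $C:=\bbE_{\pi}[\phi^{2}]$, which by definition of $\psi$ gives $\psi=\phi/C$ and hence $\psi\phi=\phi^{2}/C$. Taking logarithms, $\log(\psi\phi)=2\log\phi-\log C$, and since $\log C$ is constant its gradient vanishes, leaving
\begin{equation*}
\nabla\log(\psi\phi)=2\,\nabla\log\phi=\frac{2\nabla_{\vx}\phi}{\phi},
\qquad
\|\nabla\log(\psi\phi)\|_{2}^{2}=\frac{4\|\nabla_{\vx}\phi\|_{2}^{2}}{\phi^{2}}.
\end{equation*}

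Next I would change measure from $\mu$ to $\pi$ using the defining relation $\d\mu=\phi\,\d\pi$, so that $\bbE_{\mu}[f]=\bbE_{\pi}[\phi f]$ for any integrable $f$. Applying this with $f=\psi\|\nabla\log(\psi\phi)\|_{2}^{2}$ and substituting both $\psi=\phi/C$ and the displayed expression for the squared gradient, the three $\phi$-factors collapse:
\begin{equation*}
\bbE_{\mu}\big[\psi\|\nabla\log(\psi\phi)\|_{2}^{2}\big]
=\bbE_{\pi}\Big[\phi\cdot\frac{\phi}{C}\cdot\frac{4\|\nabla_{\vx}\phi\|_{2}^{2}}{\phi^{2}}\Big]
=\frac{4}{C}\,\bbE_{\pi}\big[\|\nabla_{\vx}\phi\|_{2}^{2}\big].
\end{equation*}
Recalling $C=\bbE_{\pi}[\phi^{2}]$ then yields exactly the claimed identity.

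This lemma is a pure bookkeeping computation, so there is no genuine obstacle; the only point requiring a little care is recognizing that $\bbE_{\pi}[\phi^{2}]$ is a scalar normalizing constant whose gradient is zero, which is precisely what makes the factor of $4$ appear cleanly and what lets the $\phi^{\pm1}$ powers cancel. I would also note in passing that the identity is nothing but the standard relation $\|\nabla\log\rho\|_{2}^{2}=4\|\nabla\sqrt{\rho}\|_{2}^{2}/\rho$ (cf.\ Lemma~\ref{lemma:score:fisher}) specialized to $\rho=\psi\phi\propto\phi^{2}$, which serves as a useful consistency check but is not needed for the direct derivation above.
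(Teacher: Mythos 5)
Your proposal is correct and follows essentially the same route as the paper: both compute $\nabla\log(\psi\phi)=2\nabla\log\phi$ using the fact that $\bbE_{\pi}[\phi^{2}]$ is a constant, then change measure via $\d\mu=\phi\,\d\pi$ so the powers of $\phi$ cancel. No gaps.
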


\begin{proof}[Proof of Lemma~\ref{lemma:psi:phi}]
It is straightforward that 
\begin{equation*}
\|\nabla\log(\psi\phi)\|_{2}^{2}=\|\nabla\log\frac{\phi^{2}}{\bbE_{\pi}[\phi^{2}]}\|_{2}^{2}=4\|\nabla\log\phi\|_{2}^{2}=\frac{4\|\nabla_{\vx}\phi\|_{2}^{2}}{\phi^{2}}.
\end{equation*}
As a consequence, 
\begin{equation*}
\bbE_{\mu}\big[\psi\|\nabla\log(\psi\phi)\|_{2}^{2}\big]=\frac{4}{\bbE_{\pi}[\phi^{2}]}\int\frac{\|\nabla_{\vx}\phi(\vx)\|_{2}^{2}}{\phi(\vx)}\mu(\vx)\d\vx=\frac{4\bbE_{\pi}[\|\nabla_{\vx}\phi\|_{2}^{2}]}{\bbE_{\pi}[\phi^{2}]},
\end{equation*}
which completes the proof.
\end{proof}


\section{Additional Numerical Experiments}
\label{section:experiments:appendix}

\begin{enumerate}[(1)]
\item Section~\ref{section:experiment:appendix:linearGaussian} showcases the application of SSLS to a linear Gaussian state-space model, highlighting its capability to accurately estimate the posterior distribution even in the presence of initial prior distribution shifts.
\item Section~\ref{section:experiments:appendix:Lorenz} focuses on applying SSLS to the Lorenz-96 model and comparing it with APF. The primary objective here is to assess the robustness of SSLS against initial prior distribution shifts, while also evaluating the impact of ensemble size on its performance across various metrics.
\end{enumerate}

\subsection{Linear Gaussian state-space model}
\label{section:experiment:appendix:linearGaussian}

\par To begin with, we look into a one-dimensional linear Gaussian state-space model, for which the ground truth posterior can be computed by the Kalman filter~\cite{Sarkka20232023Bayesian}. The state-space model is defined as
\begin{equation}\label{eq:LinearSSM}
\begin{aligned}
X_{k+1}&=X_{k}+V_{k}, && V_{k}\sim N(0,5), \\
Y_{k}&=X_{k}+W_{k}, && W_{k}\sim N(0,0.2),
\end{aligned}
\end{equation}
where $k\in\bbN$, and the initial prior distribution is set as $X_{1}\sim N(0,1)$. 
The SSLS ensemble size is 500.
\begin{figure}[htbp]
\centering
\includegraphics[width=0.75\linewidth]{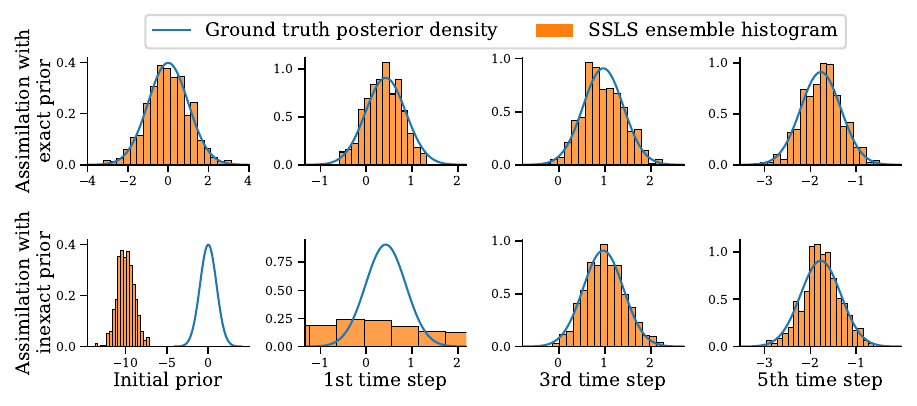}
\caption{Posterior distributions estimated by SSLS in a linear Gaussian state-space model~\eqref{eq:LinearSSM}. (a) The top row shows the histogram of the SSLS ensemble with an exact initial prior distribution. (b) The bottom row demonstrates the histogram of the SSLS ensemble with an inexact initial prior distribution.}
\label{fig:Linear}
\end{figure}

\par\textbf{Assimilation with exact initial prior.}
We begin by considering the case where the SSLS is carried out with the exact initial prior. The experimental results are shown in the top row of Figure~\ref{fig:Linear}, indicating that the distribution of the SSLS ensemble closely aligns with the ground truth posterior throughout all time steps, given that the SSLS is conducted without the initial prior distribution shift. This empirical finding validates the theoretical conclusions outlined in Theorem~\ref{theorem:section:convergence:assimilation}.

\par\textbf{Assimilation with inexact initial prior.}
In practical scenarios, the initial prior distribution is typically intractable. Therefore, it is essential to assess the robustness of the SSLS against the initial prior distribution shift. In this experiment, the SSLS is initialized by an inexact prior of $N(-10,1)$, and the outcomes are presented in the bottom row of Figure~\ref{fig:Linear}. The results demonstrate that, even in the presence of an initial prior distribution shift, the SSLS ensemble closely matches the ground truth posterior distribution after a small number of time steps, despite prominent estimation errors in initial few time steps.

\par Recall that Theorem~\ref{theorem:section:convergence:assimilation} provides an error bound that increases with the number of time step. Nonetheless, this error bound may be too loose to accurately depict empirical findings, as it solely characterizes a worst-case scenario. Therefore, establishing a tighter error bound that precisely reflects experimental observations remains an open question. This gap between theoretical understandings and experimental observations will be explored in greater depth in future work.

\subsection{Lorenz-96} 
\label{section:experiments:appendix:Lorenz}

\par Lorenz-96 is a widely-used benchmark in the field of numerical weather forecasting~\cite{Majda2012Filtering,Reich2015Probabilistic,Evensen2022Data,Spantini2022Coupling}, which is defined by a set of nonlinear ODEs representing the spatial discretization of a time-dependent PDE
\begin{equation}\label{eq:Lorenz}
\frac{\d}{\dt}Z_{t,i}=(Z_{t,i+1}-Z_{t,i-2})Z_{t,i-1}-Z_{t,i}+F, \quad 1\leq i\leq d.
\end{equation}
In this experiment, we consider the twenty-dimensional Lorenz-96 system. We set a constant forcing parameter $F=8$, resulting in a fully chaotic dynamic~\cite{Majda2012Filtering}, where slightly different initial conditions lead to extremely different trajectories. 

\par The dynamics model is defined by discretizing~\eqref{eq:Lorenz} using the fourth-order explicit Runge-Kutta method with time step $\delta t$. The states at discrete times are denoted by $(\mX_{k})_{k\in\bbN}$ with $\mX_{k}=(Z_{k\delta t,1},\ldots,Z_{k\delta t,d})^{\top}$. At each time step $k\in\bbN$, we employ a linear measurement model with Gaussian additive noise
\begin{equation}\label{eq:Lorenz:measurement}
\mY_{k}=\mX_{k}+\sigma_{\obs}\mW_{k},
\end{equation}
where $\mW_{k}\sim N(\mathbf{0},\mI_{d})$ denotes the measurement noise. 

\par\textbf{Baseline.}
To mitigate the degeneracy of the APF, a small amount of Gaussian noise $N(0,10^{-1}\mI_{d})$ is incorporated into the state at each iteration of the Runge-Kutta method~\cite{Spantini2022Coupling}. To ensure a fair comparison, the ensemble size for both APF and SSLS is set to 500. To showcase the robustness of SSLS against the initial prior distribution, we intentionally set the initial distribution of both SSLS and APF away from the ground truth initial prior. See Appendix~\ref{appendix:experiment:details} for more details on training parameters.

\par\textbf{Metrics for assimilation.}
To quantitatively measure the performance of SSLS and study the impact of ensemble size on the assimilation performance, we focus on four metrics as~\cite{Spantini2022Coupling}, including
\begin{enumerate}[(i)]
\item RMSE: the root mean squared error,
\item spread: the root mean trace of the ensemble covariance matrix,
\item coverage probability: the coverage probability of the intervals given by the empirical 2.5\% and 97.5\% quantiles of each marginal of the ensemble, and 
\item CRPS: the continuous ranked probability score~\cite{Gneiting2007Probabilistic,Brocker2012Evaluating}.
\end{enumerate}
The RMSE quantifies the discrepancy between estimated states and reference states, while the spread indicates the concentration of the ensemble particles. The coverage probability assesses the likelihood that a marginal confidence interval includes the reference states. The CRPS is a statistical metric used to assess the accuracy of the estimated posterior by comparing it to the observed data. It measures the discrepancy between the cumulative distribution function of the estimated posterior and the cumulative distribution function of the observations. A lower CRPS value indicates a better alignment between the estimated posterior and observed distributions, indicating a more accurate estimate. 

\begin{figure}[htbp]
\centering
\includegraphics[width=0.75\linewidth]{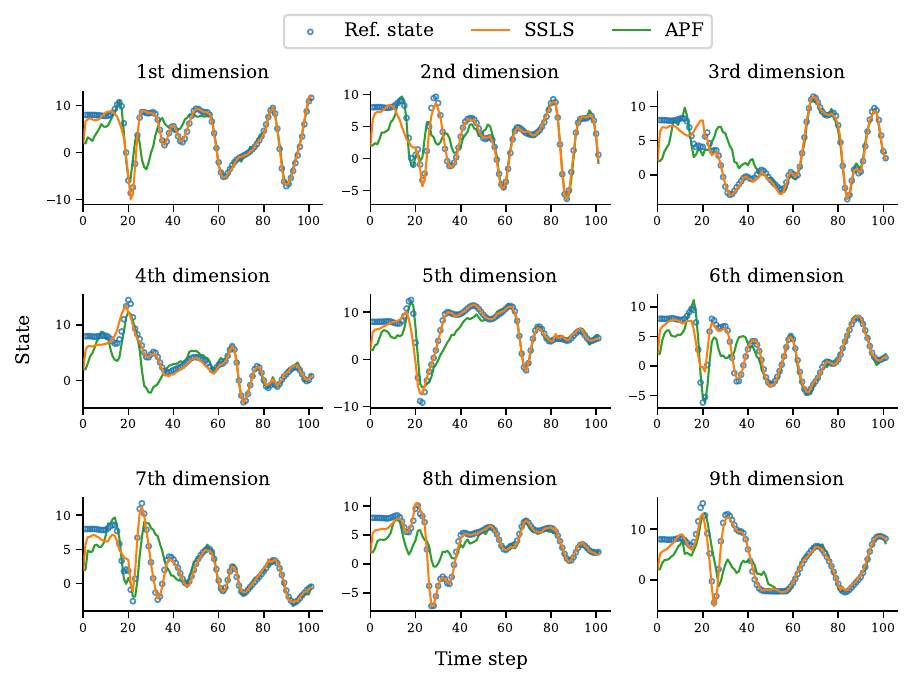}
\caption{Evolution of the reference states, the SSLS ensemble mean, and the APF ensemble mean for Lorenz-96~\eqref{eq:Lorenz}.}
\label{fig:LorenzEvolution}
\end{figure}

\begin{figure}[htbp]
\centering
\includegraphics[width=0.75\linewidth]{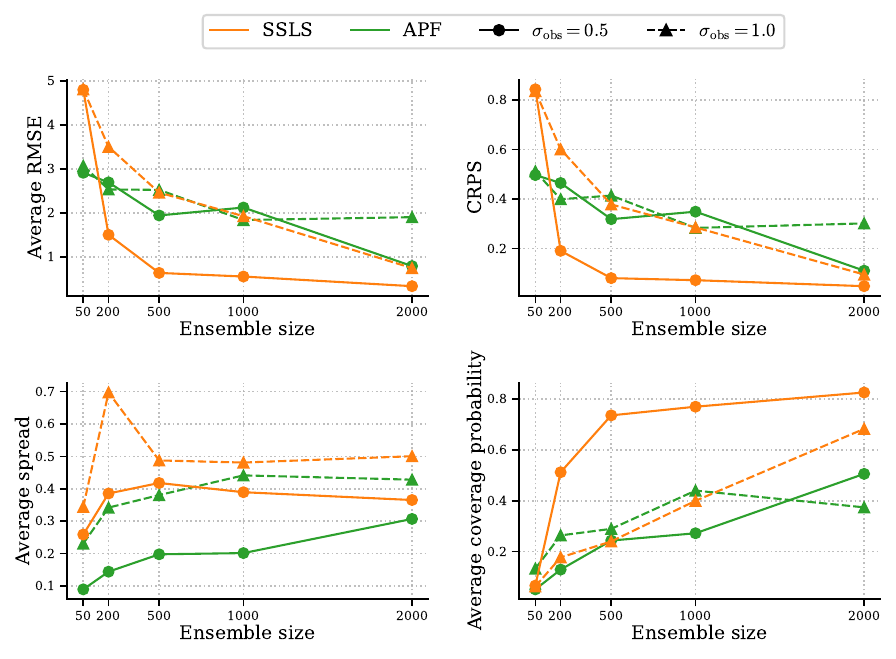}
\caption{Performance metrics of SSLS and APF for Lorenz-96~\eqref{eq:Lorenz}. For each ensemble size, metrics are averaged over elements of the estimated states and time steps.}
\label{fig:LorenzMetrics}
\end{figure}

\par\textbf{Experimental results.}
Figure~\ref{fig:LorenzEvolution} demonstrates the evolution of the first nine elements of the states estimated by SSLS and APF, respectively. Despite an initial prior distribution shift, SSLS effectively corrects this error within a few subsequent assimilation steps, whereas APF requires a significantly longer assimilation time to compensate for the initial prior distribution shift. These empirical observations are consistent with those of the previous experiment. Figure~\ref{fig:LorenzMetrics} presents four metrics for both SSLS and APF with different ensemble size.
\begin{enumerate}[(i)]
\item \textbf{RMSE}: The average RMSE of both SSLS and APF decreases as the ensemble size increases, which is consistent with our theoretical findings. Specifically, the discussions below Assumption~\ref{assumption:sm} indicates that the score matching error in SSLS approaches zero as the ensemble size increases towards infinity. Consequently, the assimilation error decreases as the ensemble size increases, as evidenced by the results in Theorem~\ref{theorem:section:convergence:assimilation}. Additionally, Figure \ref{fig:LorenzMetrics} illustrates that SSLS performs better than APF for ensemble sizes exceeding 200.
\item \textbf{CRPS}: As depicted in the upper right portion of Figure~\ref{fig:LorenzMetrics}, the average CRPS of SSLS decreases significantly as the ensemble size increases. This trend suggests an enhanced alignment between the estimated posterior distribution and the observation data. Furthermore, for ensemble sizes exceeding 500, SSLS demonstrates lower CRPS values compared to the APF, underscoring the superior effectiveness of SSLS in posterior estimation.
\item \textbf{Spread and coverage probability}: The bottom row of Figure~\ref{fig:LorenzMetrics} demonstrates that the coverage probability of SSLS noticeably grows with the increase in ensemble size, while the spread remains relatively stable. Moreover, Figure~\ref{fig:LorenzMetrics} indicates that at a low noise level of $\sigma_{\obs}=0.5$, SSLS exhibits a much higher coverage probability compared to APF, despite having a larger average spread. This disparity can be attributed to particle degeneracy in APF.
\end{enumerate}
For further discussion on this experiment, please refer to Appendix \ref{appendix:experiment:details}.

\subsection{Kolmogorov Flow baseline}
\label{section:experiments:appendix:Kolmogorov}

We evaluate SSLS against the Ensemble Kalman Diffusion Guidance (EnKG) \cite{zheng2025ensemble}. This comparison is conducted on the Kolmogorov Flow experiment with partial observations.

The original EnKG was developed as an alternative to standard Diffusion Posterior Sampling (DPS) \cite{chung2023diffusion} for general inverse problems. It differs from the sequential assimilation framework presented in this work. To facilitate a fair comparison in our scenario, we adapt its core contribution—a derivative-free estimation of the likelihood guidance for black-box measurement models—as a substitute for our likelihood calculation. Recall in SSLS, the posterior sampling is implemented using the Langevin diffusion:
\[
  \mathrm{d} \mathbf{Z}_{t}=\nabla_{\mathbf{x}}\log\pi_{k+1}(\mathbf{Z}_{t}|\mathbf{y}_{[k+1]})\mathrm{d}t+\sqrt{2}\mathrm{d}\mathbf{B}_{t},  \quad \mathbf{Z}_{0}\sim\pi_{k+1}^{m-1}(\cdot|\mathbf{y}_{[k+1]}).
\]
As for EnKG, the posterior sampling follows a prediction-correction scheme. The prediction step employs Langevin diffusion based on the prior:
\[
  \mathbf{Z}_{t+\delta t}^\prime = \mathbf{Z}_{t} + \nabla_{\mathbf{x}}\log q_{k+1}(\mathbf{Z}_{t}|\mathbf{y}_{[k]}) \delta t +\sqrt{2\delta t} \xi_t,  \quad \mathbf{Z}_{0}\sim\pi_{k+1}^{m-1}(\cdot|\mathbf{y}_{[k+1]}), \quad \xi_t \sim N(\mathbf{0}, \mathbf{I}). \tag{prediction step}
\]
The correction step then incorporates likelihood guidance through the derivative-free estimation method proposed in EnKG \cite[Algorithm 2]{zheng2025ensemble}:
\[
  \mathbf{Z}_{t+\delta t} \in \argmin_{\mathbf{Z}} \frac{\|\mathbf{Z} - \mathbf{Z}_{t+\delta t}^\prime\|^2_2}{2 w_i} - \log g_{k+1} (\mathbf{y}_{k+1} | \mathbf{Z}). \tag{correction step}
\]
To ensure a fair comparison, both SSLS and EnKG utilize the same diffusion prior architecture, training epochs, and total Langevin iterations. Since the computational budgets for likelihood guidance in both methods are comparable, their overall computational costs are nearly identical (refer to Table \ref{tab:ns-baseline} for details).

The assimilation results are presented in Figure \ref{fig:ns-baseline}. In unobserved regions, EnKG fails to capture the latent state patterns, whereas SSLS demonstrates superior performance. This superiority indicates that in data assimilation tasks, exact likelihood computation can significantly enhance assimilation accuracy.

\begin{figure}[htbp]
  \centering
  \includegraphics[width=0.6\linewidth]{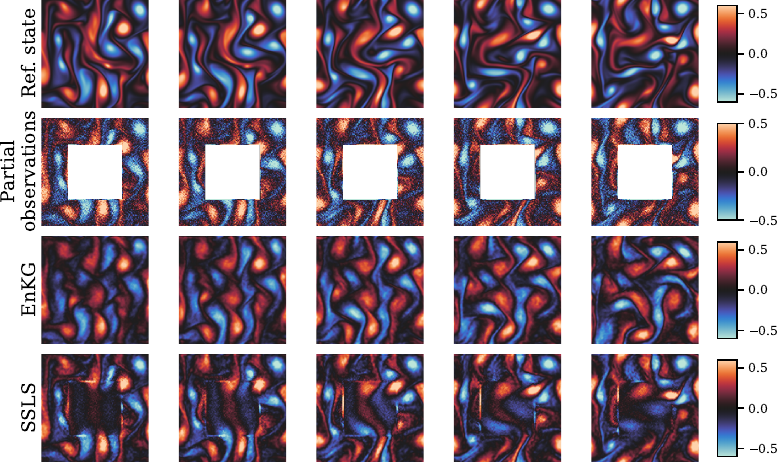}
  \caption{Performance comparison between SSLS and EnKG for the Kolmogorov Flow under partial observations.}
  \label{fig:ns-baseline}
\end{figure}

\begin{table}[htbp]
  \centering
  \caption{Computational budgets (per dynamical step) and assimilation error of SSLS and baseline EnKG, on the Kolmogorov Flow equation with partial observations.}
  \label{tab:ns-baseline}
  \begin{tabular}{ccccc}
    \toprule
    & training (sec) & update (sec) & memory load (GB) & RMSE\\
    \midrule
    EnKG &  55.88 $\pm$ 0.23 & 37.89 $\pm$ 0.19 & 11.78 $\pm$ 0.03 & 0.763 \\
    SSLS & 55.83 $\pm$ 0.24 & 37.32 $\pm$ 0.22 & 11.80 $\pm$ 0.03 & 0.300 \\
    \bottomrule
  \end{tabular}
\end{table}

\subsection{Computational complexity}
\label{section:experiments:appendix:budget}

In practice, computational efficiency is a critical factor in evaluating data assimilation methods. For the double-well problem, we report the execution time and memory consumption of the proposed SSLS, APF, and EnKF, all utilizing a consistent ensemble size of $n=1000$. For SSLS, each dynamic step involves 500 training epochs and a total of $K=400$ Langevin iterations. To ensure a fair comparison, APF and EnKF are also implemented on the same GPU hardware (NVIDIA A800, 80G). The detailed computational budgets are summarized in Table \ref{tab:computational-budget}.

\begin{table}[htbp]
  \centering
  \caption{Computational Time Analysis per dynamic step in the double-well experiment.}
  \label{tab:computational-budget}
  \begin{tabular}{lccc}
    \toprule
    & SSLS & APF & EnKF \\
    \midrule
    Training (sec) &  3.665 $\pm$ 0.185 & --- & --- \\
    Update (sec) & 0.109 $\pm$ 0.012 & 0.001 $\pm$ 0 & 0.0003 $\pm$ 0 \\
    Memory Load (MB) & 21.261 $\pm$ 0.002 & 0.260 $\pm$ 0.001 & 8.346 $\pm$ 0.001 \\
    Average RMSE & 0.194 & 0.388 & 0.225 \\
    \bottomrule
  \end{tabular}
\end{table}

These results reflect a trade-off between precision and efficiency. Specifically, SSLS achieves superior accuracy at the expense of higher temporal and memory requirements compared to the baseline methods. However, it is important to note that for APF and EnKF, simply increasing the ensemble size fails to resolve fundamental issues such as particle degeneracy and severe nonlinearity (see Figure \ref{fig:LorenzMetrics}). In contrast, the proposed SSLS effectively overcomes these challenges while maintaining a moderate ensemble size (see Figure \ref{fig:LorenzMetrics} and Figure \ref{fig:sensitivity} (f)) and manageable computational overhead.

\subsection{Hyper-parameter sensitivity analysis}
\label{appendix:experiment:sensitivity}

We conduct sensitivity analysis on hyper-parameters:
Langevin iterations $K$, annealing levels $M$, step size $h$,
Gaussian smoothing level $\sigma$, annealing parameter $\rho$, and ensemble size $n$.
Performance is evaluated using RMSE and CRPS. The results, presented in Figure \ref{fig:sensitivity}, yield the following observations:
\begin{itemize}
  \item \textbf{Scaling Effects}: Increasing the number of Langevin steps, temperature levels, and ensemble size consistently reduces assimilation errors.
  \item \textbf{Step Size Trade-off}: A ``sweet spot'' for the step size $h$ is identified at $h=0.0005$. This aligns with theoretical expectations: given a fixed number of iterations, an excessively small step size increases sampling error due to insufficient exploration, while an overly large step size introduces significant discretization error.
  \item \textbf{Smoothing and Stability}: Insufficient Gaussian smoothing ($\sigma$) leads to instability in denoising score matching (DSM), which degrades performance. Specifically, at $\sigma = 0.02$, DSM underperforms compared to sliced score matching ($\sigma=0$). However, as $\sigma$ increases from 0.02 to 0.15, the error decreases and stabilizes, with DSM eventually outperforming the sliced approach.
  \item \textbf{Annealing Efficacy}: SSLS with annealing ($\rho \neq 0$) consistently outperforms the non-annealed version ($\rho = 0$). The best annealing strategy allocates more updates at lower inverse temperatures ($\rho > 1$).
\end{itemize}

In conclusion, SSLS is particularly effective when the posterior exhibits multi-modality or when the system undergoes drastic transitions. While proper annealing mitigates the challenges posed by multi-modality, the generalization capabilities of the neural network enable the model to adapt robustly to sudden state changes.

\begin{figure}[htpb]
  \centering
  \begin{minipage}{0.32\textwidth}
    \centering
    \includegraphics[width=\linewidth]{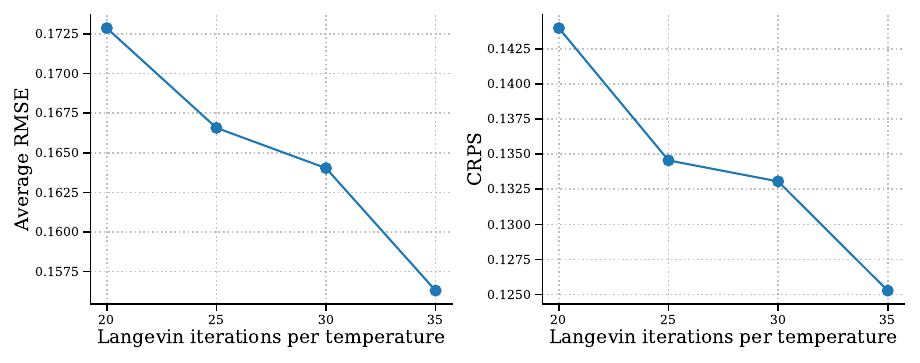}\\
    \makebox[\linewidth]{\tiny (a) SSLS with different Langevin steps.} 
  \end{minipage}
  \hfill
  \begin{minipage}{0.32\textwidth}
    \centering
    \includegraphics[width=\linewidth]{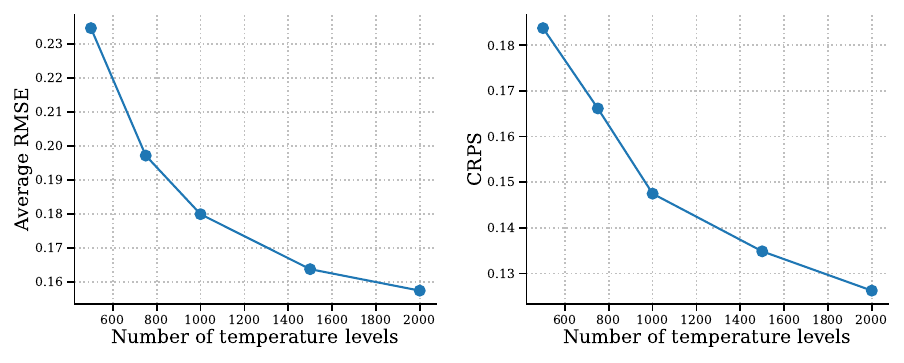}\\
    \makebox[\linewidth]{\tiny (b) SSLS with different annealing levels.} 
  \end{minipage}
  \hfill
  \begin{minipage}{0.32\textwidth}
    \centering
    \includegraphics[width=\linewidth]{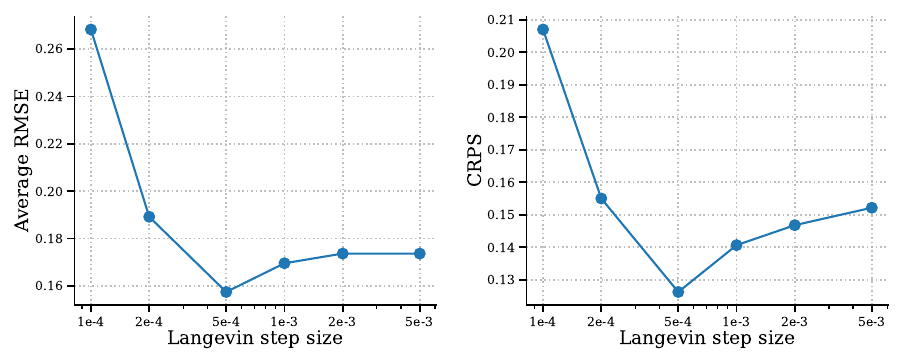}\\
    \makebox[\linewidth]{\tiny (c) SSLS with different Langevin step sizes.} 
  \end{minipage}

  \vspace{1em}

  \begin{minipage}{0.3\textwidth}
    \centering
    \includegraphics[width=\linewidth]{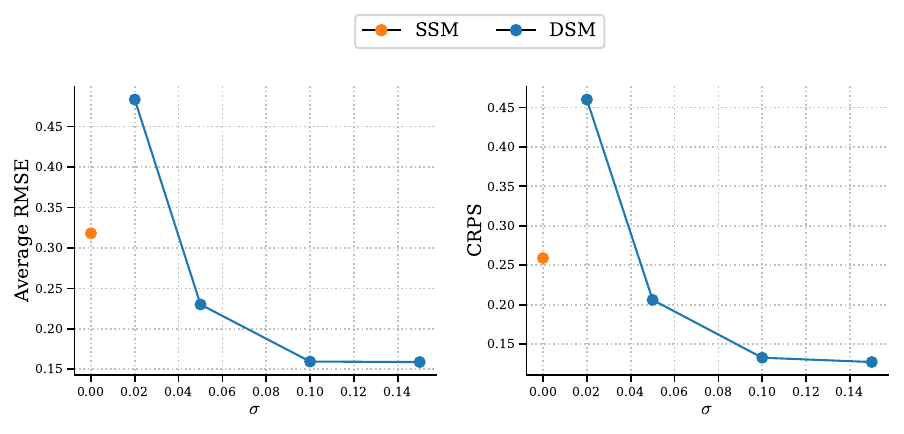}\\
    \makebox[\linewidth]{\tiny (d) SSLS with different Gaussian smoothing levels.} 
  \end{minipage}
  \hfill
  \begin{minipage}{0.3\textwidth}
    \centering
    \includegraphics[width=\linewidth]{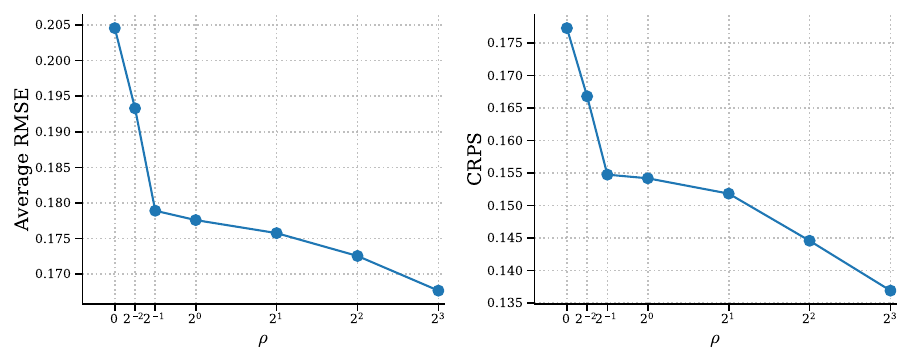}\\
    \makebox[\linewidth]{\tiny (e) SSLS with different annealing schedule.} 
  \end{minipage}
  \hfill
  \begin{minipage}{0.3\textwidth}
    \centering
    \includegraphics[width=\linewidth]{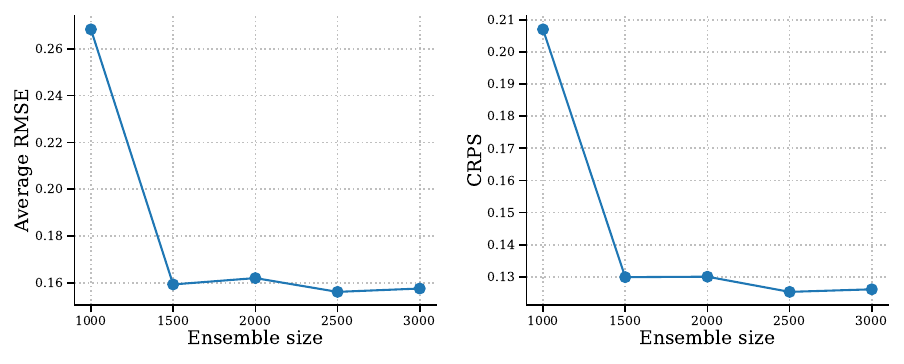}\\
    \makebox[\linewidth]{\tiny (f) SSLS with different ensemble size.} 
  \end{minipage}

  \caption{
    Sensitivity analysis of SSLS with different hyper-parameters.
    Metrics are averaged over elements of the estimated states and time steps.
  }
  \label{fig:sensitivity}
\end{figure}

\section{Experimental details}
\label{appendix:experiment:details}

\subsection{Double-well problem}
For the first problem, we adopt a residual neural network with 2 hidden layers to learn for the prior score. The width of each hidden layer is set as 128, and the activation functions is chosen as the sigmoid function. During the learning procedure, we apply the denoising score match method \cite{Vincent2011Connection} with a noise level 0.1 to improve the training efficiency. After that, the gradient of log posterior can be explicitly evaluated.

In the implementation of APF and EnKF, the ensemble size is set as $n=1000$, same to the sample size used in SSLS. Among all the three methods, the initial state distribution is defined as the normal distribution $N(-1, 0.15^2)$.

\subsection{Lorenz 96 model}

For the Lorenz 96 problem, we adopt a 1D UNet to learn for the prior score. The channels are 32, 64 and 128, and the activation functions is chosen as the ReLU function. During the learning procedure, we apply the denoising score match method \cite{Vincent2011Connection} with a noise level 0.1 to improve the training efficiency. After that, the gradient of log posterior can be explicitly evaluated.

In the implementation of APF, if unspecified, the ensemble size is set as $n=500$, same to the sample size used in SSLS. The initial state distribution is defined as the normal distribution $N(\mathbf{0}, \mathbf{I}_{20})$.

\begin{figure}
\centering
\centering
\includegraphics[width=0.50\linewidth]{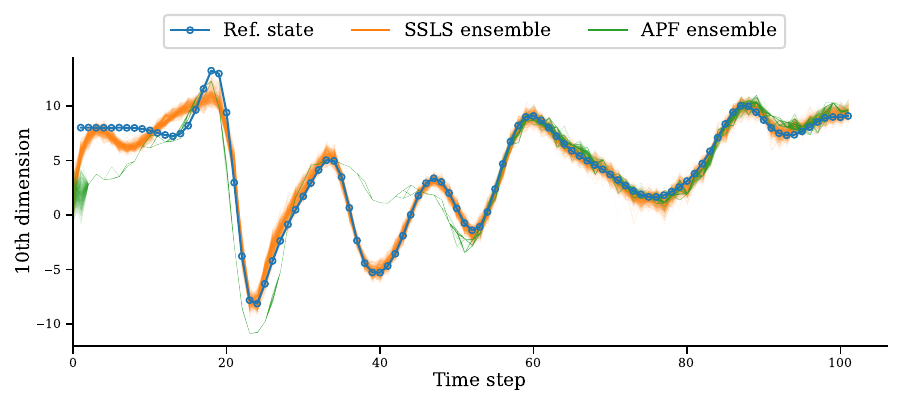}
\caption{Evolution of $x_{10}$ of the true states, the SSLS ensemble and the APF ensemble on REFERENCE LORENZ.}
\label{fig:LorenEnsemble}
\end{figure}

\begin{figure}
\centering
\includegraphics[width=0.50\linewidth]{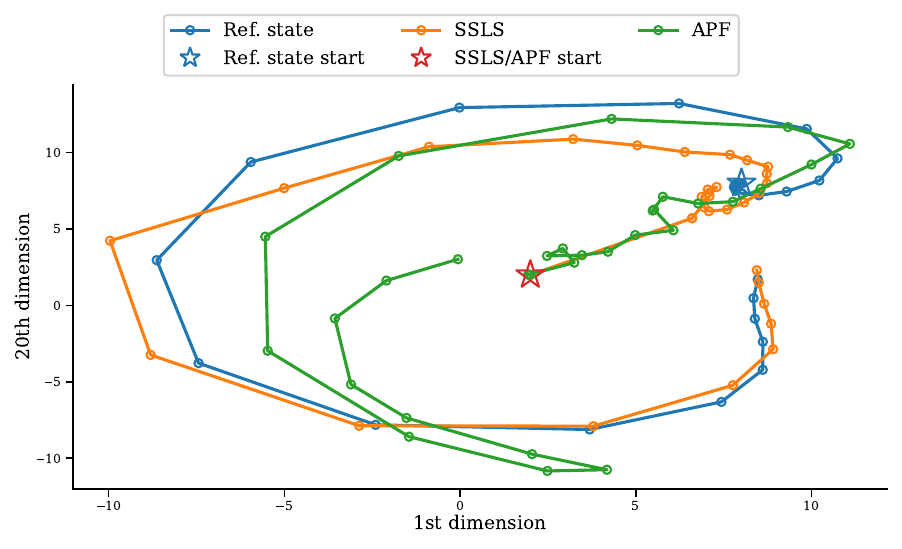}
\caption{Trajectory of the true states, the SSLS estimation and the APF estimation for a Lorenz 96 system. The trajectory is visualized in the $x_1$-$x_{20}$ space.}
\label{fig:LorenzTrajectory}
\end{figure}

The superiority of SSLS in Figure \ref{fig:LorenzEvolution} can also be understood using Figure \ref{fig:LorenEnsemble}. Starting from a bad initial guess, the width of assimilation band of the APF ensemble (the maximum difference between any two samples in the ensemble) becomes narrowing rapidly, due to the imbalanced distribution of the likelihood value. This phenomenon greatly reduces the efficiency of APF. On the contrary, SSLS adopts a continues network function to approximate the prior distribution, the better generalization ability increases the coverage probability for the true state. In Figure \ref{fig:LorenzTrajectory}, we compare the predicted trajectories of SSLS and APF with true state in the $x_1$-$x_{20}$ space, which again verifies this advantage.

\subsection{Kolmogorov flow}

For the Kolmogorov flow problem, we adopt a 2D UNet to learn for the prior score. During the learning procedure, we apply the denoising score match method \cite{Vincent2011Connection} with a noise level 0.2 to improve the training efficiency. After that, the gradient of log posterior can be explicitly evaluated.
The sample size used in SSLS is set as $n=500$.

\begin{figure}
\centering
\includegraphics[width=0.50\linewidth]{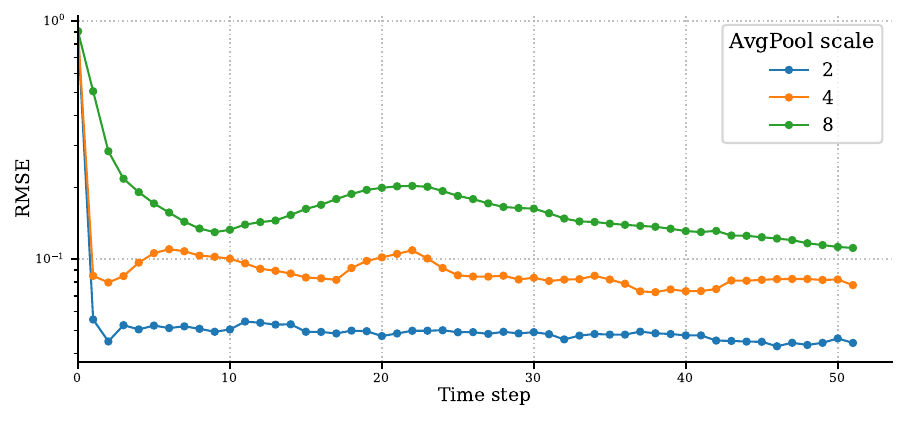}
\caption{RMSE of SSLS assimilated states at different average pooling scale. Time 0 corresponds to RMSE from the expectation of the prior distribution, when the assimilation has not taken place. The three lines share the same starting RMSE as they share the same guess on the prior distribution.}
\label{fig:KolmogorovMetrics}
\end{figure}

To further study the evolution of error, we also compare the RMSE of SSLS under different average pooling scale in Figure \ref{fig:KolmogorovMetrics}, which demonstrates at the early stage of assimilation (eliminating the effect of initial lag error), the RMSE would decreases rapidly to a small value, and then maintain stable. Furthermore, a larger pooling scale would make the problem more difficult , resulting a slower decrease of RMSE, converging to a higher value.

\subsection{Numerical stability improvements}

Throughout our numerical experiments, we mainly adopt two numerical improvements on the original algorithm for stability.
\begin{enumerate}[(i)]
\item The first improvement is that, before matching the score function of the prior distribution at each step, we normalize the samples to zero mean and identity covariance. Then we match the score function on the normalized distribution, from which we obtain the original score function after affine transformation.
\item Another improvement is that, we manually clip the score function of estimated posterior score by its $L^2$-norm to ensure the stability of the score-based Langevin sampling.
\end{enumerate}

\end{document}